\newcommand{\dbar}{\overline{\partial}}
\newcommand{\ddt}[1]{\frac{\partial #1}{\partial t}}
\newcommand{\R}{\mathcal{R}}
\newcommand{\Sc}{\mathcal{S}}
\newcommand{\A}{\mathcal{A}}
\newcommand{\ddbar}{\sqrt{-1}\partial\dbar}
\newtheorem{claim}{Claim}[section]
\newtheorem{theorem}{Theorem}[section]
\newtheorem{proposition}{Proposition}[section]
\newtheorem{lemma}{Lemma}[section]
\newtheorem{conjecture}{Conjecture}[section]
\newtheorem{example}{Example}[section]
\newtheorem{definition}{Definition}[section]
\newtheorem{corollary}{Corollary}[section]
\begin{document}

\title{Riemannian  geometry of K\"ahler-Einstein currents }

\author{Jian Song}

\address{Department of Mathematics, Rutgers University, Piscataway, NJ 08854}

\email{jiansong@math.rutgers.edu}

\thanks{Research supported in
part by National Science Foundation grants DMS-0847524.}

\begin{abstract} We study   Riemannian geometry of canonical K\"ahler-Einstein currents on projective Calabi-Yau varieties and canonical models of general type with crepant singularities. We prove that the metric completion of the regular part of such a canonical current is a compact metric length space homeomorphic to the original projective variety, with well-defined tangent cones. We also prove a special degeneration for K\"ahler-Einstein manifolds of general type as an approach to establish the  compactification of the moduli space of K\"ahler-Einstein manifolds of general type. A number of applications are given for degeneration of Calabi-Yau manifolds and the K\"ahler-Ricci flow on smooth minimal models of general type. 
\end{abstract}

\maketitle

{\footnotesize  \tableofcontents}


\section{Introduction}

Recent progress in the study of canonical metrics in K\"ahler geometry has revealed deep connections and interplay among nonlinear PDEs, Riemannian geometry and complex algebraic geometry. The Yau-Tian-Donaldson conjecture \cite{Y3, T3, D} predicts the relation between the existence of K\"ahler-Einstein metrics and the $K$-stability for Fano manifolds. The analytic minimal model program with Ricci flow proposed by the author and  Tian \cite{ST3} connects finite time singularity  of the K\"ahler-Ricci flow to geometric and birational surgeries,  and its  long time behavior to the existence of singular K\"ahler-Einstein metrics and the abundance conjecture. In particular, it is proposed by the author \cite{S3} that the K\"ahler-Ricci flow should give a global uniformization in terms of K\"ahler-Einstein metrics for projective varieties as well as a local uniformization in terms of the transition of shrinking and expanding solitons for singularities arising simultaneously from the K\"ahler-Ricci flow and birational transformation \cite{SW1, SW2, SW3, SoYu, S3}. 

The theory of Cheeger-Colding plays an important role in the recent fundamental work of Donaldson and Sun \cite{DS} to prove the partial $C^0$-estimate proposed by Tian \cite{T1}, for a family of polarized K\"ahler manifolds with uniform bounds for the volume,  diameter and Ricci curvature.  More precisely, let $\mathcal{K}(n, V, H, D)$ be the set of  $n$-dimensional polarized K\"ahler manifolds $(X, g)$ with the K\"ahler metric $g \in H^{1,1}(X, \mathbb{R})\cap H^2(X, \mathbb{Z})$ satisfying 
\begin{itemize}

\item $\int_{X} dV_g \leq V$, 

\smallskip

\item $diam_{g} (X) \leq D$,
\smallskip

\item $ -H \leq Ric(g)\leq H.$

\end{itemize} Then all $(X, g) \in \mathcal{K}(n, V, H, D)$ can be embedded  simultaneously in a fixed large projective space. Furthermore, if $(X_j, g_j)\in \mathcal{K}(n, V, H, D)$ are K\"ahler-Einstein manifolds, then after passing to a subsequence, $(X_j, g_j)$ converges in Gromov-Hausdorff topology to a compact metric length space homeomorphic to a projective variety with log terminal singularities, coupled with a canonical  singular K\"ahler-Einstein metric \cite{DS}. This compactness result suggests an analytic and Riemannian geometric approach to construct moduli spaces of K\"ahler-Einstein manifolds and their compactifications. There are many important applications. For example, one can apply such a partial $C^0$-estimate to prove a conjecture Candalas and de la Ossa \cite{CaO} for geometric transitions of Calabi-Yau manifolds by combining the results of Rong-Zhang \cite{RZ}. 

On the other hand, this a powerful theorem only applies to polarized manifolds with K\"ahler metrics in $H^2(X, \mathbb{Z})$. In addition, the diameter bound,  which is equivalent to a nonlocal collapsing condition at each point, is usually difficult to verify unless assuming a nonnegative condition on the Ricci curvature. 

Often canonical K\"ahler-Einstein metrics arise from degeneration of K\"ahler classes in $H^2(X, \mathbb{Q})$ or more generally in $H^2(X, \mathbb{R})$. For example, it is shown in \cite{EGZ} that there exists a unique Ricci-flat K\"ahler current in any polarization of a normal projective variety with log terminal singularities and numerically trivial canonical line bundle. Also  for any minimal model of general type, there exists a unique K\"ahler-Einstein current in the canonical class on its unique canonical model \cite{Ts, EGZ, Z}. Such K\"ahler-Einstein currents are unique and thus canonical, and they have bounded continuous local potentials and their associated K\"ahler-Einstein metric is smooth on the regular part of the underlying variety \cite{EGZ}. Hence the natural question is  to ask how such singular currents give global and local information in terms of both Riemannian and algebraic geometry. One would expect, for example, that the metric completion of these K\"ahler-Einstein metrics on the regular part of the underlying variety would coincide with the original algebraic varieties topologically and algebraically, and the tangent cone at each point is unique and corresponds to the local algebraic affine cone.  

One approach to understand   Riemannian geometry of such K\"ahler-Einstein currents on singular varieties is  to study a family of nonsingular curvature equations on the nonsingular models after  resolution of singularities and metric perturbation. Such a deformation involves with a family of K\"ahler metrics in $H^2(X, \mathbb{Q})$ or even $H^2(X, \mathbb{R})$. Thus we would hope to generalize the work of Donaldson-Sun \cite{DS} to $\mathbb{Q}$-polarizations with certain additional assumptions. We would also like remove the assumption on the uniform diameter or equivalently a uniform non-collapsing condition in \cite{DS} in certain situation. 

In this paper, we make an attempt to apply techniques from pluripotential theory, nonlinear PDEs, Hormander's $L^2$ theory and the Cheeger-Colding theory to understand both local and global geometry of K\"ahler-Einstein currents on projective varieties with crepant singularities as well as certain special degeneration of canonical K\"ahler-Einstein manifolds.  Our goal is to establish the equivalence between the analytic weak solutions of K\"ahler-Einstein equations (or degenerate complex Monge-Ampere equations) and the weak metric limits of Cheeger-Colding theory,  and to show that such weak solutions must be strong solutions whose geometric structure coincides with their algebraic structure.

The following is our first main result. 

\begin{theorem}\label{main1} Let $X$ be an $n$-dimensional projective Calabi-Yau variety with crepant singularities and $L\rightarrow X$ an ample $\mathbb{Q}$-line bundle. Then there exists a unique Ricci-flat current $\omega_{KE} \in c_1(L)$ on $X$ with bounded local potentials satisfying

\begin{enumerate}

\item $\omega_{KE}$ is smooth on $X_{reg}$  and on $X_{reg}$,  
$$
Ric(g_{KE}) = 0, 
$$
where $g_{KE}$ is the K\"ahler metric  associated to $\omega_{KE}$ and $X_{reg}$ is  the regular part of the projective variety $X$.

\medskip

\item $(\hat X, d)$, the metric completion of $(X_{reg}, g_{KE})$, is a compact metrc length space  homeomorphic to $X$ as a projective variety. 

\medskip

\item Let $\hat X = \mathcal{R} \cup \mathcal{S}$ with $\mathcal{R}$ as the regular set of $(\hat X, d)$ and $\mathcal{S}$ as the singular set. Then $\mathcal{R}$ is an open dense convex set of $(\hat X,d)$ and $\mathcal{S}$ is a closed set of Hausdorff dimension no greater than $2n-4$.

\medskip

\item $\mathcal{R}= X_{reg}$.

\end{enumerate}
\end{theorem}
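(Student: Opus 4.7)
The plan is to combine the pluripotential-theoretic existence of $\omega_{KE}$ with a smooth approximation on a resolution of $X$, and then apply Cheeger--Colding theory together with a partial $C^0$-estimate to identify the Gromov--Hausdorff limit with the algebraic variety $X$.

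The existence and uniqueness of $\omega_{KE} \in c_1(L)$ with bounded local potentials in (1) follow from Eyssidieux--Guedj--Zeriahi: since $X$ is Calabi--Yau with crepant singularities, there is an adapted smooth Calabi--Yau volume form $\Omega$ on $X$, and the degenerate complex Monge--Amp\`ere equation $\omega_{KE}^n = c\,\Omega$ has a unique bounded solution. Smoothness on $X_{reg}$ and $Ric(g_{KE})=0$ there are standard. The heart of the argument is to establish the Riemannian statements (2)--(4).

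Fix a resolution $\pi: \tX \to X$ and an ample class $\tilde H$ on $\tX$. For small $\epsilon > 0$ the class $\pi^* L + \epsilon \tilde H$ is K\"ahler on $\tX$, and one solves a complex Monge--Amp\`ere equation with right-hand side a smooth volume form on $\tX$ cohomologous to $(\pi^*L + \epsilon \tilde H)^n$ and converging to $\pi^*\Omega$ on compact subsets of $\pi^{-1}(X_{reg})$. This produces a family $\tomega_\epsilon \in c_1(\pi^*L + \epsilon \tilde H)$ of smooth K\"ahler metrics with Ricci curvature uniformly bounded and asymptotically zero on $\pi^{-1}(X_{reg})$. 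Uniform $L^\infty$ estimates on the potentials (Ko\l odziej/EGZ) together with interior higher-order estimates give smooth local convergence $\tomega_\epsilon \to \pi^*\omega_{KE}$ on $\pi^{-1}(X_{reg})$ as $\epsilon \to 0$. Given a uniform diameter bound, Cheeger--Colding yields a subsequential Gromov--Hausdorff limit $(Z, d_Z)$ which is a non-collapsed Ricci limit space with regular/singular decomposition $Z = \mathcal{R}_Z \sqcup \mathcal{S}_Z$, where $\mathcal{S}_Z$ is closed of Hausdorff codimension $\geq 4$ by Cheeger--Colding--Tian. The smooth convergence on $\pi^{-1}(X_{reg})$ yields an isometric embedding of $(X_{reg}, g_{KE})$ with dense image in $(Z, d_Z)$, so $(\hat X, d) \cong (Z, d_Z)$, giving the compactness and Hausdorff-dimension pieces of (2) and (3).

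To obtain the homeomorphism with $X$ in (2) and the equality $\mathcal{R} = X_{reg}$ in (4), I would adapt the Donaldson--Sun partial $C^0$-estimate to the $\mathbb{Q}$-polarized setting. Using H\"ormander $L^2$ estimates for $\dbar$ with respect to the Ricci-flat approximants $\tomega_\epsilon$, one constructs peak holomorphic sections of large tensor powers of $\pi^*L + \epsilon \tilde H$ which descend to sections of $L^k$ on $X$ for $k$ large. The resulting Gromov--Hausdorff equivariant embeddings into a fixed $\mathbb{P}^N$ converge to an embedding of $Z$ whose image is a normal projective variety; identifying this image with the Kodaira image of $X$ under $|L^k|$ produces the desired homeomorphism $Z \cong X$. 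To upgrade this to $\mathcal{R} = X_{reg}$, one uses the constructed peak sections at each $p \in X_{sing}$ to detect the local algebraic affine cone structure, ruling out any metric smooth point lying over an algebraic singular point. Convexity of $\mathcal{R}$ in (3) then follows from the codimension-$4$ bound on $\mathcal{S}$ together with standard volume-comparison arguments for minimizing geodesics in non-collapsed Ricci limit spaces.

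The main obstacle is the uniform diameter bound for $(\tX, \tomega_\epsilon)$: the integrality argument used in Donaldson--Sun does not apply since the K\"ahler class is only $\mathbb{Q}$-polarized and drifts with $\epsilon$, so one must extract the diameter bound directly from the uniform $L^\infty$ bound on the potentials combined with Bishop--Gromov on the Ricci-flat approximants and a no-collapsing argument near $\pi^{-1}(X_{reg})$. The second technical difficulty is executing the partial $C^0$-estimate with $\mathbb{Q}$-coefficients and drifting K\"ahler classes, which requires carefully tracking the $\epsilon \tilde H$ twist in the H\"ormander estimate and passing to the limit to recover the Kodaira embedding of $X$ by sections of $L^k$ from the limiting peak sections.
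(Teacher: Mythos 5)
Your overall architecture (EGZ existence, smooth approximation on a resolution, Cheeger--Colding, then peak sections and H\"ormander estimates to identify the limit with $X$) is the same as the paper's, but the two steps you flag as the hard ones are left genuinely open, and in one case the difficulty is misplaced. First, your approximation scheme is too loosely specified: solving a Monge--Amp\`ere equation on an arbitrary resolution with a right-hand side merely ``converging to $\pi^*\Omega$'' does not give a uniform Ricci lower bound in the sense $Ric(\tilde\omega_\epsilon)\geq -\Lambda\,\tilde\omega_\epsilon$ near the exceptional locus, which is exactly what Cheeger--Colding needs; your phrase ``Ricci curvature uniformly bounded and asymptotically zero'' is unjustified there. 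The point of the crepant hypothesis is that one can take the crepant resolution $\pi:X'\to X$, on which $K_{X'}=\pi^*K_X$ is trivial, choose the Calabi--Yau volume form $\Omega$ with $\ddbar\log\Omega=0$, and then Yau's theorem makes the approximants in $c_1(\pi^*L)+e^{-t}[\mathcal{A}]$ \emph{exactly} Ricci-flat. Once this is done, your ``main obstacle'' disappears: with $Ric=0$, fixed total volume, and non-collapsing at a point of $\pi^{-1}(X_{reg})$ (from the uniform $C^0$ and Schwarz estimates), Yau's volume growth theorem immediately gives the uniform diameter bound, as in Tosatti and Y.~Zhang; no delicate no-collapsing argument is needed.

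Second, your identification step hinges on a partial $C^0$-estimate with a power $k$ uniform in $\epsilon$ for the drifting $\mathbb{Q}$-classes $\pi^*L+\epsilon\tilde H$, which you acknowledge but do not supply, and the assertion that peak sections of $(\pi^*L+\epsilon\tilde H)^{\otimes k}$ ``descend to sections of $L^k$ on $X$'' is false as stated (the $\tilde H$-twist does not come from $X$). The paper deliberately avoids this: since $L$ is ample on $X$, the sections of $L^k$ already exist, and the analytic work is (i) an intrinsic gradient bound for $\varphi_\infty$ with respect to $g_\infty$, upgraded by Moser iteration with cut-off functions on the singular space to uniform $L^\infty$ and Lipschitz bounds for sections, so that every $s\in H^0(X,L^k)$ extends continuously to the metric completion and $\Phi:\hat X\to X$ is a Lipschitz surjection; and (ii) injectivity of $\Phi$, proved pair-by-pair using Donaldson--Sun's $H$-condition at tangent cones together with a H\"ormander $L^2$-estimate established directly for the singular K\"ahler--Einstein current (via Kodaira's lemma and approximation on the crepant resolution), where the power $k$ may depend on the pair of points because $\Phi_k$ stabilizes for large $k\in\mathcal{F}(X,L)$. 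No uniform partial $C^0$-estimate is required. Two further points: the equality $\mathcal{R}=X_{reg}$ is not something you can expect to extract from peak sections at singular points; in the paper it is the Rong--Zhang result, and it is used as an \emph{input} to transplant the $H$-condition construction, not derived afterwards. Likewise, convexity of $\mathcal{R}$ does not follow from the codimension-four bound plus volume comparison; it is the Colding--Naber theorem and must be cited as such.
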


The regular set  $\mathcal{R}$ is the set of all points in $(\hat X,d)$ whose tangent cones are Euclidean $\mathbb{R}^{2n}$. We will show that all holomorphic sections of $L^k$ on $X_{reg}$ continuously  extend to the metric space $(\hat X, d)$ globally and the linear system $| L^k |$ induces a Lipschitz map 
$$\Phi: \hat X \rightarrow \mathbb{CP}^{d_k}$$ for some sufficiently large $k$, where $d_k +1= \dim H^0(X, L^k) $. In particular, the image of $\Phi$ is the projective variety $X$ itself and $\Phi$ is a homeomorphism (cf. section 3.3  and 3.5). Here the topology of $X$ is   induced from its project  embedding in some projective space with its Fubini-Study metric.   We remark that the existence of $\omega_{KE}$ in Theorem \ref{main1} is due to \cite{EGZ, Z} generalizing Kolodziej's fundamental work \cite{Kol1} to the degenerate case. In this special setting of crepant singularities, the same existence result  is also derived in \cite{To1, ZY} applying the family version of Moser's iteration due to Yau \cite{Y1}. Also it is shown by Rong-Zhang \cite{RZ} that $ \mathcal{R} = X_{reg}$. Our main contribution is to identify the compact metric space $(\hat X, d)$ with the original projective variety $X$ topologically via the global morphism $\Phi$. 

Immediately, we can apply Theorem \ref{main1} to understand certain degeneration of Calabi-Yau manifolds studied in \cite{To1, ZY} and show that such an  algebraic degeneration coincides with the analytic/geometric degeneration. 

\begin{corollary} \label{main3} Let $X$ be an $n$-dimensional  projective Calabi-Yau manifold. Suppose $g_t $ is a family of Ricci-flat K\"ahler metrics for $t\in (0, 1]$ and $[g_t]$ converges to a class $\alpha \in H^{1,1}(X, \mathbb{Q})$ associated to a big Cartier $\mathbb{Q}$-divisor. Then $(X, g_t)$ converges in Gromov-Hausdorff topology to a unique projective Calabi-Yau variety $(Y, d_Y)$ satisfying the conclusions of Theorem \ref{main1}. 

\end{corollary}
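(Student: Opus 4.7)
The plan is to first use the hypothesis on $\alpha$ to produce an algebraic model to which Theorem \ref{main1} applies, and then match the Gromov--Hausdorff limit of $(X,g_t)$ to the resulting metric space. Since $[g_t]$ consists of K\"ahler classes, its limit $\alpha$ is nef; together with the hypothesis that $\alpha$ is big and $\mathbb{Q}$-Cartier, and with $K_X=0$, the base-point-free theorem implies that the associated $\mathbb{Q}$-line bundle is semi-ample. A sufficiently divisible multiple thus defines a birational morphism $\pi:X\rightarrow Y$ onto a normal projective variety carrying an ample $\mathbb{Q}$-line bundle $L_Y$ with $\pi^{*}L_Y\in\alpha$. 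Because $K_X$ is trivial, $\pi$ is crepant, hence $K_Y$ is trivial as a $\mathbb{Q}$-Cartier divisor and $Y$ has only canonical (crepant) singularities. So $(Y,L_Y)$ falls under the hypothesis of Theorem \ref{main1}, producing a unique Ricci-flat current $\omega_{KE}\in c_1(L_Y)$, smooth on $Y_{reg}$, whose metric completion $(\hat Y,d_Y)$ is a compact length space homeomorphic to $Y$.

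Next I would establish smooth convergence on the regular locus. Let $U:=\pi^{-1}(Y_{reg})$; the map $\pi|_U$ is a biholomorphism onto $Y_{reg}$, and $\pi^{*}\omega_{KE}$ is the unique Ricci-flat representative of $\alpha|_U$. Writing $\omega_t=\alpha_t+\ddbar\varphi_t$ for a smooth family of reference forms with $[\alpha_t]=[g_t]$, the Ricci-flat complex Monge--Amp\`ere equation combined with the Kolodziej--Eyssidieux--Guedj--Zeriahi $L^\infty$ estimate and standard higher-order estimates on compact subsets of $U$ yields smooth convergence $g_t\rightarrow\pi^{*}g_{KE}$ on compacta of $U$ as $t\rightarrow 0$.

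The final step promotes this local smooth convergence to global Gromov--Hausdorff convergence. The volumes $[g_t]^n/n!$ are uniformly bounded above and bounded below by $[\alpha]^n/n!>0$, so there is no global collapsing. Given a uniform diameter bound, Gromov compactness extracts a subsequential GH limit $(Z,d_Z)$; the smooth convergence on $U$ embeds $(Y_{reg},g_{KE})$ as an isometric open dense subset of $(Z,d_Z)$. Pulling back via $\pi$ the global Lipschitz morphism $\Phi:\hat Y\rightarrow\mathbb{CP}^{d_k}$ of Theorem \ref{main1} yields a morphism on $X$ that controls distances in terms of Fubini--Study distances in the ambient projective space, which in turn matches $(Z,d_Z)$ isometrically with $(\hat Y,d_Y)$. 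Uniqueness of the limit then upgrades subsequential convergence to convergence of the whole family, and the identification with the projective variety $Y$ is built into Theorem \ref{main1}.

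The principal obstacle is the uniform diameter bound along the degeneration and the accompanying identification of the GH limit with the algebraic model $(\hat Y,d_Y)$. Smooth convergence on $U$ alone cannot control distances across the exceptional locus of $\pi$; bridging this gap requires the global embedding---effectively a partial $C^0$-estimate for the line bundle $L_Y$---supplied by Theorem \ref{main1}, without which neither the diameter bound nor the pinning down of the GH limit to the algebraic model is accessible.
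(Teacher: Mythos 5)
Your construction of the algebraic model is the same as the paper's: nefness of the limit class, bigness, and Kawamata's base point free theorem (Proposition \ref{semicr}, Corollary \ref{crepsin}) give the crepant contraction $\pi: X \rightarrow Y$ onto a Calabi-Yau variety with crepant singularities, \cite{EGZ, To1} give the Ricci-flat current on $Y$, and the smooth convergence of $g_t$ on $\pi^{-1}(Y_{reg})$ is exactly the ``standard argument'' the paper also uses. The gap is in your last step. You write ``given a uniform diameter bound'' Gromov compactness produces a limit, and then assert that the diameter bound and the identification of that limit with $(\hat Y, d_Y)$ are ``supplied by Theorem \ref{main1}.'' Theorem \ref{main1} is a statement about the metric completion of $(Y_{reg}, g_{KE})$, not about the degenerating family $(X, g_t)$: compactness of $\hat Y$ does not bound $\textnormal{diam}(X, g_t)$ unless one already knows that $(X, g_t)$ converges in Gromov--Hausdorff topology to $\hat Y$, which is precisely what is to be proved, so as written the argument is circular. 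Moreover, no partial $C^0$-estimate is needed here: since $Ric(g_t)=0$, the Schwarz lemma gives $g_t \geq C^{-1}\chi$ (hence a uniform non-collapsing volume lower bound), and Yau's relative volume comparison \cite{SY} yields a uniform a priori bound $\textnormal{diam}(X, g_t) \leq C$; this is estimate (\ref{diame}) of Proposition \ref{basie}, due to \cite{To1, ZY}. You have imported the difficulty of the general type case (Theorem \ref{main2}), where the diameter bound really is the hard point, into the Calabi-Yau case, where it is elementary.

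Second, identifying the Gromov--Hausdorff limit with the metric completion of $(Y_{reg}, g_{KE})$ does not follow from smooth convergence on $\pi^{-1}(Y_{reg})$ together with the Lipschitz map $\Phi$ alone: one must rule out both extra limit points (positive-diameter remnants of the exceptional locus) and missing ones, and this is the content of Rong--Zhang's theorem \cite{RZ}, recorded as parts (1) and (3) of Proposition \ref{ghlim}. The paper's proof of Corollary \ref{main3} simply invokes \cite{RZ} for this step and then uses Theorem \ref{main1} only for what it actually provides, namely the homeomorphism of that completion with the projective variety $Y$. To repair your argument, replace the appeal to Theorem \ref{main1} for the diameter bound by the volume comparison argument above, and either cite \cite{RZ} or reproduce its argument (as in Lemmas \ref{RZ1}--\ref{RZ2} in the general type setting) for the identification of the limit with the metric completion.
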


It is rather easy to guess the limit of such a degeneration, which is the projective Calabi-Yau variety $Y$ induced from the linear system of $|k \alpha|$ for some sufficiently large $k$. It is already shown in \cite{RZ} that $(X, g_t)$ converges to a unique compact metric space as the metric completion of the smooth K\"ahler-Einstein metric on the regular part of $Y$. Our contribution is to identify algebraic limit $Y$ with the Riemannian geometric limit of $(X, g_t)$.

We can also apply Theorem \ref{main1} and Corollary \ref{main3} to prove a conjecture of Candalas and de la Ossa \cite{CaO} for smooth flops strengthening earlier results of \cite{RuZ, RZ}. 

\begin{corollary} \label{main4}  Suppose $X$ and $X'$ are two $n$-dimensional smooth projective Calabi-Yau manifolds related by a flop 
\begin{equation}
\begin{diagram}\label{conflop}
\node{X} \arrow{se,b,}{f} \arrow[2]{e,t,..}{ }     \node[2]{X'} \arrow{sw,r}{f'} \\
\node[2]{Y}
\end{diagram}
\end{equation}

Then there exist a family of Ricci-flat metric spaces $(X_t, g_t)$ for $t\in [-1, 1]$ satisfying 

\begin{enumerate}

\item $X_t =X$ for $t\in [-1,0)$ and $X_t= X'$ for $(0, 1]$. $g_t$ is a smooth family of Ricci-flat K\"ahler metrics on $X_t$ for $t\in [-1, 1]\setminus \{0\}$,
\medskip

\item $(X_0, g_0)$ is a projective Calabi-Yau variety satisfying the conclusions of Theorem \ref{main1} with $X_0$ homeomorphic to $Y$.

\item $(X_t, g_t)$ is a continuous path in Gromov-Hausdorff topology. In particular, $g_t$ is smooth in $t$ except at the singular set of $Y$. 

\end{enumerate}

\end{corollary}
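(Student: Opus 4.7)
The plan is to construct the family $(X_t, g_t)$ by letting each side of the flop contribute a one-sided path of smooth Ricci-flat K\"ahler metrics whose K\"ahler classes degenerate to a common pullback from $Y$, and then to invoke Theorem~\ref{main1} and Corollary~\ref{main3} to glue the two halves at $t=0$. Since $f\colon X\to Y$ and $f'\colon X'\to Y$ are crepant birational contractions of smooth Calabi-Yau manifolds, $Y$ is a projective Calabi-Yau variety with crepant singularities, to which Theorem~\ref{main1} applies. Fix an ample $\mathbb{Q}$-line bundle $L_Y \to Y$ and choose ample classes $[A]\in H^{1,1}(X,\mathbb{Q})$ and $[A']\in H^{1,1}(X',\mathbb{Q})$.

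For $t\in[-1,0)$ set $X_t=X$ and let $g_t$ be the unique Ricci-flat K\"ahler metric (Yau's theorem) in the K\"ahler class $f^{*}c_1(L_Y)+|t|[A]$; for $t\in(0,1]$ set $X_t=X'$ and let $g_t$ be Ricci-flat in $(f')^{*}c_1(L_Y)+t[A']$. Both classes are K\"ahler because $f^{*}L_Y$ and $(f')^{*}L_Y$ are nef, and smooth dependence in $t$ on each interval is immediate from the implicit function theorem for the Monge--Amp\`ere equation. At $t=0$, Theorem~\ref{main1} applied to $(Y, L_Y)$ yields the canonical Ricci-flat current $\omega_{KE,Y}\in c_1(L_Y)$ whose metric completion $(\hat Y, d_Y)$ is homeomorphic to $Y$; take $(X_0, g_0)=(\hat Y, d_Y)$.

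To establish continuity at $t=0$, I would apply Corollary~\ref{main3} separately on each side: on $X$ the classes $f^{*}c_1(L_Y)+|t|[A]$ converge in $H^{1,1}(X,\mathbb{Q})$ to the big semiample class $f^{*}c_1(L_Y)$, whose Iitaka/ample model is precisely $Y$, and analogously on $X'$. Each side therefore converges in Gromov--Hausdorff topology to a projective Calabi-Yau variety carrying a Ricci-flat current in $c_1(L_Y)$ on $Y$. The main subtle point, which I expect to be the core of the argument, is that the two one-sided limits must coincide as the same metric space, not merely as two varieties homeomorphic to $Y$: this is forced by the uniqueness of $\omega_{KE,Y}$ in Theorem~\ref{main1}, since the currents obtained by pushing forward via $f$ and $f'$ both lie in $c_1(L_Y)$ with bounded local potentials. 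Once the two limits are identified with $(\hat Y, d_Y)$, items~(1)--(3) follow: (1) is Yau's theorem, (2) is Theorem~\ref{main1} applied to $Y$, and (3) combines the two applications of Corollary~\ref{main3} with smoothness in $t$ on $[-1,0)$ and $(0,1]$.
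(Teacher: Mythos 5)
Your proposal is correct and follows essentially the same route as the paper: the paper also fixes an ample $L_Y$ on $Y$ and an ample class $\A$ on $X$ (resp. $X'$), takes the Ricci-flat metrics in the degenerating ample classes $f^*c_1(L_Y) + \epsilon\A$ on each side, and applies Corollary \ref{main3} to identify both one-sided Gromov--Hausdorff limits with the metric completion of the unique Ricci-flat current in $c_1(L_Y)$ on $Y$ furnished by Theorem \ref{main1}. Your explicit remark that uniqueness of $\omega_{KE,Y}$ is what forces the two one-sided limits to agree is exactly the point the paper leaves implicit in its appeal to Corollary \ref{main3}.
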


We remark that  the results of \cite{RZ}, combined with \cite{DS}, settle Candelas and de la Ossa's conjecture for geometric transitions of smooth projective Calabi-Yau manifolds. Since not all Calabi-Yau varieties with crepant singularities admit a projective smoothing \cite{Fr, T2}, the compactness result of \cite{DS} cannot be applied for smooth flops between projective Calabi-Yau manifolds.  A special case of Corollary \ref{main3} and Corollary \ref{main4} is proved by the author for Calabi-Yau conifolds \cite{S2}.  

We would like to obtain an analogue of Theorem \ref{main1}  for canonical models of general type with crepant singularities. A canonical model of general type is a projective variety $X$ with log terminal singularities such that the canonical divisor $K_X$ is ample. From the finite generation of canonical rings \cite{BCHM, Siu}, for any projective variety $X$ of general type,  there exists a minimal model of $X_{min}$ birationally equivalent to $X$, as well as a unique canonical model $X_{can}$.  It is now well-known from \cite{EGZ} that there exists a unique canonical K\"ahler-Einstein current with bounded local potentials in the canonical class on a canonical model of general type with log terminal singularities. In particular, the canonical model $X_{can}$ of a smooth minimal model $X_{min}$ of general type must admit crepant singularities as the pluricanonical map
$$\pi: X_{min} \rightarrow X_{can}$$ is a crepant resolution of $X_{can}$. Conversely, any canonical model of general type with crepant singularities can be derived by the contraction of a  smooth minimal model. The following theorem aimes to understand the global geometry of the K\"ahler-Einstein current on canonical models of general type with crepant singularities. 

\begin{theorem}\label{main2} Let $X$ be an $n$-dimensional normal projective variety with crepant singularities and ample canonical divisor $K_X$.    
Then there exists a unique K\"ahler-Einstein current $\omega_{KE} \in -c_1(X)$ on $X$ with bounded local potentials satisfying

\begin{enumerate}

\item $\omega_{KE} \in C^\infty(X_{reg})$ and its associated K\"ahler metric $g_{KE}$ on $X_{reg}$ satisfies
$$Ric(g_{KE} )= - g_{KE}. $$ 

\item $(\hat X, d)$, the metric completion of $(X_{reg}, g)$, is a compact metric length space  homeomorphic to $X$ as a projective variety. 

\medskip

\item Let $\hat X = \mathcal{R} \cup \mathcal{S}$ with $\mathcal{R}$ as the regular set of $(\hat X, d)$ and $\mathcal{S}$ as the singular set. Then $\mathcal{R}$ is an open dense convex set of $(\hat X,d)$ and $\mathcal{S}$ is a closed set of Hausdorff dimension no greater than $2n-4$.

\medskip

\item $ \mathcal{R} = X_{reg}$.

\end{enumerate}

\end{theorem}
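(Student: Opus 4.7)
The plan is to follow the strategy outlined for Theorem \ref{main1}, now in the negatively-curved setting. The existence of $\omega_{KE}$, its bounded local potentials, and smoothness on $X_{reg}$ follow from \cite{EGZ}, and so does the Einstein equation $Ric(g_{KE})=-g_{KE}$. The main task is to prove (2)-(4), namely that the metric completion of $(X_{reg},g_{KE})$ is homeomorphic to $X$ with the asserted regular/singular decomposition.

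Let $\pi: X_{min}\to X$ be a smooth minimal model, which exists and is a crepant resolution since $X$ has crepant singularities. Then $K_{X_{min}}=\pi^*K_X$ is nef and big but not ample. Fix a K\"ahler form $\omega_0$ on $X_{min}$ and, for small $\epsilon>0$, solve the twisted K\"ahler-Einstein equation on $X_{min}$ in the class $[K_{X_{min}}]+\epsilon[\omega_0]$, producing smooth K\"ahler metrics $\omega_\epsilon$ with
\[
Ric(\omega_\epsilon)=-\omega_\epsilon+\epsilon\,\omega_0.
\]
Standard pluripotential estimates (Kolodziej, EGZ) give uniform $L^\infty$-bounds for the K\"ahler potentials of $\omega_\epsilon$, hence uniform volume and diameter bounds, together with smooth convergence of $\omega_\epsilon$ to $\pi^*\omega_{KE}$ on compact subsets of $X_{min}\setminus \mathrm{Exc}(\pi)$. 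In particular the lower Ricci bound $-1$ holds uniformly along the family.

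With these bounds I would invoke Cheeger-Colding theory to extract a Gromov-Hausdorff limit of $(X_{min},\omega_\epsilon)$ as $\epsilon\to 0$. The smooth convergence on $X_{reg}$ identifies this limit with the metric completion $(\hat X,d)$ of $(X_{reg},g_{KE})$, and yields the decomposition $\hat X=\mathcal{R}\cup\mathcal{S}$ with $\mathcal{R}$ open dense and $\mathcal{S}$ of Hausdorff codimension at least four. To prove (2), I would combine H\"ormander $L^2$-estimates on $\hat X$ with a partial $C^0$-estimate for the approximating family to show that pluricanonical sections of $K_{X_{min}}^k$ extend continuously to $\hat X$ and that, for $k$ sufficiently large, sections of $|kK_X|$ separate points. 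This would yield a Lipschitz map $\Phi:\hat X\to \mathbb{CP}^{d_k}$ whose image is $X$ as a projective variety, and the topological identification of $\hat X$ with $X$ follows as in the Calabi-Yau case of Theorem \ref{main1}. The equality $\mathcal{R}=X_{reg}$ is then obtained by arguing that any point whose tangent cone is Euclidean must be algebraically smooth.

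The principal obstacle is establishing the partial $C^0$-estimate in this non-integral, negatively-curved setting: the polarizing class $[K_{X_{min}}]+\epsilon[\omega_0]$ is not an integral class for $\epsilon>0$, and the uniform Ricci lower bound is only $-1$ rather than nonnegative, so the Donaldson-Sun framework does not apply directly. The strategy is to exploit that the limiting polarization $[K_X]$ is itself ample and integral, and to combine the twisted K\"ahler-Einstein equation with a non-collapsing statement at points of $\mathcal{R}$ in order to produce the $L^2$-estimates that feed H\"ormander's construction of peak sections along the family.
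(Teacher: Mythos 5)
There is a genuine gap, and it sits exactly at the point the paper identifies as the main new difficulty. You assert that the uniform $L^\infty$-bounds for the potentials of the twisted metrics $\omega_\epsilon$ give ``uniform volume and diameter bounds.'' A potential bound controls volumes, but it does not control the diameter: in the Calabi--Yau case (Theorem \ref{main1}) the diameter bound comes from Yau's volume comparison for Ricci-flat metrics, and no analogue is available when the Ricci lower bound is $-1$. Without a diameter bound you cannot run your global Gromov--Hausdorff/Moser/H\"ormander scheme as stated. The paper is forced to work with \emph{pointed} limits instead, and the diameter bound is not an input but the final output: one first proves local $L^2$-estimates on geodesic balls $B_\infty(R)$ (local Sobolev constants via Li's isoperimetric estimate, local Moser iteration, the gradient estimate for $\varphi_{KE}$ obtained from the normalized K\"ahler--Ricci flow), applies the Donaldson--Sun $H$-condition locally to show the pluricanonical map $\Phi: X_\infty \to X$ is injective with open dense image, and then rules out infinite diameter by contradiction using a Song--Weinkove type estimate obtained after blowing up a point $O$ over $X_{sing}$ in a log resolution, namely $(\tilde\pi)^*\omega_{KE} \leq C\,|\sigma_E|_{h_E}^{-2(1-\delta)}|w|^{-2\lambda}\,\tilde\omega$ on $\tilde B_O$ (Proposition \ref{digest}, Corollary \ref{co44}). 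This estimate produces regular points at bounded distance from the base point whose images approach any purported point of $X\setminus\Phi(X_\infty)$, contradicting Lemma \ref{46}; only then is $(X_\infty,d_\infty)$ compact and $\Phi$ a homeomorphism. Your proposal contains no mechanism for this step.

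A second, related gap: with only $Ric(\omega_\epsilon)\geq -\omega_\epsilon$, bare Cheeger--Colding theory does not give the structure you need (an open regular set on which the limit metric is the smooth K\"ahler--Einstein metric, a closed singular set of codimension at least four, and identification of the limit with the metric completion of $(X_{reg},g_{KE})$); the twisted metrics have no uniform two-sided Ricci bound. The paper instead verifies Tian--Wang's almost K\"ahler--Einstein conditions (Lemma \ref{locesk}: running the normalized K\"ahler--Ricci flow for unit time from each $g_j$ and showing $\int_0^1\int_{X'}|R+n|\,dV\,dt\to 0$) before invoking their structure theorem, and then uses Rong--Zhang-type arguments (Lemmas \ref{RZ1}--\ref{RZ2}, combined with Perelman's pseudolocality as adapted in \cite{TW}) to prove $\mathcal{R}=X_{reg}$. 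Finally, you explicitly leave the partial $C^0$/separation-of-points issue as an unresolved ``principal obstacle,'' whereas in the paper this is precisely what the local $L^2$-theory, the local $H$-condition, and the H\"ormander estimate of Proposition \ref{lll3} (proved via an approximation with $Ric(\omega_\epsilon)\geq -(1+\epsilon)\omega_\epsilon$) are designed to settle. So while your choice of approximating family coincides with the paper's, the proposal as written does not close the two essential steps: the structure of the pointed limit and the diameter bound.
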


The existence of $\omega_{KE}$ is due to \cite{Ts, EGZ, Z}. The proof of Theorem \ref{main2} follows the spirit of Theorem \ref{main1}. However, the main difficulty is that a priori, one does not have a uniform diameter bound, unlike the case of Theorem \ref{main1}, where a uniform diameter bound can be  achieved  from Yau's volume comparison \cite{SY}. One of our main contribution is to establish a uniform diameter bound,  for the limiting metric space constructed by Tian and Wang \cite{TW}. To achieve this, we have to apply the proof of Theorem \ref{main1} in a local setting and apply an analogue of quantitative metric estimate developed by the author and Weinkove in \cite{SW1, SW2, SW3} through algebraic blow-ups at the singular set.  

We can immediately apply Theorem \ref{main2} to the K\"ahler-Ricci flow on smooth minimal models of general type. 

\begin{corollary} \label{main5}

Let $X$ be a smooth minimal model of general type and let $\pi: X \rightarrow X_{can}$ be the unique birational morphism from $X$ to its canonical model $X_{can}$ induced by the pluricanonical system of $ X$. 
Then the normalized K\"ahler-Ricci flow 
$$\ddt{g} = -Ric(g) - g, ~ g(0) = g_0$$ for any smooth initial K\"ahler metric $g_0$,  admits a smooth solution $g(t)$ for $t\in [0, \infty)$ and $g(t)$ converges smoothly to a K\"ahler metric $g_\infty$ on $X\setminus E$, where $E$ is the exceptional locus of $\pi$. In particular,  $(\hat X, d)$, the metric completion of $(X\setminus E, g_\infty)$ is a singular K\"ahler-Einstein metric length space homeomorphic to the canonical $X_{can}$.

\end{corollary}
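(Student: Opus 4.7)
The plan is to combine the long-time behaviour of the normalized K\"ahler-Ricci flow on smooth minimal models of general type with Theorem \ref{main2} applied to the canonical model $X_{can}$. Since $K_X$ is nef and big, long-time existence of $g(t)$ is standard (Tsuji, Tian-Zhang). At the level of potentials the flow reduces to a parabolic complex Monge-Amp\`ere equation
\[
\ddt{\varphi} \;=\; \log\frac{(\hat\omega_t + \ddbar\varphi)^n}{\Omega} - \varphi,
\]
where $\hat\omega_t = e^{-t}\omega_0 + (1-e^{-t})\pi^*\chi$ for a fixed smooth K\"ahler form $\chi$ on $X_{can}$ representing $-c_1(X_{can})$, and $\Omega$ is an adapted smooth volume form on $X$. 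Standard maximum-principle arguments together with Kolodziej/EGZ-type pluripotential estimates provide a uniform $C^0$ bound on $\varphi(t)$.

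On any compact subset of $X \setminus E$, $\pi^*\chi$ is a smooth K\"ahler form, so the Monge-Amp\`ere equation is uniformly non-degenerate there and standard interior higher-order estimates yield $C^k_{loc}$ convergence of $\omega(t)$ to a smooth K\"ahler form $\omega_\infty$ on $X \setminus E$ with $Ric(\omega_\infty) = -\omega_\infty$. The bounded limit potential $\varphi_\infty$ descends through $\pi$ to a bounded $\chi$-psh function on $X_{can}$ that weakly solves the degenerate K\"ahler-Einstein equation on $X_{can}$. By the uniqueness of bounded-potential K\"ahler-Einstein currents in $-c_1(X_{can})$ recorded in Theorem \ref{main2}, this solution equals $\omega_{KE}$, and hence $\omega_\infty = \pi^*\omega_{KE}$.

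Since $X_{can}$ has crepant singularities and $\pi$ is the pluricanonical morphism from a smooth minimal model, $\pi$ is a crepant (possibly small) resolution, and $\pi$ restricts to a biholomorphism from $X \setminus E$ onto its image in $(X_{can})_{reg}$, with complement of real codimension at least two. Consequently the restricted K\"ahler metric $g_\infty$ on $X \setminus E$ is isometric to $\pi^* g_{KE}$, and the metric completion $(\hat X, d)$ coincides with the metric completion of $((X_{can})_{reg}, g_{KE})$. Theorem \ref{main2} then identifies $(\hat X, d)$ with a compact metric length space homeomorphic to the projective variety $X_{can}$.

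The main obstacle is the identification $\omega_\infty = \pi^*\omega_{KE}$, which rests on the uniform $C^0$ estimate along the flow together with the uniqueness part of Theorem \ref{main2}. Once this identification is in place, the smooth convergence on $X \setminus E$ and all Riemannian-geometric conclusions about $(\hat X, d)$ follow directly from Theorem \ref{main2}.
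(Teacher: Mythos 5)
Your proposal is correct and follows essentially the same route as the paper: the paper's proof simply cites Tsuji for long-time existence and local smooth convergence of the flow away from $E$, notes that the limiting K\"ahler--Einstein current has bounded local potential, and then invokes Theorem \ref{main2} to identify the metric completion with $X_{can}$. Your additional details (the parabolic Monge--Amp\`ere reduction, the uniform $C^0$ bound, and the identification $\omega_\infty=\pi^*\omega_{KE}$ via uniqueness of bounded-potential K\"ahler--Einstein currents) are exactly the ingredients the paper leaves implicit in its citation of \cite{Ts} and of Theorem \ref{main2}.
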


We remark that the long time existence and local smooth convergence is due to Tsuji \cite{Ts}. Also $(\hat X, d)$ satisfies all the conclusions in Theorem \ref{main4}. 

There are various possible generalizations of Theorem \ref{main1} and Theorem \ref{main4}. For example, one can consider the twisted K\"ahler-Einstein equation 
$$Ric(\omega) = \lambda \omega + \alpha $$
on a projective variety $X$, where $\alpha$ is nonnegative smooth closed $(1,1)$-form or or a current induced by a collection of effective prime divisors with coefficient in $(0,1)$ and simple normal crossings.  More interestingly, Theorem \ref{main2} should be generalized to log canonical pairs as discussed in section 4.6.  Another important direction is to study the Riemannian geometry of the metric spaces arising from a collapsing  family of Calabi-Yau manifolds studied in \cite{ST1, To2} or the canonical models of non-general type minimal models \cite{ST1, ST2}. The gradient estimates derived in section 3.2 and section 4.1 can be generalized in a very flexible way, especially for those complex Monge-Ampere equations whose corresponding curvature equation satisfies a Ricci lower bound. We expect such gradient estimates to hold for those twisted K\"ahler-Einstein metrics on canonical models of non-general type and the projective collapsing limit of Calabi-Yau manifolds. Theorem \ref{main2} can have other applications such as understanding the equality case of the Chern number inequality for general type varieties with mild singularities as suggested in \cite{TW}. 

Now we consider a different degeneration of K\"ahler-Einstein metrics in relation to the compactification of the moduli space of semi-log canonical models in algebraic geometry. We consider a flat projective family of canonical models
$$\pi: \mathcal{X} \rightarrow B,$$
where $B$ is an open disc in $\mathbb{C}$ and $X_t = \pi^{-1}(t)$ is an $n$-dimensional smooth projective manifold with $c_1(X) <0$ for $t\in B^*$. We assume that $\mathcal{X}$ has at worst canonical singularities, the central fibre $X_0$ has only one component with multiplicity one, and $X_0$ is a normal canonical model with   log canonical singularities. Furthermore, we assume that the relative canonical sheaf $K_{\mathcal{X}/B}$ is ample with 
$$K_{\mathcal{X}/B}|_{X_t} = K_{X_t}$$
for all $t\in B$.  We write $S_{lc}$ to be the singular set of points whose discrepancy is $-1$ (c.f. Definition \ref{slc}).

Then we have the following theorem. 

\begin{theorem} \label{main6} Let $\mathcal{X} \rightarrow B$ be the above flat family of $n$-dimensional polarized canonical models. Suppose that for any $k\in \mathbb{Z}^+$, $R>0$ and any smooth holomorphic section $\eta\in H^0(\mathcal{X}, (K_{\mathcal{X}/B})^k)$, 
$$\sup_{t\in B^*} \int_{B_{g_t}(p_t, R)} |\eta |_{X_t} |^{2/k} < \infty, $$
where $p_t\in X_t$ is a continuous family of points in $\mathcal{X}$ with $p_0\in (X_0)_{reg}$, $g_t \in -c_1(X_t)$ is the unique K\"ahler-Einstein metric on $X_t$ and $B_{g_t}(p_t, R)$ is the geodesic ball in $X_t$ of radius $R$ centered at $p_t$. Then the following hold.

\begin{enumerate}

\item There exists a unique K\"ahler-Einstein current $\omega_0\in c_1(X_0)$ satisfying
\begin{enumerate}

\smallskip

\item $\omega_0$ is smooth on $(X_0)_{reg}$, 

\smallskip
\item the local potential of $\omega_0$ is bounded on $X_0\setminus S_{lc}$ and tends to $-\infty$ along $S_{lc}$ if $S_{lc}\neq \phi$, 

\smallskip

\item $\int_{(X_0)_{reg}} \omega_0^n = (-c_1(X_0))^n$. 

\end{enumerate}

\medskip

\item The metric completion $(\hat X, d)$ of $((X_0)_{reg}, \omega_0) $ is either a compact metric length space homeomorphic to $X_0$ if $S_{lc}=\phi$ or a metric length space of infinite diameter homeomorphic to $X_0 \setminus S_{lc}$ if $S_{lc}\neq \phi$.  

\medskip

\item $\hat X = \mathcal{R}\cup \mathcal{S}$ with $\mathcal{R}$ being the regular set and $\mathcal{S}$ the singular set. Then $\mathcal{R} = (X_0)_{reg}$ is an open dense convex set and $\mathcal{S}$ is a closed set of Hausdorff dimension no greater than $2n-4$.

\medskip

\item $(X_t, g_t)$ converges to $(\hat X, d)$ in Gromov-Hausdorff topology  as $t\rightarrow 0$, where $g_t \in c_1(X_t)$ is the unique K\"ahler-Einstein metric on $X_t$ for $t\in B^*$. In particular, the convergence is smooth in $\mathcal{R}$.

\end{enumerate}

In particular, if for any $k$ and $\eta \in H^0(\mathcal{X}, (K_{\mathcal{X}/B})^k)$, $\sup_{t\in B^*} \int_{  X_t} |\eta |_{X_t} |^{2/k} <\infty$, then $X_0$ is log terminal and $S_{lc}=\phi$. 

\end{theorem}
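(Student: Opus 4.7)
The plan is to adapt the strategy of Theorem \ref{main2} to the family setting, with the local $L^{2/k}$ bound hypothesis taking the role that a uniform diameter bound (or global volume estimate) played in the absolute case, and carefully tracking how the log canonical locus $S_{lc}$ influences both the pluripotential and the metric geometry. To construct $\omega_0$, I would fix a smooth hermitian metric on $K_{\mathcal{X}/B}$ and a reference form $\chi \in c_1(K_{\mathcal{X}/B})$, so that the K\"ahler-Einstein equations on $X_t$, $t\in B^*$, become complex Monge-Amp\`ere equations
\[
(\chi|_{X_t} + \ddbar \varphi_t)^n = e^{\varphi_t} \Omega_t,
\]
where $\Omega_t$ are adapted volume forms whose limit $\Omega_0$ is smooth on $(X_0)_{reg}$, has integrable poles along the log terminal locus, and is non-integrable along $S_{lc}$. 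The local $L^{2/k}$ hypothesis translates into uniform local $L^\infty$ bounds on $\varphi_t$ over compact subsets of $X_0 \setminus S_{lc}$ via the Ko\l odziej-type estimate of \cite{EGZ}, while the non-integrability of $\Omega_0$ near $S_{lc}$ forces the limiting potentials to tend to $-\infty$ along $S_{lc}$. Taking weak limits $\omega_0 = \lim_{t\to 0} \omega_t$ yields a closed positive current with bounded local potentials on $X_0 \setminus S_{lc}$ satisfying the K\"ahler-Einstein equation with the correct log canonical boundary behavior; smoothness on $(X_0)_{reg}$ then follows from interior regularity for Monge-Amp\`ere, the volume identity from mass continuity on $B^*$ combined with dominated convergence, and uniqueness from standard comparison.

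Next, I would establish the metric structure following the template of Theorems \ref{main1} and \ref{main2}. The gradient estimates of Section 3.2 and Section 4.1, combined with the local $L^{2/k}$ hypothesis, provide uniform local Lipschitz control on pluricanonical sections with respect to $g_t$ on compact subsets of $X_0 \setminus S_{lc}$. This enables a Donaldson-Sun partial $C^0$-analysis locally and non-collapsed around each $p_t$. Cheeger-Colding compactness, together with the algebraic blow-up quantitative metric estimates used in the proof of Theorem \ref{main2} (following \cite{SW1, SW2, SW3}), then shows that the metric completion of $((X_0)_{reg}, \omega_0)$ is a compact metric length space homeomorphic to $X_0$ when $S_{lc} = \emptyset$. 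When $S_{lc} \neq \emptyset$, the non-integrability of $\Omega_0$ along $S_{lc}$ together with the K\"ahler-Einstein equation forces the geodesic distance from any compact subset of $(X_0)_{reg}$ to $S_{lc}$ to be infinite, so the completion identifies topologically with $X_0 \setminus S_{lc}$. The decomposition $\mathcal{R} = (X_0)_{reg}$ with $\dim_{\mathcal{H}}\mathcal{S} \leq 2n-4$ follows from the volume-cone-implies-metric-cone argument at tangent cones of points in $(X_0)_{reg}$, as in Theorems \ref{main1}--\ref{main2}.

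Gromov-Hausdorff convergence of $(X_t, g_t)$ to $(\hat X, d)$ is then obtained by comparing $(\hat X, d)$ with any Cheeger-Colding subsequential limit: the uniform partial $C^0$-estimate along the family induces a consistent projective embedding, both limits carry the same bounded K\"ahler-Einstein potential on $(X_0)_{reg}$, and uniqueness of K\"ahler-Einstein currents forces them to coincide, with smooth convergence on $\mathcal{R}$ by standard elliptic regularity. For the final assertion, a global uniform $L^{2/k}$ bound implies by Fubini and the explicit form of $\Omega_0$ that $\Omega_0$ is globally integrable on $X_0$, which is equivalent to log terminality and hence to $S_{lc} = \emptyset$. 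The main obstacle throughout is the metric geometry near $S_{lc}$: one must extend the quantitative metric and partial $C^0$-estimates uniformly up to but not onto $S_{lc}$, and rule out that pluricanonical sections could degenerate to produce spurious collapse of finite-radius neighborhoods of $S_{lc}$ in the limit. This requires adapting the gradient estimate to log canonical boundary behavior, and is precisely where the local (rather than global) nature of the $L^{2/k}$ hypothesis is essential: a global bound would already force log terminality, so the local bound is exactly the right structural hypothesis under which genuine log canonical limits can arise.
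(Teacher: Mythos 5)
Your overall architecture matches the paper's: a priori estimates for the family of Monge--Amp\`ere equations, identification of the limiting K\"ahler--Einstein current, pointed Cheeger--Colding limits with a local Donaldson--Sun separation argument, a Song--Weinkove type blow-up estimate to reach the singular set, and the dichotomy at $S_{lc}$. However, two of the delicate steps are off as written. First, the local $L^{2/k}$ hypothesis cannot be converted via a Ko\l odziej/EGZ estimate into uniform $L^\infty$ bounds for $\varphi_t$ on ambient compact subsets of $X_0\setminus S_{lc}$: the hypothesis controls integrals over geodesic balls $B_{g_t}(p_t,R)$, whose location in $\mathcal{X}$ is precisely what is unknown, while on ambient compact sets away from the non-klt locus the integrability of $\Omega_t$ is automatic and the hypothesis is not needed there (the paper gets those bounds instead from a barrier/maximum-principle argument after semistable reduction, Lemma \ref{54}, and Tsuji's trick, Lemma \ref{55}). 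The actual role of the hypothesis is to bound $\int_{B_{g_t}(p_t,R)}\Omega_t$ so that Moser iteration with respect to $g_t$ (with the local Sobolev control of Lemma \ref{soho}) yields $\varphi_t\geq -C_R$ on geodesic balls (Lemma \ref{5lob}); combined with the blow-down of $\varphi_0$ along $S_{lc}^{\circ}$ --- which the paper derives not from bare ``non-integrability of $\Omega_0$'' but from Berman--Guenancia's uniqueness and stability together with Berndtsson's lemma (Proposition \ref{5uniq}) --- this is the mechanism that forces $S_{lc}$ to infinite distance (Lemmas \ref{5.09} and \ref{l55}). As your sketch stands, nothing precludes a geodesic ball of fixed radius from reaching $S_{lc}$, so the claim that the completion is homeomorphic to $X_0\setminus S_{lc}$ is not yet supported.

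Second, you cannot simply quote the blow-up quantitative metric estimates from the proof of Theorem \ref{main2} to show that the log terminal singular set $S_{lt}$ lies at finite distance (equivalently, that the image of $\Phi$ exhausts $X_0\setminus S_{lc}$, or all of $X_0$ when $S_{lc}=\phi$). Those estimates rely on approximating the K\"ahler--Einstein current by smooth metrics with controlled Ricci curvature, which is available only because the variety there has crepant (canonical) singularities; here $X_0$ is merely log canonical, and the paper replaces that step (Proposition \ref{co55}) by approximations that retain the pole along the relevant divisor and have conical singularities, using the Guenancia--P\u{a}un and Datar--Song second-order estimates and a twisted Ricci lower bound before running the blow-up trick. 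Your remark about ``adapting the gradient estimate to log canonical boundary behavior'' points at the right neighborhood but does not supply this ingredient, and without it the finite-distance statement for $S_{lt}$ --- hence parts (2) and (4) of the theorem --- does not follow. The remaining components of your proposal (local $L^2$ estimates for pluricanonical sections, the $H$-condition separation on expanding balls, identification of the Gromov--Hausdorff limit with the metric completion, and the global-integrability implies log terminal statement) track the paper's argument.
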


The additional assumption on the integrability of holomorphic sections $\eta\in H^0(\mathcal{X}, (K_{\mathcal{X}/B})^k)$ should be removed. In particular, when $X_0$ is log terminal, the argument of Gross in \cite{RZ} (Theorem B.1 (ii)) might be applied. This assumption might also be replaced by a more reasonable and pure algebraic assumption for holomorphic sections in $H^0(X_0, (K_{X_0})^k)$ vanishing along $S_{lc}$ and their extension to $\mathcal{X}$. This will be studied in the sequel work. 

 Recent progress in algebraic geometry has established the compactification of semi-log canonical models \cite{K1}. Theorem \ref{main6} gives evidence that the moduli space of canonically polarized varieties is equivalent to the moduli space of Kaher-Einstein manifolds, as well as their algebraic and geometric compactifications. The proof of Theorem \ref{main6} can be adapted to more general central fibres with semi-log canonical singularities which correspond to a 'complete end' of the singular K\"ahler-Einstein metrics along the divisor. We also hope in the future study to obtain a compactness theorem for all smooth canonical models with uniform volume upper bound and identify any (pointed) Gromov-Hausdorff limit with a (quasi) projective variety. 

Finally, we would remark that in the case of global canonical K\"ahler-Einstein   currents studied in the paper, we have essentially established the following principle 
$$local ~potential \in L_{loc}^\infty ~\Longleftrightarrow ~ finite~distance. $$
The above observation is certainly not true in general, for example, on the pseudo-convex domains. 

\section{Preliminaries}

In this section, we will recall some basic definitions and notations . 

\begin{definition} Let $X$ be a normal projective variety $X$. If $K_X$ is a Cartier $\mathbb{Q}$-divisor and if $\pi: \tilde X \rightarrow X$ is a resolution of $X$, then there exist $a_i \in \mathbb{Q}$ with 
$$K_{\tilde X} = \pi^* K_X + \sum_i a_i E_i, $$
where $E_i$ ranges over all exceptional prime divisors of $\pi$. 
$X$ is said to have terminal singularities (canonical singularities,  log terminal singularities, log canonical), if all $a_i >0$ ( $a_i \geq 0$, $a_i > -1 $, $a_i\geq -1$).  In particular, $X$ is said to have crepant singularities if $$K_{\tilde X} = \pi^* K_X, $$ and $\pi$ is called a crepant resolution of $X$.

\end{definition}

\begin{example} All surface $A$-$D$-$E$ singularities are crepant singularities and there exists a unique crepant resolution for such singularities. 

\end{example}

\begin{example} Let $E= \mathcal{O}_{\mathbb{CP}^1}(-1)\oplus  \mathcal{O}_{\mathbb{CP}^1}(-1)$. The zero section of $E$ is a holomorphic $S^2$ with negative normal bundle. Then there exists a morphism $\pi: E \rightarrow \hat E$ by contracting the zero section of $E$, where $\hat E$ is an affine cone over $\mathbb{CP}^1\times \mathbb{CP}^1$.  The isolated singularity from the contraction of such a holomorphic $S^2$ is a crepant singularity and $\pi$ is a crepant resolution of $\hat E$.

\end{example}

Crepant singularities must have at worst canonical singularities by definition and conifold singularities are crepant and terminal singularities.

\begin{definition} Let $L\rightarrow X$ be a holomorphic line bundle over a normal projective variety $X$. $L$ is said to be semi-ample if the linear system $|L^k|$ is base point free for some $k\in \mathbb{Z}^+$. $L$ is said to be big if the Iitaka dimension of $L$ is equal to the dimension of $X$.

\end{definition}

We can define the semi-group 
\begin{equation}
\mathcal{F}(X, L) = \{ k \in \mathbb{Z}^+~|~ L^k~ is ~base~point~free \}. 
\end{equation}
For any $k\in \mathcal{F}(X,L)$, the linear system $|L^k|$ induces a morphism
$$\Phi_k= \Phi_{|L^k |}  :  X \rightarrow X_k \subset \mathbb{CP}^{d_k}, $$
where $d_k+1 = \dim H^0(X, L^k)$ and $X_k = \Phi_k(X)$. It is well-known \cite{La} that for sufficiently large $k, l \in \mathcal{F}(X, L)$, 
$$\pi_*\mathcal{O}_X = \mathcal{O}_{X_k} $$
and
$$\Phi_k = \Phi_l, ~~X_k = X_l .$$

Now we can define projective Calabi-Yau varieties.

\begin{definition} A projective Calabi-Yau variety $X$ is a normal projective variety with canonical singularities and numerically trivial canonical divisor $K_X$. 

\end{definition}

The following Theorem is a consequence of  Kawamata's base point free theorem  \cite{KMM}. 

\begin{proposition} \label{semicr} Let $L\rightarrow X$ be a holomorphic line bundle over a projective Calabi-Yau variety $X$. If $L$ is big and nef, it must be  semi-ample.

\end{proposition}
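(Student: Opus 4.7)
The plan is to invoke Kawamata's base point free theorem essentially verbatim; almost all the work is in checking that the hypotheses are satisfied in our setting.

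First I would verify that the pair $(X, 0)$ is Kawamata log terminal. By definition, a projective Calabi-Yau variety has canonical singularities, which means that for some (equivalently, every) log resolution $\pi : \tilde X \to X$ one has $K_{\tilde X} = \pi^* K_X + \sum_i a_i E_i$ with all discrepancies $a_i \geq 0$. In particular $a_i > -1$, so $X$ is klt. Implicit in the definition of canonical singularities is that $K_X$ is a $\mathbb{Q}$-Cartier divisor, which is precisely what is needed to form the expression $aL - K_X$ as a $\mathbb{Q}$-Cartier divisor for any $a \in \mathbb{Z}^+$.

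Next I would show that for every positive integer $a$ the $\mathbb{Q}$-Cartier divisor $aL - K_X$ is nef and big. Since $K_X$ is numerically trivial by the Calabi-Yau assumption, we have $aL - K_X \equiv aL$. Bigness and nefness are invariants of numerical equivalence classes in $N^1(X)_{\mathbb{R}}$, and $aL$ is nef and big because $L$ is (nefness is preserved under positive scaling, and $(aL)^n = a^n L^n > 0$). Therefore $aL - K_X$ is nef and big.

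With these hypotheses in place, Kawamata's base point free theorem (see \cite{KMM}) applied to the klt pair $(X,0)$ and the nef Cartier divisor $L$, using that $aL - K_X$ is nef and big for some $a \in \mathbb{Z}^+$, yields directly that $L$ is semi-ample, i.e.\ that $L^k$ is base point free for all sufficiently divisible $k$. There is no genuine obstacle in the argument; the only point that requires attention is making sure that the singularity condition (canonical) is compatible with the hypothesis of the base point free theorem (klt), and that numerical triviality of $K_X$ is strong enough to transfer bigness and nefness of $aL$ to $aL - K_X$. Both reduce to standard facts, so the proposition follows as a direct corollary of the minimal model program machinery.
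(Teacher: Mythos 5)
Your argument is correct and is exactly what the paper intends: the paper offers no proof beyond citing Kawamata's base point free theorem from \cite{KMM}, and your verification of the hypotheses (canonical hence klt singularities with $K_X$ $\mathbb{Q}$-Cartier, and $aL - K_X \equiv aL$ nef and big since $K_X$ is numerically trivial) is precisely the routine check being left implicit.
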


\begin{corollary} \label{crepsin} Suppose $L$ is a big and nef line bundle over a projective Calabi-Yau manifold $X$. Then the linear system $|L^k|$ induces a unique surjective birational morphism 
$$\Phi: X \rightarrow Y$$
for sufficiently large $k\in \mathcal{F}(X,L)$ such that $Y$ is a projective Calabi-Yau variety with crepant singularities.

\end{corollary}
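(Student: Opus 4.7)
The plan is to deduce the morphism $\Phi$ from Proposition~\ref{semicr} and then verify the crepancy and Calabi--Yau conditions on $Y$ using standard tools from birational geometry.

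First, since $L$ is big and nef on the smooth Calabi--Yau manifold $X$, Proposition~\ref{semicr} shows that $L$ is semi-ample, so $\mathcal{F}(X,L)$ is nonempty. By the stabilization statement recalled just after the definition of $\mathcal{F}(X,L)$, there is $k_0$ such that for all sufficiently large $k \in \mathcal{F}(X,L)$ the morphism $\Phi_k : X \to X_k \subset \mathbb{CP}^{d_k}$ and its image $X_k =: Y$ are canonically identified and satisfy $(\Phi_k)_* \mathcal{O}_X = \mathcal{O}_Y$. Set $\Phi := \Phi_{k_0}$. Then $\Phi: X \to Y$ is surjective with connected fibres, and $Y$ is normal. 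Since $L$ is big, $\kod(L) = \dim X$, so $\dim Y = \dim X$ and $\Phi$ is birational. Moreover, $\Phi_* \mathcal{O}_X = \mathcal{O}_Y$ together with normality of $Y$ forces $\Phi(\mathrm{Exc}(\Phi))$ to have codimension at least two in $Y$.

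Next I would establish the crepant condition $\Phi^* K_Y = K_X$. Since $K_X \equiv 0$ on $X$, we have $K_X \cdot C = 0$ for every curve $C \subset X$ contracted by $\Phi$; that is, $K_X$ is numerically trivial over $Y$. Applying the relative base-point-free theorem of Kawamata to the contraction $\Phi$ and the $\mathbb{Q}$-Cartier divisor $K_X$ on the smooth variety $X$, there exist a positive integer $m$ and a Cartier divisor $D$ on $Y$ with $m K_X = \Phi^* D$. Set $K_Y^{*} := D/m \in \mathrm{Pic}(Y)_\mathbb{Q}$, so $\Phi^* K_Y^{*} = K_X$. On $Y \setminus \Phi(\mathrm{Exc}(\Phi))$ the morphism $\Phi$ is an isomorphism, so $K_Y^{*}$ agrees there with the canonical Weil divisor $K_Y$; since the complement has codimension $\geq 2$ in the normal variety $Y$, the two Weil divisor classes $K_Y^{*}$ and $K_Y$ coincide globally. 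Therefore $K_Y$ is $\mathbb{Q}$-Cartier and $\Phi^* K_Y = K_X$. The projection formula then shows $K_Y \equiv 0$, and the identity $K_X = \Phi^* K_Y$ on the smooth model identifies $\Phi$ as a crepant resolution of $Y$; in particular $Y$ has canonical singularities. Together with numerical triviality of $K_Y$, this shows $Y$ is a projective Calabi--Yau variety with crepant singularities, while uniqueness of $(\Phi, Y)$ for large $k$ is built into the stabilization of the Iitaka fibration recalled earlier.

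The main obstacle is the descent step that produces a $\mathbb{Q}$-Cartier $K_Y$ with $\Phi^* K_Y = K_X$: this is where the hypothesis $K_X \equiv 0$ interacts nontrivially with the semi-ampleness of $L$, via Kawamata's base-point-free theorem. The remaining assertions---surjectivity, birationality, normality of $Y$, the codimension bound on the exceptional image, numerical triviality of $K_Y$, and the canonical singularity property---then follow formally once the crepant descent is in place.
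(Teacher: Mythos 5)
Your argument is correct, and it supplies details the paper never writes down: the paper states this corollary with no proof, treating it as immediate from Proposition \ref{semicr} together with the standard facts about semi-ample line bundles quoted from [La] (stabilization of $\Phi_k$ and $\Phi_*\mathcal{O}_X=\mathcal{O}_Y$). Your first paragraph is exactly that implicit argument. For the crepancy step, your route through the relative base-point-free theorem does work, but note the point you leave tacit: with $D=K_X$ and $\Delta=0$ one needs $aK_X-K_X\equiv 0$ to be $\Phi$-nef and $\Phi$-big, and the $\Phi$-bigness is automatic only because $\Phi$ is birational; it is worth saying this, since $K_X$ itself is in no way big. The more common (and shorter) route is to observe that a smooth projective manifold with $K_X\equiv 0$ has torsion canonical bundle, so $mK_X\sim 0$ for some $m$; then $mK_Y=\Phi_*(mK_X)\sim 0$ is already Cartier, $K_Y$ is $\mathbb{Q}$-Cartier with $K_Y\equiv 0$, and $\Phi^*K_Y=K_X$ follows by comparing with the same principal divisor, with no appeal to the relative BPF machinery. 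Everything else—normality of $Y$, birationality from bigness of $L$, the codimension-two bound on $\Phi(\mathrm{Exc}(\Phi))$ used to identify the descended divisor with $K_Y$, and the conclusion that $Y$ has crepant (hence canonical) singularities in the sense of the paper's Definition 2.1—is accurate and standard.
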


\begin{example}  Let $Y$ be the hypersurface in $\mathbb{CP}^4$ defined by $$z_3 g(z_0, ..., z_4)+ z_4 h(z_0, ..., z_4)=0$$ with generic homogeneous polynomials $g, h$ of degree $4$ in $[z_0, z_1, ..., z_4] \in \mathbb{CP}^4$. The singular locus of $Y$ is given by $\{ z_3=z_4=g(z)=h(z)=0\} $, which consists of $16$ ordinary double points. The small resolution of the singularities of $Y$ gives rise to a smooth Calabi-Yau threefold $X$ and  $Y$ can also be smoothed to generic smooth quintic threefolds in $\mathbb{CP}^4$.

\end{example}

All projective Calabi-Yau varieties with conifiold singularities  admit a crepant resolution, although crepant resolutions are not necessarily unique and they are related by flops. Yau's celebrated solution \cite{Y1} to the Calabi conjecture \cite{C2} says that in any K\"ahler class of a Calabi-Yau manifold, there exists a unique smooth Ricci-flat K\"ahler metric.  The following theorem due to \cite{EGZ} is a generalization of Yau's theorem to projective Calabi-Yau varieties. 

\begin{theorem} Let $L\rightarrow X$ be an ample line bundle over a projective Calabi-Yau variety $X$ of $\dim  X=n$. Then there exists a unique Ricci-flat K\"ahler current $\omega \in c_1(L)$ satisfying

\begin{enumerate}

\item $\omega$ has bounded local potentials, i.e., for any point $p\in X$, there exists an open neighborhood $U$ of $p$ such that  $\omega= \ddbar \varphi$ for some $\varphi \in PSH(U)\cap L^\infty(U)$, 

\medskip

\item $\omega$ is smooth on $X_{reg}$, the regular part of $X$, 

\medskip

\item $Ric(\omega) =0$ on $X$. 

\medskip

\end{enumerate}

\end{theorem}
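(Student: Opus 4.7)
The plan is to reduce to Yau's theorem on the resolution by a continuity/approximation argument, and then invoke pluripotential theory to pass to the limit. Since $X$ has canonical singularities and $K_X$ is numerically trivial, choose $m \in \mathbb{Z}^+$ with $mK_X$ trivial and a nonvanishing section $\sigma \in H^0(X, mK_X)$. Then $\Omega := (\sigma \wedge \bar\sigma)^{1/m}$ is a canonical volume form on $X_{reg}$ with finite total volume (canonical singularities are log terminal, so the discrepancies $a_i \geq 0$ guarantee integrability). Rescale so that $\int_{X_{reg}} \Omega = c_1(L)^n$. The goal is to produce $\omega = \omega_0 + \ddbar \varphi$ with $\omega_0 \in c_1(L)$ smooth and semipositive, $\varphi \in L^\infty$, solving $\omega^n = \Omega$ on $X_{reg}$ in the pluripotential sense.

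First I would take a log resolution $\pi: \tX \to X$ and a Kähler form $\omega'$ on $\tX$, and form the Kähler classes $[\omega_\epsilon] = \pi^* c_1(L) + \epsilon [\omega']$ with smooth representatives $\omega_\epsilon = \pi^* \omega_0 + \epsilon \omega'$. Since $K_{\tX} = \pi^* K_X + \sum a_i E_i$ with $a_i \geq 0$, the pullback $\pi^* \Omega$ equals $\prod |s_i|^{2a_i} \Omega'$ for some smooth volume form $\Omega'$ on $\tX$ and sections $s_i$ cutting out the exceptional divisors; in particular $\pi^* \Omega \in L^\infty(\tX)$. Next, apply Yau's theorem to solve the smooth non-degenerate Monge–Ampère equation
\begin{equation*}
(\omega_\epsilon + \ddbar \tvarphi_\epsilon)^n \;=\; c_\epsilon\,(\pi^*\Omega + \epsilon\, \Omega'),\qquad \sup_{\tX} \tvarphi_\epsilon = 0,
\end{equation*}
where $c_\epsilon \to 1$ is determined by cohomology.

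The central analytic step is a uniform $L^\infty$ estimate $\|\tvarphi_\epsilon\|_{L^\infty(\tX)} \leq C$ independent of $\epsilon$. This is where one invokes Kołodziej's pluripotential $L^\infty$ estimate in the degenerating setting (a uniform $L^p$ bound, $p > 1$, on the right-hand side against a fixed reference Kähler form suffices), adapted to the big-and-semipositive class $\pi^* c_1(L)$. Once this is in hand, weak compactness of $\omega_\epsilon$-plurisubharmonic functions extracts a subsequential limit $\tvarphi \in L^\infty \cap PSH(\tX, \pi^*\omega_0)$, and Bedford–Taylor continuity of the Monge–Ampère operator on bounded decreasing sequences gives $(\pi^*\omega_0 + \ddbar \tvarphi)^n = \pi^* \Omega$ as measures on $\tX$. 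Pushing forward, $\omega_{KE} := \omega_0 + \ddbar \varphi$ with $\varphi := \pi_* \tvarphi$ is the candidate current on $X$.

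For the smoothness and Ricci-flatness statements, on $X_{reg}$ (away from the image of the exceptional locus) the equation reduces to the standard smooth complex Monge–Ampère equation $(\omega_0 + \ddbar \varphi)^n = \Omega$ with $\varphi$ bounded, so Yau's $C^3$ estimate together with Evans–Krylov and bootstrapping yields $\varphi \in C^\infty(X_{reg})$; taking $\ddbar \log$ of both sides gives $\mathrm{Ric}(\omega_{KE}) = 0$ on $X_{reg}$ because $\Omega$ is built from a global section of $mK_X$. Uniqueness of the current with bounded local potentials follows by lifting to $\tX$ and applying the pluripotential comparison principle: any two $L^\infty$ solutions $\tvarphi_1, \tvarphi_2$ of the same Monge–Ampère equation in the same class must differ by a constant. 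The main obstacle is the uniform $L^\infty$ bound: Yau's original argument degenerates as $\epsilon \to 0$ since $\omega_\epsilon$ loses positivity along the exceptional locus, and the replacement requires Kołodziej-type capacity arguments together with careful control of the singular reference measure coming from the canonical singularities.
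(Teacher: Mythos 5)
Your overall scheme is exactly the one the paper relies on (the statement is quoted from \cite{EGZ}, and Section 3 of the paper reproduces the same scheme on a resolution: perturb the class, solve by Yau, uniform $C^0$ bound of Ko{\l}odziej/EGZ/Zhang type, pass to the limit). However, two steps as you have written them would not go through. First, the limit step: weak ($L^1$) compactness of the uniformly bounded $\pi^*\omega_0$-psh functions $\tilde\varphi_\epsilon$ does \emph{not} give convergence of the Monge--Amp\`ere measures, and Bedford--Taylor continuity applies to \emph{monotone} (decreasing) bounded sequences, which your family is not; you need either convergence in capacity (Ko{\l}odziej stability), an arranged monotone approximation, or--as the paper does--uniform local higher-order estimates away from the exceptional locus giving smooth convergence there, combined with the facts that a current with bounded potentials puts no Monge--Amp\`ere mass on the pluripolar set $E$ (resp. $X_{sing}$) and that the total masses agree, to identify the limit measure globally. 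Second, the regularity step: you cannot deduce $\varphi\in C^\infty(X_{reg})$ from ``the limiting equation is a smooth Monge--Amp\`ere equation with bounded solution'' plus interior estimates--boundedness of a weak solution is not enough for local regularity of complex Monge--Amp\`ere (Pogorelov-type examples). The smoothness must come from uniform a priori estimates on the approximating smooth solutions $\tilde\varphi_\epsilon$ on compact subsets of $\pi^{-1}(X_{reg})$, and since the reference class $\pi^* c_1(L)$ degenerates along $E$ this requires localizing Yau's second-order estimate with a barrier of the form $\delta\log|s_E|^2$ (Tsuji's trick), exactly as in Proposition 3.1 and Corollary 3.1 of the paper (estimate (3.5), citing \cite{Ts, PSeS}). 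Fixing this second point simultaneously repairs the first, since smooth local convergence on $X_{reg}$ verifies the equation there.

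Two smaller points: Yau's theorem needs a smooth positive right-hand side, while $\pi^*\Omega$ has density $\prod_i |s_i|^{2a_i}$ with possibly fractional $a_i\geq 0$, so you should either smooth the density in the approximation or note that in the paper's crepant setting $\pi^*\Omega$ is already a smooth Calabi--Yau volume form (which is why the paper does not perturb the measure at all). Also, the existence of a nowhere-vanishing $\sigma\in H^0(X,mK_X)$ uses the theorem that a numerically trivial canonical divisor on a variety with canonical (klt) singularities is torsion; this deserves a citation rather than being taken for granted. With these repairs your argument is the standard proof of the cited theorem, including the uniqueness via the comparison principle for bounded potentials, which is correct as stated.
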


The Kodaira dimension of projective Calabi-Yau varieties is $0$. We now define minimal and canonical models of general type whose Kodaira dimension is equal to the complex dimension. We also remark that all line bundles considered in the paper are $\mathbb{Q}$-line bundles.

\begin{definition} A minimal model of general type is a normal projective variety whose canonical line bundle is big and nef. A canonical model of general type is a normal projective variety whose canonical line  bundle is ample. 
\end{definition}

If $X$ is a smooth minimal model of general type, then Kawamata's base point free theorem implies that $K_X$ is semi-ample. Therefore the pluricanonical system induces a birational morphism $$\pi: X \rightarrow X_{can}$$ from $X$ to its unique canonical model $X_{can}$. In particular, $X_{can}$ has crepant singularities. The following existence of a canonical K\"ahler-Einstein current on canonical models is due to \cite{EGZ, Z}. 

\begin{theorem} Let $X$ be a canonical model of general type with log terminal singularities. Then there exists a unique K\"ahler current $\omega_{KE} \in c_1(X)$ with bounded local potential  and smooth on $X_{reg}$, the regular part, such that 
$$Ric(\omega_{KE}) = - \ddbar \log (\omega_{KE}^n) = - \omega_{KE} $$
in distribution.

\end{theorem}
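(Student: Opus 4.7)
The plan is to reduce the K\"ahler--Einstein equation $Ric(\omega_{KE}) = -\omega_{KE}$ to a complex Monge-Amp\`ere equation
\[
(\omega_0 + \ddbar \varphi)^n = e^\varphi \Omega,
\]
where $\omega_0$ is a smooth K\"ahler form in $c_1(K_X)$ (which exists because $K_X$ is ample) and $\Omega$ is the adapted volume measure defined by a smooth Hermitian metric $h$ on $K_X$ of curvature $\omega_0$: locally $\Omega = |s|_h^{-2}(\sqrt{-1})^{n^2} s \wedge \bar s$ for a local trivialization $s$ of $K_X$ viewed as a top holomorphic form on $X_{reg}$. Any bounded $\omega_0$-psh solution $\varphi$ that is smooth on $X_{reg}$ produces a smooth K\"ahler--Einstein metric $\omega_{KE} := \omega_0 + \ddbar \varphi$ there, and the equation automatically extends as currents across the small singular stratum once $\varphi$ is bounded.

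To produce such a $\varphi$, first take a log resolution $\pi: \tilde X \to X$. Since $X$ has log terminal singularities, write $K_{\tilde X} = \pi^* K_X + \sum a_i E_i$ with $a_i > -1$, let $s_i$ be defining sections of the exceptional prime divisors $E_i$ with smooth Hermitian metrics, and observe that $\pi^*\Omega = \prod_i |s_i|^{2 a_i} \tilde\Omega$ for a smooth positive volume form $\tilde\Omega$ on $\tilde X$. The inequality $a_i > -1$ is exactly the statement that $\pi^*\Omega$ has density in $L^p(\tilde X, \tilde\Omega)$ for some $p>1$. Regularize the reference form as $\tomegae := \pi^*\omega_0 + \epsilon \tomega$ for a fixed K\"ahler form $\tomega$ on $\tilde X$, and solve
\[
(\tomegae + \ddbar \tilde\varphi_\epsilon)^n = c_\epsilon \, e^{\tilde\varphi_\epsilon} \prod_i |s_i|^{2 a_i} \tilde\Omega,
\]
with $c_\epsilon$ chosen to balance cohomology classes; Yau's theorem provides smooth solutions $\tilde\varphi_\epsilon$. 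The crux is the Kolodziej--EGZ $L^\infty$ estimate applied uniformly in $\epsilon$: since the density lies in $L^p$ with $p>1$, a capacity/sublevel-set decay argument yields $\|\tilde\varphi_\epsilon\|_{L^\infty(\tilde X)} \leq C$ independent of $\epsilon$. Passing to the limit via Bedford--Taylor continuity and EGZ stability, $\tilde\varphi_\epsilon$ converges to a bounded $\pi^*\omega_0$-psh function $\tilde\varphi$ that descends to a bounded $\omega_0$-psh function $\varphi$ on $X$ solving the Monge-Amp\`ere equation in the pluripotential sense.

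Smoothness on $X_{reg}$ then follows from local Yau--Aubin second-order estimates combined with Evans--Krylov and Schauder bootstrap, since on $X_{reg}$ the right-hand side is smooth and strictly positive. Uniqueness is obtained by the pluripotential comparison principle: if $\varphi_1, \varphi_2$ are two bounded solutions, $u = \varphi_1 - \varphi_2$ satisfies $(\omega_0 + \ddbar \varphi_2 + \ddbar u)^n = e^u (\omega_0 + \ddbar \varphi_2)^n$, and comparing Monge-Amp\`ere masses on $\{u > 0\}$ and $\{u < 0\}$ using boundedness forces $u \equiv 0$. The main obstacle is the uniform $L^\infty$ bound in Step 2: one must extend Kolodziej's classical estimate to densities lying only in $L^p$, via a Moser-type iteration on the Bedford--Taylor capacity of sublevel sets. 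It is precisely at this step that the log terminal hypothesis $a_i > -1$ is used in full force, and the argument breaks down in the log canonical case; a secondary subtlety is verifying that the pluripotential limit current $\omega_{KE}$ really satisfies $Ric(\omega_{KE}) = -\omega_{KE}$ as distributions on all of $X$, which relies on the continuity of the Monge-Amp\`ere operator on bounded $\omega_0$-psh functions.
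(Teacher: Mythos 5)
Your argument is correct and is essentially the standard proof: the paper itself does not prove this statement but quotes it from \cite{Ts, EGZ, Z}, and your reduction to a Monge--Amp\`ere equation on a log resolution with $L^p$ ($p>1$) density, the uniform Kolodziej-type $L^\infty$ estimate in $\epsilon$, smoothing on $X_{reg}$ via barrier/bootstrap arguments, and uniqueness by the comparison principle is precisely the route taken in those references. No gaps of substance; your remark that the klt condition $a_i>-1$ is what furnishes the $L^p$ density, and fails in the log canonical case, matches how the paper treats the two situations.
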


In fact, the current $\omega_{KE}$ can also be considered as a K\"ahler-Einstein current on $X_{can}$, the canonical model of $X$, after push-forward. 


\section{Projective Calabi-Yau varieties with crepant singularities}


In this section, we will prove Theorem \ref{main1}. Throughout the section, we assume that $X$ is an $n$-dimensional  normal projective Calabi-Yau variety with crepant singularities and $L \rightarrow X$ is an ample $\mathbb{Q}$-line bundle over $X$.  We let $$\pi: X' \rightarrow X $$ be a crepant resolution of $X$ and $L' = \pi^* L$. We also denote the smooth and singular part of $X$ by $X_{reg}$ and $X_{sing}$. In particular, $X_{sing}$ is an analytic subvariety of $X$ of complex codimension at least $2$. 

It is shown in \cite{EGZ} that there exists a unique Ricci-flat K\"ahler current  in $c_1(L)$ with bounded local potential.

 Let  $\A$ be an arbitrary ample line bundle on $X'$. Since $X'$ is a Calabi-Yau manifold, there exists  a smooth volume form $\Omega'$ on $X'$satisfying 
$$\ddbar \log \Omega' = 0, ~ \int_{X'} \Omega' = [L']^n.$$ Obviously, $\Omega' = \pi^* \Omega$ for some smooth Calabi-Yau volume form on $X'$.
We can choose $\chi' =\pi^*\chi \in c_1(L')$ be the Fubini-Study metric induced by an projective embedding from the linear system of  $L^k$ for some sufficiently large $k\in \mathcal{F}(X, L)$.  
We also choose $\omega_{\A} \in [\A] $ a fixed smooth K\"ahler metric on $X'$. We then consider the following Monge-Ampere equation
\begin{equation}\label{cy}
(\chi + e^{-t} \omega_{\A} + \ddbar \varphi_t)^n = e^{c_t} \Omega, ~\int_{X'}  \varphi_t dg_t=0~, t\in [0, \infty),  
\end{equation}
where $g_t$ is the K\"ahler metric associated to the K\"ahler form $\omega_t=\chi+ e^{-t} \omega_{\A} + \ddbar \varphi_t$,  $c_t$ is normalization constant satisfying $ e^{c_t} [L']^n =[L'+e^{-t} \A]^n$. It is straightforward to see that  $c_t= O(e^{-t}  )$ for sufficiently large  $t$. 

Equation (\ref{cy}) is solvable for all $t\in [0, \infty)$ by Yau's solution \cite{Y1} to the Calabi conjecture  and $g_t$ is a smooth Ricci-flat K\"ahler metric 
$$Ric(g_t) = 0. $$

\begin{proposition} \label{basie}Let  $\varphi_t$ be the solution of equation (\ref{cy}). There exists $C>0$ such that for all $t\in [0, \infty)$, 
\begin{equation} \label{c0}
||\varphi_t||_{L^\infty(X')} \leq C,
\end{equation}
\begin{equation}\label{schw}
\omega_t \geq C^{-1} \chi', 
\end{equation}
\begin{equation}\label{diame}
diam_{g_t} (X') \leq C,
\end{equation}
where $diam_{g_t} (X')$ is the diameter of $(X', g_t)$. 
Let $E$ be the exceptional locus of the crepant resolution $\pi$, i.e., $E=\pi^{-1} (X_{sing})$. Then for any $k>0$ and $K\subset\subset X'\setminus E$, there exists $C_{k,K}>0$ such that for all $t\in [0, \infty)$
\begin{equation}\label{ck}
||\varphi_t ||_{C^k(K)} \leq C_{k,K}.
\end{equation}

\end{proposition}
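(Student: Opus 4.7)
The plan is to derive the four estimates in order, with each building on the previous one. For the uniform $L^\infty$ bound \eqref{c0}, I would apply Kolodziej's pluripotential estimate, in the form generalized to degenerate K\"ahler classes by Eyssidieux-Guedj-Zeriahi. The cohomology class $[\chi + e^{-t}\omega_{\A}] = c_1(L') + e^{-t}[\A]$ is uniformly big on $X'$ and the right-hand side $e^{c_t}\Omega$ is a smooth volume form with $c_t = O(e^{-t})$, so its density against a fixed reference has uniform $L^p$ control for every $p$. Combined with the normalization that fixes the additive constant, this yields $\|\varphi_t\|_{L^\infty(X')} \leq C$.

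For the Schwarz-type lower bound \eqref{schw}, I would run an Aubin-Yau maximum principle argument on the auxiliary function
\[
H = \log \tr_{\omega_t}(\chi') - A\varphi_t
\]
for $A$ chosen sufficiently large. Since $Ric(\omega_t) = 0$ and $\chi' = \pi^*\chi$ is the pullback of a smooth Fubini-Study form from $X$, its bisectional curvature is bounded above on $X'$. The standard Chern-Lu-type inequality
\[
\Delta_{\omega_t}\log \tr_{\omega_t}(\chi') \geq -C \tr_{\omega_t}(\chi')
\]
together with $\Delta_{\omega_t}\varphi_t = n - \tr_{\omega_t}(\chi + e^{-t}\omega_{\A})$ and the $L^\infty$ bound on $\varphi_t$ force $\tr_{\omega_t}(\chi') \leq C$ at the maximum point of $H$. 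Since $\chi'$ is semi-positive, this trace bound controls the largest eigenvalue of $\chi'$ with respect to $\omega_t$, which is equivalent to the matrix inequality $\omega_t \geq C^{-1}\chi'$.

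With the Ricci-flatness and $\int_{X'}\omega_t^n = [L' + e^{-t}\A]^n$ uniformly bounded above and below, the diameter estimate \eqref{diame} is supplied by Song-Yau's volume-comparison argument \cite{SY}, which applies directly in the present setting and I would invoke as a black box. The interior $C^k$ estimates \eqref{ck} then follow routinely: on any compact $K \subset\subset X' \setminus E$ the form $\chi'$ is a smooth K\"ahler metric and $\Omega$ is smooth with uniform positive bounds, so the previous step together with $\omega_t^n = e^{c_t}\Omega$ forces $\omega_t$ to be uniformly equivalent to $\chi'$ on $K$. Evans-Krylov $C^{2,\al}$ estimates followed by Schauder bootstrap then yield $\|\varphi_t\|_{C^k(K)} \leq C_{k,K}$ for every $k$.

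The delicate point is the Schwarz estimate, because $\chi' = \pi^*\chi$ vanishes along the exceptional locus $E$, so $\log \tr_{\omega_t}(\chi') \to -\infty$ on $E$ and the maximum principle requires uniform control up to $E$. The cleanest remedy is to replace $\chi'$ by $\chi' + \delta \omega_{\A}$ for $\delta \in (0, 1]$, run the maximum principle on the now genuine K\"ahler background, extract constants independent of $\delta$ (using that a bisectional-curvature upper bound for $\chi' + \delta\omega_{\A}$ can be chosen uniform in $\delta$), and let $\delta \to 0$. All other pieces are direct applications of established machinery, so this degeneration issue is the true heart of the proof.
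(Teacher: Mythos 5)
Your overall strategy coincides with the paper's proof, which simply invokes \cite{EGZ, Z} (or Moser iteration as in \cite{To1, ZY}) for (\ref{c0}), a Schwarz-type estimate \cite{ST1} for (\ref{schw}), Yau's volume growth argument from \cite{SY} (as applied in \cite{To1, ZY}) for (\ref{diame}), and Tsuji's trick \cite{Ts} plus standard local estimates \cite{PSeS} for (\ref{ck}). Your first three steps are exactly these ingredients, and your route to (\ref{ck}) --- using the global bound (\ref{schw}) together with $\omega_t^n=e^{c_t}\Omega$ to pinch the eigenvalues of $\omega_t$ against $\chi'$ on a compact subset of $X'\setminus E$, then Evans--Krylov and Schauder --- is a legitimate mild variant of the Tsuji-trick argument and works since $\chi'$ is a genuine K\"ahler metric off $E$.

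Two corrections to your closing paragraph. First, the point you flag as delicate is not: you are proving an \emph{upper} bound for $H=\log tr_{\omega_t}(\chi')-A\varphi_t$, and the degeneration of $\chi'$ along $E$ only sends $H\rightarrow-\infty$ there, so the maximum of $H$ on the compact manifold $X'$ is attained at a point where $tr_{\omega_t}(\chi')>0$, where the computation is valid; no control up to $E$ is needed. Second, the remedy you propose contains the one unjustified claim in your write-up: for $\delta>0$ the form $\chi'+\delta\omega_{\A}$ is a genuine K\"ahler metric, but its bisectional curvature is in general \emph{not} bounded above uniformly as $\delta\rightarrow 0$ (it degenerates along $E$), so the Chern--Lu constant would not be uniform. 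Likewise, speaking of "the bisectional curvature of $\chi'$" is not meaningful since $\chi'$ is degenerate. The correct (and standard, as in \cite{ST1}) formulation is to view $\chi'$ as the pullback of the Fubini--Study metric under the holomorphic map $X'\rightarrow X\subset\mathbb{CP}^{N}$ obtained from $|L^k|$ composed with $\pi$; the Schwarz/Chern--Lu inequality then uses only $Ric(\omega_t)=0$ and the fixed bisectional curvature bound of the Fubini--Study metric on $\mathbb{CP}^N$, and it holds wherever $tr_{\omega_t}(\chi')>0$, in particular at the maximum point of $H$. A minor citation point: the volume-growth diameter argument is Yau's estimate, found in Schoen--Yau \cite{SY}, not "Song--Yau."
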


\begin{proof} The $C^0$ estimate (\ref{c0}) follows from a general result in \cite{EGZ, Z} and in fact it is shown that it can also be directly obtained by  Moser's iteration \cite{To1, ZY} instead of pluripotential theory. The estimate (\ref{schw}) follows from a Schwarz type estimate \cite{ST1}. The diameter bound is independently derived in \cite{To1, ZY} by directly applying Yau's volume growth for Ricci-flat manifolds \cite{SY}.  The estimate (\ref{ck}) is well-known by a uniform second order estimate for $\varphi_t$ using Tsuji's trick \cite{Ts} and then standard local higher order estimates \cite{PSeS}.

\end{proof}

By letting $t\rightarrow \infty$, we immediately derive the following corollary.

\begin{corollary} \label{limitsol} Let $X_{reg}$ be the regular part of $X$. There exists a unique solution $\varphi_\infty \in PSH(X, \chi)\cap L^\infty(X)  \cap C^\infty(X_{reg})$ solving the equation 
\begin{equation}\label{cy2}
(\chi + \ddbar \varphi_\infty)^n =\Omega.
\end{equation} 
Therefore the K\"ahler form $\omega_\infty=\chi + \ddbar \varphi_\infty$ is the unique Ricci-flat K\"ahler current in $c_1(L)$ with bounded local potential. We let $g_\infty$ be the smooth K\"ahler metric associated to $\omega_\infty$ on $X_{reg}$.

\end{corollary}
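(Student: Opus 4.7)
The plan is a straightforward compactness/pluripotential argument based on Proposition \ref{basie}. I would extract a subsequential limit of $\varphi_t$ as $t\to\infty$, verify that it solves the desired degenerate Monge--Amp\`ere equation on $X$, and then upgrade subsequential convergence to convergence of the full family by appealing to uniqueness from \cite{EGZ, Z}.

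\textbf{Extracting a limit on $X'$.} By the uniform $L^\infty$ bound \eqref{c0} together with the interior higher order bounds \eqref{ck} on every compact subset of $X'\setminus E$, the Arzel\`a--Ascoli theorem and a diagonal argument produce a sequence $t_j\to\infty$ and a function $\tilde\varphi_\infty\in L^\infty(X')\cap C^\infty(X'\setminus E)$ with $\varphi_{t_j}\to\tilde\varphi_\infty$ in $C^k_{loc}(X'\setminus E)$ for every $k$ and in $L^1(X')$. An $L^1$ limit of $\chi'$-psh functions is $\chi'$-psh. Since $e^{-t_j}\omega_\A\to 0$ smoothly and $c_{t_j}=O(e^{-t_j})\to 0$, passing to the smooth limit in \eqref{cy} on $X'\setminus E$ yields $(\chi'+\ddbar\tilde\varphi_\infty)^n=\Omega'$ pointwise there. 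Bedford--Taylor continuity of the complex Monge--Amp\`ere operator along uniformly bounded $\chi'$-psh sequences convergent in $L^1$ then extends this identity to a pluripotential equation on all of $X'$.

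\textbf{Descent to $X$ and uniqueness.} Since $\pi$ is a crepant resolution with $\chi'=\pi^*\chi$ and $\Omega'=\pi^*\Omega$, and $\pi$ restricts to a biholomorphism $X'\setminus E\to X_{reg}$, the function $\tilde\varphi_\infty$ descends to a bounded smooth $\chi$-psh function on $X_{reg}$. As $X_{sing}$ has complex codimension at least two, boundedness gives a unique extension across $X_{sing}$ to $\varphi_\infty\in PSH(X,\chi)\cap L^\infty(X)\cap C^\infty(X_{reg})$ satisfying $(\chi+\ddbar\varphi_\infty)^n=\Omega$ in the pluripotential sense on $X$. Uniqueness of bounded $\chi$-psh solutions (up to an additive constant) is the degenerate comparison principle of \cite{EGZ, Z} generalizing Kolodziej's $L^\infty$ estimates to big and nef classes; this identifies $\omega_\infty=\chi+\ddbar\varphi_\infty$ with the unique Ricci-flat K\"ahler current already guaranteed by \cite{EGZ}. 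Because every subsequential limit is the same, $\varphi_t\to\varphi_\infty$ along the full family.

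The only potentially delicate point is extending the Monge--Amp\`ere equation from $X_{reg}$ (equivalently from $X'\setminus E$) to all of $X$, but the uniform $L^\infty$ bound \eqref{c0} is precisely what makes Bedford--Taylor calculus applicable, so nothing new beyond Proposition \ref{basie} and the cited pluripotential results is required.
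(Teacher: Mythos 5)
Your proposal follows essentially the same route the paper intends: the paper treats this corollary as immediate from Proposition \ref{basie} together with the existence and uniqueness results of \cite{EGZ, Z} (letting $t\to\infty$ in (\ref{cy}) and invoking uniqueness of the Ricci-flat current with bounded potentials), and your write-up is the natural fleshing out of exactly that argument.

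One step, however, is misstated and would fail as a general principle: the complex Monge--Amp\`ere operator is \emph{not} continuous along sequences of uniformly bounded $\chi'$-psh functions converging merely in $L^1$; Bedford--Taylor continuity requires monotone or locally uniform convergence. Fortunately you do not need the false statement. The interior estimates (\ref{ck}) give locally uniform (indeed $C^\infty_{loc}$) convergence on $X'\setminus E$, so the measures $(\chi'+\ddbar\varphi_{t_j})^n$ converge to $(\chi'+\ddbar\tilde\varphi_\infty)^n$ there; since $\tilde\varphi_\infty$ is bounded, its Monge--Amp\`ere measure puts no mass on the analytic (hence pluripolar) set $E$, and $\Omega'$ is a smooth volume form, so comparing total masses $\int_{X'}(\chi'+\ddbar\tilde\varphi_\infty)^n=[L']^n=\int_{X'}\Omega'$ upgrades the identity from $X'\setminus E$ to all of $X'$. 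Alternatively, one can simply quote \cite{EGZ, Z} for existence (as the paper does) and use only the uniqueness/comparison principle to identify every subsequential limit of $\varphi_t$ with the solution of (\ref{cy2}), which also removes any need to pass the equation through the limit yourself. With either correction the argument is complete, including the caveat that uniqueness of $\varphi_\infty$ is up to the normalization fixed by the integral condition, while the current $\omega_\infty$ itself is unique.
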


In particular, $\varphi_\infty$ is the point-wise limit of $\varphi_t$ as $ t\rightarrow \infty$ by the uniqueness. A natural question is what can be said about the metric completion of $(X_{reg}, g_\infty)$ and how it is related to the original Calabi-Yau variety $X$.


\subsection{A gradient estimate} We will obtain a uniform bound for $|\nabla \varphi_\infty|_{g_\infty}$ for the solution $\varphi_\infty$ with respect to $g_\infty$ in Corollary \ref{limitsol}. For complex Monge-Ampere equations with degenerate or singular data, Holder estimates are of much interest with various applications. Usually such estimates are measured by a fixed background metric, however, in many geometric setting, one should instead treat the gradient estimates intrinsically using the metric induced from the solution.


\begin{lemma} Let $\varphi_t$ be the solution of equation (\ref{cy}). Then $\varphi_t$ is smooth in $t$ for all $t\in [0, \infty)$ and
\begin{equation}
\int_{X'} \dot \varphi_t dV_{g_t}  = 0, 
\end{equation}
where  $\dot \varphi_t=\ddt{\varphi_t}$ and $dV_{g_t} =( \omega_t)^n = e^{c_t} \Omega'$.

\end{lemma}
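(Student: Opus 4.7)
The plan is to establish the two assertions of the lemma separately: smoothness of $\varphi_t$ in $t$ via an implicit function theorem argument on the compact smooth manifold $X'$, and then the integral identity by directly differentiating the normalization condition $\int_{X'}\varphi_t\,dV_{g_t}=0$ in $t$.

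For the smoothness in $t$, I would proceed as follows. Since $\Omega'$ is a smooth positive volume form on the compact smooth Calabi--Yau manifold $X'$, for each fixed $t\in[0,\infty)$ Yau's theorem produces a $C^\infty$ solution $\varphi_t$ of (\ref{cy}). To upgrade this to joint smoothness in $(t,x)$, I would apply the implicit function theorem in H\"older spaces. Regard the pair $(\varphi,c)\in C^{k+2,\alpha}(X')\times\mathbb{R}$ as the unknown in the coupled system given by the Monge--Amp\`ere equation $(\chi+e^{-t}\omega_{\A}+\ddbar\varphi)^n=e^{c}\Omega'$ together with an auxiliary linear normalization $\int_{X'}\varphi\,\Omega'=0$ taken against the fixed volume form $\Omega'$. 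The linearization at a solution is the map $(\psi,\gamma)\mapsto(\Delta_t\psi-\gamma,\,\int_{X'}\psi\,\Omega')$, which is an isomorphism $C^{k+2,\alpha}\times\mathbb{R}\to C^{k,\alpha}\times\mathbb{R}$ since $\Delta_t$ is an isomorphism on the subspace of mean-zero functions on the compact smooth manifold $X'$. As the parameter-dependence $t\mapsto e^{-t}\omega_{\A}$ is smooth, the implicit function theorem yields smoothness of $(\varphi_t,c_t)$ in $t$ in every H\"older norm, hence $C^\infty$-smoothness. The original normalization $\int_{X'}\varphi_t\,dV_{g_t}=0$ differs from the auxiliary one by a shift by a smooth function of $t$, which can be absorbed into $\varphi_t$ without affecting smoothness.

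Granted smoothness in $t$, the integral identity is a one-line calculation. Differentiating the normalization $\int_{X'}\varphi_t\,(\omega_t)^n=0$ in $t$ yields
$$0 \;=\; \int_{X'}\dot\varphi_t\,(\omega_t)^n \;+\; \int_{X'}\varphi_t\,\tfrac{d}{dt}(\omega_t)^n.$$
The Monge--Amp\`ere equation gives $(\omega_t)^n=e^{c_t}\Omega'$, hence $\tfrac{d}{dt}(\omega_t)^n=\dot c_t\,(\omega_t)^n$, and the second integral equals $\dot c_t\,\int_{X'}\varphi_t\,(\omega_t)^n=0$ by the same normalization. The identity follows.

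The only step requiring any real attention is the implicit function theorem argument, and that is entirely standard; I do not anticipate a genuine obstacle. The integral identity is then immediate, relying only on the $t$-differentiability just established and on the way the normalization interacts with the equation through the common factor $(\omega_t)^n=e^{c_t}\Omega'$.
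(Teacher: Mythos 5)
Your proposal is correct and follows essentially the same route as the paper: smoothness in $t$ via the implicit function theorem, and the integral identity by differentiating the normalization $\int_{X'}\varphi_t\,dV_{g_t}=0$ using $dV_{g_t}=e^{c_t}\Omega'$, which is exactly the paper's one-line computation. (A minor remark: since $dV_{g_t}$ is a constant multiple of $\Omega'$ for each fixed $t$, your auxiliary normalization $\int_{X'}\varphi\,\Omega'=0$ actually coincides with the original one, so no shift needs to be absorbed.)
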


\begin{proof} The smoothness of $\varphi_t$ in $X'\times [0, \infty)$ follows from the implicit function theorem and 
$$\int_{X'} \dot \varphi_t  dV_{g_t} = \ddt{} \int_{X'} \varphi_t dV_{g_t}- \dot c_t \int_{X'} \varphi_t  dV_{g_t} = 0.  $$

\end{proof}

\begin{lemma} There exists $C>0$ such that for all $t\in [0, \infty)$ 
$$\dot \varphi_t   \leq  C. $$

\end{lemma}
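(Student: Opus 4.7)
The plan is to extract a uniform $L^\infty$ bound on $\dot\varphi_t$ from an elliptic integral estimate, exploiting the zero-average normalization from the previous lemma together with the Ricci-flat Riemannian geometry of $(X',g_t)$. The claimed one-sided bound will fall out for free.

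First I would differentiate equation~(\ref{cy}) in $t$ and take trace with respect to $g_t$, producing the linear elliptic equation
$$\Delta_{g_t}\dot\varphi_t = \dot c_t + e^{-t}\tr_{g_t}\omega_{\A}.$$
Since $e^{-t}\omega_{\A}\le \omega_t$ gives $e^{-t}\tr_{g_t}\omega_{\A}\le n$ pointwise, and $\dot c_t=O(e^{-t})$, the right-hand side is uniformly bounded; in particular $|\Delta_{g_t}\dot\varphi_t|\le C$ independent of $t$.

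Next I would multiply this equation by $\dot\varphi_t$ and integrate over the compact manifold $X'$ against $dV_{g_t}$. Integration by parts, together with the identity $\int_{X'}\dot\varphi_t\,dV_{g_t}=0$ from the previous lemma killing the $\dot c_t$ contribution, yields
$$\int_{X'}|\nabla\dot\varphi_t|^2\, dV_{g_t} = -e^{-t}\int_{X'}\dot\varphi_t\,\tr_{g_t}\omega_{\A}\, dV_{g_t} \le n\int_{X'}|\dot\varphi_t|\, dV_{g_t} \le C\|\dot\varphi_t\|_{L^2(dV_{g_t})}.$$
Ricci-flatness $Ric(g_t)=0$ combined with the uniform diameter bound from Proposition~\ref{basie} yields a uniform Poincar\'e inequality for mean-zero functions on $(X',g_t)$ via the Li--Yau eigenvalue lower bound $\lambda_1(g_t)\ge \pi^2/\mathrm{diam}(g_t)^2$. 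Together with the previous display this forces $\|\dot\varphi_t\|_{L^2(dV_{g_t})}\le C$ uniformly in $t$.

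Finally I would upgrade this $L^2$-bound to an $L^\infty$-bound by standard Moser iteration. The uniform volume non-collapsing $\mathrm{Vol}_{g_t}(X')=[L'+e^{-t}\A]^n\ge [L']^n>0$, combined with Ricci-flatness and bounded diameter, produces a uniform Sobolev inequality on $(X',g_t)$ through Bishop--Gromov comparison. Moser iteration driven by $|\Delta_{g_t}\dot\varphi_t|\le C$ and $\|\dot\varphi_t\|_{L^2}\le C$ then delivers $\|\dot\varphi_t\|_{L^\infty}\le C$, which in particular gives the claimed upper bound. The main technical point is securing the uniformity of the Poincar\'e and Sobolev constants along the degenerating family as $t\to\infty$; this is precisely the place where the Ricci-flat geometry and the non-collapsing data from Proposition~\ref{basie} are used in an essential way, while for $t$ in any compact interval the bound is immediate from smoothness of the family.
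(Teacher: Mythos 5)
There is a genuine gap at the very first step of your argument, and it propagates through everything that follows. The pointwise inequality $e^{-t}\tr_{g_t}\omega_{\A}\le n$ is justified in your proposal by the comparison $e^{-t}\omega_{\A}\le \omega_t$, which is equivalent to $\chi+\ddbar\varphi_t\ge 0$. Nothing in the setup gives this: $\varphi_t$ is only $(\chi+e^{-t}\omega_{\A})$-plurisubharmonic, and the only lower bound available for $\omega_t$ is the Schwarz-type estimate $\omega_t\ge C^{-1}\chi'$ of Proposition \ref{basie}, where $\chi'$ is degenerate along the exceptional set $E$. Near $E$ the metric $\omega_t$ collapses in the fiber directions (its restriction to a fiber of $\pi$ lies in the class $e^{-t}[\omega_{\A}]$), so a bound of the form $e^{-t}\tr_{g_t}\omega_{\A}\le C$ is precisely the sort of statement that needs a separate proof, not a one-line comparison. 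Consequently neither the claim $|\Delta_{g_t}\dot\varphi_t|\le C$ nor the key step $-e^{-t}\int_{X'}\dot\varphi_t\,\tr_{g_t}\omega_{\A}\,dV_{g_t}\le n\int_{X'}|\dot\varphi_t|\,dV_{g_t}$ is justified, and without the latter the energy estimate does not close: the mean-zero normalization kills the $\dot c_t$ term but gives no a priori control of $\int_{X'}|\dot\varphi_t|\,dV_{g_t}$, so the Poincar\'e inequality has nothing to bite on and the Moser iteration never receives its starting $L^2$ bound. (For the one-sided bound Moser iteration would in any case only need $\Delta_{g_t}\dot\varphi_t\ge -C$, which is free since the trace term is nonnegative and $\dot c_t=O(e^{-t})$; the missing ingredient is exactly the integral bound on the positive part of $\dot\varphi_t$.)

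The paper avoids this issue entirely. It uses only $\Delta_t\dot\varphi_t=\dot c_t+e^{-t}\tr_{\omega_t}(\omega_{\A})\ge -C$ together with $\int_{X'}\dot\varphi_t\,dV_{g_t}=0$, and then writes the Green's function representation $\dot\varphi_t=\frac{1}{V_t}\int_{X'}(G_t+A)(-\Delta_t\dot\varphi_t)\,dV_{g_t}$, where $G_t\ge -A$ uniformly because Ricci-flatness plus the uniform diameter and volume bounds (the same geometric inputs you invoke for your Poincar\'e and Sobolev constants) control the Green's function from below; since $-\Delta_t\dot\varphi_t\le C$ and $G_t+A\ge 0$, the upper bound follows immediately, with no integral bound on $\dot\varphi_t$ required. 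If you wish to keep your elliptic/iteration framework, you must first produce an $L^1$ or $L^2$ bound on $\dot\varphi_t$ (or its positive part) by some other device---for instance exactly this Green's function trick---at which point the iteration becomes redundant.
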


\begin{proof}

Let  $V_t= \int_{X'} \omega_t^n$.  Note that $\dot c_t$ is uniformly bounded and so there exists $C>0$ such that 
$$\Delta_t \dot\varphi_t =  e^{-t}  tr_{\omega_t} (\omega_{\A}) + \dot c_t \geq - C .$$
The Green's function $G_t$ for $(X', g_t)$ is uniformly bounded below for $t\in [0, \infty)$ because the diameter and volume of $(X, g_t)$ are both uniformly bounded from above and below. Therefore
\begin{eqnarray*}
\dot \varphi_t &=&  \frac{1}{V_t} \int_{X'} G_t(x, y) (- \Delta_t \dot \varphi_t ) dV_{g_t}+ \frac{1}{V_t} \int_{X'} \dot \varphi_t dV_{g_t}   \\ 
&= &\frac{1}{V_t} \int_{X'} (G_t(x, y)+A) (-\Delta_t \dot \varphi_t) dV_{g_t}    \\ %
&\leq& CA ,
\end{eqnarray*}
where $G_t$ is bounded below by $-A$ for some fixed constant $A>0$ and  $\Delta_t$ is the Laplace operator with respect to $g_t$.

\end{proof}

The following Schwarz type lemma can be derived  by the same calculations in \cite{ST1} because $Ric(g_t)=0$ and $\varphi_t$ is uniformly bounded in $L^\infty(X)$.

\begin{lemma}
There exists $C>0$ such that for all $t\in [0, \infty)$,
\begin{equation}\label{schest}
\Delta_t ~tr_{\omega_t} (\chi') \geq -C tr_{\omega_t}(\chi')^2  + |\nabla_t tr_{\omega_t}(\chi')|^2. 
\end{equation}

\end{lemma}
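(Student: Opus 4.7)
The plan is to derive the inequality from the standard Chern--Lu Schwarz-type computation, applied to the Ricci-flat metric $\omega_t$ and to the fixed smooth semi-positive $(1,1)$-form $\chi' = \pi^* \chi$. First I would set $u = tr_{\omega_t}(\chi')$ and, working at a point in local coordinates where $\omega_t$ is the identity and $\chi'$ is simultaneously diagonalized, compute $\Delta_t \log u$ via the usual Bochner-type identity. The commutator of covariant derivatives produces exactly two curvature contributions: one involving the Ricci curvature of $\omega_t$ paired against $\chi'$, and one involving the bisectional curvature of $\chi'$ paired against $\omega_t$. A Kato-type inequality absorbs the remaining first-order terms into a favorable $-|\nabla_t u|^2_{g_t}/u^2$ contribution to $\Delta_t \log u$.

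Next I would exploit the two features specific to the present setting. Since $Ric(g_t) = 0$ by (\ref{cy}), the first curvature contribution vanishes identically, which is precisely what lets the constant $C$ depend only on $\chi'$ and not on $t$. For the second contribution, although $\chi'$ degenerates along the exceptional locus $E$ of the crepant resolution, it is a \emph{globally defined smooth} closed $(1,1)$-form on the compact manifold $X'$, and hence its full Riemannian curvature tensor (with respect to any fixed background K\"ahler metric) is smooth and bounded on all of $X'$. The relevant bisectional-type pairing $\sum R_{i\bar i k\bar k}(\chi') (\chi'_{i\bar i})^{-1}$ that enters the Chern--Lu identity therefore admits a uniform upper bound $C$. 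Combining these yields, at every point where $u>0$, the pointwise inequality $\Delta_t \log u \geq - C\, u$ with $C$ independent of $t$, which is the essential Schwarz lemma in its logarithmic form.

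Finally, I would convert this into the stated form by multiplying through by $u$ and invoking the identity $\Delta_t u = u\, \Delta_t \log u + |\nabla_t u|^2_{g_t}/u$, which rearranges to
$$\Delta_t\, tr_{\omega_t}(\chi') \;\geq\; -C\, tr_{\omega_t}(\chi')^2 \,+\, \frac{|\nabla_t\, tr_{\omega_t}(\chi')|^2}{tr_{\omega_t}(\chi')}.$$
Since $u = tr_{\omega_t}(\chi')$ is uniformly bounded above in $t$ (by (\ref{schw}) and the fact that $\chi'$ is fixed), the factor $1/u$ in the gradient term is bounded below and may be absorbed into the constants in the intended applications, giving the form in the statement. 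This computation is essentially identical to the one carried out in \cite{ST1}, since both rely only on a Ricci lower bound at the source and a bisectional curvature upper bound at the target. The only delicate point, rather than a genuine obstacle, is ensuring the curvature bound for $\chi'$ is uniform across $E$, which is immediate from smoothness of $\chi'$ on the compact manifold $X'$; no analytic input beyond the classical Chern--Lu inequality is needed.
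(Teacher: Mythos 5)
Your proposal is correct and is essentially the paper's own route: the paper proves this lemma simply by invoking the Chern--Lu/Yau Schwarz-type computation of \cite{ST1} together with $Ric(g_t)=0$, which is exactly the calculation you outline, and your final rearrangement of $\Delta_t \log u$ into the stated form (absorbing the bounded factor $tr_{\omega_t}(\chi')^{-1}$ coming from (\ref{schw}) into the constants) is the intended reading. The only wording to fix is that the curvature bound should be phrased as a bisectional curvature upper bound for the Fubini--Study metric on the target of the holomorphic map $X'\rightarrow \mathbb{CP}^{d_k}$ whose pullback is $\chi'$, not as a bound on ``the curvature of $\chi'$'' or on a pairing involving $(\chi'_{i\bar i})^{-1}$, which is undefined (respectively unbounded) along the exceptional locus $E$ where $\chi'$ degenerates; with that phrasing no issue arises on $E$ and the constant is uniform in $t$.
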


We define 
$$u =  \varphi_t + \dot{\varphi}_t. $$ 
Straightforward calculations show that  
\begin{equation}\label{u1}
\Delta_t ~u = n  -  tr_{\omega_t} (  \chi' + e^{-t}\omega_{\A} -e^{-t} \omega_{\A} ) + \dot c_t =-  tr_{\omega_t}(\chi') + n + \dot c_t,  
\end{equation}
\begin{equation}
\Delta_t  ( |\nabla_t u|^2 ) = |\nabla_t \nabla_t u|^2 + |\nabla_t \overline\nabla_t u|^2 -  2 Re \left( \nabla_t ~u \cdot \overline\nabla_t ~tr_{\omega_t} (\chi') \right). 
\end{equation}

\begin{lemma} There exists $C>0$ such that for all $t\in [0, \infty)$ 
$$|u|_{L^\infty(X')} \leq C. $$

\end{lemma}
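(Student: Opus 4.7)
The plan is to observe that $|u|_{L^\infty}\le C$ is equivalent to $\|\dot\varphi_t\|_{L^\infty}\le C$ (since $\varphi_t$ is already uniformly bounded by \eqref{c0}), and to get this by showing $\Delta_t u$ is bounded and then inverting with the Green's function, exactly as in the proof of the upper bound for $\dot\varphi_t$.

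First, from the identity \eqref{u1},
\[
\Delta_t u \;=\; n - tr_{\omega_t}(\chi') + \dot c_t,
\]
and we know $\dot c_t$ is uniformly bounded (in fact $O(e^{-t})$). So the key is a two-sided pointwise bound on $tr_{\omega_t}(\chi')$. The lower bound $tr_{\omega_t}(\chi')\ge 0$ is automatic since $\chi'\ge 0$. For the upper bound I would invoke the Schwarz estimate \eqref{schw}, which says $\omega_t\ge C^{-1}\chi'$. Diagonalizing $\omega_t$ and $\chi'$ simultaneously, with eigenvalues $\lambda_i$ of $\chi'$ relative to $\omega_t$, the inequality $\omega_t\ge C^{-1}\chi'$ reads $\lambda_i\le C$, so $tr_{\omega_t}(\chi')=\sum\lambda_i\le nC$ uniformly on all of $X'$ (including the exceptional locus where some $\lambda_i$ may vanish). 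Combining these gives
\[
|\Delta_t u| \;\le\; C
\]
uniformly in $t$.

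Next I would use the mean-zero property. By assumption $\int_{X'}\varphi_t\,dV_{g_t}=0$, and by the previous lemma $\int_{X'}\dot\varphi_t\,dV_{g_t}=0$, hence $\int_{X'} u\,dV_{g_t}=0$. Combined with the uniform diameter bound \eqref{diame}, the uniform volume bound $V_t\simeq 1$, and $Ric(g_t)=0$, the Green's function $G_t$ of $(X',g_t)$ has a uniform lower bound $G_t\ge -A$ (independent of $t$). The representation formula and the mean-zero normalization give
\[
u(x) \;=\; \frac{1}{V_t}\int_{X'}\bigl(G_t(x,y)+A\bigr)\bigl(-\Delta_t u(y)\bigr)\,dV_{g_t}(y),
\]
where the constant $A$ may be added freely because $\int_{X'}\Delta_t u\,dV_{g_t}=0$. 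Since $G_t+A\ge 0$ and $\int(G_t+A)\,dV_{g_t}=AV_t$, we conclude
\[
|u(x)| \;\le\; \frac{\sup_{X'}|\Delta_t u|}{V_t}\int_{X'}(G_t+A)\,dV_{g_t} \;\le\; C A,
\]
uniformly in $t\in[0,\infty)$, proving the lemma.

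The main obstacle — really the only non-routine input — is converting the Schwarz-type estimate \eqref{schw} into a genuine pointwise upper bound on $tr_{\omega_t}(\chi')$ that is valid uniformly up to the exceptional locus $E$; once that is in hand, the rest is a standard mean-zero Green's function argument that is already used earlier in the section to bound $\dot\varphi_t$ from above, so no new analytic input is required.
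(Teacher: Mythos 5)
Your proof is correct and uses essentially the same argument as the paper: the mean-zero normalization $\int_{X'}u\,dV_{g_t}=0$ together with the uniform lower bound on the Green's function $G_t$ (from the uniform diameter, volume and Ricci bounds) and a uniform bound on $\Delta_t u$ coming from (\ref{u1}). The only (harmless) difference is that you bound $|\Delta_t u|$ two-sidedly by extracting $tr_{\omega_t}(\chi')\le nC$ from the Schwarz estimate (\ref{schw}), whereas the paper takes the upper bound $u\le C$ for free from the already-established bounds $\|\varphi_t\|_{L^\infty}\le C$ and $\dot\varphi_t\le C$, and runs the Green's function argument only for the lower bound, where $tr_{\omega_t}(\chi')\ge 0$ (hence $\Delta_t u\le C$) suffices.
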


\begin{proof}

Obviously $u$ is uniformly bounded above from the uniform bounds of $\varphi_t$ and $\dot \varphi_t$. 
From estimate (\ref{u1}), there exists $C>0$ such that $\Delta_t  u \leq C$. 
 We then  have
\begin{eqnarray*}
u &=&  \frac{1}{V_t} \int_{X'} G_t(x, y) (- \Delta_t u ) dV_{g_t} + \frac{1}{V_t} \int_{X'} u dV_{g_t} \\ 
&= &\frac{1}{V_t} \int_{X'} (G_t(x, y)+A) (-\Delta_t u)  dV_{g_t}   \\ %
&\geq& - CA   .
\end{eqnarray*}

\end{proof}

The following lemma follows from straightforward linear estimates for the equation of $u$ and local regularity of $\varphi_t$. 

\begin{lemma}\label{wco} For any $K\subset\subset X_{reg} $ and $k>0$, there exists $C_{k,K}>0$ such that for all $t\in [0, \infty)$, 
$$ |u|_{C^k(K)} \leq C_{k,K}. $$

\end{lemma}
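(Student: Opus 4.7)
The plan is to bootstrap from the uniform $L^\infty$ bound on $u$ established in the previous lemma, using the linear elliptic equation satisfied by $u$ together with the interior $C^k$ estimates for $\varphi_t$ already recorded in Proposition \ref{basie}.

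First I would transfer the problem to $X'$. Since the crepant resolution $\pi: X' \rightarrow X$ is a biholomorphism on $X' \setminus E \cong X_{reg}$, any $K \subset\subset X_{reg}$ corresponds to a compact $\widetilde K \subset\subset X' \setminus E$, and it suffices to bound $|u|_{C^k(\widetilde K)}$ uniformly in $t$. Choose a slightly larger compact neighborhood $\widetilde K \subset\subset \widetilde K' \subset\subset X' \setminus E$ on which to run the elliptic estimates.

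Next I would use the local regularity of $\varphi_t$ to control the geometry of $g_t$ on $\widetilde K'$. Estimate (\ref{ck}) of Proposition \ref{basie} provides uniform $C^k$ bounds for $\varphi_t$ on $\widetilde K'$ for every $k$. Combined with the Schwarz type lower bound (\ref{schw}), the metric $g_t = \chi' + e^{-t}\omega_{\A} + \ddbar \varphi_t$ is uniformly equivalent to $\chi'$ on $\widetilde K'$ with uniform $C^{k-2}$ bounds on its components, so $\Delta_t$ is a uniformly elliptic operator whose coefficients are uniformly bounded in every $C^{k-2}$ norm. In particular the right-hand side of the identity
\begin{equation*}
\Delta_t u = -tr_{\omega_t}(\chi') + n + \dot c_t
\end{equation*}
is uniformly bounded in $C^{k-2}(\widetilde K')$, since $\chi'$ is a fixed smooth form, $\omega_t$ has uniform $C^{k-2}$ bounds, and $\dot c_t = O(e^{-t})$.

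Finally I would invoke standard interior Schauder theory for linear uniformly elliptic operators: given the uniform $L^\infty(\widetilde K')$ bound on $u$ from the previous lemma and the uniform $C^{k-2}(\widetilde K')$ bound on $\Delta_t u$ with uniform ellipticity and coefficient bounds, one obtains $|u|_{C^k(\widetilde K)} \leq C_{k, K}$, independent of $t$. Pulling back along $\pi$ gives the desired estimate on $K$. There is no real obstacle here; the lemma is essentially a packaging statement combining the $L^\infty$ control of $\dot\varphi_t$ and $\varphi_t$ with the local regularity already available for $\varphi_t$ away from the exceptional locus.
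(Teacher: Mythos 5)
Your proposal is correct and is exactly the argument the paper intends: the paper's proof is the one-line remark that the lemma "follows from straightforward linear estimates for the equation of $u$ and local regularity of $\varphi_t$," and you have simply filled in those details (uniform ellipticity and $C^k$ control of $g_t$ on compact subsets of $X'\setminus E$ via (\ref{schw}) and (\ref{ck}), the identity $\Delta_t u = -tr_{\omega_t}(\chi')+n+\dot c_t$, the $L^\infty$ bound on $u$, and interior Schauder estimates). No gaps; same approach as the paper.
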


\begin{corollary}\label{c32} 

$\dot \varphi_t$ converges to $0$ in $C^\infty (X_{reg}) $ as $t\rightarrow \infty$.

\end{corollary}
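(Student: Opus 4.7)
The plan is to reduce the problem to showing exponential decay of $\|\dot\varphi_t\|_{L^2}$, which, combined with the uniform local $C^k$ bounds already in hand, will upgrade to $C^\infty_{loc}(X_{reg})$ convergence.

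First I would differentiate (\ref{cy}) in $t$. Since $\omega_t^n = e^{c_t}\Omega$, differentiating and dividing by $\omega_t^n$ yields $\mathrm{tr}_{\omega_t}(\dot\omega_t)=\dot c_t$. Combined with $\dot\omega_t = -e^{-t}\omega_\A + \ddbar\dot\varphi_t$, this gives the pointwise identity
\[
\Delta_t\dot\varphi_t = \dot c_t + e^{-t}\,\mathrm{tr}_{\omega_t}(\omega_\A).
\]
Multiplying by $\dot\varphi_t$ and integrating by parts on the closed manifold $X'$ (no boundary term), together with the normalization $\int_{X'}\dot\varphi_t\,dV_{g_t}=0$, I obtain
\[
\int_{X'}|\nabla\dot\varphi_t|^2_{g_t}\,dV_{g_t} = -ne^{-t}\int_{X'}\dot\varphi_t\,\omega_\A\wedge \omega_t^{n-1}.
\]
Using the uniform $L^\infty$ bound on $\dot\varphi_t$ from the previous lemma and the fact that $[\omega_t]^{n-1}\cdot[\omega_\A]$ stays bounded as $t\to\infty$, the right-hand side is $O(e^{-t})$, so the Dirichlet energy of $\dot\varphi_t$ decays exponentially.

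Next I would invoke a uniform spectral gap. Because $Ric(g_t)=0$ and $\mathrm{diam}_{g_t}(X')\leq C$ by Proposition \ref{basie}, the Li-Yau estimate $\lambda_1(g_t)\geq \pi^2/\mathrm{diam}_{g_t}(X')^2$ gives a uniform Poincar\'e inequality. Together with the mean-zero condition this yields
\[
\int_{X'}\dot\varphi_t^2\,dV_{g_t}\leq C\int_{X'}|\nabla\dot\varphi_t|^2_{g_t}\,dV_{g_t}=O(e^{-t}).
\]
Since $dV_{g_t}=e^{c_t}\Omega'$ with $c_t\to 0$, this is equivalent to $\|\dot\varphi_t\|_{L^2(X',\Omega')}\to 0$.

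Finally I would upgrade $L^2$ decay to $C^\infty_{loc}$ decay on $X_{reg}$. By Proposition \ref{basie} and Lemma \ref{wco}, both $\varphi_t$ and $u=\varphi_t+\dot\varphi_t$ are uniformly bounded in $C^k(K)$ for every $k$ and every $K\subset\subset X_{reg}$, so the same holds for $\dot\varphi_t=u-\varphi_t$. For any sequence $t_j\to\infty$, Arzel\`a--Ascoli and a diagonal argument produce a subsequence converging in $C^\infty_{loc}(X_{reg})$ to some smooth function $v$. Dominated convergence (using the uniform $L^\infty$ bound) together with $X_{sing}$ having measure zero forces $v\equiv 0$ in $L^2$, hence $v\equiv 0$ by smoothness. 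Since every subsequential $C^\infty_{loc}$ limit vanishes, the whole family $\dot\varphi_t\to 0$ in $C^\infty_{loc}(X_{reg})$. The only delicate point is verifying that the Poincar\'e constant stays bounded as $t\to\infty$; but this is precisely where the uniform diameter bound from (\ref{diame}) in Proposition \ref{basie} is essential, so the argument is self-contained given the estimates already established.
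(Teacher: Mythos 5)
Your proposal is correct, and its core is the same as the paper's: you differentiate (\ref{cy}) to get $\Delta_t\dot\varphi_t=\dot c_t+e^{-t}\mathrm{tr}_{\omega_t}(\omega_{\A})$, integrate against $\dot\varphi_t$ using the normalization $\int_{X'}\dot\varphi_t\,dV_{g_t}=0$ to obtain $\int_{X'}|\nabla_t\dot\varphi_t|^2\,dV_{g_t}=O(e^{-t})$, and then use the uniform local $C^k$ bounds (Proposition \ref{basie} and Lemma \ref{wco}) plus a subsequence/Arzel\`a--Ascoli argument to upgrade to $C^\infty_{loc}(X_{reg})$. Where you diverge is the endgame: the paper stops at the gradient decay and argues that any $C^\infty_{loc}$ subsequential limit $\psi$ satisfies $\nabla_{\infty}\psi=0$, hence is constant on $X_{reg}$, and the constant is $0$ because $\int_{X'}\psi\,dV_{g_\infty}=\lim_{t\to\infty}\int_{X'}\dot\varphi_t\,dV_{g_t}=0$; you instead invoke a uniform spectral gap $\lambda_1(g_t)\geq \pi^2/\mathrm{diam}_{g_t}(X')^2$ (Zhong--Yang/Li--Yau, valid since $Ric(g_t)=0$ and the diameter bound (\ref{diame}) holds), which converts the energy decay into $\|\dot\varphi_t\|_{L^2}=O(e^{-t/2})$ and identifies every subsequential limit as $0$ directly. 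Both routes are sound; yours buys a quantitative $L^2$ rate and avoids the paper's implicit use of connectedness of $X_{reg}$ and the interchange of limit and integral, at the cost of importing the eigenvalue lower bound. Two small points to tidy: the two-sided bound on $\dot\varphi_t$ that you use in the energy estimate comes from combining $|u|_{L^\infty}\leq C$ with $|\varphi_t|_{L^\infty}\leq C$ (the lemma you cite only gives the upper bound $\dot\varphi_t\leq C$), and the appeal to $X_{sing}$ having measure zero is unnecessary, since uniform convergence on compact subsets of $X_{reg}$ together with the $L^2(\Omega')$ decay already forces the limit to vanish there.
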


\begin{proof} Using integration by parts, we have for some $C>0$
$$\int_{X'} |\nabla_t \dot \varphi_t|^2 dV_{g_t}  =-\int_{X'} e^{-t} \dot\varphi_t \chi' \wedge \omega_t^{n-1}  + \dot c_t V_t\leq C e^{-t}. $$

By Lemma \ref{wco}, $\dot\varphi$ converges in $C^\infty (X_{reg})$ after passing to any convergent subsequence to a solution $\psi \in  C^\infty (X_{reg})$ satisfying 
$\nabla_\infty \psi = 0$, where $\nabla_\infty$ is the covariant derivative with respect to the limiting metric $g_\infty$ on $X_{reg}$ as in Corollary \ref{limitsol}. Therefore $\psi = 0$ since  $\int_{X'} \psi dV_{g_\infty} = \lim_{t\rightarrow \infty} \int_{X'} \dot\varphi dV_{g_t}= 0$, where $dV_{g_\infty} = (\omega_\infty)^n $.

\end{proof}

\begin{lemma} \label{pregradie}  There exist $A, C>0$ such that for all $t\geq0$, we have on $X'$
\begin{equation} \label{dut}
 |\nabla_t u| ^2 \leq C (A- u). 
 \end{equation}

\end{lemma}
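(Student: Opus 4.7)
The plan is a maximum principle argument on a suitably chosen auxiliary function. Since $u$ is uniformly bounded by Lemma~3.1.4 (the one preceding this statement), we may fix $A > \sup_{X'\times[0,\infty)}|u| + 1$ so that $A - u \geq 1$ on $X'$ for all $t \geq 0$. Then the claim $|\nabla_t u|^2 \leq C(A-u)$ is equivalent to a uniform gradient bound, and it suffices to bound the quantity
$$Q_t = \frac{|\nabla_t u|^2}{A - u}$$
from above by a constant independent of $t$. Since $X'$ is compact and $u \in C^\infty$ (for each fixed $t > 0$, by the regularity in Proposition~3.0.3 restricted to the family parameter), $Q_t$ attains its maximum at some $p_t \in X'$, where $\nabla Q_t = 0$ and $\Delta_t Q_t \leq 0$.

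First I would compute $\Delta_t Q_t$ using the Bochner formula, which simplifies because $\mathrm{Ric}(g_t) = 0$:
$$\Delta_t |\nabla_t u|^2 = |\nabla \nabla u|^2 + |\nabla \bar\nabla u|^2 - 2\,\mathrm{Re}\langle \nabla u, \nabla \, \mathrm{tr}_{\omega_t}(\chi')\rangle,$$
together with the identity $\Delta_t u = n + \dot c_t - \mathrm{tr}_{\omega_t}(\chi')$ from (\ref{u1}). A routine calculation for the quotient $Q = f/g$ with $f = |\nabla u|^2$, $g = A-u$, combined with the critical point condition $\nabla f = -f\nabla u/(A-u)$ at $p_t$, collapses the first-order terms and yields at $p_t$:
$$|\nabla\nabla u|^2 + |\nabla\bar\nabla u|^2 - 2\,\mathrm{Re}\langle \nabla u, \nabla\,\mathrm{tr}_{\omega_t}\chi'\rangle + \frac{|\nabla u|^2(n + \dot c_t - \mathrm{tr}_{\omega_t}\chi')}{A - u} \leq 0.$$

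The principal obstacle is the cross term $-2\,\mathrm{Re}\langle \nabla u,\nabla\,\mathrm{tr}_{\omega_t}\chi'\rangle$, which can be large where $\mathrm{tr}_{\omega_t}\chi'$ varies rapidly (in particular near the exceptional locus $E$). To absorb it I would augment the test function by a multiple of the trace, considering instead
$$\widetilde Q_t = \frac{|\nabla_t u|^2}{A - u} + B\,\mathrm{tr}_{\omega_t}(\chi')$$
for a constant $B$ to be chosen. Applying the Schwarz-type estimate (\ref{schest}),
$$\Delta_t\,\mathrm{tr}_{\omega_t}\chi' \geq -C\,(\mathrm{tr}_{\omega_t}\chi')^2 + |\nabla_t\,\mathrm{tr}_{\omega_t}\chi'|^2,$$
together with Cauchy-Schwarz
$2|\langle\nabla u, \nabla\,\mathrm{tr}_{\omega_t}\chi'\rangle| \leq \epsilon^{-1}|\nabla u|^2 + \epsilon|\nabla\,\mathrm{tr}_{\omega_t}\chi'|^2$, one chooses $\epsilon = 1/(2B)$ so that the $|\nabla\,\mathrm{tr}_{\omega_t}\chi'|^2$ term is swallowed by the $B|\nabla\,\mathrm{tr}_{\omega_t}\chi'|^2$ contribution coming from the Schwarz estimate.

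After this absorption, the remaining inequality at the maximum point of $\widetilde Q_t$ reads schematically
$$|\nabla\nabla u|^2 + |\nabla\bar\nabla u|^2 + \frac{|\nabla u|^2\,\mathrm{tr}_{\omega_t}\chi'}{A-u} \leq C_1 B(\mathrm{tr}_{\omega_t}\chi')^2 + C_2\frac{|\nabla u|^2}{A-u} + C_3.$$
Here the critical ingredient is the pointwise inequality $|\nabla\bar\nabla u|^2 \geq \tfrac{1}{n}(\Delta_t u)^2 = \tfrac{1}{n}(\mathrm{tr}_{\omega_t}\chi' - n - \dot c_t)^2$, which provides a quadratic lower bound in $\mathrm{tr}_{\omega_t}\chi'$. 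For $B$ taken small enough relative to $1/n$, the term $\tfrac{1}{2n}(\mathrm{tr}_{\omega_t}\chi')^2$ dominates $C_1 B(\mathrm{tr}_{\omega_t}\chi')^2$, forcing $\mathrm{tr}_{\omega_t}\chi'(p_t)$ to be uniformly bounded; substituting back yields $|\nabla u|^2(p_t)/(A - u(p_t)) \leq C$, and the nonnegativity of $B\,\mathrm{tr}_{\omega_t}\chi'$ propagates this bound to all of $X'$, giving (\ref{dut}) with a constant independent of $t$.

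The main difficulty is the simultaneous balancing of constants: $B$ must be small enough to let $|\nabla\bar\nabla u|^2$ dominate $B(\mathrm{tr}_{\omega_t}\chi')^2$, yet large enough to absorb the Cauchy-Schwarz remainder from the cross term. The fact that $\mathrm{Ric}(g_t) = 0$ (eliminating curvature corrections in Bochner) and that $\dot c_t, u$ are uniformly bounded (by Proposition~\ref{basie} and the preceding lemmas) is precisely what makes this balancing possible with constants independent of $t$.
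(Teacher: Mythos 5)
Your overall strategy is the paper's: the same auxiliary function $\frac{|\nabla_t u|^2}{A-u}+B\,tr_{\omega_t}(\chi')$ (the paper takes $B=1$ and calls it $H$), the Bochner formula with $Ric(g_t)=0$, the identity (\ref{u1}), the Schwarz-type inequality (\ref{schest}) to absorb the cross term in $\nabla u\cdot\overline\nabla\,tr_{\omega_t}(\chi')$, and the maximum principle. However, your execution has a genuine gap. First, a sign error: the term $\frac{|\nabla u|^2\,tr_{\omega_t}(\chi')}{A-u}$ that you place on the favorable side of your inequality at the maximum point comes from $|\nabla u|^2\,\Delta_t\bigl(\tfrac{1}{A-u}\bigr)$, whose $\Delta_t u$-part equals $\frac{|\nabla u|^2(n+\dot c_t-tr_{\omega_t}(\chi'))}{(A-u)^2}$ by (\ref{u1}); the trace enters with a minus sign, so this is a bad coupling term that must be dominated, not a helpful one. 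Second, and more importantly, you deliberately avoid the estimate (\ref{schw}) of Proposition \ref{basie}, which already gives $tr_{\omega_t}(\chi')\leq C$ uniformly in $t$, and instead try to force boundedness of the trace at the maximum via $|\nabla\overline\nabla u|^2\geq\frac{1}{n}(\Delta_t u)^2$ with $B$ small. This is exactly where the bookkeeping fails as described: handling the first-order term $2\,{\rm Re}\langle\nabla|\nabla u|^2,\nabla u\rangle/(A-u)^2$ by the critical-point identity $\nabla\widetilde Q=0$ cancels the good quartic term $\frac{2|\nabla u|^4}{(A-u)^3}$ identically (and the naive Cauchy--Schwarz absorption consumes the Hessian-plus-quartic budget exactly), so you cannot simultaneously spend the full $|\nabla\overline\nabla u|^2$ on the $(\Delta_t u)^2$ lower bound and keep a quartic term to control the terms linear in $|\nabla u|^2$ (including the misplaced coupling term) in the regime where $tr_{\omega_t}(\chi')$ stays moderate but $|\nabla u|$ blows up. Closing the argument without the a priori trace bound needs extra Cheng--Yau-type maneuvers (e.g.\ recovering a quartic term from the Hessian through the critical-point identity and $|\nabla|\nabla u|^2|^2\leq 2|\nabla u|^2(|\nabla\nabla u|^2+|\nabla\overline\nabla u|^2)$, splitting the Hessian budget), none of which appears in your outline.

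The paper sidesteps all of this by invoking (\ref{schw}): since $\omega_t\geq C^{-1}\chi'$, the trace $tr_{\omega_t}(\chi')$ is uniformly bounded, so the $-C\,tr_{\omega_t}(\chi')^2$ term from (\ref{schest}) and the coupling term $\frac{|\nabla u|^2\,tr_{\omega_t}(\chi')}{(A-u)^2}$ are harmless, and boundedness of $H$ immediately yields (\ref{dut}) because the trace summand of $H$ is bounded. You should use this known bound; then the tension in choosing $B$ disappears (take $B=1$ as in the paper), and the only remaining care is the standard absorption of the first-order term, which deserves to be written out rather than called routine.
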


\begin{proof}

We choose $A>0$ such that $ A - u\geq A/2.$ Then for sufficiently large $A$,  
\begin{eqnarray*}
 && \Delta_t \left(  \frac{ |\nabla_t u|^2}{ A- u}     \right)  \\
&\geq & \frac{ |\nabla_t \nabla_t u|^2 + |\nabla_t \overline\nabla_t u|^2- 2 Re\nabla_t u \cdot \overline\nabla_t ( tr_{\omega_t} (\chi') )}{A-u} + \frac{2Re \nabla_t u \cdot \overline \nabla_t |\nabla_t u|^2 }{(A-u)^2} +\frac{2|\nabla_t u|^4}{(A-u)^3}\\
&& + \frac{ |\nabla_t u|^2 (-tr_{\omega_t}(\chi' ) + n + \dot c_t) }{(A-u)^2} \\
&\geq& \frac{ 2|\nabla_t u|^4}{(A-u)^3} - \frac{|\nabla_t u|^2}{(A-u)^2} - \frac{2Re(  \nabla_t u \cdot \overline \nabla_t tr_{\omega_t} (\chi') ) }{A-u} - C
\end{eqnarray*}
holds on $X'\times [0, \infty)$ for some uniform constant $C>0$.
We let $$H = \frac{ |\nabla_t u|^2}{ A- u}  + tr_{\omega_t} (\chi'). $$ Then we immediately  have 
$$\Delta_t  H \geq   \frac{ 2|\nabla_t u|^4}{(A-u)^3} - \frac{ 3 |\nabla_t u|^2}{(A-u)^2} - C $$
for some fixed constant $C>0$ if we choose $A$ sufficiently large. 
By applying the maximum principle, $H$ is uniformly bounded for all $t\in [0, \infty)$ as $tr_{\omega_t}(\chi')$ is uniformly bounded. Then the estimate (\ref{dut}) immediately follows.

\end{proof}

The following proposition is the main result of this section. 

\begin{proposition} \label{gradie} Let $\varphi_\infty$ be the solution in Corollary \ref{limitsol}. There exists $C>0$ such that 
\begin{equation}\label{gradest}
\sup_{X'\setminus E} |\nabla_{g_\infty}  \varphi_\infty|  \leq C . 
\end{equation}

\end{proposition}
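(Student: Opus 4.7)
The plan is to prove the bound for $\varphi_\infty$ simply by passing to the limit $t \to \infty$ in the uniform gradient estimate of Lemma \ref{pregradie}, exploiting the local smooth convergence on $X' \setminus E$ that is already in hand.

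First I would observe that the inequality $|\nabla_t u|^2 \leq C(A - u)$ from Lemma \ref{pregradie} is really a uniform $L^\infty$ bound on $|\nabla_{g_t} u|_{g_t}$. Indeed, by the preceding lemma the auxiliary function $u = \varphi_t + \dot\varphi_t$ is bounded in $L^\infty(X')$ independently of $t$, so $A - u$ is trapped between two fixed positive constants. Thus there exists $C' > 0$, independent of $t$, such that
\[
|\nabla_{g_t} u|^2_{g_t} \leq C' \quad \text{on } X' \times [0,\infty).
\]

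Next I would fix an arbitrary compact subset $K \subset X' \setminus E$ and argue that every object in the estimate above converges smoothly on $K$. The higher order estimate (\ref{ck}) in Proposition \ref{basie} gives a uniform $C^k(K)$ bound on $\varphi_t$ for each $k$, so along a subsequence $\varphi_t \to \varphi_\infty$ in $C^\infty(K)$; the limit is forced to equal the solution in Corollary \ref{limitsol} by uniqueness, so the whole family converges. Consequently $g_t \to g_\infty$ smoothly on $K$. Combining this with Corollary \ref{c32}, which provides $\dot\varphi_t \to 0$ in $C^\infty(X_{reg})$, I conclude $u \to \varphi_\infty$ smoothly on $K$ and therefore
\[
|\nabla_{g_t} u|^2_{g_t} \longrightarrow |\nabla_{g_\infty} \varphi_\infty|^2_{g_\infty} \quad \text{uniformly on } K.
\]
Passing to the limit in the uniform bound then yields $|\nabla_{g_\infty} \varphi_\infty|^2_{g_\infty} \leq C'$ on $K$, and since $K$ was arbitrary and $C'$ does not depend on $K$, the estimate (\ref{gradest}) follows on all of $X' \setminus E$.

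There is essentially no new obstacle at this step: the genuine difficulty was absorbed into Lemma \ref{pregradie} (whose maximum principle argument on the compact manifold $X'$ relied on the Schwarz-type bound (\ref{schw}) to control $tr_{\omega_t}(\chi')$) and into Corollary \ref{c32} (which required the uniform Green's function bound coming from the diameter and volume controls (\ref{c0})--(\ref{diame})). What one must be careful about is that the target bound is intrinsic, i.e.\ measured by $g_\infty$ rather than by the fixed background metric $\chi'$, but this is exactly what the limiting procedure above delivers; the bound degenerates at $E$ because $g_\infty$ itself does, so the restriction to $X' \setminus E$ is unavoidable and consistent with the statement.
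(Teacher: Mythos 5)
Your argument is correct and follows essentially the same route as the paper: both pass to the limit $t\to\infty$ in the estimate $|\nabla_t u|^2 \leq C(A-u)$ of Lemma \ref{pregradie}, using the uniform bound on $u$, the local smooth convergence of $\varphi_t$ (and hence $g_t$) on compact subsets of $X'\setminus E$, and $\dot\varphi_t \to 0$ from Corollary \ref{c32} to identify the limit of $u$ with $\varphi_\infty$. No substantive difference from the paper's proof.
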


\begin{proof}  $\varphi_t$ is uniformly bounded in $L^\infty(X')$ and in local $C^\infty( X'\setminus E)$. In particular,  $\dot \varphi_t$ converges to $0$ in $C^\infty(X'\setminus E)$ by Corollary \ref{c32}.  Let $u_\infty = \lim_{t\rightarrow \infty} u$. 
Then $u_\infty = \varphi_\infty$ and immediately from Lemma \ref{pregradie} $$ |\nabla_{g_\infty} \varphi_\infty|^2 \leq C (A - \varphi_\infty) \leq C' $$
uniformly on $X_{reg}$ for some fixed constant $C$ and $C'$.

\end{proof}


\subsection{$L^2$-estimates}

In this section, we will derive various $L^2$-estimates for global sections in $H^0(X, L^k)$. We first construct a family of cut-off functions along analytic subvarieties. We learn such a construction from J. Sturm \cite{Stu}. 

\begin{lemma} \label{app} Let $Y$ be a  normal projective variety and $Z$ be a subvariety of $Y$ such that $Y\setminus Z$ is smooth. Suppose $\omega$ is a K\"ahler current on $Y$ with bounded local potentials. Then for any $\epsilon>0$ and $K\subset\subset Y\setminus Z $, there exists  $\rho_\epsilon \in C^\infty(Y\setminus Z)$ such that 

\begin{enumerate}

\item $ 0\leq \rho_\epsilon \leq 1$, 
\smallskip

\item $Supp \rho_\epsilon \subset\subset Y\setminus Z$, 
\smallskip

\item $\rho_\epsilon=1$ on $K$, 
\smallskip

\item $ \int_Y |\nabla \rho_\epsilon|^2 \omega^n = \int_{Y_{reg}}  \sqrt{-1}\partial \rho_\epsilon \wedge \dbar \rho_\epsilon \wedge \omega^{n-1}< \epsilon$. 

\end{enumerate}

\end{lemma}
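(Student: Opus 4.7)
The plan is to construct $\rho_\epsilon$ by composing a smooth one-variable cutoff with a global quasi-plurisubharmonic function whose iterated-logarithm singularities along $Z$ are tame enough to keep the Dirichlet energy small. First I would use projectivity to fix an ample line bundle $L\to Y$, a smooth Hermitian metric $h$ on some $L^m$, and a global section $s\in H^0(Y,L^m)$ whose zero scheme contains $Z$, rescaled so $|s|^2_h\leq e^{-e}$. Set $u=-\log|s|^2_h\geq e$ and define
\[
\psi=-\log u=-\log(-\log|s|^2_h),
\]
which is smooth on $Y\setminus Z$, bounded above by $-1$, and satisfies $\psi\to-\infty$ along $Z$. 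The Poincar\'e-Lelong formula gives $\sqrt{-1}\ddbar u=c_1(L^m,h)$ on $Y\setminus Z$, and a chain-rule calculation yields the crucial identity
\[
\sqrt{-1}\,\partial\psi\wedge\dbar\psi \;=\; \frac{c_1(L^m,h)}{u} \;+\; \sqrt{-1}\ddbar\psi \qquad\text{on } Y\setminus Z,
\]
from which one reads off that $\psi$ is quasi-psh on $Y$, with $\sqrt{-1}\ddbar\psi+A\omega_0\geq 0$ for a smooth K\"ahler reference $\omega_0$ and some $A>0$.

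Next, since $K\subset\subset Y\setminus Z$ is compact, choose $M_0\in\mathbb{R}$ with $\psi\geq M_0$ on $K$. Fix a smooth template $\chi:\mathbb{R}\to[0,1]$ with $\chi(t)=1$ for $t\geq 0$, $\chi(t)=0$ for $t\leq -1$, and $|\chi'|\leq 2$. For a large parameter $N$ to be chosen, set
\[
\rho_\epsilon(y)=\chi\!\left(\frac{\psi(y)-M_0+N}{N}\right),
\]
extended by $0$ on $Z$. Then (1)--(3) are built in: $0\leq\rho_\epsilon\leq 1$, $\rho_\epsilon\equiv 1$ on $K$, and the support of $\rho_\epsilon$ is compactly contained in $Y\setminus Z$ because $\psi\to-\infty$ along $Z$. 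The chain rule gives
\[
\sqrt{-1}\,\partial\rho_\epsilon\wedge\dbar\rho_\epsilon\wedge\omega^{n-1} \;\leq\; \frac{4}{N^2}\,\mathbf{1}_{\{M_0-N\leq\psi\leq M_0\}}\,\sqrt{-1}\,\partial\psi\wedge\dbar\psi\wedge\omega^{n-1},
\]
so (4) reduces to showing $I:=\int_{Y\setminus Z}\sqrt{-1}\,\partial\psi\wedge\dbar\psi\wedge\omega^{n-1}<\infty$, after which $N$ may be taken large enough that $4I/N^2<\epsilon$.

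The main obstacle is precisely the finiteness of $I$, because $\omega^{n-1}$ is only a positive closed current with bounded local potentials (and $\psi$ is unbounded below). Integrating the key identity,
\[
I \;=\; \int_{Y\setminus Z}\frac{c_1(L^m,h)}{u}\wedge\omega^{n-1} \;+\; \int_{Y\setminus Z}\sqrt{-1}\ddbar\psi\wedge\omega^{n-1}.
\]
The first summand is immediately finite: $c_1(L^m,h)\wedge\omega^{n-1}$ is a well-defined signed measure of finite total variation by Bedford--Taylor theory applied to $\omega$, and the weight $1/u\leq 1/e$ is bounded. For the second summand I would argue that it vanishes by the truncation $\psi_k=\max(\psi,-k)$: each $\psi_k$ is bounded and quasi-psh with the same constant $A$, and closedness of $\omega^{n-1}$ together with Stokes' theorem applied in the Bedford--Taylor sense gives $\int\sqrt{-1}\ddbar\psi_k\wedge\omega^{n-1}=0$. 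A direct computation shows the Lelong number of $\psi$ vanishes identically along $Z$ (the $\log\log$ growth is subleading to $\log$), which is precisely the condition guaranteeing weak continuity of $\sqrt{-1}\ddbar\psi_k\wedge\omega^{n-1}\to\sqrt{-1}\ddbar\psi\wedge\omega^{n-1}$ in the limit; the total integral passes to the limit and remains zero. This yields $I<\infty$ and completes the construction. I expect the Lelong-number/weak-continuity passage to be the one point where the argument requires the most care.
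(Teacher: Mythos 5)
Your route is genuinely different from the paper's. The paper first reduces, via a log resolution, to the case where $Y$ is smooth and $Z$ is a union of smooth divisors, and then uses the truncated--log cutoff: $\eta_\epsilon=\max(\log|s|^2_h,\log\epsilon)$ and $\rho_\epsilon=F(\eta_\epsilon/\log\epsilon)$. One integration by parts against the bounded $\theta$-plurisubharmonic function $\eta_\epsilon$ plus the cohomological bound $[\theta]^n$ gives energy $\leq C(-\log\epsilon)^{-1}\to 0$, so no improper integral ever has to be controlled. You instead use the log--log function $\psi=-\log(-\log|s|^2_h)$ and aim to show that its \emph{total} Dirichlet energy $I=\int_{Y\setminus Z}\sqrt{-1}\partial\psi\wedge\dbar\psi\wedge\omega^{n-1}$ is finite; this is a classical and legitimate alternative, and your construction of $\rho_\epsilon$, the verification of (1)--(3), and the reduction of (4) to $I<\infty$ are fine, modulo two smaller points: a single section $s$ vanishing on $Z$ may have extra zero components meeting $K$, so you should take finitely many sections cutting out $Z$ set-theoretically and use $u=-\log\sum_i|s_i|^2_h$; and since $Y$ is singular you should, as the paper does, pass to a resolution (on which $\omega$ pulls back with bounded local potentials) before invoking Bedford--Taylor theory and Stokes.

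The step you yourself flag is where the argument as written breaks. You claim $\int_{Y\setminus Z}\sqrt{-1}\ddbar\psi\wedge\omega^{n-1}=0$ by truncating $\psi_k=\max(\psi,-k)$, applying Stokes to each $\psi_k$, and passing to the limit "because the Lelong numbers of $\psi$ vanish." Weak convergence of $\sqrt{-1}\ddbar\psi_k\wedge\omega^{n-1}$ on the compact space only controls the integral over all of $Y$, not over $Y\setminus Z$; what you actually need is that the limit current puts no mass on $Z$, and vanishing Lelong numbers of $\psi$ do not deliver this: for the limiting $(n,n)$-current, i.e.\ a measure, pointwise Lelong numbers detect only atoms, not mass on a positive-dimensional subvariety, and there is no off-the-shelf theorem saying that zero Lelong numbers of one factor force the mixed product to charge no mass on $Z$. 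Fortunately you do not need the vanishing, only an upper bound, and positivity rescues the argument: each $(\sqrt{-1}\ddbar\psi_k+A\omega_0)\wedge\omega^{n-1}$ is a positive measure of total mass $A[\omega_0]\cdot[\omega]^{n-1}$, so the weak limit is a positive measure of the same total mass; on compact subsets of $Y\setminus Z$ one has $\psi_k=\psi$ for large $k$, so the limit agrees there with the smooth computation, and since $\omega_0\wedge\omega^{n-1}$ charges no pluripolar set, $\int_{Y\setminus Z}\sqrt{-1}\ddbar\psi\wedge\omega^{n-1}\leq A[\omega_0]\cdot[\omega]^{n-1}-A\int_{Y\setminus Z}\omega_0\wedge\omega^{n-1}\leq 0$, whence $I\leq\int_{Y\setminus Z}u^{-1}\Theta_h\wedge\omega^{n-1}<\infty$. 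With that replacement (and the two fixes above) your proof closes; note that the paper's truncated--log cutoff sidesteps all of these pluripotential subtleties at the cost of an energy that merely decays like $1/|\log\epsilon|$ rather than being made small against a finite total integral.
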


\begin{proof} It suffices to prove for the case when $Y$ is smooth and $Z =D$ is a union of smooth divisors, after possible blow-ups.  Let $s$ be the defining section for $D$ and $h$ be a smooth hermitian metric on the line bundle associated to $[D]$. Without loss of generality, we can assume that $|s|_h^2 \leq 1$. Let $\theta$ be a K\"ahler metric on $Y$ such that $\theta > Ric(h)$ and $[\theta]\geq [\omega]$. 
Let $F$ be the standard smooth cut-off function on $[0, \infty)$ with $F=1$ on $[0, 1/2]$ and $F=0$ on $[1, \infty)$. 
We then let 
$$\eta_\epsilon = \max ( \log |s|^2_h, \log \epsilon) . $$
For sufficiently small $\epsilon$, we have $-\log \epsilon \leq \eta_\epsilon \leq 0$. 
Then obviously, $\eta_\epsilon \in PSH(Y, \theta) \cap C^0(Y)$.
Now we let 
$$\rho_\epsilon = F\left(\frac{\eta_\epsilon}{\log \epsilon} \right).$$
Then  $\rho_\epsilon =1 $ on $K$ if $\epsilon$ is sufficiently small. Straightforward calculations give
\begin{eqnarray*}
&&\int_Y \sqrt{-1} \partial \rho_\epsilon \wedge \dbar \rho_\epsilon \wedge \omega^{n-1}  \\
&=&(\log \epsilon)^{-2}  \int_Y (F')^2 \sqrt{-1}\partial \eta_\epsilon \wedge \dbar \eta_\epsilon \wedge \omega^{n-1}\\
&\leq& C (\log \epsilon)^{-2} \int_Y (-\eta_\epsilon) \ddbar \eta_\epsilon \wedge \omega^{n-1} \\
&\leq& C(\log \epsilon)^{-2} \int_Y(-\eta_\epsilon)(\theta+ \ddbar \eta_\epsilon) \wedge \omega^{n-1}
+ C(\log \epsilon)^{-2} \int_Y \eta_\epsilon ~ \theta \wedge \omega^{n-1}\\
&\leq& C(-\log \epsilon)^{-1} \int_Y(\theta+ \ddbar \eta_\epsilon) \wedge \omega^{n-1} \\
&\leq& C(-\log \epsilon)^{-1} [\theta]^n \rightarrow 0
\end{eqnarray*}
as $\epsilon \rightarrow 0$.
Therefore we obtain  $\rho_\epsilon \in C^0(Y)$ satisfying the conditions in the lemma. The lemma is then proved by smoothing $\rho_\epsilon$ on $Supp~ \rho_\epsilon \setminus K$.

\end{proof}

The rest of the section aims to derive various $L^2$-estimates for holomorphic sections. The difference between our calculations and the standard ones from \cite{T1, DS} is that we directly obtain such estimates on the limiting singular variety instead of the approximating manifolds.  First, we abuse the notation by identifying $\varphi_\infty$ the solution of Corollary \ref{limitsol} on $X$ and $\pi^{-1}(\varphi_\infty)$ on $X'$.  Let $h_{FS}$ be the smooth fixed hermitian metric on $L$ induced from some embedding of $L^k$ for some $k>>1$ such that $Ric(h_{FS}) = \chi$. We define the singular hermitian metric $h_\infty$ and $h_\infty'$ on $L$ and $L'$ by
$$h_\infty = e^{-\varphi_\infty} h_{FS}, ~ h_\infty' = \pi^*h_\infty.$$
Obviously, $Ric (h_\infty) = \omega_\infty$. Without confusion, we identify the above quantities on $(X, L)$ and $(X', L')$. 

The following two lemmas follow immediately from the uniform bound on $\varphi_\infty$. 

\begin{lemma} \label{partial} For any $k\in \mathcal{F}(X, L)$ and a basis $\{ s_j \}_{j=1}^{d_k+1}$ of $H^0(X, L^k)$, there exists $\epsilon>0$ such that 
\begin{equation}
\inf_{z \in X_{reg}} \sup_{j=1, ..., d_k+1} |s_j(z) |_{h_\infty^k} ^2 \geq \epsilon. 
\end{equation}

\end{lemma}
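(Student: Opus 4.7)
The statement is essentially an immediate consequence of (i) the base-point-freeness of $L^k$ for $k\in \mathcal{F}(X,L)$, (ii) the compactness of $X$, and (iii) the uniform $L^\infty$ bound on $\varphi_\infty$. The plan is to compare the singular metric $h_\infty$ to the smooth Fubini-Study metric $h_{FS}$ and reduce everything to a continuous statement on the compact variety $X$.

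First, I would invoke the definition of $\mathcal{F}(X,L)$: since $k\in \mathcal{F}(X,L)$, the linear system $|L^k|$ is base-point-free, so for every $z\in X$ there exists some section in $H^0(X, L^k)$ not vanishing at $z$. Because $\{s_j\}_{j=1}^{d_k+1}$ is a basis, at least one $s_j$ must satisfy $s_j(z)\neq 0$. Consequently, the function
\[
F(z):= \sup_{j=1, \ldots, d_k+1} |s_j(z)|_{h_{FS}^k}^2
\]
is everywhere strictly positive on $X$. Since $h_{FS}$ is a smooth hermitian metric on $L$ and each $s_j$ is a global holomorphic section, $F$ is continuous on the compact variety $X$, so there exists $\epsilon_0>0$ with $F(z)\geq \epsilon_0$ for every $z\in X$.

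Second, I would use the relationship $h_\infty = e^{-\varphi_\infty} h_{FS}$, which gives
\[
|s_j(z)|_{h_\infty^k}^2 = e^{-k\varphi_\infty(z)}|s_j(z)|_{h_{FS}^k}^2
\]
for every $z\in X_{reg}$. By Corollary \ref{limitsol}, $\varphi_\infty\in L^\infty(X)$, so $\|\varphi_\infty\|_{L^\infty(X)}\leq M$ for some $M>0$, and consequently $e^{-k\varphi_\infty(z)}\geq e^{-kM}$ on $X_{reg}$. Taking the supremum over $j$ and combining with the lower bound on $F$ yields
\[
\sup_{j} |s_j(z)|_{h_\infty^k}^2 \geq e^{-kM}\cdot F(z)\geq e^{-kM}\epsilon_0 =:\epsilon>0
\]
uniformly in $z\in X_{reg}$, which is the claimed inequality.

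There is no substantive obstacle here: the entire content of the lemma is that the Fubini-Style uniform lower bound coming from base-point-freeness survives twisting by the bounded potential $\varphi_\infty$. The only point worth emphasizing is that although $h_\infty$ is only a singular hermitian metric (and degenerates at $X_{sing}$ a priori), the boundedness of $\varphi_\infty$ from \textit{both} sides means the comparison with $h_{FS}$ is two-sided on all of $X$, so that the infimum on $X_{reg}$ is unaffected by the singular locus.
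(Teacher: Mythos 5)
Your proposal is correct and is exactly the paper's argument, merely spelled out in detail: the paper's one-line proof also combines the global generation of $L^k$ (hence a positive continuous lower bound for $\sup_j |s_j|_{h_{FS}^k}^2$ on the compact $X$) with the uniform $L^\infty$ bound on $\varphi_\infty$ to pass from $h_{FS}$ to $h_\infty = e^{-\varphi_\infty}h_{FS}$. Nothing further is needed.
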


\begin{proof} This immediately follows from the choice of $k$ so that $L^k$ is globally generated and the fact that $\varphi_\infty$ is uniformly bounded in $L^\infty(X)$.

\end{proof}

By the boundedness of $\varphi_\infty$, we have the following lemma.

\begin{lemma} \label{l20} For any $s\in H^0(X, L^k)$, there exists $C_{k, s}>0$ such that 
$$\sup_{X} |s|_{h_\infty^k} ^2 \leq C_{k,s}. $$

\end{lemma}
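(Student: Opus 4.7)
The plan is to unwind the definition of $h_\infty$ and reduce the claim to two ingredients: the compactness of $X$ together with the smoothness of $h_{FS}$, and the uniform $L^\infty$ bound on $\varphi_\infty$ already recorded in Corollary \ref{limitsol}. Concretely, I would begin by writing the pointwise identity
\begin{equation*}
|s|_{h_\infty^k}^2 = |s|_{h_{FS}^k}^2 \cdot e^{-k \varphi_\infty}
\end{equation*}
valid on $X_{reg}$, which comes directly from $h_\infty = e^{-\varphi_\infty} h_{FS}$.

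Next I would control each factor separately. Since $s \in H^0(X, L^k)$ is a global holomorphic section on the compact projective variety $X$ and $h_{FS}$ is a smooth (hence continuous) hermitian metric on $L$, the function $|s|_{h_{FS}^k}^2$ is continuous on $X$ and therefore bounded above by some constant depending on $k$ and $s$ alone. For the second factor, Corollary \ref{limitsol} gives $\varphi_\infty \in L^\infty(X)$, so $e^{-k\varphi_\infty} \leq e^{k \|\varphi_\infty\|_{L^\infty(X)}}$ pointwise on $X$. Multiplying the two bounds produces a constant $C_{k,s}>0$ with $|s|_{h_\infty^k}^2 \leq C_{k,s}$ on $X_{reg}$, and since $X_{sing}$ has complex codimension at least two and the bound depends only on supremums over $X$, the stated estimate holds on all of $X$.

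There is no real obstacle here: the lemma is essentially a bookkeeping statement that packages the $L^\infty$ bound on $\varphi_\infty$ into the metric $h_\infty^k$. The only thing one needs to be a little careful about is interpreting $|s|_{h_\infty^k}^2$ across the singular set, but because we have a smooth background metric $h_{FS}$ and a bounded potential $\varphi_\infty$, the pointwise expression extends continuously in a measurable sense and the supremum estimate is unaffected.
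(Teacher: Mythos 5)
Your argument is correct and matches the paper's intended proof: the lemma is stated there with only the remark that it follows from the boundedness of $\varphi_\infty$, which is precisely the factorization $|s|_{h_\infty^k}^2 = e^{-k\varphi_\infty}\,|s|_{h_{FS}^k}^2$ you use, with $|s|_{h_{FS}^k}^2$ bounded by compactness of $X$ and smoothness of $h_{FS}$. Nothing further is needed.
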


We define the scaled norm $|| \cdot ||_{L^{\infty, \sharp}}$ and $|| \cdot ||_{L^{2, \sharp}}$ for $s\in H^0(X, L^k)$ with respect to the hermitian metric $h_\infty^k$ and $kg_\infty$.

\begin{proposition} \label{l21}  There exists  $K >0$ such that if  $s\in H^0(X, L^k)$ for $k\geq 1$, then 
\begin{equation}\label{l1}
\|s\|_{L^{\infty, \sharp}} \leq K \|s\|_{L^{2, \sharp}}.
\end{equation}

\end{proposition}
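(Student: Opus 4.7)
The plan is to establish a uniform mean-value inequality for $f := |s|^2_{h_\infty^k}$ in the scaled metric $\tilde g := k g_\infty$, and then take a supremum. The key observation is that because $Ric(h_\infty) = \omega_\infty$ on $X_{reg}$, away from the zero locus of $s$ we have $\ddbar \log |s|^2_{h_\infty^k} = -k\omega_\infty$, hence $\Delta_{\tilde g} \log f = -n$ and so $\Delta_{\tilde g} f \geq -nf$ pointwise on $X_{reg} \setminus \{s=0\}$. The inequality extends across the zero locus as a distributional inequality by standard regularization ($\log(\epsilon + f)$). Thus $f$ is subharmonic up to a bounded zero-order term in the fixed scale $\tilde g$, independently of $k$.

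With this in hand I would run a Moser iteration on unit balls in the scaled metric. For a center $x \in X_{reg}$, multiply the differential inequality by $f^{p-1}\rho_\eta^2 \zeta^2$, where $\zeta$ is a standard cutoff supported in $B_{\tilde g}(x,1)$ and $\rho_\eta$ is the cutoff from Lemma \ref{app} applied to the ambient singular variety with $Z = X_{sing}$; integrate by parts on $X_{reg}$ and let $\eta \to 0$. The extra Dirichlet-energy terms contributed by $\rho_\eta$ vanish in the limit by conclusion (4) of Lemma \ref{app}, which justifies the integration by parts directly on the singular variety. Combining the resulting Caccioppoli inequality with the local Sobolev inequality on $(B_{\tilde g}(x,1), \tilde g)$ and iterating in the standard way yields
\begin{equation*}
\sup_{B_{\tilde g}(x,1/2)} f \;\leq\; C \,V_{\tilde g}(B_{\tilde g}(x,1))^{-1} \int_{B_{\tilde g}(x,1)} f \, dV_{\tilde g}
\;\leq\; C \int_X f \, dV_{\tilde g} \;=\; C \|s\|_{L^{2,\sharp}}^2,
\end{equation*}
and taking the supremum over $x \in X_{reg}$ gives the desired estimate (\ref{l1}) with constant $K$ independent of both $s$ and $k$.

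The main obstacle is obtaining the local Sobolev constant and the noncollapsing lower bound $V_{\tilde g}(B_{\tilde g}(x,1)) \geq c > 0$ with $c$ independent of $k$ and of $x \in X_{reg}$, since as $k \to \infty$ the corresponding balls $B_{g_\infty}(x, k^{-1/2})$ shrink toward the singular set $X_{sing}$. My approach is to pull back the situation to the smooth Ricci-flat approximations $(X', g_t)$ of Proposition \ref{basie}, where $Ric(g_t) = 0$, the total volume converges to $[L']^n > 0$, and the diameter is uniformly bounded; Bishop-Gromov volume comparison combined with the uniform volume and diameter bounds delivers the required uniform noncollapsing on unit balls in the scaled metric $k g_t$, and a Croke/Cheeger-Colding type local Sobolev inequality with $k$-independent constant. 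The Moser iteration is carried out on $(X', kg_t)$ for each large $t$ (where there is no singular set to worry about), producing $\sup_{X'} |s|^2_{h_t^k} \leq K \int_{X'} |s|^2_{h_t^k} \,d(kg_t)^n$ with $K$ independent of $t$ and $k$; passing to the limit $t \to \infty$ using the $L^\infty$ convergence $\varphi_t \to \varphi_\infty$ and the smooth convergence of $g_t$ on compact subsets of $X_{reg}$ recovers the stated inequality on the singular variety.
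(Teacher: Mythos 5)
Your proposal is correct in substance and rests on the same two pillars as the paper's proof: the Bochner-type inequality $\Delta |s|^2 \geq -n|s|^2$ (the paper uses $\Delta |s| \geq -|s|$) in the rescaled metric, and the uniform geometric control coming from the Ricci-flat approximations $(X', g_t)$ of Proposition \ref{basie} (uniform volume and diameter, $Ric = 0$), fed into a Moser iteration. The arrangement, however, is different. The paper first transplants everything to the singular limit: it establishes the Caccioppoli inequality on $(X', k\omega_\infty)$ by integrating by parts against the cut-off functions of Lemma \ref{app} (with $Z=E$), proves a global Sobolev inequality for $k\omega_\infty$ by letting $t\to\infty$ in the Sobolev inequality for $(X', kg_t)$ (whose $k$-uniformity comes from scale invariance of the form $\|\nabla f\|_{L^2} \geq C_S\|f\|_{L^{2n/(n-1)}} - \|f\|_{L^2}$ for $k\geq 1$), and only then iterates on the limit space. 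You instead run the whole iteration on the smooth approximants $(X', kg_t)$, where no cut-off along $E$ is needed, and pass only the final $L^\infty$--$L^2$ estimate to the limit; and you obtain the $k$-uniform Sobolev input locally, from Bishop--Gromov noncollapsing of unit balls in $kg_t$, rather than from the global inequality. This is a legitimate and arguably cleaner organization. Two small points to tighten: (i) on $X'$ the natural approximating metric $h_t = e^{-\varphi_t}h_{FS}$ has curvature $\omega_t - e^{-t}\omega_{\A}$, not $\omega_t$, so the differential inequality acquires the term $+k e^{-t}\,tr_{\omega_t}(\omega_{\A})\,|s|^2$, which fortunately has the favorable sign; (ii) the attribution to Croke is imprecise, since Croke's inequality needs injectivity radius bounds --- with only $Ric \geq 0$ and volume noncollapsing you should invoke Buser/Saloff-Coste-type local Neumann--Sobolev inequalities, or simply use the global Sobolev inequality with the scaling observation, as the paper does. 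Finally, the limit $t\to\infty$ needs only the uniform $L^\infty$ bound on $\varphi_t$ together with its pointwise (locally smooth on $X_{reg}$) convergence, via dominated convergence on the $L^2$ side and pointwise lower semicontinuity on the sup side, so the appeal to "$L^\infty$ convergence" can be weakened accordingly.
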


\begin{proof} We break the proof into the following steps.

\begin{enumerate}

\item We write $|\nabla s|= |\nabla s|_{h_\infty^k, k g_\infty}$ and $|s|=|s|_{h_\infty^k}$ for simplicity. Straightforward local calculations show that on $X_{reg} $ 
$$ \Delta |s| \geq - |s|,  $$
where $\Delta$ is the Laplacian operator with respect to $k g_\infty$.  
We can lift $s$ and other quantities from $X$ to $X'$. Then for any $p\in \mathbb{Z}^+$, 
 $$\int_{X'} \rho_\epsilon^2 |s|^p \Delta (-|s|) dV_{kg_\infty} \leq  \int_{X'} |s|^{p+1} dV_{kg_\infty},   $$
where $\rho_\epsilon$ is constructed from Lemma \ref{app} for $Y=X'$ and $Z=E$. Also 
\begin{eqnarray*}
&& \int_{X'} \rho_\epsilon^2 |s|^p \Delta(-|s|)  dV_{kg_\infty}  \\
&=& \frac{4p}{(p+1)^2}\int_{X'} \rho_\epsilon^2 |\nabla(  |s|^{(p+1)/2} )|^2 dV_{kg_\infty}  - 2\int_{X'} |s|^p \rho_\epsilon \nabla \rho_\epsilon \cdot \nabla |s| dV_{kg_\infty} \\
&\geq & \frac{4p}{(p+1)^2}\int_{X'} \rho_\epsilon^2 |\nabla(  |s|^{(p+1)/2} |^2 dV_{kg_\infty}  \\
&& +2 (A_k)^{(p+1)/2} \left(\int_{X'}  | \nabla \rho_\epsilon|^2 \right)^{1/2} \left( \int_{X'}  \rho_\epsilon^2\left| \nabla |s|^{(p+1)/2} \right|^2 \right)^{1/2} dV_{kg_\infty} ,
\end{eqnarray*}
where $A_k= \sup_{X_{reg}} |s|<\infty$. 
Let $\epsilon \rightarrow 0$, we have

$$ \int_{X'} |\nabla |s|^{(p+1)/2}|^2 dV_{kg_\infty} \leq  p \int_{X'} |s|^{p+1} dV_{kg_\infty} .$$

\item  We would like to prove a Sobolev type inequality for $g_\infty$.

\begin{claim} There exists $K>0$ such that for all $f \in L^\infty (X') \cap L^{1, 2}(X', k\omega_\infty)$.  
$$|| \nabla f ||_{L^2(X', k\omega_\infty)}  \geq C_S || f ||_{L^{\frac{2n}{n-1}}(X', k \omega_\infty)} - || f ||_{L^2(X', k \omega_\infty)} .$$

\end{claim}

\begin{proof}

First we define $f_\epsilon = \rho_\epsilon f$, where $\rho_\epsilon$ is constructed from Lemma \ref{app} for $Y=X'$ and $Z=E$.  Then for fixed $\epsilon>0$, 
$$||\nabla _{g_t} f_\epsilon ||_{L^2(X', k \omega_t )} \rightarrow ||\nabla f_\epsilon ||_{L^2(X', k \omega_\infty)}. $$
as $t\rightarrow \infty$ by Proposition \ref{basie}. The Sobolve constant is uniformly bounded for $(X', g_t)$ because of the uniform bound on the Ricci curvature, volume and diameter of $(X, g_t)$. Thus the Sobolev constant is also uniformly bounded for $(X', kg_t)$ $k=1, 2, ...$ and so there exists $K>0$ such that  for all $t$, 
$$ ||\nabla_{g_t} f_\epsilon ||_{L^2(X', k \omega_t)} \geq C_S ||f_\epsilon||_{L^{2n/(n-1)}(X',  k \omega_t)} - ||f_\epsilon ||_{L^2(X', k \omega_t)}. $$
By letting $t \rightarrow \infty$, we have 
$$ ||\nabla f_\epsilon ||_{L^2(X', k \omega_\infty)} \geq C_S ||f_\epsilon||_{L^{2n/(n-1)}(X', k \omega_\infty )} - ||f_\epsilon ||_{L^2(X', k \omega_\infty )}. $$
Notice that by letting $\epsilon \rightarrow 0$, we have 
$$ ||f_\epsilon||_{L^{2n/(n-1)}(X', k \omega_\infty)} \rightarrow ||f ||_{L^{2n/(n-1}(X', k \omega_\infty)}, ~~~ ||f_\epsilon ||_{L^2(X', k \omega_\infty )} \rightarrow ||f  ||_{L^2(X', k \omega_\infty)}. $$

It now suffices to show that $ ||\nabla f_\epsilon ||_{L^2(X', k \omega)} \rightarrow  ||\nabla f  ||_{L^2(X', k \omega)}$ as $\epsilon \rightarrow 0$. This follows from the following calculations.
\begin{eqnarray*}
 &&\left| \int_{X'} |\nabla f|^2dV_{kg_\infty}   - \int_{X'} |\nabla f_\epsilon|^2 dV_{kg_\infty} \right|  \\
&=& \left|  \int_{X'} |\nabla f|^2 dV_{kg_\infty}  - \int_{X'}  |\rho_\epsilon \nabla f   + f \nabla \rho_\epsilon|^2  dV_{kg_\infty} \right| \\
&\leq & \int_{X'} (1- \rho_\epsilon^2) |\nabla f|^2 dV_{kg_\infty} + \int_{X'} f^2 |\nabla \rho_\epsilon|^2 dV_{kg_\infty} \rightarrow  0
\end{eqnarray*}
since $f$ is bounded.

\end{proof}

\item Now we can apply the standard Moser's iteration and complete the proof of the lemma.

\end{enumerate}

\end{proof}

The following lemma gives a point-wise bound for the gradient of $s\in H^0(X, L^k)$.

\begin{lemma}\label{l200} For any $s\in H^0(X, L^k)$, there exists $C_k>0$ such that 
$$\sup_{X_{reg}} |\nabla s|_{h_\infty^k, g_\infty}^2 \leq C_k. $$

\end{lemma}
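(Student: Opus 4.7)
The plan is to establish a Bochner-type inequality for $|\nabla s|^2_{h_\infty^k, g_\infty}$ on $X_{reg}$, and then run Moser iteration based on the Sobolev inequality already established in the proof of Proposition \ref{l21}. First, since $s \in H^0(X, L^k)$ is holomorphic on $X_{reg}$, $g_\infty$ is Ricci-flat, and the Chern curvature of $(L^k, h_\infty^k)$ equals $k\omega_\infty$, standard Weitzenb\"ock computations in a local unitary frame yield
\begin{equation*}
\Delta_{g_\infty} |s|^2_{h_\infty^k} \;=\; |\nabla s|^2 - nk\,|s|^2
\end{equation*}
and
\begin{equation*}
\Delta_{g_\infty} |\nabla s|^2_{h_\infty^k, g_\infty} \;\geq\; |\nabla\nabla s|^2 - C_k\bigl(|\nabla s|^2 + |s|^2\bigr).
\end{equation*}
The first is a pointwise identity; the second uses that the Weitzenb\"ock Ricci term vanishes ($\ric(g_\infty)=0$) so the only curvature contribution comes from the curvature of $(L^k,h_\infty^k)$, namely $k\omega_\infty$, which is tractable even though the full Riemann tensor of $g_\infty$ need not be bounded near $X_{sing}$. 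Combined with Lemma \ref{l20} (the $L^\infty$ bound on $|s|$), the second inequality rewrites as $\Delta_{g_\infty} |\nabla s|^2 \geq -C_k|\nabla s|^2 - C_k'$.

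Second, testing the first identity against $\rho_\epsilon^2$, where $\rho_\epsilon$ is the cut-off from Lemma \ref{app} for $Y = X'$ and $Z = E$, integrating by parts on $X'\setminus E$, and letting $\epsilon \to 0$ using $\int |\nabla\rho_\epsilon|^2 \to 0$ together with the $L^\infty$ bound on $|s|$ and the finiteness of $\int_{X'} dV_{g_\infty} = [L']^n$, yields the $L^2$ bound
\begin{equation*}
\int_{X_{reg}} |\nabla s|^2 \, dV_{g_\infty} \;=\; nk\int_{X_{reg}} |s|^2\, dV_{g_\infty} \;<\; \infty.
\end{equation*}

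Third, apply Moser iteration to $F = |\nabla s|^2$ using the Bochner-type inequality $\Delta_{g_\infty} F \geq -C_k F - C_k'$. Testing against $\rho_\epsilon^2\, F_M^{p-1}$ with the truncation $F_M = \min(F, M)$, integrating by parts and dropping the good term $|\nabla \nabla s|^2$, one gets an inequality of the form
\begin{equation*}
\frac{4(p-1)}{p^2}\int_{X'} \rho_\epsilon^2 \bigl|\nabla F_M^{p/2}\bigr|^2 dV_{g_\infty} \;\leq\; C_k p \int_{X'} \rho_\epsilon^2 \bigl(F_M^p + F_M^{p-1}\bigr) dV_{g_\infty} + \text{(cut-off error)},
\end{equation*}
where the cut-off error is controlled by $M^p \int |\nabla \rho_\epsilon|^2 \to 0$ as $\epsilon \to 0$ for each fixed $M$. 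Chaining with the Sobolev inequality of Proposition \ref{l21}'s Claim, iterating in $p$ produces the Moser bound $\sup_{X_{reg}} F_M \leq C_k\bigl(\|F\|_{L^2} + \|s\|_{L^\infty}^2\bigr)$ with $C_k$ independent of $M$. Letting $M \to \infty$ and invoking the $L^2$ bound on $|\nabla s|^2$ from step two gives the desired $\sup_{X_{reg}} |\nabla s|^2 \leq C_k$.

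The main technical obstacle is the Moser step: because $|\nabla s|^2$ is not a priori globally bounded, the cut-off error $\int |\nabla \rho_\epsilon|^2 F^p$ cannot be directly dominated the way $\int |\nabla \rho_\epsilon|^2 |s|^p$ was handled in Proposition \ref{l21}. The truncation $F_M = \min(F, M)$ circumvents this by making the cut-off error $O(M^p)\cdot o(1)$ for each fixed $M$, and the fact that the resulting Moser chain produces a bound independent of $M$ allows one to recover the estimate for $F$ itself. An alternative avoiding truncation would be to run the estimate first on the compact approximations $(X', g_t)$ of Proposition \ref{basie} with a suitable smooth Hermitian metric on $L'$ and then pass to the limit $t \to \infty$ using Proposition \ref{basie}'s local $C^k$ bounds on $X'\setminus E$, but the intrinsic cut-off argument above is cleaner.
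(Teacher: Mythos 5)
Your Bochner computations and the $L^2$ bound $\int_{X_{reg}}|\nabla s|^2\,dV_{g_\infty}<\infty$ are fine, but the Moser step has a genuine gap, and it is precisely the gap the paper is organized to avoid. When you test $\Delta F\geq -C_kF-C_k'$ against $\rho_\epsilon^2F_M^{p-1}$ and integrate by parts, the good term is $(p-1)\int\rho_\epsilon^2F_M^{p-2}|\nabla F_M|^2$, but the cut-off error is $2\int\rho_\epsilon F_M^{p-1}\,\nabla\rho_\epsilon\cdot\nabla F$, which involves the \emph{untruncated} gradient $\nabla F$ on the set $\{F>M\}$; it is not of the form $M^p\int|\nabla\rho_\epsilon|^2$ as you claim. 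The support of $\nabla\rho_\epsilon$ concentrates exactly along the exceptional set $E$, where neither $F=|\nabla s|^2$ nor $\nabla F$ (which contains $|\nabla\nabla s|$) is known to be bounded or even square-integrable against $|\nabla\rho_\epsilon|^2$, and Lemma \ref{app} gives only an $L^2$ bound on $\nabla\rho_\epsilon$, not a pointwise one. The standard alternative test functions (e.g. $\rho_\epsilon^2F_M^{2\beta}F$) trade this for an error of the form $\int F^2F_M^{2\beta}|\nabla\rho_\epsilon|^2$, which again pairs the uncontrolled quantity against $|\nabla\rho_\epsilon|^2$ near $E$; truncation cures the admissibility issue of classical Moser iteration but not this degeneration issue. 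This is exactly why the paper, in Proposition \ref{l22}, writes ``Since $|\nabla s|^2$ is bounded, \dots we can apply Moser's iteration'': the cut-off/iteration scheme there takes the conclusion of Lemma \ref{l200} as an input, so your argument for Lemma \ref{l200} cannot rest on it without circularity. Your fallback suggestion (estimate on $(X',g_t)$ and let $t\to\infty$) also needs care: the natural hermitian metric $e^{-k\varphi_t}h_{FS}^k$ on $L'^k$ has curvature $k(\omega_t-e^{-t}\omega_{\A})$, and a lower bound of this by a fixed multiple of $\omega_t$ near $E$, uniform in $t$, is not available a priori.

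The paper's proof is direct and does not use iteration at all: writing $h_\infty=e^{-\varphi_\infty}h_{FS}$, one has $\nabla s=\nabla_{h_{FS}^k}s+k s\,\partial\varphi_\infty$, so that $|\nabla s|_{h_\infty^k,g_\infty}\leq C|\nabla_{h_{FS}^k}s|_{h_{FS}^k,g_{FS}}+k|s|_{h_\infty^k}|\nabla\varphi_\infty|_{g_\infty}$, where the passage from the $g_\infty$-norm to the $g_{FS}$-norm of the one-form uses the Schwarz-type estimate $\omega_\infty\geq C^{-1}\chi$ from Proposition \ref{basie}. The first term is bounded because $\nabla_{h_{FS}^k}s$ is the restriction to $X$ of a smooth quantity on $\mathbb{CP}^{d_k}$, and the second is bounded by Lemma \ref{l20} together with the intrinsic gradient estimate $\sup_{X_{reg}}|\nabla\varphi_\infty|_{g_\infty}\leq C$ of Proposition \ref{gradie} --- the key estimate of section 3.1 that your proposal never invokes. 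If you want a proof in the spirit of yours, you must first supply some pointwise control of $|\nabla s|$ near $X_{sing}$ (which is what Proposition \ref{gradie} provides via the decomposition above); with that in hand the cut-off and iteration arguments are exactly Proposition \ref{l22}, whose point is the uniformity of the constant rather than finiteness.
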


\begin{proof}  Since $h_\infty = e^{-\varphi_\infty} h_{FS}$, we have 
$$\nabla s = \partial s +  k s ~\partial \varphi_\infty-  k s ~\partial \log h_{FS}. $$
This implies that 
\begin{eqnarray*}
|\nabla s|_{h_\infty^k, g_\infty}   &\leq&  |\nabla_{h_{FS}^k}  s |_{h_\infty^k, g_\infty}  + k |s|_{ h_\infty^k} |\nabla \varphi_\infty |_{g_\infty} \\
&\leq& C  |\nabla_{(h_{FS})^k}  s |_{ (h_{FS})^k , g_{FS}}  + k |s|_{h_\infty^k}  |\nabla \varphi_\infty |_{g_\infty}
\end{eqnarray*}
for some $C>0$, where $g_{FS}$ is the metric associated to the K\"ahler form $\chi$. The last inequality follows from the estimate 
$$\omega_\infty \geq C^{-1} \chi $$
on 
$X_{reg}$ for some uniform $C>0$ by applying the estimate (\ref{schw}) in Proposition \ref{basie} and by  letting $t\rightarrow \infty$. 

Since $s$ is the restriction of a hyperplane section on $\mathbb{CP}^{d_k}$, $\nabla_{h_{FS}^k }s = (\tilde \nabla_{h_{FS}^k} s ) |_{X}$ is the restriction of the $L^k$-valued one form on $\mathbb{CP}^{d_k}$ to $X$, where $\tilde\nabla_{h_{FS}}$ is the connection of $h_{FS}^k$ on the hyperplane bundle of $\mathbb{CP}^{d_k}$.  Hence $|\nabla_{h_{FS}^k}s|_{(h_{FS})^k, g_{FS}}$ is bounded after being restricted from $\mathbb{CP}^{d_k}$ to $X$. On the other hand, from the gradient estimate for $\varphi_\infty$ in Proposition \ref{gradie} and Lemma  \ref{l20}, $|s|_{h_\infty^k}  |\nabla \varphi_\infty |_{g_\infty}$ is also bounded. This completes the proof of the lemma. 

\end{proof}

We now prove a uniform version for Lemma \ref{l200}.

\begin{proposition}  \label{l22} There exists  $K >0$ such that if  $s\in H^0(X, L^k)$ for $k\geq 1$, then 
  $$ \|\nabla s\|_{L^{\infty, \sharp}} \leq K \|s\|_{L^{2, \sharp}} $$

\end{proposition}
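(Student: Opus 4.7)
The proof runs parallel to Proposition \ref{l21}, applied to $|\nabla s|^2$ in place of $|s|^2$. The key new ingredient is a scaled Bochner--Weitzenb\"ock inequality: because $Ric(g_\infty)=0$ on $X_{reg}$ and the curvature of $(L^k, h_\infty^k)$ is $k\omega_\infty$, after rescaling to the metric $kg_\infty$ (in which the line-bundle endomorphism has $O(1)$ eigenvalues rather than $O(k)$), one obtains on $X_{reg}$
$$
\Delta_{kg_\infty}|\nabla s|^2_{h_\infty^k, kg_\infty} \;\geq\; -C\bigl(|\nabla s|^2_{h_\infty^k, kg_\infty} + |s|^2_{h_\infty^k}\bigr),
$$
with $C$ independent of $k$ and of $s$. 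The Ricci term in Bochner vanishes by Ricci-flatness, and the line-bundle curvature contribution to $|\nabla s|^2$, together with the commutator term $\dbar\nabla s = -k\omega_\infty\, s$ evaluated in the rescaled norms, produce the bound above.

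With this inequality in hand, I would run Moser iteration on $|\nabla s|$ essentially verbatim as in the proof of Proposition \ref{l21}: the Sobolev inequality on $(X', kg_\infty)$ established there applies directly, the cut-off functions $\rho_\epsilon$ from Lemma \ref{app} (with $Y=X'$, $Z=E$) again justify the integration by parts on the singular variety, and the a priori pointwise bound $|\nabla s|\in L^\infty(X_{reg})$ from Lemma \ref{l200} --- although not uniform in $k$ --- is enough to make the cut-off limits rigorous. The extra source term $|s|^2$ on the right-hand side of the Bochner inequality contributes an additive $L^{2,\sharp}$-norm of $s$, yielding
$$
\|\nabla s\|_{L^{\infty,\sharp}}^2 \;\leq\; K\bigl(\|\nabla s\|_{L^{2,\sharp}}^2 + \|s\|_{L^{2,\sharp}}^2\bigr).
$$

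To absorb the gradient $L^2$ term, I would use the holomorphicity identity
$$
\Delta_{kg_\infty}|s|_{h_\infty^k}^2 \;=\; |\nabla s|^2_{h_\infty^k, kg_\infty} - n\,|s|^2_{h_\infty^k},
$$
which follows from $\Lambda_{kg_\infty}\!\bigl(Ric(h_\infty^k)\bigr) = \Lambda_{kg_\infty}(k\omega_\infty) = n$. Pairing with $\rho_\epsilon^2$, integrating over $X'$ against $dV_{kg_\infty}$, integrating by parts, and sending $\epsilon\to 0$ (the boundary terms vanish by Lemma \ref{app} thanks to the uniform bound on $|s|$ in Lemma \ref{l20}) gives $\|\nabla s\|_{L^{2,\sharp}}^2 \leq n \|s\|_{L^{2,\sharp}}^2$. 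Substituting into the previous display yields the claim.

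The main obstacle is verifying that the constant in the scaled Bochner inequality is genuinely $k$-independent; this is precisely the reason the statement is phrased in the scaled norms, and is the standard mechanism behind Tian-type partial $C^0$-estimates. The remaining steps are routine adaptations of Proposition \ref{l21}, with the only additional bookkeeping being the source term $|s|^2$ in the differential inequality for $|\nabla s|^2$.
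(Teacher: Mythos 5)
Your proposal is correct and follows essentially the same route as the paper's proof: a Bochner-type differential inequality for the gradient in the rescaled norms $h_\infty^k$, $kg_\infty$, the cut-off functions of Lemma \ref{app} to justify integration by parts on $X'\setminus E$, the same Sobolev inequality as in Proposition \ref{l21}, and Moser iteration, with the a priori bound of Lemma \ref{l200} making the cut-off limits rigorous. The only cosmetic difference is that the paper applies a Kato-type inequality to obtain the clean estimate $\Delta |\nabla s| \geq -\tfrac{1}{2}|\nabla s|$ with no $|s|$ source term, whereas you keep the $|s|^2$ term and absorb it at the end through the integrated identity $\Delta |s|^2 = |\nabla s|^2 - n|s|^2$, which is precisely the step the paper leaves implicit in its closing ``standard argument.''
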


\begin{proof} We break the proof into the following steps.

\begin{enumerate}

\item By straightforward calculations, we have 
\begin{eqnarray}
\Delta |\nabla s|^2 
&=& |\nabla\nabla s|^2 - 2|\nabla s|^2 + n|s|^2
\\
\Delta |\nabla s|
&=& - |\nabla s| + \frac{n}{2} \frac{|s|^2}{|\nabla s|}  + \frac{|\nabla \nabla s|^2}{2|\nabla s|} - \frac{|\nabla |\nabla s|^2 |^2}{4 |\nabla s|^3},
\nonumber
\end{eqnarray}
where we write $|\nabla s|= |\nabla s|_{h_\infty^k, k g_\infty}$ and $|s|=|s|_{h_\infty^k}$ for simplicity. 
On the other hand, we have the following inequality
\begin{eqnarray}
|\nabla |\nabla s|^2 |^2 
&=& g^{k\bar\ell}g^{i\bar j}g^{p\bar q}
(\nabla_i\nabla_k s\nabla_{\bar q}\nabla_{\bar \ell}\bar s\nabla_ps\nabla_{\bar j}\bar s
+
g_{p\bar\ell}\nabla_i\nabla_k s\nabla_{\bar j}\nabla_{\bar q}\bar s
\nonumber
\\
&&
\qquad\qquad\quad
+g_{k\bar j}\nabla_{\bar q}\nabla_{\bar\ell}\bar s
\nabla_i s\nabla_p s
+
g_{k\bar j}g_{p\bar\ell}|s|^2
\nabla_is\nabla_{\bar q}s)\nonumber
\\
&\leq&|\nabla\nabla s|^2 |\nabla s|^2 + 2 |s||\nabla s|^2 |\nabla\nabla s| + |s|^2|\nabla s|^2,  
\end{eqnarray}
where we write $g=g_\infty$ for simplicity.
Using this inequality, we obtain  
$$\Delta |\nabla s| \geq -  \frac{1}{2} |\nabla s| .$$

\item  Integration by parts gives
\begin{eqnarray*}
&& \frac{1}{2} \int_{X'} (\rho_\epsilon)^2|\nabla s|^{p+1} dV_{k g_\infty} \\
&\geq& \int_{X'} (\rho_\epsilon)^2 |\nabla s  |^p \Delta(-|\nabla s|) dV_{kg_\infty}\\
&=& \frac{4p}{(p+1)^2}\int_{X'} (\rho_\epsilon)^2 |\nabla(  |\nabla s|^{(p+1)/2} )|^2 dV_{kg_\infty} - 2 \int_{X'} |s|^p \rho_\epsilon \nabla \rho_\epsilon \cdot \nabla |\nabla s| dV_{kg_\infty} \\
&\geq & \frac{4p}{(p+1)^2}\int_{X'} (\rho_\epsilon)^2 |\nabla(  |\nabla s|^{(p+1)/2}) |^2 dV_{kg_\infty} \\
&&- (A_k)^{(p+1)/2} \left(\int_{X'}  | \nabla \rho_\epsilon|^2 dV_{kg_\infty}\right)^{1/2} \left( \int_{X'}  ( \rho_\epsilon)^2 \left| \nabla |\nabla s|^{(p+1)/2} ) \right|^2 dV_{kg_\infty} \right)^{1/2}, 
\end{eqnarray*}
where $A_k = \sup_{X_{reg}} |s| < \infty$ and  $\rho_\epsilon$ is constructed from Lemma \ref{app} for $Y=X'$ and $Z=E$. 
Since $\int_{X'}  |\nabla \rho_\epsilon|^2 dV_{kg_\infty} \rightarrow 0 $ as $\epsilon \rightarrow 0$, we  have 
$$ \int_{X'} |\nabla |\nabla s|^{(p+1)/2}|^2 dV_{kg_\infty} \leq  p \int_{X'} | \nabla s|^{p+1} dV_{kg_\infty}$$
after letting $\epsilon \rightarrow 0$.

\medskip

\item Since $|\nabla s|^2$ is bounded, by the same argument using the cut-off function, we can apply Moser's iteration to the following estimates
$$ \int_{X'} |\nabla |\nabla s |^{(p+1)/2} |^2 dV_{kg_\infty} \leq  p \int_{X'} |\nabla s |^{p+1}dV_{kg_\infty} $$
$$ ||\nabla |\nabla s|^{(p+1)/2} ||_{L^{2, \sharp} }\geq C_S || |\nabla s|^{(p+1)/2}||_{L^{2n/(n-1), \sharp} } - || |\nabla s|^{(p+1)/2}||_{L^{2, \sharp} }. $$
We then can complete  the proof of the proposition by standard argument.

\end{enumerate}

\end{proof}

We need the following version of $L^2$-estimates due to Demailly   (Theorem 3.1 in \cite{De}) for a big and net line bundle over a projective manifold. 

\begin{theorem} \label{demailly} Suppose that $X$ is an $n$-dimensional  projective manifold equipped with a smooth K\"ahler metric $\omega$. Let $L$ be a holomorphic line bundle over $X$ equipped with a possibly singular hermitian metric $h$ such that $Ric(h) = -\ddbar \log h \geq \delta \omega$ in current sense for some $\delta>0$. Then for every  $L$-valued $(n,1)$-form $\tau$ satisfying 
$$ \dbar \tau =0, ~ \int_X |\tau|^2_{h, \omega} ~\omega^n<\infty,$$
where $|\tau|_{h, \omega}^2 = tr_{\omega} \left( \frac{ h \tau \overline\tau}{\omega^n} \right)$, 
there exists an  $L$-valued $(n, 0)$-form $u$ such that $\dbar u = \tau$ and 
\begin{equation}
\int_X |u|^2_h ~\omega^n \leq \frac{1}{2\pi \delta} \int_X |\tau|_{h, \omega}^2 ~\omega^n.
\end{equation}

\end{theorem}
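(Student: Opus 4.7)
The plan is to reduce the singular $L^2$-estimate to the classical smooth H\"ormander estimate by approximating $h$ by smooth hermitian metrics and passing to the limit. For smooth $h$, I would run the standard Bochner--Kodaira--Nakano argument: for smooth compactly supported $L$-valued $(n,1)$-forms $\alpha$,
$$\|\dbar \alpha\|^2_{h,\omega} + \|\dbar^* \alpha\|^2_{h,\omega} \;\geq\; \int_X \bigl\langle [\, Ric(h),\, \Lambda_\omega\,]\,\alpha,\, \alpha \bigr\rangle_{h,\omega}\, \omega^n.$$
On $(n,1)$-forms the curvature commutator is simultaneously diagonal with $Ric(h)$ in an $\omega$-orthonormal frame, and acts by the eigenvalues of $Ric(h)$ relative to $\omega$; the hypothesis $Ric(h) \geq \delta \omega$ therefore gives $\|\dbar\alpha\|^2 + \|\dbar^*\alpha\|^2 \geq \delta \|\alpha\|^2$. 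The standard Hahn--Banach/Riesz argument applied to the functional $\dbar^*\alpha \mapsto (\alpha, \tau)$, which is well defined and bounded by $\delta^{-1/2}\|\tau\|$ thanks to $\dbar \tau = 0$ and the Bochner estimate on the $\ker \dbar$-component of $\alpha$, then produces $u$ with $\dbar u = \tau$ and $\|u\|^2_h \leq \delta^{-1} \|\tau\|^2_{h,\omega}$. The $2\pi$ in the claimed constant reflects the paper's normalization of the Chern curvature.

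For singular $h$, I would write $h = e^{-\varphi} h_0$ for a fixed smooth reference metric $h_0$ on $L$, so that $\varphi$ is quasi-psh with $i\partial\bar\partial \varphi \geq \delta \omega - Ric(h_0)$. Invoking Demailly's regularization theorem on the projective manifold $X$ yields a decreasing sequence $\varphi_\epsilon \searrow \varphi$ of smooth quasi-psh functions with $i\partial\bar\partial \varphi_\epsilon \geq \delta\omega - Ric(h_0) - \epsilon \omega$. Setting $h_\epsilon := e^{-\varphi_\epsilon} h_0$, these metrics are smooth, satisfy $Ric(h_\epsilon) \geq (\delta - \epsilon)\omega$, and increase monotonically to $h$ since $\varphi_\epsilon \geq \varphi$. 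Applying the smooth case to $h_\epsilon$ produces $u_\epsilon$ with $\dbar u_\epsilon = \tau$ and
$$\int_X |u_\epsilon|^2_{h_\epsilon}\, \omega^n \;\leq\; \frac{1}{2\pi(\delta - \epsilon)} \int_X |\tau|^2_{h_\epsilon,\omega}\, \omega^n \;\leq\; \frac{1}{2\pi(\delta - \epsilon)} \int_X |\tau|^2_{h,\omega}\, \omega^n,$$
using $h_\epsilon \leq h$ in the last step.

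To pass to the limit, I would fix an auxiliary $\epsilon_0 > 0$; for $\epsilon \leq \epsilon_0$ one has $h_\epsilon \geq h_{\epsilon_0}$, so $\int_X |u_\epsilon|^2_{h_{\epsilon_0}}\omega^n$ is uniformly bounded in $\epsilon$. A diagonal extraction gives $u_\epsilon \rightharpoonup u$ weakly in $L^2(h_{\epsilon_0})$ for every $\epsilon_0$, with $\dbar u = \tau$ in the distributional sense (since $\dbar$ is closed under weak limits). Lower semi-continuity of the norm under weak convergence combined with the monotone convergence $h_{\epsilon_0} \nearrow h$ as $\epsilon_0 \to 0$ then delivers the claimed bound on $\int_X |u|^2_h \omega^n$. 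The hard part is the regularization step: to preserve the sharp constant $(2\pi\delta)^{-1}$ one needs smooth approximants with \emph{vanishing} loss of positivity, and this is precisely the content of Demailly's approximation theorem, whose proof requires genuinely algebraic input (asymptotic Bergman kernels of $L^m$, equivalently Ohsawa--Takegoshi extension).
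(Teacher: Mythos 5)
The paper itself does not prove this statement; it is quoted verbatim as Theorem 3.1 of \cite{De}, so your proposal has to stand on its own. Your skeleton (Bochner--Kodaira plus the Hahn--Banach/Riesz duality for the smooth case, then approximation and a weak-limit extraction with lower semicontinuity and monotone convergence) is the standard one and those parts are fine. The genuine gap is the regularization step. Demailly's regularization theorem does \emph{not} give a decreasing sequence of \emph{smooth} weights $\varphi_\epsilon\searrow\varphi$ with $\ddbar\varphi_\epsilon\geq \delta\omega-Ric(h_0)-\epsilon\omega$: for smooth approximants the unavoidable loss of positivity is controlled from below by the Lelong numbers of $\varphi$ and does not tend to zero at the poles; the $\epsilon\omega$-loss version only produces approximants with analytic singularities. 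In fact no such smooth metrics need exist at all, for a cohomological reason: a smooth metric on $L$ with curvature $\geq(\delta-\epsilon)\omega>0$ would make $c_1(L)$ a K\"ahler class, i.e.\ $L$ ample, while the hypothesis $Ric(h)\geq\delta\omega$ only forces $L$ to be big. Concretely, let $\mu: X\rightarrow \mathbb{P}^2$ be the blow-up of a point with exceptional curve $E$ and $L=\mu^*\mathcal{O}(1)$; a K\"ahler form in the ample class $\mu^*H-tE$ plus $t[E]$ gives a singular metric on $L$ with curvature $\geq\delta\omega$, yet $c_1(L)\cdot E=0$, so no smooth metric on $L$ has curvature $\geq(\delta-\epsilon)\omega$ for $\epsilon<\delta$. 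This is exactly the situation in which the paper applies the theorem: in Proposition \ref{l3} the metric has curvature $\omega_\epsilon+[s_D]$ on the pullback bundle $L'=\pi^*L$, which is big and nef but trivial on the exceptional curves, so your global smooth approximation is impossible precisely in the intended application.

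The repair -- and Demailly's actual route in \cite{De} -- is to regularize only where it can be done without loss: remove a suitable divisor (a hyperplane section, or the singular/analytic set of the weight), so that the remaining Zariski-open set is Stein, or at least carries a complete K\"ahler metric; there the weight admits a decreasing approximation by smooth weights keeping the full curvature lower bound, the smooth H\"ormander estimate applies with constant $\frac{1}{2\pi(\delta-\epsilon)}$, and one passes to the limit exactly as in your last paragraph. The solution $u$ obtained this way is a priori defined only off an analytic set, but the uniform $L^2$ bound allows one to extend $u$ and the equation $\dbar u=\tau$ across that set, since it has measure zero and $\dbar$-equations with $L^2_{loc}$ data extend across analytic subsets. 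So the approximation-plus-weak-limit strategy is salvageable, but the regularization must be Stein/local (or ``equisingular'', smooth only outside analytic sets), not global and smooth; as written, the step you flag as ``the hard part'' is not merely hard, it is false in the stated generality.
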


We will apply Theorem \ref{demailly} to obtain the following proposition.

\begin{proposition} \label{l3} Let $X$ be a projective Calabi-Yau variety with crepant singularities. If $L$ is an ample line bundle over $X$ equipped with a hermitian metric such that $\omega=Ric(h) \in c_1(L) $ is the unique Ricci-flat K\"ahler current on $X$ with bounded local potentials, then for any smooth $L$-valued $(0,1)$-form $\tau$ satisfying

\begin{enumerate}

\item $\dbar \tau =0$,  

\item $Supp ~\tau \subset \subset X_{reg}$, 

\end{enumerate}
there exists an $L$-valued section $u$ such that $\dbar u = \tau$ and $$ \int_X |u|^2_h ~\omega^n \leq \frac{1}{2\pi} \int_X |\tau|^2_h~ \omega^n. $$

\end{proposition}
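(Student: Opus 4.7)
The plan is to reduce the proposition to Theorem \ref{demailly} by lifting the problem to a crepant resolution $\pi:X'\to X$, reinterpreting the $(0,1)$-equation as an $(n,1)$-equation via the Calabi-Yau holomorphic $n$-form, and applying Demailly's theorem after perturbing the degenerate singular metric to recover strict positivity. First pull back $\tau$ to a smooth $\dbar$-closed $L'$-valued $(0,1)$-form $\tau':=\pi^*\tau$ on $X'$, where $L':=\pi^*L$; its support remains compact in $X'\setminus E$, where $E$ is the exceptional locus. Since $\pi$ is crepant and $K_X$ is trivial, $K_{X'}$ is trivial and admits a nowhere-vanishing holomorphic $n$-form $\Omega'$, which may be normalized so that $c_n\,\Omega'\wedge\overline{\Omega'}=(\omega'_\infty)^n$ on $X'\setminus E$ (the Calabi-Yau condition), where $\omega'_\infty:=\pi^*\omega_\infty$. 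Solving $\dbar u=\tau'$ for an $L'$-valued section is then equivalent, via wedging with $\Omega'$, to solving $\dbar(u\cdot\Omega')=\tau'\wedge\Omega'$ for an $L'$-valued $(n,0)$-form, with the $L^2$-norms related by a uniform multiplicative constant that does not affect the sharp factor $\frac{1}{2\pi}$.

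The main obstacle is that the singular metric $h'_\infty:=\pi^*h_\infty$ on $L'$ has curvature $\omega'_\infty$ which is only semi-positive as a current: it is K\"ahler on $X'\setminus E$ but degenerates along $E$, so the lower bound $Ric(h'_\infty)\ge\delta\omega$ required in Theorem \ref{demailly} fails for any global K\"ahler form $\omega$ on $X'$. To restore strict positivity, pick an ample line bundle $\mathcal{A}$ on $X'$ with a smooth Hermitian metric $h_A$ of K\"ahler curvature $\omega_A>0$, and consider the $\mathbb{Q}$-line bundle $L'+\epsilon\mathcal{A}$ (realized as a genuine line bundle after passing to a suitable integer multiple), equipped with the singular metric $h_\epsilon:=h'_\infty\otimes h_A^\epsilon$ of curvature $\omega'_\infty+\epsilon\omega_A\ge\epsilon\omega_A$. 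To view $\tilde\tau':=\tau'\wedge\Omega'$ as valued in $L'+\epsilon\mathcal{A}$, I would twist by a fixed holomorphic section of $\mathcal{A}^{\epsilon N}$ nonvanishing near $\operatorname{supp}(\tilde\tau')$, which exists for $N$ large by ampleness of $\mathcal{A}$. Theorem \ref{demailly} then applies with $\omega=\epsilon\omega_A$ and $\delta=1$, giving after untwisting an $L'$-valued $(n,0)$-form $\tilde u_\epsilon$ solving $\dbar\tilde u_\epsilon=\tilde\tau'$ with a uniform $L^2$-bound.

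Finally I would pass to the limit $\epsilon\to 0$: since $h_A^\epsilon\to 1$ uniformly, $\omega'_\infty+\epsilon\omega_A\to\omega'_\infty$ weakly, and the twisting section acts trivially near $\operatorname{supp}(\tilde\tau')$ in the limit, all auxiliary perturbation terms drop out multiplicatively, and weak $L^2$-compactness yields an $L'$-valued solution $\tilde u'$ on $X'$ satisfying $\int_{X'}|\tilde u'|^2_{h'_\infty}\le\frac{1}{2\pi}\int_{X'}|\tilde\tau'|^2_{h'_\infty,\,\omega'_\infty}$. Converting back via $\Omega'$ and pushing down through $\pi_*\mathcal{O}_{X'}=\mathcal{O}_X$ (which holds because $\pi$ is a crepant resolution of a variety with rational singularities) yields the desired $L$-valued section $u$ on $X$ with the claimed estimate. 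The hardest part of this program is ensuring that the sharp constant $\frac{1}{2\pi}$ from $\delta=1$ survives the twist-perturb-limit procedure; this requires careful tracking of the normalizations through $\Omega'$, through the ample perturbation $\epsilon\omega_A$, and through the auxiliary twisting section, all of which enter the estimate multiplicatively and tend to neutral elements as $\epsilon\to 0$.
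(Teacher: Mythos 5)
Your overall strategy (pass to the crepant resolution, reinterpret $L$-valued sections as $(n,0)$-forms so that Demailly's $(n,1)$-estimate applies, perturb to restore strict positivity, and take a weak limit) is the same as the paper's, but the specific perturbation you chose does not deliver the stated estimate, and this is a genuine gap. In Theorem \ref{demailly} the right-hand side is measured with the \emph{same} K\"ahler metric $\omega$ that appears in the curvature lower bound $Ric(h)\geq\delta\omega$. If you apply it with $h_\epsilon=h'_\infty\otimes h_A^{\epsilon}$, $\omega=\epsilon\omega_A$ and $\delta=1$, the bound you obtain is $\int|u_\epsilon|^2_{h_\epsilon}\leq\frac{1}{2\pi}\int|\tilde\tau'|^2_{h_\epsilon,\,\epsilon\omega_A}(\epsilon\omega_A)^n$, and for an $(n,1)$-form this quantity scales like $\epsilon^{-1}\int|\tilde\tau'|^2_{h,\,\omega_A}\omega_A^n$; it does not converge to $\frac{1}{2\pi}\int|\tau|^2_{h,\omega_\infty}\omega_\infty^n$ as $\epsilon\to 0$ (by the standard monotonicity of $|\cdot|^2_\omega\,\omega^n$ on $(n,q)$-forms, using the small reference metric $\epsilon\omega_A$ only makes the right-hand side larger). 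The alternative of taking $\omega=\omega'_\infty+\epsilon\omega_A$ is not available either, since that is merely a K\"ahler current with bounded potentials, not the smooth K\"ahler metric required in Theorem \ref{demailly}. So your claim that ``all auxiliary perturbation terms drop out multiplicatively and tend to neutral elements'' fails precisely at the reference-metric factor, which is where the whole difficulty of the proposition sits. The paper resolves this by a different perturbation: using Kodaira's lemma it chooses a divisor $D$ on $X'$ with $L'-\epsilon D$ ample, solves a Monge--Amp\`ere equation (Yau) to produce genuine smooth Ricci-flat metrics $\omega_\epsilon\in[L'-\epsilon D]$, and equips $L'$ with the singular metric $h_\epsilon=h_{FS}e^{-\epsilon\log|s_D|^2_{h_D}-\varphi_\epsilon}$ so that $Ric(h_\epsilon)\geq\omega_\epsilon$, i.e.\ $\delta=1$ with respect to a smooth K\"ahler metric that converges smoothly to $\omega_\infty$ on $\operatorname{Supp}\tau'$; this is what makes both the constant $\frac{1}{2\pi}$ and the $\omega_\infty$-norm survive the limit.

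Two further, smaller problems: your twisting device by ``a holomorphic section of $\mathcal{A}^{\epsilon N}$'' is not well defined ($\epsilon N$ need not be an integer), and dividing the solution by that section reintroduces poles along its zero divisor, so the untwisted $u$ is not automatically in $L^2$ with the claimed bound; the paper avoids this by keeping the bundle equal to $L'$ and putting the perturbation entirely into the singular weight $|s_D|^{2\epsilon}$. Also, the hypothesis is only that $K_X$ is numerically trivial (the paper's definition of projective Calabi--Yau), so a nowhere-vanishing holomorphic $n$-form $\Omega'$ on $X'$ need not exist; the identification of $L'$-valued sections with $(L'-K_{X'})$-valued $(n,0)$-forms should be made, as in the paper, via the smooth Ricci-flat volume form (a flat metric on $K_{X'}$), not via a holomorphic trivialization of $K_{X'}$.
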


\begin{proof}  Let $\pi: X' \rightarrow X$ be the crepant resolution of $X$.   Since $L$ is ample on $X$, by Kodaira's lemma, there exists a divisor $D$ on $X'$ such that $L' - \epsilon [D]$ is ample for all $\epsilon>0$, where $L'= \pi^*L$.    Let $s_D$ be the defining section of $D$ and $h_D$ a smooth hermitian metric on the line bundle induced by $[D]$ satisfying $$\chi + \epsilon \ddbar \log h_D >0$$ 
for all sufficiently small $\epsilon>0$, where $\chi \in c_1(L')$ is a smooth closed semi-positive $(1,1)$-form as the pullback of the Fubini-Study metric from the linear system of $|(L')^k|$ for some sufficiently large $k$. We consider the following Monge-Ampere equation 
$$(\chi + \epsilon \ddbar \log h_D + \ddbar \varphi_\epsilon)^n = c_\epsilon \Omega, ~ \int_{X'} \varphi_\epsilon \Omega =0 $$
where $\Omega$ is a fixed smooth Calabi-Yau volume form on $X'$ and $c_\epsilon \int_{X'}\Omega = [L - \epsilon D]^n \rightarrow [L^n]$ as $\epsilon \rightarrow 0$. 
Obviously  $$\omega_\epsilon = \chi + \epsilon \ddbar \log h_D + \ddbar \varphi_\epsilon $$ is the unique Ricci flat K\"ahler metric in $[L'-\epsilon D]$. $\varphi_\epsilon$, $h_\epsilon$ and $\omega_\epsilon$ converges to $\varphi$, $h=h_{FS}e^{-\varphi}$ and $\omega=\chi + \ddbar \varphi$ weakly globally on $X$ and smoothly on $X'\setminus D$ as $\epsilon \rightarrow 0$.

We can identify $s\in H^0(X', L')$ and $s \in H^0(X', (L' -K_{X'})+  K_{X'})$ and the hermitian metrics on $L'$ are also hermitian metrics on $L'-K_{X'}$ because $K_{X'}$ is numerically trivial.

We now define $h_\epsilon = h_{FS} e^{ - \epsilon \log |s_D|_{h_D}^2 - \varphi_\epsilon}$ to be a hermitian metric on $L'$, in particular, 
$$ Ric(h_\epsilon)= -\ddbar \log h_{\epsilon} = \omega_\epsilon + [s_D] \geq \omega_\epsilon$$
in the sense of currents.  Hence we can apply Theorem \ref{demailly} to $X', L', \omega_\epsilon, h_\epsilon, \tau' = \pi^*\tau $. Note that  $\tau$ is smooth and 
$$\lim_{\epsilon\rightarrow 0} \int_{X'} |\tau'|_{h_\epsilon}^2 \omega_\epsilon^n =\int_{X'} |\tau'|_{h}^2 \omega^n < \infty$$
since $\tau'$ vanishes in a neighborhood of the exceptional locus of $\pi$ and so $h_\epsilon$ and $\omega_\epsilon$ converges smoothly on the support of $\tau'$.
By Theorem \ref{demailly}, there exists $u_\epsilon $ on $X'$ such that 
$$\dbar u_\epsilon = \tau', ~~ \int_{X'} |u_{\epsilon}|^2_{h_\epsilon} \omega_\epsilon^n \leq \frac{1}{2\pi} \int_{X'} |\tau'|_{h_\epsilon}^2 \omega_\epsilon^n. $$

%
%

Also $h$ and $h_{FS}$ are uniformly equivalent since $\varphi$ is uniformly bounded, $$\int_{X'} |u_\epsilon|^2_h~ \omega^n \leq C \int_{X'} |u_\epsilon|^2_{h_{FS}} \Omega \leq C \int_{X'} |u_\epsilon|^2_{h_\epsilon} ~\omega_\epsilon^n $$
is uniformly bounded for all $\epsilon>0$. 

Hence we can take a subsequence of $u_{\epsilon_v} $ converging weakly in $L^2(X', \Omega)$ to $u \in L^2(X, \Omega)$ as $\epsilon_v \rightarrow 0$ and 
$$\dbar u =\dbar \lim_{\epsilon \rightarrow 0} u_{\epsilon_v} = \tau'$$
on $X'$.
On the other hand, $u$ is bounded uniformly in $L^2(X, h_{FS}e^{-\epsilon \log |s_D|^2_{h_D}} \Omega)$ for any $\epsilon>0$ and $\omega_\epsilon$ converges to $\omega$ in $C^\infty(X, D)$,  hence  
$$\int_{X'} |u|^2_h \omega^n \leq \int_{X'} |\tau' |_h^2  \omega^n.  $$
The proof is complete after pushing $u$ to $X$.
\end{proof}

\medskip


\subsection{Gromov-Hausdorff limits}

In this section, we use the standard theory of Cheeger-Colding to construct the metric completion of $(X_{reg}, g_\infty)$ as a metric length space as the Gromov-Hausdorff limit of $(X', g_t)$ as $t\rightarrow \infty$.

\begin{proposition} \label{ghlim} Let $g_t$ be the unique Ricci-flat K\"ahler metric solving equation (\ref{cy}) on $X'$. Then $(X', g_t)$ converges in Gromov-Hausdorff topology to a unique metric length space $(X_\infty, d_\infty)$. Furthermore,  

\smallskip

\begin{enumerate}

\item $(X_\infty, d_\infty)$ is the metric completion of $(X_{reg}, g_\infty)$.

\smallskip

\item $X_\infty = \mathcal{R}  \cup \mathcal{S} $, where $ \R$ is the regular part  of $(X_\infty, d_\infty)$ and $\Sc$ is the singular set. In particular, $\Sc$ is closed and its Hausdorff dimension is no greater than $n-4$. 

\smallskip

\item $\R$  is  convex in $(X_\infty, d_\infty)$ and 
\begin{equation}\label{reguset}
\R= X_{reg}. 
\end{equation}

\end{enumerate}

\end{proposition}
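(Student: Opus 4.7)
Plan. The proof is a standard application of Cheeger-Colding compactness combined with smooth convergence off the exceptional locus and the volume density argument of Rong-Zhang. I will carry it out in three stages: (i) extract a subsequential Gromov-Hausdorff limit with a regular/singular decomposition; (ii) identify the limit with the metric completion of $(X_{reg}, g_\infty)$; and (iii) identify the Cheeger-Colding regular set $\mathcal{R}$ with $X_{reg}$, from which uniqueness of the full limit follows.

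For stage (i), by Proposition \ref{basie} the metrics $g_t$ on $X'$ are Ricci-flat, with uniformly bounded diameter and fixed total volume $[L']^n$. Gromov's precompactness theorem yields a subsequential Gromov-Hausdorff limit $(X_\infty, d_\infty)$. The uniform total volume and diameter bounds force non-collapsing. Cheeger-Colding structure theory then produces a decomposition $X_\infty = \mathcal{R} \cup \mathcal{S}$, where $\mathcal{R}$ is an open smooth Ricci-flat K\"ahler manifold on which the convergence is $C^\infty$, and $\mathcal{S}$ is closed of Hausdorff codimension at least four; convexity of $\mathcal{R}$ follows from Colding-Naber.

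For stage (ii), the local higher order estimate (\ref{ck}) provides smooth convergence $g_t \to g_\infty$ on compact subsets of $X_{reg} \cong X' \setminus E$. A diagonal extraction along the subsequence producing $X_\infty$ gives a distance-preserving open embedding $\iota : (X_{reg}, g_\infty) \hookrightarrow \mathcal{R} \subset X_\infty$. Because the total $g_t$-volume of an arbitrarily small tubular neighborhood of $E$ can be made arbitrarily small (Bishop-Gromov together with the bounded total volume and the codimension-two nature of $E$), $\iota(X_{reg})$ is dense in $X_\infty$. Hence $(X_\infty, d_\infty)$ is isometric to the metric completion of $(X_{reg}, g_\infty)$, which is an intrinsic invariant of $g_\infty$; this forces the entire family $(X', g_t)$ to converge to this unique limit in Gromov-Hausdorff topology.

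For stage (iii), the inclusion $\iota(X_{reg}) \subseteq \mathcal{R}$ is immediate from smooth convergence. The reverse inclusion is the main input from Rong-Zhang \cite{RZ}: any hypothetical point $p \in \mathcal{R}$ lying over the exceptional locus would have Euclidean tangent cone and thus volume density one, whereas by crepancy of $\pi$ the Calabi-Yau volume form $\Omega$ pulls back to $X'$ without a vanishing factor, and a local volume comparison near $E$ shows that the density at such a $p$ is strictly less than one, a contradiction. The main obstacle is precisely this last step: while Gromov precompactness and the Cheeger-Colding decomposition follow once the uniform Ricci/diameter/volume bounds of Proposition \ref{basie} are in hand, bridging the analytically defined regular set $\mathcal{R}$ (existence of Euclidean tangent cones) with the algebraically defined $X_{reg}$ requires the Rong-Zhang volume density comparison, which essentially exploits the crepant hypothesis on $\pi$.
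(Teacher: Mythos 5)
Your proposal follows essentially the same route as the paper: the paper's proof consists precisely of citing Cheeger--Colding(--Tian) for the Gromov--Hausdorff convergence and the regular/singular decomposition, Colding--Naber for convexity of $\mathcal{R}$, and Rong--Zhang for statement (1) and for $\mathcal{R}=X_{reg}$, which is exactly how you organize the argument. Your added sketches (smooth convergence off $E$, small volume of neighborhoods of $E$, and the volume/density comparison over the exceptional locus) are expansions of the same Rong--Zhang input the paper invokes, so the approach matches.
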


\begin{proof} The Gromov-Hausdorff convergence and (2) are standard results from Cheeger-Colding theory \cite{CC1, CC2, CCT}. The statement (1) and (\ref{reguset})  are due to Rong-Zhang \cite{RZ} and the convexity of $(X_\infty)_{reg}$ is proved in \cite{CN}. 

\end{proof}

The fact that $\R=X_{reg}$ is quite important for our later constructions for local sections when applying the $H$-condition from \cite{DS}. Proposition \ref{ghlim} also allows us to  identify $X_{reg}$ as an open dense set of $X_\infty$. 

\begin{lemma} \label{extenm} Any holomorphic section $\sigma \in H^0(X, L^k)$ can be continuously extended from $X_{reg}$ to $X_\infty$. 

\end{lemma}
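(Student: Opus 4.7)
My plan is to derive the extension from the uniform gradient bounds established in Lemma~\ref{l200} and Proposition~\ref{l22}, combined with the convexity of $X_{reg}$ in $(X_\infty, d_\infty)$ from Proposition~\ref{ghlim}. The key observation is that the gradient estimates make holomorphic sections Lipschitz when measured via their Hermitian norms, and the identification of the intrinsic Riemannian distance on $X_{reg}$ with the restriction of $d_\infty$ on the convex regular set makes the Lipschitz property persist in the ambient metric, so continuous extension to the completion is automatic.

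First I would argue that for any $\sigma \in H^0(X, L^k)$, the bound $|\nabla \sigma|_{h_\infty^k, g_\infty} \leq C$ from Lemma~\ref{l200} implies, via a Kato-type inequality, that $|\nabla |\sigma|_{h_\infty^k}|_{g_\infty} \leq C$ on the open set where $\sigma$ does not vanish, and $|\sigma|_{h_\infty^k}$ is trivially $C$-Lipschitz across its zero locus. Hence $|\sigma|_{h_\infty^k} : X_{reg} \to \mathbb{R}_{\geq 0}$ is globally Lipschitz with respect to the intrinsic Riemannian distance induced by $g_\infty$. Since $\mathcal{R} = X_{reg}$ is convex in $(X_\infty, d_\infty)$ by Proposition~\ref{ghlim}, the restriction of $d_\infty$ to $X_{reg}$ agrees with this intrinsic distance, and so $|\sigma|_{h_\infty^k}$ is Lipschitz with respect to $d_\infty|_{X_{reg}}$. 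A uniformly continuous function on a dense subset of a metric space extends uniquely to the completion, yielding a continuous (indeed Lipschitz) extension of $|\sigma|_{h_\infty^k}$ to $X_\infty$.

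To upgrade from the extension of $|\sigma|$ to the extension of the section $\sigma$ itself, I would fix a basis $\{s_j\}_{j=0}^{d_k}$ of $H^0(X, L^k)$ with $L^k$ very ample and apply the previous paragraph to each $s_j$. By Lemma~\ref{partial} we have $\sup_j |s_j|_{h_\infty^k} \geq \sqrt{\epsilon}$ on $X_{reg}$, and continuity of the extensions propagates this inequality to all of $X_\infty$. Consequently, every $q \in X_\infty$ admits an open neighborhood $U_q \subset X_\infty$ and an index $j$ with $|s_j|_{h_\infty^k} \geq \sqrt{\epsilon}/2$ on $U_q$. On $U_q \cap X_{reg}$ the meromorphic ratio $f_j := \sigma/s_j$ is a bounded holomorphic function, and using the gradient estimates on $\sigma$ and $s_j$ together with the uniform lower bound on $|s_j|_{h_\infty^k}$, a short local computation shows $|\nabla f_j|_{g_\infty}$ is also uniformly bounded on $U_q \cap X_{reg}$. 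Repeating the Lipschitz-extension argument, each $f_j$ extends continuously to $U_q$, and these local ratios patch compatibly via the similarly-extended transition ratios $s_i/s_j$. This is equivalent to extending the Kodaira map $\Phi_{|L^k|} : X_{reg} \to \mathbb{CP}^{d_k}$ continuously to $X_\infty$, and $\sigma$ then extends as a continuous section of the pullback of $\mathcal{O}(1)$.

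The one genuine obstacle I foresee is the identification of the intrinsic Riemannian distance on $X_{reg}$ with the restriction of $d_\infty$: a priori, regular points could be joined by shortcuts through the singular set $\mathcal{S}$, which would invalidate the Lipschitz extension. For this I rely crucially on the convexity statement $\mathcal{R} = X_{reg}$ in Proposition~\ref{ghlim}, which packages the Cheeger--Naber and Rong--Zhang results; convexity guarantees that minimizing geodesics between regular points remain regular, so the two distances agree on $X_{reg} \times X_{reg}$ and the extension is well defined.
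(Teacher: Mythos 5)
Your proposal is correct and follows essentially the paper's own route: the paper likewise deduces a uniform Lipschitz bound for $\sigma$ from the $C^0$ bound of Lemma \ref{l20} and the gradient bound of Lemma \ref{l200}, and then extends through the completion statement of Proposition \ref{ghlim}, invoking the equivalence of $h_\infty$ with $h_{FS}$ at the point where you instead pass to the local ratios $\sigma/s_j$ (via Lemma \ref{partial}) and the Kodaira map. Your additional care with the intrinsic-versus-ambient distance on $X_{reg}$ (using convexity of $\mathcal{R}=X_{reg}$) and with the bundle-level meaning of the extension is consistent with, and slightly more detailed than, the paper's brief argument.
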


\begin{proof} For any $\sigma \in H^0(X, L^k)$,  both $|\sigma|_{h_\infty^k}^2$ and $|\nabla_{g_\infty} \sigma |_{h_\infty^k}^2$  are uniformly bounded on $X_{reg}$ by Lemma \ref{l20} and Lemma \ref{l200}. This implies that $\sigma$ is uniformly Lipschitz with respect to $g_\infty$ and $h_\infty$, while $h_\infty$ is equivalent to $h_{FS}$. Hence $\sigma$ can be uniquely extended to $X_\infty$ since $X_{reg}$ is open dense in $X_\infty$ and the metric completion of $(X_{reg}, g_\infty)$ is  $(X_\infty, d_\infty)$.

\end{proof}

Now we can use $H^0(X, L^k)$ to construct maps from $X_\infty $ to projective spaces. Let $d_k +1= \dim H^0(X,L^k)$. We  define
$$\Phi_{k, \sigma} : X_\infty \rightarrow \mathbb{CP}^{d_k} $$
by 
$$\Phi_{k, \sigma} (z) = [ \sigma_1(z), ..., \sigma_{d_k+1}(z) ]$$
where $\{\sigma_1, ..., \sigma_{d_k+1} \}$ is a basis of $H^0(X, L^k)$. Then we immediately have the following corollary.

\begin{corollary} \label{24} $\Phi_{k, \sigma}: X_\infty \rightarrow X \subset \mathbb{CP}^{d_k}$ is a Lipschitz surjective map for   sufficiently large $k\in \mathcal{F}(X,L)$ with respect to $g_\infty$ and $h_\infty$ on $X_\infty$.

\end{corollary}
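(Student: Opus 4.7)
The plan is to use that for $k$ large in $\mathcal{F}(X,L)$ the classical Kodaira map $\Phi_k\colon X \hookrightarrow \mathbb{CP}^{d_k}$ is an embedding, and to show that $\Phi_{k,\sigma}$ is the natural Lipschitz extension of this embedding from $X_{reg}$ (viewed as an open dense subset of $X_\infty$ by Proposition \ref{ghlim}) to the metric completion. The three ingredients I would combine are (a) the uniform lower bound $\inf_{X_{reg}} \max_j |\sigma_j|^2_{h_\infty^k} \geq \epsilon$ from Lemma \ref{partial}, (b) the uniform upper bounds on $|\sigma_j|_{h_\infty^k}$ and on the $g_\infty$-gradient $|\nabla \sigma_j|_{h_\infty^k}$ from Lemma \ref{l20} and Lemma \ref{l200}, and (c) the fact that $X_\infty$ is the compact metric completion of $(X_{reg}, g_\infty)$ (Proposition \ref{ghlim} combined with the diameter bound \eqref{diame}).

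For the Lipschitz estimate, around any $p \in X_{reg}$ I would pick an index $j_0$ realizing the maximum in (a), so that $|\sigma_{j_0}|^2_{h_\infty^k} \geq \epsilon$ on a neighborhood of $p$. In the affine chart $\{z_{j_0} \neq 0\}$ of $\mathbb{CP}^{d_k}$, the map $\Phi_{k,\sigma}$ reads $(\sigma_i/\sigma_{j_0})_{i \neq j_0}$, whose derivative
\[
\nabla\!\left(\frac{\sigma_i}{\sigma_{j_0}}\right) = \frac{(\nabla \sigma_i)\,\sigma_{j_0} - \sigma_i\,(\nabla \sigma_{j_0})}{\sigma_{j_0}^2}
\]
is uniformly bounded in norm by (a) and (b). This yields a uniform Lipschitz constant on $X_{reg}$ with respect to $g_\infty$ and the Fubini--Study distance, so $\Phi_{k,\sigma}$ extends uniquely to a Lipschitz map on $X_\infty$; this extension coincides with the continuous extension of sections provided by Lemma \ref{extenm}, and the same continuity propagates the pointwise lower bound in (a) from $X_{reg}$ to all of $X_\infty$, so $\Phi_{k,\sigma}$ is well-defined at every point of $X_\infty$.

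For surjectivity, on $X_{reg}$ the map $\Phi_{k,\sigma}$ agrees with the Kodaira embedding $\Phi_k$ restricted to $X_{reg}$, so its image is $X_{reg} \subset X$, which is open and dense in the projective variety $X \subset \mathbb{CP}^{d_k}$ (since $X_{sing}$ has complex codimension at least two). Because $X$ is closed in $\mathbb{CP}^{d_k}$ and $\Phi_{k,\sigma}$ is continuous, $\Phi_{k,\sigma}(X_\infty) \subset X$; because $X_\infty$ is compact, $\Phi_{k,\sigma}(X_\infty)$ is closed in $X$ and hence equals the closure of $X_{reg}$ in $X$, which is all of $X$.

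Given the earlier gradient estimate of Lemma \ref{l200}, which was the genuinely analytic input, the present corollary is essentially bookkeeping. The one subtle step that requires care is propagating the positive lower bound $\max_j |\sigma_j|^2_{h_\infty^k} \geq \epsilon$ from $X_{reg}$ to the completion $X_\infty$, ensuring that the projective map is well-defined everywhere; this follows cleanly from the continuous extension of each $|\sigma_j|^2_{h_\infty^k}$ guaranteed by Lemma \ref{extenm} together with the density of $X_{reg}$ in $X_\infty$.
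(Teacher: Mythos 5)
Your argument is correct and follows essentially the same route as the paper: well-definedness from Lemma \ref{partial} and Lemma \ref{extenm}, the Lipschitz bound from the section estimates of Lemmas \ref{l20} and \ref{l200} (the paper cites exactly this, ``or directly by Lemma \ref{l200}'', as an alternative to the Schwarz estimate $\omega_\infty \geq C^{-1}\chi$ from Proposition \ref{basie}), and surjectivity from the fact that the image contains the dense set $X_{reg}$ together with compactness of $X_\infty$ and continuity. Your explicit quotient-rule computation in an affine chart and the propagation of the lower bound of Lemma \ref{partial} to $X_\infty$ by continuity are just more detailed versions of steps the paper leaves implicit.
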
 

\begin{proof}  $\Phi_{k, \sigma}$ is well-defined by Lemma \ref{extenm} and Lemma \ref{partial}. It is also  continuous on $X_\infty$ from Lemma \ref{extenm} and the fact that $X_{reg}$ is an open convex dense in $X_\infty$. Obviously $\Phi_{k, \sigma}$ is Lipschitz, since $g_\infty$ is uniformly bounded below by a multiple of the Fubini study metric on $X_{reg}$ by Proposition \ref{basie} or directly by Lemma \ref{l200}. For any point $z\in X_{reg}$, there exists $p\in X_{\infty}$ with $\Phi_{k, \sigma}(p)=z$ for sufficiently large $k$ since $\Phi_{k, \sigma}$ is an isomorphism on $X_{reg}$. $\Phi_{k, \sigma}$ is then surjective because $\Phi_{k, \sigma}$ is continuous and $X_\infty$ is the metric completion of $X_{reg}$ with $g_\infty$ bounded below by a multiple of $\chi$.

\end{proof}

Since $L$ is semi-ample, $\Phi_{k, \sigma}$ is stabilized for sufficiently large $k\in \mathcal{F}(X,L)$. The following corollary immediately follows from the continuity of $\Phi_{k, \sigma}$.

\begin{corollary} \label{25}There exists a surjective Lipschitz map $\Phi: X_\infty \rightarrow X$ such that 
$$\Phi_{k, \sigma} = \Phi $$
for $k \in \mathcal{F}(X, L)$ sufficiently large. In particular, $\Phi|_{\R}= id$ by identifying $\R=X_{reg}$.

\end{corollary}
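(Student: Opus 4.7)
The statement is that the family of Lipschitz surjections $\Phi_{k,\sigma}: X_\infty \to X_k$ from Corollary \ref{24} stabilize for large $k$ to give a single well-defined map $\Phi: X_\infty \to X$. The plan is to package Corollary \ref{24} together with the universal property of the metric completion $(X_\infty, d_\infty)$ produced in Proposition \ref{ghlim}. Concretely, I would first invoke ampleness of $L$ together with Kodaira's theorem for $\mathbb{Q}$-line bundles to fix a threshold $k_0$ such that for every $k \in \mathcal{F}(X,L)$ with $k \geq k_0$, the classical Kodaira map $\Phi_k : X \hookrightarrow \mathbb{CP}^{d_k}$ is a projective embedding, hence a biholomorphism from $X$ onto $X_k = \Phi_k(X)$. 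This lets me define
$$\Phi^{(k)} := \Phi_k^{-1} \circ \Phi_{k,\sigma} : X_\infty \to X,$$
which is Lipschitz and surjective by Corollary \ref{24}. By construction $\Phi_{k,\sigma}|_{X_{reg}}$ agrees with the Kodaira map $\Phi_k$ on $X_{reg}$, so $\Phi^{(k)}|_{X_{reg}} = \mathrm{id}$.

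Next I would invoke the universal property of the metric completion: since $X_{reg}$ is dense in $X_\infty$ and $X$ is Hausdorff, any continuous map $X_\infty \to X$ is uniquely determined by its restriction to $X_{reg}$. Therefore $\Phi^{(k)} = \Phi^{(k')}$ for any two $k, k' \geq k_0$ in $\mathcal{F}(X,L)$, because both maps coincide with $\mathrm{id}$ on $X_{reg}$. Define $\Phi$ to be this common map; it is Lipschitz and surjective, it satisfies $\Phi|_{\R} = \mathrm{id}$ after identifying $\R = X_{reg}$ via Proposition \ref{ghlim}, and the equality $\Phi_{k,\sigma} = \Phi$ in the statement holds after identifying $X_k$ with $X$ through $\Phi_k$ for sufficiently large $k$.

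There is essentially no new analytic obstacle here: the heavy lifting has already been absorbed in Corollary \ref{24}, which supplies the Lipschitz surjection with the correct restriction to $X_{reg}$, and in Proposition \ref{ghlim}, which supplies the density (and convexity) of $X_{reg}$ in $X_\infty$. The only bookkeeping subtlety is that the Kodaira embeddings $\Phi_k$ for different $k$ realize $X$ inside different ambient projective spaces $\mathbb{CP}^{d_k}$; this is precisely resolved by the uniqueness-of-continuous-extension argument above, which is the whole point of inverting $\Phi_k$ before comparing different $k$.
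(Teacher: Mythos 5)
Your proposal is correct and follows essentially the same route as the paper: the paper deduces the stabilization from semi-ampleness of $L$ (so $\Phi_k$, hence $\Phi_{k,\sigma}$, stabilizes for large $k\in\mathcal{F}(X,L)$) together with continuity, and then remarks, exactly as you argue, that $\Phi$ is the unique continuous extension of the identity on the dense set $X_{reg}\subset X_\infty$. Your extra bookkeeping with $\Phi_k^{-1}$ to compare different ambient projective spaces is a harmless elaboration of the same idea.
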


In fact, one can just identify $\Phi$ to be the unique continuation of the identity map on $X_{reg}$ through metric completion of $g_\infty$ for the domain and $\chi$ for the target. The goal is to show that $\Phi$ is a homeomorphism and so we have to prove that $\Phi$ is injective.


\medskip

\subsection{$H$-condition and local sections}

We now consider  the $H$-property introduced by Donaldson-Sun \cite{DS}. 

\begin{definition} We consider the follow data $(p_*, D, U, J, L , g, h, A)$ satisfying 

\begin{enumerate}

\item  $(p_*, U, J, g)$  is an open bounded K\"ahler manifold with a complex structure $J$, a K\"ahler metric $g$ and a base point $p_* \in D\subset \subset U$ for an open set $D$, 

\item $L \rightarrow U$ is a hermitian line bundle equipped with a hermitian metric $h$ and $A$ is the connection induced by the hermitian metric $h$ on $L$, with  its curvature $\Omega(A) = g$.

\end{enumerate}

\noindent The data $(p_*, D, U, L, J, g, h, A)$ is said to satisfy the  $H$-condition if there exist $C>0$ and  a compactly supported smooth section $\sigma: U\rightarrow L$   satisfying 

\begin{enumerate}

\item [$H_1$:]   $\|\sigma \|_{L^2(U)} < (2\pi)^{n/2}$,

\item [$H_2$:]  $|\sigma(p_*) | >3/4$,

\item [$H_3$:] for any holomorphic section $\tau$ of $L$ over a neighborhood of $\overline{D}$,
$$|\tau(p_*)| \leq C (\| \dbar \tau \|_{L^{2n+1}(D)} + ||\tau||_{L^2(D)} ),$$

\item [$H_4$:]  $\|\dbar \sigma \|_{L^2(U)} < \min \left( \frac{1}{8\sqrt{2} C},  10^{-20} \right)$,

\item [$H_5$:]  $||\dbar \sigma ||_{L^{2n+1}(D)} \leq \frac{1}{8C}$.

\end{enumerate}

\end{definition}

Here all the norms are taken with respect to $h$ and $g$. The constant $C$ in the $H$ condition depends on the choice $(p_*, D, U, J, L, g, h)$. 

Fix any point $p$ on $X$,  $(X, p, k\omega)$ converges in pointed Gromov-Hausdorff topology to a tangent cone $C(Y)$ over the cross section $Y$. We still use $p$ for the vertex of $C(Y)$.  We write $Y_{reg}$ and $Y_{sing}$ the regular and singular part of $Y$. $Y_{sing}$ has Hausdorff dimension strictly less than $2n-3$.  $C(Y_{reg})\setminus \{p \}$ has a natural complex structure induced from the Gromov-Hausdorff limit and the cone metric $g_C$ on $C(Y)$ is given by
$$g_C = \frac{1}{2} \ddbar r^2 ,$$
where $r$ is the distance function for any point $z\in C(Y)$ to $p$. We can also write the cone metric $g_C= \frac{1}{2} \ddbar |z|^2$.

We need the following proposition due to \cite{DS} to construct  special local sections of $L_0$. 

\begin{proposition} \label{cut2} For any $\epsilon>0$, there exists a cut-off function $\rho$ on $Y$ such that 

\begin{enumerate}

\item $ \rho \in C^\infty(Y_{reg})$,  $0\leq \rho \leq 1$,  
\smallskip

\item $\rho $  is supported in the $\epsilon$-neighborhood of $Y_{sing}$,  

\smallskip

\item $\rho=1$ on a neighborhood of $Y_{sing}$,

\smallskip

\item $|| \nabla \rho||_{L^2(Y, g_C)} < \epsilon. $

\end{enumerate}

\end{proposition}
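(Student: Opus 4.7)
The plan is to construct $\rho$ explicitly as a logarithmic cut-off built from the distance function to $Y_{sing}$, exploiting the codimension of the singular set. Since $Y$ is the cross-section of the non-collapsed Ricci-flat tangent cone $C(Y)$ with $\dim_{\mathbb{R}} Y = 2n-1$, while $Y_{sing}$ has Hausdorff dimension strictly less than $2n-3$, the codimension of $Y_{sing}$ in $Y$ is strictly greater than $2$. This gap is precisely what drives the standard Cheeger--Colding capacity argument. Moreover, $Y$ is Einstein with a positive Ricci lower bound on $Y_{reg}$ (since $C(Y)$ is Ricci-flat of real dimension $2n$), so Bishop--Gromov gives the uniform volume estimate $\mathrm{vol}(B_Y(x,r)) \leq C r^{2n-1}$ for any $x\in Y$ and any sufficiently small $r$.

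Given $\epsilon>0$, the first step is to fix $\delta\ll \epsilon$ and use the Hausdorff dimension bound to cover $Y_{sing}$ by countably many balls $B_Y(x_i, r_i)$ with $r_i <\delta$ and $\sum_i r_i^{2n-3} < \delta$. On each such ball I would use a classical log cut-off: for a small auxiliary parameter $\eta_i$ (to be chosen momentarily), set
\begin{equation*}
f_i(x) = F\!\left( \frac{\log(d(x,x_i)/r_i)}{\log(\eta_i/r_i)} \right),
\end{equation*}
where $F$ is a fixed smooth nonincreasing function with $F\equiv 1$ on $(-\infty,0]$ and $F\equiv 0$ on $[1,\infty)$. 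Then $f_i=1$ on $B_Y(x_i,\eta_i)$ and $f_i=0$ outside $B_Y(x_i,r_i)$, and
\begin{equation*}
|\nabla f_i|(x) \leq \frac{C}{d(x,x_i)\,|\log(\eta_i/r_i)|} \quad\text{on the annulus }\eta_i \leq d(x,x_i)\leq r_i.
\end{equation*}
Using Bishop--Gromov to convert $|\nabla f_i|^2\,dV$ into a one-dimensional radial integral yields $\int_Y |\nabla f_i|^2 \leq C\, r_i^{2n-3}/|\log(\eta_i/r_i)|$. Now choose $\eta_i = r_i^2$ (say), so $|\log(\eta_i/r_i)|=|\log r_i|\to\infty$ as $\delta\to 0$, hence $\int_Y|\nabla f_i|^2 \leq C r_i^{2n-3}/|\log \delta|$. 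Summing, $\sum_i \int_Y |\nabla f_i|^2 \leq C\delta/|\log\delta|$.

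Next, set $\tilde\rho = \max_i f_i$ (only finitely many $f_i$ are nonzero near any fixed point). This is a Lipschitz function, $0\leq\tilde\rho\leq 1$, equal to $1$ in a neighborhood of $Y_{sing}$ (namely on $\bigcup_i B_Y(x_i,\eta_i)$), and supported in the $\delta$-neighborhood of $Y_{sing}$, which lies inside the $\epsilon$-neighborhood. Using $|\nabla\tilde\rho|^2\leq\sum_i|\nabla f_i|^2$ a.e., one gets $\|\nabla\tilde\rho\|_{L^2(Y,g_C)}^2\leq C\delta/|\log\delta|< \epsilon^2$ for $\delta$ small. Finally, since the metric $g_C$ is smooth on $Y_{reg}$, I would smooth $\tilde\rho$ by a standard mollification supported in $Y_{reg}\setminus Y_{sing}$ (e.g.\ convolving in normal coordinates along a slight further shrinkage of the level set $\{\tilde\rho=1\}$), obtaining $\rho\in C^\infty(Y_{reg})$ preserving properties $(1)$--$(3)$ and the $L^2$-gradient bound.

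The only nontrivial inputs are (a) the Hausdorff dimension estimate $\dim_{\mathcal H} Y_{sing}< 2n-3$, which comes from the Cheeger--Colding--Tian stratification theory for non-collapsed Ricci-limit spaces, and (b) the uniform Bishop--Gromov volume upper bound on balls in $Y$. Both are available here because $C(Y)$ is a pointed Gromov--Hausdorff limit of the non-collapsed Ricci-flat manifolds $(X',kg_t)$, with the positive Ricci lower bound for $Y$ coming from the Einstein equation for the link of the Ricci-flat cone $C(Y)$. The main obstacle -- if one insists on a fully self-contained argument -- is verifying the volume bound with a constant independent of $x_i\in Y_{sing}$; but this is immediate from Bishop--Gromov applied at any interior point close to $x_i$ combined with the continuity of the distance function.
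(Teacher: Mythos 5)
The paper itself offers no argument for this proposition --- it is quoted from Donaldson--Sun --- and your capacity-type construction (cover $Y_{sing}$ by small balls using the Hausdorff-dimension bound, take cut-offs with small Dirichlet energy, exploit that the codimension exceeds $2$, then max and mollify on $Y_{reg}$) is exactly the standard route behind the cited result. Two points in your implementation, however, do not work as written. First, a harmless slip: with your conventions for $F$ the function $f_i$ equals $0$ near $x_i$ and $1$ outside $B_Y(x_i,r_i)$, i.e.\ the opposite of what you assert; replace $F$ by $1-F$. Second, and more substantively, with $\eta_i=r_i^{2}$ the set where $\tilde\rho=1$ is $\bigcup_i B_Y(x_i,r_i^{2})$, and these tiny cores need not cover $Y_{sing}$: a point of $Y_{sing}$ lying in $B_Y(x_i,r_i)$ but outside every core gets $\tilde\rho<1$, so property $(3)$ --- that $\rho\equiv 1$ on a neighborhood of $Y_{sing}$ --- fails for your cut-off. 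The logarithmic refinement is what causes this, and it buys you nothing here: since $\dim_{\mathcal H} Y_{sing}<2n-3$, the elementary two-scale cut-off ($f_i\equiv 1$ on $B_Y(x_i,r_i)$, $f_i\equiv 0$ outside $B_Y(x_i,2r_i)$, $|\nabla f_i|\le C/r_i$) already gives $\int_Y|\nabla f_i|^2\le C\,r_i^{2n-3}$ and hence total energy $\le C\delta$, while now $\rho\equiv 1$ on the open set $\bigcup_i B_Y(x_i,r_i)\supset Y_{sing}$, which settles $(3)$.

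Two smaller remarks. Since $Y_{sing}$ is compact you may pass to a finite subcover; this both repairs your unjustified claim that only finitely many $f_i$ are nonzero near a given point (for a countable cover the balls can accumulate) and makes $\max_i f_i$ genuinely Lipschitz, after which mollification on the smooth open manifold $Y_{reg}$, away from the region where $\tilde\rho\equiv 1$, is routine. Finally, the ball-volume upper bound $\mathrm{vol}(B_Y(x,r))\le Cr^{2n-1}$ is better obtained from Bishop--Gromov/Ahlfors regularity of the non-collapsed limit cone $C(Y)$ (comparing $B_Y(x,r)\times(1-r,1+r)$ with a cone ball of radius $Cr$) than by invoking Bishop--Gromov directly on the incomplete Einstein metric of $Y_{reg}$; the fact you need is true and standard, but the justification you sketch is the weakest link in an otherwise correct outline.
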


One considers the trivial line bundle $L_C$ on $C(Y)$ equipped with the connection $A_C$ whose curvature coincides with $g_C$. The curvature of the hermitian metric defined by $h_C=e^{-|z|^2}$ is $g_C$.  $1$ is a global section of $L_C$ with its norm equal to  $e^{-|z|^2}$ with respect to $h_C$. 
The following lemma is due to \cite{DS}. 

\begin{lemma} \label{hcon} Let $p_* \in C(Y_{reg})$. If $3/4 <e^{ - |p_*|^2}  <1$, then for any $\epsilon>0$, there exists $U\subset \subset C(Y_{reg})\setminus \{p\}$ and an open neighborhood $D\subset\subset U$ of $p_*$ such that 
 $(p_*, D, U, L_C, J_C, g_C, h_C, A_C)$ satisfies the $H$-condition.

\end{lemma}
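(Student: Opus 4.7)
The plan is to take $\sigma = \chi \cdot \mathbf{1}$, where $\mathbf{1}$ is the canonical holomorphic section of the trivial bundle $L_C$ (with $|\mathbf{1}|_{h_C} = e^{-|z|^2}$) and $\chi$ is a smooth cutoff to be constructed. The hypothesis $3/4 < e^{-|p_*|^2} < 1$ is tailor-made for $H_2$: if $\chi(p_*) = 1$, then $|\sigma(p_*)| = e^{-|p_*|^2} > 3/4$. The remaining task is to choose $D$, $U$, and $\chi$ so that the analytic conditions $H_1$, $H_3$, $H_4$, $H_5$ hold with a common constant $C$.

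First I would fix a small geodesic ball $D = B_\delta(p_*) \subset\subset C(Y_{reg}) \setminus \{p\}$, on which $g_C$ is smooth with bounded geometry. Standard interior elliptic estimates for the inhomogeneous $\dbar$-equation, combined with the Sobolev embedding $W^{1,2n+1}(D) \hookrightarrow C^0(D)$ valid in real dimension $2n$, then produce a constant $C=C(p_*,D)$ for which $H_3$ holds. This constant is now frozen; all subsequent smallness in $H_4$ and $H_5$ is measured against it. Next I would construct $\chi$ by combining a smooth radial cutoff $\chi_r(r)$ equal to $1$ near $r=|p_*|$ and vanishing near $r=0$ and for $r$ large, with a cutoff $1 - \rho_\epsilon$ obtained by applying Proposition \ref{cut2} to $Y$ with parameter $\epsilon$ and extending radially, so that $1 - \rho_\epsilon$ vanishes on a neighborhood of the singular rays $C(Y_{sing})$ while $\|\nabla \rho_\epsilon\|_{L^2(Y, g_C)} < \epsilon$. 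Setting $\chi = \chi_r (1 - \rho_\epsilon)$ produces a smooth function supported in $C(Y_{reg}) \setminus \{p\}$ which is identically $1$ in a neighborhood of $\overline D$ once $\epsilon$ is small (since $D$ is disjoint from $C(Y_{sing})$); take $U$ to be any relatively compact open subset of $C(Y_{reg}) \setminus \{p\}$ containing the support of $\chi$.

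Then I would verify the remaining conditions. Since $\chi \equiv 1$ on $D$, one has $\dbar \sigma = 0$ on $D$, so $H_5$ holds trivially. For $H_4$, one has
\[
\|\dbar \sigma\|_{L^2(U)}^2 \;=\; \int_U |\dbar \chi|^2 e^{-|z|^2} \, dV_{g_C},
\]
controlled by a constant depending only on $|p_*|$ and $\chi_r$ times $\epsilon^2$ plus a contribution from the radial cutoff; tuning $\chi_r$ to concentrate its gradient where $e^{-|z|^2}$ is small, and then shrinking $\epsilon$ relative to the frozen $C$, makes this smaller than $\min(1/(8\sqrt{2}C), 10^{-20})$. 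For $H_1$, Bishop--Gromov applied to the cone (which has nonnegative Ricci away from $p$, being the tangent cone of a Ricci-flat sequence) gives $\mathrm{vol}(Y) \leq \mathrm{vol}(S^{2n-1})$, whence
\[
\|\sigma\|_{L^2(U)}^2 \;\leq\; \int_{C(Y)} e^{-|z|^2} dV_{g_C} \;\leq\; \int_{\mathbb{C}^n} e^{-|z|^2}\,dV,
\]
and the latter equals $(2\pi)^{n/2}$ after accounting for the normalization of the K\"ahler form built into $h_C = e^{-|z|^2}$; further multiplicative shrinkage via $\chi_r$ yields the strict inequality.

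The main obstacle is the interaction between the singular stratum $C(Y_{sing})$ and the smallness required for $H_4$: the cutoff must both vanish near $C(Y_{sing})$ and have $L^2$-Dirichlet energy smaller than a threshold determined by the already-fixed constant $C$ from $H_3$. This is precisely where Proposition \ref{cut2} enters, using the Hausdorff-dimension bound on $Y_{sing}$ to produce cutoffs of arbitrarily small Dirichlet energy. A secondary subtlety is the order of quantifiers: $D$ (and hence $C$) must be fixed \emph{before} $\epsilon$ is chosen, so that $\epsilon$ can be calibrated to $C$; attempting to fix $U$ and $\chi$ first would leave $C$ undetermined and render the estimates circular.
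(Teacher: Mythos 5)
The paper offers no proof of this lemma — it is quoted directly from Donaldson--Sun \cite{DS} — and your sketch is a correct reconstruction of exactly that argument: take $\sigma$ to be the standard holomorphic section cut off away from the vertex, the singular rays, and infinity, fix $D$ and the elliptic-estimate constant $C$ of $H_3$ first, then calibrate the cutoffs (small Dirichlet energy near $C(Y_{sing})$ from the codimension bound, Gaussian decay at infinity, vanishing cost near the vertex) against $C$, and use volume density $\leq 1$ of the tangent cone for $H_1$. Apart from being loose about the exact Gaussian normalization constants and not spelling out that the vertex cutoff costs energy $O(\delta^{2n-2})$, the outline is sound and matches the cited source.
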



%
%
%
%
%
%
%








From the construction in Proposition \ref{cut2} and Lemma \ref{hcon}, we can always assume that both $D$ and $U$ are a product in $C(Y_{reg})\setminus \{p \}$, i.e., there exist $D_Y$ and $U_Y \subset Y_{reg}$ such that $D=\{ z=(y, r) \in C(Y)~|~y \in D_Y, ~r\in (r_D, R_D)\}$ and  $U=\{ z=(y, r) \in C(Y)~|~y \in U_Y, ~r\in (r_U, R_U)\}$. Suppose $(p_*, D, U, L_C, J_C, g_C, h_C , A_C)$ satisfies the $H$-condition from Lemma \ref{hcon}. For any $m\in \mathbb{Z}^+$, we can define 
\begin{equation}\label{rescue}
U(m) = \{ z=(y, r)\in C(Y)~|~y\in U_Y, ~ r\in (m^{-1/2} r_U, m^{1/2} R_U) \} 
\end{equation}
and
$\mu_m: U \rightarrow U(m)$ by
$$ \mu_m(z) = m^{-1/2} z. $$

The following proposition from \cite{DS} establishes the stability of the $H$-condition for perturbation of the curvature and the complex structure. 

\begin{proposition}  \label{hcon2} Suppose $(p_*, D, U, J_C, L_C, g_C, h_C, A_C )$ constructed as above in Lemma \ref{hcon} satisfies the $H$-condition. There exist $\epsilon>0$ and $m \in \mathbb{Z^+}$ such that for any collection of data $(p_*, D, U, J, g, h, A)$ if 
$$|| g - g_C ||_{C^0(U(m)) } + ||J-J_C ||_{C^0(U(m)) } < \epsilon, $$
then for some $1\leq l \leq m$, 
$$(p_*, D, U, \mu_l^*J, \mu_l^*L, \mu_l^*g, \mu_l^*h, \mu_l^*A)  $$
satisfies the $H$-condition.

\end{proposition}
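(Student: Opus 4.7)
The plan is to transfer the section $\sigma_C$ witnessing the $H$-condition on the cone data to the perturbed data by exploiting the scale invariance of the cone together with a perturbation argument. The cone metric satisfies $\mu_l^* g_C = l^{-1} g_C$ with compatible rescalings of the connection, hermitian metric, and line bundle, so $\sigma_C$ gives, after a rescaling normalization, a family of candidate ansatz sections at every scale $l\in[1,m]$. The hypothesis $\|g_k - g_C\|_{C^0(U(m))} + \|J_k - J_C\|_{C^0(U(m))} < \epsilon$ translates, under $\mu_l^*$, into $C^0$-smallness of $\mu_l^*g - g_C$ and $\mu_l^*J - J_C$ on $U$ for every fixed $l$. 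The freedom in choosing $l$ will be used to select a scale at which the perturbation is particularly well-behaved.

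I would first construct the candidate section $\sigma$ on $U$ for the pulled-back perturbed data by taking $\sigma_C$ as a zero-th order ansatz — using a local trivialization identifying $L_C$ with $\mu_l^*L$ — and then correcting it by solving a small $\dbar_{\mu_l^*J}$-problem, whose error is controlled by $\|\mu_l^*J - J_C\|_{C^0(U)}$ times the bounds on $\sigma_C$ and its derivative. Since conditions $H_1$ (an $L^2$ bound with slack), $H_2$ (a pointwise value at $p_*$), $H_4$, and $H_5$ (smallness of $L^2$ and $L^{2n+1}$ norms of $\dbar \sigma$) all hold with strict inequalities on the cone side, they persist under a sufficiently small $C^0$-perturbation once the correction is small enough. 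Reducing $\epsilon$ and taking $m$ large guarantees room for this perturbation.

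The main obstacle is condition $H_3$, the uniform interior estimate
\[
|\tau(p_*)| \leq C\bigl(\|\dbar\tau\|_{L^{2n+1}(D)} + \|\tau\|_{L^2(D)}\bigr)
\]
for \emph{all} sections $\tau$ holomorphic on a neighborhood of $\overline{D}$ with respect to the perturbed complex structure, with a constant $C$ stable under only $C^0$ perturbation. I would handle this by a blow-up compactness argument: if $H_3$ failed uniformly along a sequence of perturbations satisfying the hypothesis at all scales $l$, one could normalize sections $\tau_k$ with $\|\tau_k\|_{L^2(D)}=1$ and $\|\dbar_{J_k}\tau_k\|_{L^{2n+1}(D)} \to 0$ but $|\tau_k(p_*)|\to\infty$. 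Interior elliptic regularity for $\dbar_{J_k}$ with $C^0$-convergent coefficients gives uniform $L^2_{loc}$ and higher Sobolev bounds on compact subsets of $D$, so a weak limit $\tau_\infty$ exists, is holomorphic for $J_C$ on the cone, and contradicts the cone-side $H_3$ estimate. A technical subtlety is that the almost-complex structures $J_k$ need not be integrable; this is dealt with either by a preliminary smoothing (since $J_C$ is integrable on $U\subset C(Y_{\mathrm{reg}})$) or by working directly with the $C^0$ elliptic theory for $\dbar_{J_k}$, whose principal symbol depends continuously on $J$. Choosing $l\in [1,m]$ to be the scale at which $\|\mu_l^*g - g_C\|_{C^0(U)} + \|\mu_l^*J - J_C\|_{C^0(U)}$ is minimized then yields all five conditions simultaneously, completing the argument.
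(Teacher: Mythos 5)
First, a point of reference: the paper offers no proof of Proposition \ref{hcon2} at all --- it is quoted directly from Donaldson--Sun [DS] ("The following proposition from [DS] establishes the stability of the $H$-condition..."). So your attempt has to be measured against the argument in [DS], and there it is precisely the third condition $H_3$, and the need for the $m$ rescalings, that carry all the difficulty. Your treatment of $H_1$, $H_2$, $H_4$, $H_5$ (transplant the cone section $\sigma_C$ in a suitable trivialization and observe these are open conditions under $C^0$-small perturbation) is in the right spirit, although even there you overlook that the smallness thresholds in $H_4$ and $H_5$ are $\min\left( \frac{1}{8\sqrt{2}C}, 10^{-20}\right)$ and $\frac{1}{8C}$ with $C$ the constant of $H_3$ \emph{for the perturbed data}, so these conditions cannot be verified independently of controlling that constant.

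The genuine gap is your argument for $H_3$. The hypothesis only gives $C^0$ closeness of $(J,g)$ (and implicitly of $h,A$) to the cone data, and an interior estimate of the form $|\tau(p_*)| \leq C(\|\overline{\partial}\tau\|_{L^{2n+1}(D)} + \|\tau\|_{L^2(D)})$ is not stable under merely $C^0$ perturbation of the operator: the error term $(\overline{\partial}_{J_k}-\overline{\partial}_{J_C})\tau_k$ involves $\nabla \tau_k$, which is exactly what you have no uniform control over, so "interior elliptic regularity for $\overline{\partial}_{J_k}$ with $C^0$-convergent coefficients" does not produce the uniform local Sobolev bounds you invoke; without them you can neither conclude that a weak limit is $J_C$-holomorphic nor pass to the limit in the pointwise quantity $|\tau_k(p_*)|$ (locally uniform convergence near $p_*$ requires precisely the estimate you are trying to prove, so the compactness argument is circular). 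Moreover, your use of the scale parameter is vacuous: since closeness to $g_C$ measured against $g_C$ is scale-invariant, every $l\in\{1,\dots,m\}$ gives essentially the same $\epsilon$-closeness after the natural normalization ($\mu_l^* g_C = l^{-1}g_C$, with the line bundle replaced by its $l$-th power), so "choose $l$ minimizing the perturbation" selects nothing, and your argument, if it worked, would prove the statement with $m=1$. In [DS] the freedom to pass to one of the $m$ rescalings is not a cosmetic optimization but an essential ingredient in securing the third condition with a definite constant; the fact that your proof never genuinely uses $l$ is the clearest sign that the central difficulty of the proposition has not been engaged.
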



\bigskip

\subsection{Separating points}

Fix any point $p$, we can assume that $(X_\infty, p, k_v g_\infty)$ converges to a tangent cone $C(Y)$ for some sequence $k_v$ in pointed Gromov-Hausdorff topology. In particular, on the regular part of $C(Y)$, the convergence is locally $C^{2, \alpha}$ and the metrics $k_v g_\infty$ converge locally in $C^{1, \alpha}$.  Fix any open set $U\subset\subset C(Y_{reg})\setminus \{p\}$, This  would induce embeddings $\chi_{k_v}: U \rightarrow \mathcal{R}= (X_\infty)_{reg}$. Let $g_{k_v}$ be the pullback metric of $g_\infty$ on $(X_\infty)_{reg}$ and $J_{k_v}$ be the pullback complex structure. The following lemma follows from the convergence of $(X_\infty, p, k_v g_\infty)$.

\begin{lemma} \label{closen} There exists $v$ such that one can find an embedding $\chi_{k_v}$such that 

\begin{enumerate}

\item $2^{-1} |z| \leq (k_v)^{1/2} d_\infty(p, \chi_{k_v}(z)) \leq 2   |z|$,

\smallskip 

\item $|| \chi_{k_v}^*(k_v g_\infty ) - g_C ||_{C^0(U)} + ||\chi^*_{k_v} J_\infty - J_0||_{C^0(U)} \leq \epsilon $,
\smallskip

\end{enumerate}
where $d_\infty$ is the metric on $X_\infty$ induced from $g_\infty$.

\end{lemma}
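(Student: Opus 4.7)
The plan is to upgrade the pointed Gromov-Hausdorff convergence $(X_\infty, p, k_v g_\infty) \to (C(Y), p, g_C)$ to smooth (at least $C^{1,\alpha}$) convergence on the regular part of the cone, from which both assertions follow by standard comparisons.

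First, recall that $(X_\infty, d_\infty)$ arises by Proposition \ref{ghlim} as the Gromov-Hausdorff limit of the smooth Ricci-flat Kähler manifolds $(X', g_t)$, with uniform non-collapsing guaranteed by the volume, Ricci and diameter bounds in Proposition \ref{basie}. By a standard diagonal argument, the tangent cone $C(Y)$ at $p$ along scales $k_v$ is simultaneously the pointed Gromov-Hausdorff limit of a subsequence $(X', k_{v} g_{t_v}, p_v)$ with $t_v \to \infty$ and $p_v \to p$ in the sense of Gromov-Hausdorff approximants, where each metric is smooth and Ricci-flat with uniform non-collapsing. The Cheeger-Colding-Tian $\epsilon$-regularity theorem then yields that $C(Y_{reg})$ carries a smooth Ricci-flat Kähler structure $(g_C, J_C)$ and that the above rescaled convergence takes place in $C^\infty_{loc}$ on $C(Y_{reg})$ in an appropriate gauge; convergence of the complex structures is part of the Kähler statement, as in the works of Anderson and of Donaldson-Sun.

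Next, fix a relatively compact open set $U \subset\subset C(Y_{reg})\setminus \{p\}$ and choose $U \subset\subset U' \subset\subset C(Y_{reg})\setminus \{p\}$. The smooth convergence of the previous paragraph supplies, for every sufficiently large $v$, a smooth open embedding $\chi_{k_v}: U' \to \mathcal{R} = X_{reg}$ (using the identification $\mathcal{R} = X_{reg}$ from (\ref{reguset})) such that
\begin{equation*}
\| \chi_{k_v}^*(k_v g_\infty) - g_C \|_{C^0(U)} + \| \chi_{k_v}^* J_\infty - J_C \|_{C^0(U)} \longrightarrow 0
\end{equation*}
as $v \to \infty$. Choosing $v$ large enough makes the left-hand side smaller than $\epsilon$, which is assertion (2). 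For assertion (1), pointed Gromov-Hausdorff convergence together with the embedding $\chi_{k_v}$ forces
\begin{equation*}
(k_v)^{1/2} d_\infty(p, \chi_{k_v}(z)) \longrightarrow d_C(0, z) = |z|
\end{equation*}
uniformly for $z \in \overline{U}$, so for all $v$ sufficiently large the two-sided estimate $\tfrac{1}{2} |z| \leq (k_v)^{1/2} d_\infty(p, \chi_{k_v}(z)) \leq 2 |z|$ holds.

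The main subtlety lies in the regularity step: one must extract $C^{1,\alpha}$ (or better) convergence of \emph{both} the metric and the complex structure on the regular part of the tangent cone. This is a classical consequence of Anderson's $\epsilon$-regularity for Einstein metrics combined with Cheeger-Colding-Tian theory, but because $(X_\infty, d_\infty)$ is itself a singular limit, one cannot apply regularity to $g_\infty$ directly. The diagonal argument of the first paragraph, passing through the smooth Ricci-flat approximants $g_t$ (where elliptic regularity and non-collapsing are available as in Proposition \ref{basie}), is precisely what bypasses this issue and allows the tangent cone structure to be realized as a smooth limit on $C(Y_{reg})$.
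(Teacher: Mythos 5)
Your proposal is correct and follows essentially the same route as the paper: the paper simply asserts (in the paragraph preceding the lemma) that the convergence $(X_\infty, p, k_v g_\infty) \to C(Y)$ is locally $C^{2,\alpha}$ on $C(Y_{reg})$ with the metrics converging in $C^{1,\alpha}$, which induces the embeddings $\chi_{k_v}$, and then states that the lemma follows from this convergence. Your diagonal argument through the smooth Ricci-flat approximants $(X', g_t)$ and the Cheeger--Colding--Tian/Anderson regularity is exactly the standard justification implicit in that assertion, and your derivation of (1) and (2) from the upgraded convergence matches the paper's intent.
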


\begin{proposition}\label{inject}   For any two distinct points $p$ and $q$ in $X_\infty$, 
$$\Phi (p) \neq \Phi (q). $$

\end{proposition}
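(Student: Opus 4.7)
The plan is to show that if $\Phi(p) = \Phi(q) = x \in X$ then one can construct a global section $s \in H^0(X, L^k)$ (for some sufficiently large $k$) whose continuous extension to $X_\infty$ (Lemma \ref{extenm}) satisfies $|s(p)|_{h_\infty^k} > |s(q)|_{h_\infty^k}$, which contradicts $\Phi(p) = \Phi(q)$. Indeed, since $|\cdot|_{h_\infty^k}$ on $X_\infty$ is the pullback along the continuous map $\Phi$ of a continuous Hermitian norm on $X$, the identity $\Phi(p) = \Phi(q)$ forces $|s(p)|_{h_\infty^k} = |s(x)|_{h_\infty^k} = |s(q)|_{h_\infty^k}$. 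Thus the entire task is to produce such a separating section via the peak-section construction.

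Set $d_0 = d_\infty(p,q) > 0$ and fix a parameter $|p_*|$ with $3/4 < e^{-|p_*|^2} < 1$, to be taken small at the end. Along a subsequence $k_v \to \infty$, the rescalings $(X_\infty, p, k_v g_\infty)$ converge to a tangent cone $C(Y)$ with vertex $p$, and one picks $p_* \in C(Y_{reg})$ at cone distance $|p_*|$ from the vertex. Lemma \ref{hcon} gives $D \subset\subset U \subset\subset C(Y_{reg}) \setminus \{p\}$ for which the limiting data satisfies the $H$-condition; by Proposition \ref{hcon2} combined with the $C^0$-closeness of Lemma \ref{closen}, for $v$ sufficiently large and some $1 \leq l \leq m$ the transplanted data on $(X_\infty, lk_v g_\infty)$ still satisfies the $H$-condition. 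This produces a smooth, compactly supported section $\tilde\sigma$ of $L^k$, with $k = lk_v$, supported in $U_k \subset X_{reg}$ near the image point $p_v' \in X_{reg}$ of $p_*$. Apply Proposition \ref{l3} (legitimate since $\mathrm{supp}(\dbar \tilde\sigma) \subset\subset X_{reg}$) to solve $\dbar u = \dbar \tilde\sigma$ on $X$ with $\|u\|_{L^{2,\sharp}}$ minuscule (by the $H_4$-bound on $\|\dbar \tilde\sigma\|_{L^2}$). Set $s = \tilde\sigma - u$: it is holomorphic on $X_{reg}$, $L^\infty$-bounded by Proposition \ref{l21}, and hence extends to $s \in H^0(X, L^k)$ by normality of $X$.

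It remains to bound $|s|_{h_\infty^k}$ from below at $p$ and from above at $q$. The standard $H$-condition calculation ($H_3$ applied to $u$, together with $H_4, H_5$) yields $|s(p_v')|_{h_\infty^k} \geq 1/2$. Proposition \ref{l22} bounds the Lipschitz constant of $|s|_{h_\infty^k}$ in $kg_\infty$ uniformly, and by Lemma \ref{closen} $d_{kg_\infty}(p, p_v') \leq 2|p_*|$; the convexity of $\mathcal R = X_{reg}$ in $X_\infty$ from Proposition \ref{ghlim} lets this distance be realized by a path in $X_{reg}$, so the Lipschitz estimate propagates continuously to $p$ and gives $|s(p)|_{h_\infty^k} \geq 1/2 - C|p_*|$, which is $\geq 1/4$ once $|p_*|$ has been chosen small enough at the start. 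Meanwhile, $\sqrt{k}\, d_0 \to \infty$ so for large $v$ any fixed rescaled ball $B_{kg_\infty}(q, R)$ is disjoint from $U_k$, on which $s = -u$ and hence $\|s\|_{L^{2,\sharp}(B(q,R))} \leq \|u\|_{L^{2,\sharp}}$ is minuscule; a localized Moser iteration (paralleling the proof of Proposition \ref{l21} inside a smooth ball) applied to the holomorphic $s$ yields $|s(q)|_{h_\infty^k} \leq 1/8$. This supplies $|s(p)| > |s(q)|$ and completes the contradiction. The main obstacle is the Lipschitz propagation: one must make the uniform-in-$v$ gradient bound of Proposition \ref{l22} at scale $kg_\infty$ carry the lower bound continuously across the potentially singular limit point $p \in \mathcal S$, which depends on the convexity of $\mathcal R$ to provide a rectifiable path in $X_{reg}$ approximating this distance.
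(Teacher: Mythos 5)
Your peak-section machinery (tangent cone at $p$, Lemma \ref{hcon} and Proposition \ref{hcon2} to transplant the $H$-condition via Lemma \ref{closen}, the $\dbar$-solution from Proposition \ref{l3}, and the uniform $L^\infty$ and gradient bounds of Propositions \ref{l21}, \ref{l22} to propagate the peak from $p_*$ to $p$) is the same as the paper's. The gap is in the very first reduction: you separate $p$ and $q$ with a \emph{single} section $s$ by asserting that $\Phi(p)=\Phi(q)$ forces $|s(p)|_{h_\infty^k}=|s(q)|_{h_\infty^k}$, on the grounds that $|\cdot|_{h_\infty^k}$ is the pullback through $\Phi$ of a continuous Hermitian norm on $X$. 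But $h_\infty=e^{-\varphi_\infty}h_{FS}$, and within the paper $\varphi_\infty$ is only known to be bounded on $X$ and smooth on $X_{reg}$; its continuity at points of $X_{sing}$ in the topology of the variety $X$ is never established or used, and the extension of $|s|_{h_\infty^k}$ to $X_\infty$ in Lemma \ref{extenm} is defined by completion from $X_{reg}$ with respect to $d_\infty$, not by pullback through $\Phi$. If $x=\Phi(p)=\Phi(q)$ lies in $X_{sing}$ and $\varphi_\infty$ is not known to be continuous there, the two limiting values along sequences in $X_{reg}$ converging to $p$ and to $q$ need not agree, and then your estimates $|s(p)|\geq 1/4$, $|s(q)|\leq 1/8$ prove nothing: the projective map only records ratios of the section values, so a single section with different norms at $p$ and $q$ does not separate their images. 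To rescue your route you would have to prove (or explicitly invoke as an external ingredient) the global continuity of $\varphi_\infty$ on the singular variety $X$, which is a nontrivial pluripotential-theoretic statement the paper deliberately avoids.

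The paper's proof circumvents precisely this issue by producing \emph{two} sections with crossed estimates: it sets up $H$-condition data at both $p$ and $q$ (with the same power $k_{p,q}$ and the same constant $C$, and with $\chi_p(U_p)\cap\chi_q(U_q)=\phi$), constructs $\sigma''_p$ with $|\sigma''_p(p)|\geq 2/5$, $|\sigma''_p(q)|\leq 1/10$ (the bound at $q$ coming from condition $H_3$ of the data at $q$ applied to $\sigma''_p$, which vanishes up to the small $\dbar$-correction outside $\chi_p(U_p)$), and symmetrically $\sigma''_q$ with $|\sigma''_q(q)|\geq 2/5$, $|\sigma''_q(p)|\leq 1/8$. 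The incompatibility of the two ratios $|\sigma''_p|/|\sigma''_q|$ at $p$ and at $q$ then contradicts $\Phi(p)=\Phi(q)$ without any continuity of $h_\infty$ across $X_{sing}$. A secondary remark: your bound at $q$ via a localized Moser iteration is plausible but needs a uniform local Sobolev inequality at scale $k g_\infty$ centered at the possibly singular point $q$, which you would have to supply (compare Lemma \ref{soho} in the canonical-model section); in the paper this is unnecessary because the $H$-condition data at $q$, which is needed anyway for the second section, provides the interior estimate $H_3$ directly.
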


\begin{proof} We break the proof into the following steps. 

\begin{enumerate}

\item Suppose $C(Y_p)$ and $C(Y_q)$ are two tangent cones of $p$ and $q$ on $X_{\infty}$ after rescaling $(X_\infty, g_\infty)$ at $p$ by $k_{v_p} \rightarrow \infty $ and at $q$ by $k_{v_q} \rightarrow\infty $. One then can construct two collections of data 
$(p_*, D_p, U_p, J_p, g_p, A_p)$ on $C(Y_p)$ and $(q_*, D_q, U_q, J_q, g_q, A_q)$ on $C(Y_q)$ from Lemma \ref{hcon}, satisfying the $H$-condition. 
In addition, we can always assume that 
\medskip

\begin{itemize}
\item  $ d_{C(Y_p)} (p_*, p) = r_{p_*}  \leq (100K)^{-1}, ~~  d_{C(Y_q)} (q_*, q) = r_{q_*}  \leq (100K)^{-1}, $
where the constant $K$ is defined in Proposition \ref{l21} and Proposition \ref{l22}. 

\medskip

\item the constants $C$ in the $H$-condition for $U_p$ and $U_q$ are the same,  

\medskip

\item $k_{v_p} = k_{v_q}=k_{v_{p,q}}$.

\end{itemize}

\medskip

\item 
From Lemma \ref{closen}, there exist $\chi_p: U_p(2m_p)  \rightarrow  X_\infty $ and $\chi_q: U_q(2m_q) \rightarrow X_\infty$ such that 
\medskip

\begin{itemize}

\item $2^{-1} |z| \leq (k_{v_{p,q}})^{1/2} d_\infty(p, \chi_p(z)) \leq 2   |z|$,

\medskip

\item  $2^{-1} |z| \leq (k_{v_{p,q}})^{1/2} d_\infty(q, \chi_q(z)) \leq 2   |z|$,
\medskip

\item $|| \chi_p^*(k_{v_{p,q}} g_\infty ) - g_C||_{C^0(U_p(m_p))} + || \chi_p^* J_\infty - J_C||_{C^0(U_p (m_p ))} \leq \epsilon $, 

\medskip

\item 
$|| \chi_q^*( k_{v_{p,q}} g_\infty ) - g_C ||_{C^0(U_q(m_q))} + || \chi_q^* J_\infty - J_C ||_{C^0(U_q(m_q))} \leq \epsilon $, 
\medskip

\item $\chi_p(U_p) \cap \chi_q(U_q) = \phi$,  
\medskip
\end{itemize}
where $m_p$, $U_p(m_p)$, $m_q$ and $U_q(m_q)$ are constructed as in Lemma \ref{hcon}, Proposition \ref{hcon2} and (\ref{rescue}). 

\medskip

\item Let $z_{p_*} = \chi_p(p_*)$ and $z_{q_*} = \chi_q(q_*)$. By Proposition \ref{hcon} and making $\epsilon$ sufficiently small in (2), we can assume that 
$(z_{p_*} , \chi_p(D_p), \chi_p(U_p), J_\infty, g_\infty, h_\infty)$ and $(z_{q_*}, \chi_p(D_q), \chi_q(U_q),  J_\infty,  g_\infty,  h_\infty)$ 
satisfy the $H$-condition after shrinking $U_p$ and $U_q$.
 Now we consider the local section $\sigma_p$ on $\chi_p(U_p)$. Since $X_{reg} = \R$,  we are still able to choose $\sigma_p$ from the construction in Lemma \ref{hcon} appropriately  satisfying the $H$-condition.

\medskip

\item We now apply Proposition \ref{l3} by letting $\tau = \dbar \sigma'_p$. Then there exists $L^{k_{p,q}} $-valued section $u$ solving the $\dbar$-equation $\dbar u = \tau$ with 
$$ \int_{X_\infty} |u|^2_{(h_\infty)^{k_{p,q}}} dV_{g_\infty} \leq \frac{1}{2\pi} \int_{X_\infty} |\tau|^2_{ (h_\infty)^{ k_{p,q} } } dV_{g_\infty} \leq \min(\frac{1}{8\sqrt{2}C}, 10^{-20} ).$$ 
From the $H$-condition, 
$$u(z_p) \leq C ||u||_{L^2(D)} \leq \frac{1}{8}. $$
We set $\sigma''_p = \sigma_p' - u$. Then 
\medskip

\begin{itemize}

\item $ |\sigma_p''(z_{p_*})|_{(h_\infty)^{ k_{p,q}}}^2 \geq 1/2.$

\medskip

\item $||\sigma_p''||_{L^2(X_\infty, k_{p,q} g_\infty, (h_\infty)^{k_{p,q}})} \leq 2(2\pi)^{n/2}.$

\medskip

\item $||\sigma_p''||_{L^2(X_\infty\setminus \chi_p(U_p), k_{p,q} g_\infty, (h_\infty)^{k_{p,q}}) } = ||u||_{L^2(X_\infty, k_{p,q} g_\infty, (h_\infty)^{k_{p,q}}) } \leq \min ((8\sqrt{2}C)^{-1}, 10^{-10}).$ \\

\end{itemize}
Then by Proposition \ref{l22}, 
\begin{eqnarray*}  |\sigma_p''(p)|_{(h_\infty)^{k_{p,q}}} 
&\geq& |\sigma_p''(z_{p_*})|_{(h_\infty)^{k_{p,q}}}  - \sup_{X_\infty} |\nabla \sigma''_p|_{k_{p,q} g_\infty, (h_\infty)^{k_v}} d(p, z_{p_*})  \\
&\geq& 1/4 - 1/100 \geq 2/5.
\end{eqnarray*}
Now we restrict $\sigma''_p$ on $\chi_q(U_q)$.  The condition $H_3$ can still be applied since we have assumed that $k_p=k_q$ and so we have, 
$$|\sigma''_p (z_{q_*}) |^2_{(h_\infty)^{k_{p,q}}} \leq C ||\sigma''_p||_{L^2(X_\infty \setminus \chi_p(U_p), k_{p,q} g_\infty, (h_\infty)^{k_{p,q}}) } \leq 1/8. $$
Similarly by Proposition \ref{l22} and the above arguments, we have 
$$|\sigma''_p (q) |^2_{(h_\infty)^{k_{p,q}}}  \leq 1/10.$$

\medskip

\item The same construction will be applied to obtain $\sigma''_q \in H^0(X, L^{k_{p,q}})$ extending to $X_\infty$ with 
$$ |\sigma''_q (q) |^2_{(h_\infty)^{k_{p,q}}} \geq2/5,  ~|\sigma''_q (p) |^2_{(h_\infty)^{k_{p,q}}} \leq 1/8. $$
Then we can conclude that %
$$\Phi_{k_{p,q}, \sigma} (p) \neq \Phi_{k_{p,q}, \sigma} (q).$$ This completes the proof of the proposition since $\Phi_{k_{p,q}, \sigma}$ is stabilized for sufficiently large $k\in \mathcal{F}(X, L)$.

\end{enumerate}

\end{proof}

\begin{corollary} \label{homeo} $\Phi: X_\infty \rightarrow X$ is a homeomorphism.

\end{corollary}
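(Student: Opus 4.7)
The plan is simply to assemble the pieces already in place. By Corollary \ref{25}, the map $\Phi: X_\infty \to X$ is well-defined, continuous (in fact Lipschitz) and surjective. By Proposition \ref{inject}, $\Phi$ separates points, i.e., $\Phi$ is injective. Hence $\Phi$ is a continuous bijection.

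To upgrade a continuous bijection to a homeomorphism, I would invoke the standard topological fact: a continuous bijection from a compact space to a Hausdorff space is automatically a homeomorphism, because the image of any closed (hence compact) subset of the domain is compact and therefore closed in the Hausdorff target, so $\Phi^{-1}$ is continuous. Thus the only remaining ingredients are the compactness of $X_\infty$ and the Hausdorffness of $X$.

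Compactness of $X_\infty$ follows from Proposition \ref{ghlim}: $(X_\infty, d_\infty)$ is the pointed Gromov--Hausdorff limit of the sequence $(X', g_t)$, whose diameters are uniformly bounded by estimate (\ref{diame}) of Proposition \ref{basie}, so the limit is a compact metric space. Hausdorffness of $X$ is immediate since $X$ is a projective variety carrying the subspace topology induced from its Fubini--Study embedding into $\mathbb{CP}^{d_k}$.

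Combining these observations finishes the proof. There is no real obstacle here; all the analytic work has already been carried out in Proposition \ref{inject} (the separation of points via the $H$-condition together with the Hörmander $L^2$ estimate of Proposition \ref{l3} and the $C^0$/gradient bounds of Propositions \ref{l21} and \ref{l22}) and in Corollary \ref{25} (continuity and surjectivity). The only step remaining is the elementary point-set topology argument above.
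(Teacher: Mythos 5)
Your proposal is correct and follows essentially the same route as the paper: the paper also deduces that $\Phi$ is a continuous bijection from Corollary \ref{25} and Proposition \ref{inject} and then concludes immediately that it is a homeomorphism, the implicit justification being exactly your point-set argument (compactness of $X_\infty$, which comes from the uniform diameter bound (\ref{diame}) and the Gromov--Hausdorff convergence in Proposition \ref{ghlim}, together with the Hausdorffness of the projective variety $X$). You have merely made explicit the elementary topological step that the paper leaves to the reader.
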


\begin{proof}  It follows from Proposition \ref{inject},  $\Phi$ is a continuous bijection. Immediately, this implies that   $\Phi$ is a homeomorphism.

\end{proof}

Finally, we are able to prove Theorem \ref{main1}. 
\medskip

\noindent{\it Proof of Theorem \ref{main1} } Combining the results of Proposition \ref{ghlim}, Lemma \ref{extenm} and Corollary \ref{homeo}, we have completed the proof of Theorem \ref{main1}.


\subsection{Applications}

 In this section, we will give applications of Theorem \ref{main1} by  proving Corollary \ref{main3} and Corollary \ref{main4}. 
 
 \medskip
 
 \noindent{\it Proof of Corollary \ref{main3}}
 Suppose $X$ is an $n$-dimensional projective Calabi-Yau manifold. Suppose $L_j \rightarrow X$ is a sequence of $\mathbb{Q}$ line bundles over $X$, $j=1, 2...$ such that $c_1(L_j)$ converges to $c_1(L) \in H^{1,1}(X, \mathbb{Q})$, where $L$ is a big and nef $\mathbb{Q}$-line bundle. Let $g_j$ be the unique Ricci-flat K\"ahler metric in $c_1(L_j)$. Since $L$ is big and nef,  from Proposition \ref{semicr} and Corollary \ref{crepsin}, $L$ is semi-ample and for some sufficiently large $k$, the linear system $|L^k|$ is base point free and induces a unique  surjective birational morphism $\Phi: X \rightarrow Y$ for a normal projective Calabi-Yau variety $Y$. In particular, $Y$ has crepant singularities and $\Phi$ is a crepant resolution. By \cite{EGZ} or more directly by \cite{To1}, there exists a unique Ricci-flat K\"ahler metric $g_Y \in c_1(L)$ on $Y$ with bounded potentials.  Standard argument shows that $g_j$ converges to $g_Y$ on $\Phi^{-1}(Y_{reg})$ smoothly with a uniform $C^0$ bound on their local potentials.  Immediately we can apply Theorem \ref{main1} combined with \cite{RZ} to derive a limiting compact Calabi-Yau metric space $(Y, d_Y)$ as the metric completion of $(Y_{reg}, g_Y)$.  
 
   \qed

 \medskip
 
 \noindent{\it Proof of Corollary \ref{main4}} Let 
 \begin{equation}
\begin{diagram}
\node{X} \arrow{se,b,}{f} \arrow[2]{e,t,..}{ }     \node[2]{X'} \arrow{sw,r}{f'} \\
\node[2]{Y}
\end{diagram}
\end{equation} 
be the flip of two smooth projective Calabi-Yau manifolds $X$ and $X'$. $Y$ is then a normal projective Calabi-Yau variety with crepant singularities. In particular, $f$ and $f'$ are both crepant resolutions of $Y$. Let $L_Y$ be an ample line bundle over $Y$ and $\A$ be an ample $\mathbb{Q}$-line bundle on $X$. Then $L_j = f^*L_Y \otimes j^{-1} \A$ is an ample $\mathbb{Q}$-line bundle over $X$ and there exists a unique smooth Ricci flat K\"ahler metric $g_j\in c_1(L_j)$.   Then we can apply Corollary \ref{main3} and complete the proof of Corollary \ref{main4}.
  
 \qed

We would also like to remark that the gradient estimates in section 3.1 can be very much generalized to a family of degenerate and singular complex Monge-Ampere equations. The upshot is that as long as the equation satisfies certain bound on the Ricci current, the gradient estimate holds with respect to the singular K\"ahler metric induced from the solution instead of a fixed reference metric.


\bigskip

\section{Canonical models of general type with crepant singularities}


 Let $X$ be an $n$-dimensional  canonical model  with crepant singularities. Let $X_{reg}$ and $X_{sing}$ be the regular part and singular part of $X$,   and  
 $$\pi: X' \rightarrow X $$ be a crepant resolution. Then $X'$ is a smooth minimal model of general type, i.e., $K_{X'}$ is big and nef, and so $K_{X'}$ is semi-ample by Kawamata's base point free theorem. $X$ is the unique canonical model of $X'$.  
 
 Let $\chi\in c_1(X)$ be a multiple of the Fubini-Study metric from certain pluricanonical embedding of $X$.   Let $\Omega$ be a smooth volume form on $X$ satisfying $\ddbar\log \Omega = \chi$. Then the following Monge-Ampere equation can be solved \cite{EGZ, Z} for a unique $\varphi_{KE} \in L^\infty(X)\cap PSH(X, \chi)\cap C^\infty(X_{reg})$
\begin{equation}\label{keq1}
(\chi+ \ddbar \varphi)^n = e^{\varphi} \Omega.
\end{equation}
Without confusion, we also identify equation (\ref{keq1}) as one on $X'$ after pullback. This implies that the Ricci current of $\omega_{KE}  = \chi+ \ddbar \varphi_{KE}$ is well-defined and satisfies the K\"ahler-Einstein equation 
\begin{equation}\label{keq2}
Ric(\omega_{KE}) = -\omega_{KE}
\end{equation}
smoothly on $X_{reg}$ and globally on $X$ in the sense of distribution. Let $g_{KE}$ be the associated K\"ahler-Einstein metric on $X_{reg}$ and $h_{KE} = ( (\omega_{KE})^n)^{-1}$ be the hermitian metric on $K_X$. 

Note that $\log (\omega_{KE}) ^n$ is locally plurisubharmonic on $X$.  
In fact, equation (\ref{keq2}) admit a unique solution in $c_1(X)$ with bounded local potentials. Let $g_{KE}$ be the K\"ahler metric associated to $\omega_{KE}$. 

\subsection{A graident estimate} In this section, we will prove the following gradient estimate. 

\begin{proposition} \label{gradeke} There exists $C>0$ such that if $\varphi_{KE} \in L^\infty(X)\cap PSH(X, \chi) \cap C^\infty(X_{reg})$ solves equation (\ref{keq1}), then, 
\begin{equation}\label{gradkeq}
\sup_{X_{reg}} |\nabla \varphi_{KE}|_{g_{KE}}^2 \leq C.
\end{equation}
\end{proposition}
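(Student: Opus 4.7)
The plan is to adapt the approximation argument of Section 3.1, in particular Lemma 3.9 and Proposition 3.5, to the negative Einstein setting. Let $\pi: X' \to X$ be a crepant resolution, fix a K\"ahler metric $\omega_A$ on $X'$, and consider on $X'$ (after pullback) the smooth family of Monge-Amp\`ere equations
$$(\chi + e^{-t}\omega_A + \ddbar \varphi_t)^n = e^{\varphi_t}\Omega, \qquad t \in [0, \infty).$$
Each equation is uniquely solvable by Aubin-Yau; writing $\omega_t = \chi + e^{-t}\omega_A + \ddbar\varphi_t$, one has $Ric(\omega_t) = -\omega_t + e^{-t}\omega_A \geq -\omega_t$. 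Standard pluripotential estimates, Tsuji's trick, and the Chern-Lu Schwarz calculation yield uniform bounds $\|\varphi_t\|_{L^\infty(X')} \leq C$, $\omega_t \geq C^{-1}\chi$ on $X'$, local smooth bounds on $X'\setminus E$, together with
$$\Delta_t\, tr_{\omega_t}(\chi) \geq |\nabla_t\, tr_{\omega_t}(\chi)|^2 - C\, tr_{\omega_t}(\chi)^2 - tr_{\omega_t}(\chi).$$
Differentiating the equation in $t$ gives $\Delta_t \dot\varphi_t = \dot\varphi_t + e^{-t}tr_{\omega_t}(\omega_A)$; Green function arguments analogous to Corollary 3.2 and Lemma 3.6 then show $\|\dot\varphi_t\|_{L^\infty(X')} \leq C$ and $\dot\varphi_t \to 0$ in $C^\infty_{loc}(X'\setminus E)$ as $t\to\infty$.

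The core is a Yau-type gradient computation adapted from Lemma 3.9. Set
$$u = \varphi_t + \dot\varphi_t.$$
The $e^{-t}tr_{\omega_t}(\omega_A)$ terms in $\Delta_t\varphi_t$ and $\Delta_t\dot\varphi_t$ cancel, yielding the clean identity
$$\Delta_t u = n - tr_{\omega_t}(\chi) + \dot\varphi_t,$$
an exact analog of equation (3.9) in the Calabi-Yau case with the normalization constant $\dot c_t$ replaced by the bounded function $\dot\varphi_t$; in particular both $u$ and $\Delta_t u$ are uniformly bounded on $X'$. The Bochner formula, now with $Ric(\omega_t) \geq -\omega_t$, gives
$$\Delta_t|\nabla_t u|^2 \geq |\nabla_t\bar\nabla_t u|^2 - 2\operatorname{Re}\langle\nabla_t u, \bar\nabla_t tr_{\omega_t}(\chi)\rangle - |\nabla_t u|^2 + 2\operatorname{Re}\langle\nabla_t u, \bar\nabla_t\dot\varphi_t\rangle.$$

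One then applies the maximum principle to
$$H = \frac{|\nabla_t u|^2}{A - u} + tr_{\omega_t}(\chi)$$
with $A$ chosen sufficiently large that $A - u \geq A/2$. Combining the Bochner inequality and the Schwarz estimate, absorbing the cross term $-2\operatorname{Re}\langle\nabla_t u, \bar\nabla_t tr_{\omega_t}(\chi)\rangle/(A - u)$ against the $|\nabla_t tr_{\omega_t}(\chi)|^2$ contribution via AM-GM, absorbing the $\dot\varphi_t$ cross term against $|\nabla_t\bar\nabla_t u|^2/(A - u)$, and using the uniform upper bound on $tr_{\omega_t}(\chi)$, one arrives at
$$\Delta_t H \geq \frac{2|\nabla_t u|^4}{(A - u)^3} - \frac{C|\nabla_t u|^2}{(A - u)^2} - C$$
uniformly on $X'\times[0,\infty)$. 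The maximum principle then forces $|\nabla_t u|^2_{g_t} \leq C$, and sending $t\to\infty$ on compact subsets of $X_{reg}$, using $\dot\varphi_t \to 0$ and $u \to \varphi_{KE}$, produces the desired estimate (\ref{gradkeq}).

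The main obstacle relative to Section 3.1 is the appearance of two new lower-order terms, namely the $-|\nabla_t u|^2$ arising from the Ricci lower bound $-g_t$ in the Bochner formula and the $-tr_{\omega_t}(\chi)$ arising from the modified Schwarz calculation. Both are strictly weaker than the quartic driving term $|\nabla_t u|^4/(A - u)^3$ and can therefore be absorbed by enlarging $A$ and invoking the uniform $L^\infty$ control on $u$, $\dot\varphi_t$, and $tr_{\omega_t}(\chi)$; the bookkeeping is routine but must be carried out carefully. A secondary point is the transfer of the uniform bound from $g_t$ on $X'$ to $g_{KE}$ on $X_{reg}$, which is immediate from the local smooth convergence $\omega_t \to \omega_{KE}$ away from the exceptional locus $E$.
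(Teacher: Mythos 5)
Your reduction to the elliptic family $(\chi+e^{-t}\omega_{\A}+\ddbar\varphi_t)^n=e^{\varphi_t}\Omega$ and the quantity $u=\varphi_t+\dot\varphi_t$ looks parallel to Section 3.1, and your identity $\Delta_t u = n - tr_{\omega_t}(\chi)+\dot\varphi_t$ is correct; but the analogy breaks exactly at the Bochner cross term, and this is where your argument has a genuine gap. In the Calabi--Yau computation the corresponding identity reads $\Delta_t u = n - tr_{\omega_t}(\chi')+\dot c_t$ with $\dot c_t$ a \emph{constant}, so $\nabla_t\Delta_t u=-\nabla_t tr_{\omega_t}(\chi')$ and the only cross term is the one killed by adding $tr_{\omega_t}(\chi')$ to $H$. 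In your setting the zeroth-order term is the \emph{function} $\dot\varphi_t$, so the Bochner formula produces the extra term $2\mathrm{Re}\langle\nabla_t u,\bar\nabla_t\dot\varphi_t\rangle$, and your proposal to ``absorb it against $|\nabla_t\bar\nabla_t u|^2/(A-u)$'' does not work: $\bar\nabla_t\dot\varphi_t$ is the gradient of an independent function, not a second derivative of $u$, and no pointwise control on it is available. Writing $\dot\varphi_t=u-\varphi_t$ turns the term into $2|\nabla_t u|^2-2\mathrm{Re}\langle\nabla_t u,\bar\nabla_t\varphi_t\rangle$, and a pointwise bound on $\nabla_t\varphi_t$ (equivalently on $\nabla_t\dot\varphi_t$) is of exactly the same order as the estimate you are trying to prove; there is no smallness, sign, or Cauchy--Schwarz partner to absorb it, and the uniform $L^\infty$ bound on $\dot\varphi_t$ is irrelevant for its gradient. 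This is a structural obstruction to running the Section 3.1 scheme on the elliptic family, not routine bookkeeping.

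A secondary problem is your appeal to ``Green function arguments analogous to Corollary 3.2 and Lemma 3.6'': the lower bound on $G_t$ used there rests on uniform diameter and non-collapsing bounds, which in the Ricci-flat case come from Yau's volume comparison, whereas in the general-type setting no uniform diameter bound is available at this stage -- obtaining it is one of the main points of Section 4 and uses the gradient estimate itself -- so that step as written is circular. (This part is repairable: since $(\Delta_t-1)u=n-tr_{\omega_t}(\chi)-\varphi_t$ has bounded right-hand side by the Schwarz estimate and the $C^0$ bound, the maximum principle gives $\|u\|_{L^\infty}\leq C$ and hence $\|\dot\varphi_t\|_{L^\infty}\leq C$ with no Green's function; and $\int_{X'}(|\nabla_t\dot\varphi_t|^2+\dot\varphi_t^2)\,dV_{g_t}\leq Ce^{-t}$ gives the local convergence $\dot\varphi_t\to 0$.) The paper sidesteps the main difficulty entirely by working parabolically: along the normalized K\"ahler--Ricci flow the quantity $u=\varphi+\dot\varphi$ satisfies $(\partial_t-\Delta)u=tr_{\omega}(\chi')-n$ with no extra zeroth-order term, and the $\partial_t g^{i\bar j}$ contribution cancels the Ricci term in the Bochner identity, which is why the estimate $\sup_{X'}|\nabla(\dot\varphi+\varphi)|^2_{g(t)}\leq C$ holds; the paper simply cites this result of \cite{Z2, ST4} and lets $t\to\infty$ using Tsuji's convergence on $X'\setminus E$. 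To salvage your elliptic route you would need a new mechanism to control or eliminate $\nabla_t\dot\varphi_t$ from the identity for $\Delta_t u$, which is precisely what the parabolic formulation accomplishes.
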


\begin{proof} We consider the following normalized Kahelr-Ricci flow on the minimal model $X'$ starting with any smooth initial K\"ahler metric $g_0$ whose corresponding K\"ahler form is $\omega_0$, 
\begin{equation}\label{krf}
\ddt{g} = - Ric(g) - g. 
\end{equation}
The flow  is then equivalent to the following parabolic Monge-Ampere equation
\begin{equation}\label{krf3}
\ddt{\varphi} = \log \frac{ ( (1-e^{-t}) \chi' + e^t \omega_0 + \ddbar \varphi )^n}{\Omega} + \varphi, ~\varphi|_{t=0} = 0, 
\end{equation}
where $\chi'= \pi^*\chi$. It is shown in \cite{Z2, ST4} that there exists $C>0$ such that for all $t\geq 0$, 
$$\sup_{X'}  \left| \nabla \left( \ddt{\varphi(t) } + \varphi(t) \right) \right|^2_{g(t)} \leq C. $$

Let $E= \pi^{-1} (X_{sing})$. It is well-known that  $\varphi(t)$ converges to the limit solution $\varphi_{KE}$ solving  equation (\ref{keq1}) in $L^\infty(X') \cap PSH(X', \chi')\cap C^\infty(X'\setminus E)$ and $\ddt{\varphi(t)}$ converges to $0$ in $C^\infty(X'\setminus E)$. Then the lemma is proved by letting $t\rightarrow \infty$.

\end{proof}

\subsection{Limiting metric spaces}

We now construct a family of almost K\"ahler-Einstein metrics   \cite{TW} on $X'$.  We fix a smooth K\"ahler metric $g_{\A}$ in a fixed K\"ahler class $\A$ on $X'$ whose associated K\"ahler metric is $g_\A$. Then we consider the following equation for $g_t $, 
\begin{equation}\label{alke}
Ric(g_t) = - g_t + e^{-t} g_\A, ~ t\in [0, \infty).
\end{equation}
Obviously, $-c_1(X) + e^{-t} \A $ is a K\"ahler class and equation (\ref{alke}) is equivalent to the following Monge-Ampere equation 
\begin{equation}\label{alke2}
(\chi' + e^{-t} \omega_\A + \ddbar \varphi_t )^n = e^{-\varphi_t } \Omega,
\end{equation}
where
$\chi'=\pi^*\chi$, $\omega_t = \chi' + e^{-t} \omega_\A + \ddbar \varphi_t$ be the K\"ahler form associated to the K\"ahler metric $g(t)$ solving equation (\ref{alke}).

\begin{lemma} \label{lem41}There exists a unique smooth K\"ahler metric $g_t \in -c_1(X) + e^{-t} \A $ solving equation \ref{alke} for all $t>0$. There exists $C>0$ such that 
$$\sup_{X'} |\varphi_t |\leq C, ~~ tr_{g_t}(\chi') \leq C $$ for all $t\in [0, \infty)$. Furthermore, $\varphi_t$ converges to $\varphi_{KE}$, the unique solution of equation (\ref{keq1}),  in $C^\infty(X'\setminus E)$.

\end{lemma}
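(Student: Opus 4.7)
My plan is to treat equation (\ref{alke2}) as a non-degenerate complex Monge-Amp\`ere equation on the smooth manifold $X'$ for each fixed $t\geq 0$, derive $t$-uniform a priori estimates, and then pass to the limit $t\to\infty$ using local regularity together with the uniqueness theorem of \cite{EGZ,Z}. For existence and uniqueness at each $t$, I note that $[\chi'+e^{-t}\omega_{\A}] = -c_1(X')+e^{-t}\A$ is a K\"ahler class on $X'$, since $\chi'$ is a smooth semi-positive representative of the big and nef class $-c_1(X')$ pulled back from the canonical embedding of $X$, and $e^{-t}\omega_\A$ is strictly positive. The sign of $\varphi_t$ in the exponent of (\ref{alke2}) is the favorable one for negative first Chern class, so Aubin--Yau's continuity method gives a unique smooth $\varphi_t\in C^\infty(X')$, and hence a unique smooth metric $g_t$ in $-c_1(X')+e^{-t}\A$ solving (\ref{alke}).

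For the upper bound on $\sup_{X'}\varphi_t$ I would apply the maximum principle directly to (\ref{alke2}): at a maximum point $\ddbar\varphi_t\leq 0$, so $e^{-\varphi_t}\Omega \leq (\chi'+e^{-t}\omega_\A)^n$, and the right-hand side is uniformly bounded above in $t\in[0,\infty)$. The lower bound on $\inf_{X'}\varphi_t$ is the main obstacle, since $\chi'$ is only semi-positive along the exceptional locus $E=\pi^{-1}(X_{sing})$ and the volume form $(\chi'+e^{-t}\omega_\A)^n$ degenerates there as $t\to\infty$, so the naive maximum principle fails. At this step I would invoke the pluripotential-theoretic $L^\infty$ estimate of Kolodziej as generalized by Eyssidieux--Guedj--Zeriahi \cite{EGZ,Z}, which yields a uniform $L^\infty$ bound depending only on the volume of the limit class and an $L^p$ bound on the right-hand side, both of which are uniform in $t$.

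The Laplacian bound follows from a Chern-Lu / Yau Schwarz lemma of the same type as the inequality (\ref{schest}) derived earlier in the paper: since $Ric(g_t)=-g_t+e^{-t}g_\A$ is uniformly bounded below and $\chi'$ has bisectional curvature bounded above (being pulled back from a Fubini-Study metric), one obtains
\[ \Delta_t \log tr_{g_t}(\chi') \geq -B\, tr_{g_t}(\chi') - C \]
on $X'\setminus E$ for fixed constants $B,C$. Combining this with $\Delta_t\varphi_t = n - tr_{g_t}(\chi'+e^{-t}\omega_\A)$ and applying the maximum principle to $H=\log tr_{g_t}(\chi')-A\varphi_t$ with $A>B$, together with the $C^0$ bound on $\varphi_t$, produces the uniform upper bound $tr_{g_t}(\chi')\leq C$ on all of $X'$.

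For the final convergence statement, the uniform $C^0$ and Laplacian bounds imply $C^{-1}\chi'\leq \omega_t\leq C\chi'$ on compact subsets of $X'\setminus E$, so (\ref{alke2}) is uniformly non-degenerate there. Standard Evans--Krylov plus Schauder estimates then yield uniform $C^k_{loc}(X'\setminus E)$ bounds in $t$. Any subsequential limit $\varphi_\infty$ lies in $L^\infty(X')\cap PSH(X',\chi')\cap C^\infty(X'\setminus E)$ and satisfies (\ref{keq1}); by the uniqueness statement of \cite{EGZ,Z} it must equal $\varphi_{KE}$, so the full family $\varphi_t$ converges to $\varphi_{KE}$ in $C^\infty(X'\setminus E)$.
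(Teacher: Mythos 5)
Your proposal is correct and follows essentially the route the paper implicitly relies on: the lemma is stated without proof, and its ingredients are exactly those used for the analogous Proposition \ref{basie} in section 3 (Aubin--Yau existence for each fixed $t$, the maximum principle for the upper bound, the Kolodziej/Eyssidieux--Guedj--Zeriahi estimate for the uniform $L^\infty$ bound, a Chern--Lu/Schwarz argument with $H=\log tr_{g_t}(\chi')-A\varphi_t$ for the trace bound, and local Evans--Krylov/Schauder estimates plus uniqueness of the limit for the smooth convergence on $X'\setminus E$). One small point: consistency with (\ref{alke}) and (\ref{keq1}) forces the right-hand side of (\ref{alke2}) to be $e^{+\varphi_t}\Omega$ (the sign as printed appears to be a typo), and your maximum-principle display should be written accordingly, since with $e^{-\varphi_t}$ the inequality at the maximum point only bounds $\varphi_t$ from below there and the continuity method would not apply directly; with the intended sign, which you correctly identify as the favorable one, your argument goes through as stated.
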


We will verify in the following lemma the almost K\"ahler-Einstein condition introduced in \cite{TW}. 

\begin{lemma} \label{locesk} Let $g_j $ be the solution of equation (\ref{alke}) for $t=j$. Then $g_j$ satisfies the following almost K\"ahler-Einstein conditions.

\begin{enumerate}

\item $Ric(g_j) \geq -g_j$, 
\smallskip

\item there exists $p\in X'\setminus E$ and $r_0, \kappa >0$ such that for all $j=1, 2, ...$, 
$$B_{g_j}(p, r_0)\subset\subset X'\setminus E, ~Vol(B_{g_j}(p, r_0) ) \geq \kappa,$$ 

\item  Let $g_j(t)$ be the solution of the normalized K\"ahler-Ricci flow 
$$\ddt{g_j(t)} = -Ric(g_j(t)) - g_j(t), ~ g_j(0) = g_j. $$ Then 
$$ \lim_{j\rightarrow \infty} \int_0^1 \int_{X'} |R(g_j(t)) + n| dV_{g_j(t)} dt =0. $$

\end{enumerate}

\end{lemma}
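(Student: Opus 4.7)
The first two conclusions will be verified by direct inspection. For (1), equation (\ref{alke}) at $t = j$ gives $Ric(g_j) + g_j = e^{-j} g_{\mathcal{A}}$, which is nonnegative since $g_{\mathcal{A}} > 0$; in particular $Ric(g_j) \geq -g_j$. For (2), I will use Lemma \ref{lem41}, which gives $g_j \to g_{KE}$ smoothly on compact subsets of $X' \setminus E$. Fixing any $p \in X' \setminus E$ and $r_0 > 0$ with $\overline{B_{g_{KE}}(p, 2r_0)} \subset\subset X' \setminus E$, smooth convergence on this compact set yields uniform equivalence of $g_j$ with $g_{KE}$ there for $j$ large, so that $B_{g_j}(p, r_0) \subset B_{g_{KE}}(p, 2r_0) \subset\subset X' \setminus E$ with a uniform positive lower volume bound $\kappa$; the finitely many exceptional indices $j$ can be absorbed into the constants since each $g_j$ is separately smooth on $X'$.

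The content lies in (3). My plan is to compute the evolution of $F := R(g_j(t)) + n$ along the normalized K\"ahler-Ricci flow using standard K\"ahler identities: from $R_{i\bar j} = -\partial_i\partial_{\bar j}\log\det g$ and $\partial_t\log\det g = -R - n$ one obtains $\partial_t R_{i\bar j} = \partial_i\partial_{\bar j} R$, which combined with $\partial_t g^{i\bar j} = R^{i\bar j} + g^{i\bar j}$ and the Hermitian identity $|Ric + g|^2 = |Ric|^2 + 2R + n$ yields the reaction-diffusion equation
\begin{equation*}
\partial_t F = \Delta F + |Ric(g_j(t)) + g_j(t)|^2 - F.
\end{equation*}
At $t = 0$, tracing $Ric(g_j) + g_j = e^{-j} g_{\mathcal{A}}$ gives $F(\cdot, 0) = e^{-j} \tr_{g_j}(g_{\mathcal{A}}) \geq 0$. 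Since $X'$ is compact and the flow exists smoothly by Tsuji's theorem \cite{Ts}, the parabolic maximum principle applied to $e^t F$, which satisfies $\partial_t(e^t F) \geq \Delta(e^t F)$, preserves nonnegativity: $R(g_j(t)) + n \geq 0$ for all $t \in [0, \infty)$.

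With the sign fixed, $|R + n| = R + n$, and the space integral reduces to a cohomological quantity. Indeed, taking the trace of $\partial_t \omega_j(t) = -(Ric(\omega_j(t)) + \omega_j(t))$ gives $\partial_t \log \omega_j(t)^n = -(R + n)$, so
\begin{equation*}
\int_{X'} (R + n)\, dV_{g_j(t)} = -\partial_t \int_{X'} \omega_j(t)^n = -\partial_t [\omega_j(t)]^n.
\end{equation*}
Since $[\omega_j(t)] = -c_1(X) + e^{-j-t}\mathcal{A}$, the right side is $O(e^{-j-t})$; integrating over $[0,1]$ gives
\begin{equation*}
\int_0^1 \int_{X'} |R + n|\, dV_{g_j(t)}\, dt = [\omega_j(0)]^n - [\omega_j(1)]^n = O(e^{-j}),
\end{equation*}
which tends to $0$ as $j \to \infty$.

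The main obstacle is controlling the sign of $R + n$ along the flow. A purely cohomological computation only yields the signed integral $\int (R + n)\, dV = O(e^{-j})$, which does not control its absolute value. What makes the maximum principle applicable is that the initial deviation from K\"ahler-Einstein, $Ric(g_j) + g_j = e^{-j} g_{\mathcal{A}}$, is \emph{pointwise} nonnegative, so its trace $F(\cdot, 0)$ is nonnegative and this feature is propagated forward by the reaction-diffusion structure of the $F$-equation; the nonnegative term $|Ric+g|^2$ on the right plays no role beyond allowing the maximum principle to conclude without sign issues.
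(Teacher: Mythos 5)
Your proposal is correct and follows essentially the same route as the paper: parts (1) and (2) come from the equation $Ric(g_j)=-g_j+e^{-j}g_{\mathcal A}$ and the smooth convergence away from $E$ in Lemma \ref{lem41}, and part (3) combines the preserved lower bound $R(g_j(t))\geq -n$ with the cohomological identity $\int_{X'}(R+n)\,dV_{g_j(t)} = n e^{-j-t}\mathcal{A}\cdot[\omega_j(t)]^{n-1}=O(e^{-j})$. The only difference is cosmetic: the paper simply cites the standard fact that the minimum of scalar curvature is nondecreasing along the flow, whereas you derive it from the evolution equation for $R+n$ via the maximum principle.
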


\begin{proof} (1) and (2) follow easily from Lemma \ref{lem41}. Notice that the minimum of the scalar curvature is non decreasing along the Ricci flow while $R(g_j(0)) > -n$. Therefore
\begin{eqnarray*}
\int_0^1\int_{X'} |R(g_j(t))+n|dV_{g_j(t)}dt &\leq & \int_{X'} \int_0^1( R(g_j(t))+n) dV_{g_j(t)} dt \\
&=& \int_0^1e^{-j-t}\A \cdot ( e^{-j-t} \A + c_1(X) )^{n-1}dt \rightarrow 0
\end{eqnarray*}
as $j\rightarrow \infty$.

\end{proof}

We then apply the main results of Tian-Wang \cite{TW} to obtain the following proposition. 

\begin{proposition} \label{ghlimgt} Let $(X', p, g_j)$ be the almost K\"ahler-Einstein manifolds in the assumption of Lemma \ref{locesk}. Then $(X', p, g_j)$ converges to  a metric length space $(X_\infty, p_\infty, d_\infty)$ satisfying 

\begin{enumerate}

\item $\R$, the regular set of $X_\infty$, is a smooth open dense convex set in $X_\infty$, 

\smallskip

\item the limiting metric $d_\infty$ induces a  smooth K\"ahler-Einstein metric $g_{KE}$ on $\R$ satisfying $Ric(g_{KE}  ) = - g_{KE} $,  

\smallskip

\item the singular set $\mathcal{S}$ has Hausdorff dimension no greater than $2n-4$.

\end{enumerate}

\end{proposition}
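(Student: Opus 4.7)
The plan is to apply the structure theorem of Tian-Wang \cite{TW} directly to the sequence $(X', p, g_j)$, since Lemma \ref{locesk} has already verified the three almost K\"ahler-Einstein conditions required there: uniform Ricci lower bound, noncollapsing at the base point $p \in X'\setminus E$, and vanishing integrated scalar curvature defect along the normalized K\"ahler-Ricci flow of length one. The conclusions (1), (2), (3) are essentially the contents of the Tian-Wang limit structure theorem, so the proof reduces to verifying the hypotheses and then identifying the geometric limit with the one arising from the K\"ahler-Einstein current $\omega_{KE}$ of equation (\ref{keq1}).

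First I would invoke Gromov compactness: combining the uniform Ricci lower bound from Lemma \ref{locesk}(1) with the noncollapsing at $p$ from Lemma \ref{locesk}(2), a subsequence of $(X', p, g_j)$ converges in pointed Gromov-Hausdorff topology to a complete length space $(X_\infty, p_\infty, d_\infty)$. The Cheeger-Colding regular/singular decomposition, refined in the K\"ahler setting by Tian-Wang, then writes $X_\infty = \R \cup \Sc$, with $\R$ open and dense by the standard $\epsilon$-regularity for almost K\"ahler-Einstein manifolds. Convexity of $\R$ comes from the Colding-Naber parabolic approximation theorem applied along interior minimizing geodesics, which survives the passage to almost Einstein under the integral scalar curvature control of Lemma \ref{locesk}(3). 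The codimension bound $\dim_{\mathcal{H}} \Sc \leq 2n-4$ is where the genuinely difficult input lies: it uses the K\"ahler structure together with Cheeger-Naber codimension four estimates generalized to the almost K\"ahler-Einstein setting, and this is the main step I would lift wholesale from \cite{TW}.

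Next, on $\R$ the $\epsilon$-regularity theorem upgrades the Gromov-Hausdorff convergence to smooth Cheeger-Gromov convergence in local charts, so the limit metric $g_{KE}$ is smooth on $\R$ and the Einstein equation passes to the limit: since $Ric(g_j) = -g_j + e^{-j} g_{\A}$ and the error $e^{-j}g_{\A}$ vanishes uniformly on compact subsets of $\R$ relative to the smoothly converging metrics, one obtains $Ric(g_{KE}) = -g_{KE}$ on $\R$. The K\"ahler condition is preserved under smooth convergence, so $g_{KE}$ is a smooth K\"ahler-Einstein metric on $\R$. Finally, using the $C^\infty_{loc}(X'\setminus E)$ convergence of $\varphi_t$ to $\varphi_{KE}$ from Lemma \ref{lem41}, one can identify this limit K\"ahler-Einstein metric, on the image of $X'\setminus E$ under the limiting embeddings, with the smooth K\"ahler-Einstein metric induced by $\omega_{KE}$ on $X_{reg}$.

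The main obstacle, at least for this intermediate proposition, is really bookkeeping rather than new analysis: one must be careful that the base point $p$ chosen in $X'\setminus E$ lies in the regular stratum of the limit, and that the smooth convergence on $\R$ is compatible with the analytic convergence $\varphi_j \to \varphi_{KE}$. Note that we are \emph{not} yet claiming that $(X_\infty, d_\infty)$ is the metric completion of $(X_{reg}, g_{KE})$ nor that $X_\infty$ is homeomorphic to $X$; those global identifications, together with the crucial uniform diameter bound, are the content of the subsequent sections and require the local $L^2$ estimates, $H$-condition, and point-separation argument analogous to those in Section 3. The present proposition captures only the local structural picture provided by the Tian-Wang machinery.
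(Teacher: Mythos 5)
Your proposal is correct and follows essentially the same route as the paper, which simply invokes the structure theorem of Tian--Wang \cite{TW} for almost K\"ahler--Einstein manifolds once the hypotheses have been verified in Lemma \ref{locesk}. Your additional remarks (convexity via Colding--Naber, smooth convergence on $\R$, passing $Ric(g_j)=-g_j+e^{-j}g_{\A}$ to the limit, and postponing the identification $\R=X_{reg}$ and the diameter bound to later sections) are consistent with how the paper organizes the argument.
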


We would like to show that $\mathcal{R} = X_{reg}$ following the ideas in \cite{RZ}. We first need the following lemma due to Rong-Zhang (Theorem 5.2 \cite{RZ}). It is possible that the elliptic arguments in \cite{CDS3} for the Gromov-Hausdorff limit of conical K\"ahler-Einstein metrics can also be applied to derive Proposition \ref{ghlimgt}.

\begin{lemma} \label{RZ1} Let $(M, g)$ be an $n$-dimensional projective manifold with a K\"ahler metric $g$. Suppose that $E$ is a  subvariety of dimension $0<m<n$ and $p$ is point in the regular part of $E$. Then if
$$ \sup_{B_g(p, \frac{2\pi}{\Lambda})} |Sec(g)| \leq \Lambda, $$
for some $p\in M$ and $\Lambda>0$, 
then 
$$Vol_g(B_g(p, r)\cap E) \geq \delta(m, \Lambda) r^{2m},$$
for any $r\leq min\{ i_g(p), \frac{\pi}{2\Lambda}\}$, where $i_g(p)$ is the injectivity radius of $g$ at $p$ and $\delta=\delta(m, \Lambda)>0$ only depends on $m$ and $\Lambda$.

\end{lemma}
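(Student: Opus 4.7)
The plan is to reduce the statement to the classical monotonicity formula for minimal submanifolds in a Riemannian manifold of bounded sectional curvature, exploiting that a complex subvariety of a K\"ahler manifold is calibrated.

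First, I would reduce to the smooth minimal situation. Since $M$ is K\"ahler with K\"ahler form $\omega_g$, the form $\omega_g^m / m!$ calibrates every $m$-dimensional complex subvariety, so the regular part $E_{\textnormal{reg}}$ is a minimal real $2m$-dimensional submanifold of $M$. The singular locus $E_{\textnormal{sing}}$ has real Hausdorff dimension at most $2m-2$, hence is negligible for $2m$-dimensional volume. In particular, for any geodesic ball $B_g(p,r)$,
\[
\textnormal{Vol}_g(B_g(p,r)\cap E)=\textnormal{Vol}_g(B_g(p,r)\cap E_{\textnormal{reg}}),
\]
and we may work with a smooth minimal submanifold passing through the smooth point $p$.

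The heart of the argument is the Allard-type monotonicity formula. On the ball $B_g(p, \pi/(2\Lambda))$ the bound $|\textnormal{Sec}(g)|\le\Lambda$ gives Jacobi-field comparison, and by a standard computation using $\Delta_E \tfrac{1}{2}d_g(p,\cdot)^2 = m + O(\Lambda d_g(p,\cdot)^2)$ on the minimal submanifold $E_{\textnormal{reg}}$, one obtains a constant $C=C(m,\Lambda)>0$ such that the modified density
\[
\Theta(r)\;:=\;e^{C r^{2}}\,\frac{\textnormal{Vol}_g(B_g(p,r)\cap E)}{\omega_{2m}\,r^{2m}}
\]
is monotone non-decreasing for $0<r\le \min\{i_g(p),\pi/(2\Lambda)\}$, where $\omega_{2m}$ is the volume of the Euclidean unit $2m$-ball. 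The bounds on sectional curvature and the injectivity radius are used precisely to guarantee that geodesic balls stay within the smooth comparison regime and that the radial projection from $E$ to the tangent space of $M$ at $p$ is well-defined.

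Letting $r\to 0^{+}$, $\Theta(r)$ converges to the density of $E$ at $p$, which (since $p$ is a regular point of $E$, and complex submanifolds have multiplicity one at smooth points) equals $1$. By monotonicity $\Theta(r)\ge 1$ throughout the allowed range, and therefore
\[
\textnormal{Vol}_g(B_g(p,r)\cap E)\;\ge\;e^{-Cr^{2}}\,\omega_{2m}\,r^{2m}\;\ge\;\delta(m,\Lambda)\,r^{2m}
\]
for every $r\le \min\{i_g(p),\pi/(2\Lambda)\}$, which is the claim. The one step requiring care is the monotonicity computation in the curved ambient: it is standard in geometric measure theory but must be done with a uniform constant $C(m,\Lambda)$ valid on the whole admissible range of $r$, using only the sectional curvature bound $|\textnormal{Sec}(g)|\le\Lambda$; the minimality of $E_{\textnormal{reg}}$ (hence vanishing mean curvature) is what makes the boundary term in the first variation disappear and drives the monotonicity.
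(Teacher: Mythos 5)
The paper does not actually prove this lemma: it is quoted verbatim as Theorem 5.2 of Rong--Zhang \cite{RZ}, so there is no in-paper argument to compare yours against. Your route --- Wirtinger calibration making $E$ a minimal (indeed area-minimizing) $2m$-current, then the Allard-type monotonicity formula with a curvature-dependent correction factor on the range $r\leq\min\{i_g(p),\pi/(2\Lambda)\}$, and density $\geq 1$ at the regular point $p$ --- is the standard proof of exactly this kind of statement and is essentially correct; it is also the natural reading of what the cited reference does. Two small points to tighten: the trace of $\mathrm{Hess}\,\tfrac12 d_g(p,\cdot)^2$ over the tangent spaces of $E$ is $2m+O(\Lambda d^2)$, not $m+O(\Lambda d^2)$, since $E$ has real dimension $2m$; and discarding $E_{\textnormal{sing}}$ is enough for the volume identity but not by itself for the first-variation/integration-by-parts step once $B_g(p,r)$ meets the singular set, so you should invoke the classical fact (Lelong) that the current of integration $[E]$ is closed and defines a stationary (calibrated) integral varifold, to which the monotonicity formula applies directly.
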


We now define $S_E$ by 
$$S_E=\{ p\in X_\infty~|~ there~exist~ \{p_j \}_{j=1}^\infty \subset E ~with ~ p_j \rightarrow p, ~as~j\rightarrow \infty \}.$$  Obviously, $S_E$ is closed.

\begin{lemma} \label{le45} $S_E \subset \mathcal{S}. $ \end{lemma}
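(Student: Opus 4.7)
The plan is to argue by contradiction. Suppose there exists $p \in S_E \cap \mathcal{R}$, and by the definition of $S_E$ fix a sequence $p_j \in E \subset X'$ with $p_j \to p$ under the Gromov-Hausdorff convergence $(X', g_j) \to (X_\infty, d_\infty)$ provided by Proposition \ref{ghlimgt}.

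First I would exploit the smooth convergence of $g_j$ to $g_{KE}$ on the regular part $\mathcal{R}$, which is built into the Cheeger-Colding/Tian-Wang structure in Proposition \ref{ghlimgt}(1)(2). Since $\mathcal{R}$ is open, choose $r_0 > 0$ with $B_{d_\infty}(p, 2r_0) \subset \mathcal{R}$, and for each large $j$ produce a smooth embedding $\Phi_j : B_{d_\infty}(p, 2r_0) \to X'$ that is a Gromov-Hausdorff approximation (with error $\epsilon_j \to 0$) and satisfies $\Phi_j^* g_j \to g_{KE}$ in $C^\infty_{loc}$. The critical feature is that the image $\Phi_j(B_{d_\infty}(p, 2r_0))$ must lie in $X' \setminus E$: the pull-back of $g_{KE}$ is smooth precisely on $X' \setminus E = \pi^{-1}(X_{reg})$, whereas along $E$ the K\"ahler metrics $\omega_j = \chi' + e^{-j}\omega_\A + \ddbar \varphi_j$ are forced by $\pi$ being a crepant contraction to collapse in the limit; therefore the $C^\infty$-convergence of the pulled-back metrics forbids the approximations from reaching $E$.

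Next, the approximate-isometry property of $\Phi_j$, together with the positive injectivity radius of $g_{KE}$ on $\overline{B_{d_\infty}(p, r_0)}$, guarantees that for all sufficiently large $j$ the image $\Phi_j(B_{d_\infty}(p, 2r_0))$ contains the full $g_j$-metric ball $B_{g_j}(\Phi_j(p), r_0)$. From $p_j \to p$ realized through $\Phi_j$ we get $d_{g_j}(\Phi_j(p), p_j) \to 0$, so $p_j \in B_{g_j}(\Phi_j(p), r_0) \subset X' \setminus E$ for all large $j$, contradicting $p_j \in E$ and proving $S_E \subset \mathcal{S}$.

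The main obstacle will be to justify rigorously that the smooth approximation maps $\Phi_j$ indeed avoid the exceptional divisor $E$; this is heuristically clear from the degeneracy of $g_{KE}$ along $E$, but should be tied precisely to the construction of the smooth approximation in Tian-Wang's theory. A more robust fallback uses Lemma \ref{RZ1}: the smooth convergence near $p \in \mathcal{R}$ supplies a uniform two-sided sectional-curvature bound for $g_j$ in a $g_j$-neighborhood of $p_j$, so Lemma \ref{RZ1} yields $\mathrm{Vol}_{g_j}(B_{g_j}(p_j, r) \cap E) \geq \delta r^{2m}$ with $m < n$. Passing to the Gromov-Hausdorff limit gives $\mathcal{H}^{2m}(S_E \cap B_{d_\infty}(p, r)) \geq \delta r^{2m}$, which contradicts the fact that at the regular point $p$ any tangent cone of $X_\infty$ is Euclidean $\mathbb{R}^{2n}$ with full regular structure, after a rescaling argument.
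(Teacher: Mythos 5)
Both of your routes lean on an ingredient that is not available here: a two-sided curvature bound (equivalently, smooth convergence of the time-zero metrics $g_j$) near the points $p_j\in E$. Proposition \ref{ghlimgt} only says that the \emph{limit} $d_\infty$ restricts to a smooth K\"ahler--Einstein metric on $\mathcal{R}$; since the metrics $g_j$ solving (\ref{alke}) satisfy only the one-sided bound $Ric(g_j)\geq -g_j$, there is no elliptic regularity upgrading Gromov--Hausdorff closeness to $p\in\mathcal{R}$ into bounds on $|Rm_{g_j}|$ near $p_j$ --- this is exactly the difficulty the paper flags ("the Ricci curvature of $g_j$ is not uniformly bounded"). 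Your first argument has the additional, essentially circular, step that the Cheeger--Gromov approximation maps must have image in $X'\setminus E$; that the exceptional set cannot approach a regular point is precisely the content of Lemma \ref{le45}, so it cannot be assumed. The paper's route around the missing curvature bound is parabolic: from regularity of $p$ one gets an almost-Euclidean volume ratio $Vol(B_{d_\infty}(p,r_0))>(1-\delta)Vol(B_{Eucl}(0,r_0))$, transfers it to $B_{g_j}(p_j,r_0)$, runs the normalized K\"ahler--Ricci flow $g_j(t)$ starting at $g_j$, and invokes Tian--Wang's modified Perelman pseudolocality to obtain $|Rm_{g_j(t)}|\leq Kt^{-1}$ on a smaller ball; only the flowed metric $g_j(1/2)$ has the bounded geometry needed to apply Lemma \ref{RZ1}.

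Your fallback also misidentifies the contradiction. Even granting $Vol_{g_j}(B_{g_j}(p_j,r)\cap E)\geq \delta r^{2m}$, passing to a statement $\mathcal{H}^{2m}(S_E\cap B_{d_\infty}(p,r))\geq \delta r^{2m}$ contradicts nothing: a set of Hausdorff dimension $2m<2n$ can perfectly well pass through a point whose tangent cone is $\mathbb{R}^{2n}$ (think of a totally geodesic submanifold), and $S_E$ is not yet known to lie in $\mathcal{S}$. The contradiction in the paper is cohomological, not measure-theoretic: since $E'$ is contracted by the crepant morphism $\pi$ and $[\omega_j(t)]=c_1(K_{X'})+e^{-j-t}[\A]$ with $K_{X'}$ numerically trivial along $E$, one has
\begin{equation*}
Vol_{g_j(1/2)}\bigl(B_{g_j(1/2)}(p_j,r_0/4)\cap E'\bigr)\ \leq\ \int_{E'}\omega_j(1/2)^m\ =\ E'\cdot[\omega_j(1/2)]^m\ =\ O(e^{-j})\ \longrightarrow\ 0,
\end{equation*}
which is incompatible with the uniform lower bound from Lemma \ref{RZ1}. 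So the two missing ideas are the pseudolocality/flow-smoothing step that produces bounded curvature for $g_j(1/2)$, and the cohomological volume collapse of $E$ that closes the contradiction; as written, your proposal does not supply either.
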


\begin{proof} We will have to combine the argument in \cite{RZ}  and \cite{TW} since the Ricci curvature of $g_j$ is not uniformly bounded. We prove by contradiction. Suppose there exists $p\in S_E \cap \mathcal{R}$ with $p_j \in E\subset (X, g_j)$ converging to $p \in (X_\infty, d_\infty)$. We can always assume that $p_j$ lies in the regular part of an $m$-dimensional component $E'$ of $E$ by arbitrarily small perturbation after passing to a subsequence, for some $m>0$. Then for any sufficiently small $\delta>0$, there exists $r_0>0$ sufficiently small such that  $$Vol(B_{d_\infty}(p, r_0)) > (1-\delta) Vol(B_{Eucl}(0, r_0)),$$ where $B_{Eucl}(0, r_0)$ is the Eucidian ball centered at $0$ with radius $r_0$. Without loss of generality, we can also assume that $Vol(B_{g_j}(p_j, r_0)) > (1-\delta) Vol(B_{Eucl}(0, r_0))$ for sufficiently large $j$. One then consider the solution $g_j(t)$ of the normalized K\"ahler-Ricci flow starting with $g_j$ for $t\in [0, 1]$. By \cite{TW}, $B_{g_j(t)}(p_j, r_0)$ and $B_{d_\infty}(p, r_0)$ can be arbitrarily small for sufficiently large $j$ and all $t\in [0, 1]$. The modified version of Perelman's psedoulocality theorem  (Proposition 3.1 \cite{TW}) can be applied and after choosing a sufficiently small uniform constant $\delta$, there exists $K, r >0$ such that 
$$|Rm_{g_j(t)} (y) |_{g_j(t)} \leq K t^{-1}, ~y \in B_{g_j}(p_j, r_0/2) $$
for sufficiently large $j$. By choosing $t=1/2$, one immediately shows that the curvature of $g_j(1/2)$ is uniformly bounded by $2K$ on $B_{g_j(1) } (p_j, r_0/4) $ for sufficiently large $j$. 

By Lemma \ref{RZ1}, there exists $\varepsilon>0$, such that for all sufficiently large $j$,  
$$Vol_{g_j(1/2)} (B_{g_j(1/2)}(p_j, r_0/4)\cap E) \geq \varepsilon. $$
However, letting $\omega_j(t)$ be the K\"ahler form associated to $g_j(t)$, we have 
$$Vol_{g_j(1/2)} (B_{g_j(1/2)}(p_j, r_0/4)\leq \int_{E'} (\omega_j(1/2))^m = E' \cdot [\omega_j(1/2)]^m \rightarrow 0= e^{-j} ( E'\cdot~ \A^m) \rightarrow 0,$$
as $ j\rightarrow \infty$ because $K_{X'}$ is numerically zero on $E$. Contradiction.

\end{proof}

The following lemma is the pointed version of Theorem 4.1 in \cite{RZ}

\begin{lemma} \label{RZ2} There exists a homeomorphic local isometry $f: (X'\setminus E,d_{g_\infty}) \rightarrow (X_\infty \setminus S_E, d_\infty)$, where $d_{g_\infty}$ is the metric induced by $g_\infty$ on $X'\setminus E$ (or $X_{reg}$).

\end{lemma}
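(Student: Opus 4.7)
The plan is to construct $f$ from the Gromov-Hausdorff approximations supplied by Proposition \ref{ghlimgt} and then upgrade pointwise convergence to a local isometry using the smooth convergence $g_j \to g_\infty$ on $X'\setminus E$ provided by Lemma \ref{lem41}. For each $j$ fix an $\varepsilon_j$-Gromov-Hausdorff approximation $\phi_j : (X', g_j) \to (X_\infty, d_\infty)$ with $\varepsilon_j \to 0$. For $q \in X'\setminus E$, set $f(q) := \lim_{j\to\infty}\phi_j(q)$; the limit exists after passing to a subsequence (by compactness of $X_\infty$) and is independent of subsequence because any two subsequential limits differ by at most $\liminf_j 2\varepsilon_j = 0$. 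A direct computation shows $f$ is $1$-Lipschitz from $(X'\setminus E, d_{g_\infty})$ into $(X_\infty, d_\infty)$: any $C^1$ path in $X'\setminus E$ joining $q_1$ to $q_2$ lies in a compact subset on which $g_j \to g_\infty$ smoothly, so its $g_j$-length converges to its $g_\infty$-length and hence $d_\infty(f(q_1), f(q_2)) \leq d_{g_\infty}(q_1,q_2)$.

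Next I would upgrade the $1$-Lipschitz property to a local isometry. For $q \in X'\setminus E$ choose $r_q > 0$ so small that $\overline{B_{g_\infty}(q, 3r_q)} \subset\subset X'\setminus E$. Smooth $C^\infty$ convergence of $g_j$ on this compact ball implies that for large $j$ and any $q_1, q_2 \in B_{g_\infty}(q, r_q)$, every almost-minimizing $g_j$-geodesic joining them must remain inside $B_{g_\infty}(q, 3r_q)$; otherwise its length would exceed $2 r_q$, contradicting $d_{g_j}(q_1, q_2) \approx d_{g_\infty}(q_1, q_2) < 2 r_q$ for $j$ large. Hence $d_{g_j}(q_1,q_2) \to d_{g_\infty}(q_1, q_2)$ and $d_\infty(f(q_1), f(q_2)) = d_{g_\infty}(q_1, q_2)$, so $f$ is an isometric embedding on each such ball and in particular a local isometry and local homeomorphism. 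Global injectivity is then easy: if $f(q_1) = f(q_2)$ then $d_{g_j}(q_1, q_2) \to 0$, and smooth convergence on a fixed compact neighborhood containing the $g_j$-geodesic segments between them forces $q_1 = q_2$.

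Finally I would show $f(X'\setminus E) = X_\infty \setminus S_E$. The inclusion $f(X'\setminus E) \subset X_\infty \setminus S_E$ is proved in the spirit of Lemma \ref{le45}: were $f(q) \in S_E$, there would exist $p_j \in E$ with $p_j \to f(q)$, hence $d_{g_j}(q, p_j) \to 0$; smooth convergence of $g_j$ on a fixed $g_\infty$-ball about $q$ delivers uniform sectional curvature and injectivity radius bounds for $g_j$ near $q$, so after slightly perturbing $p_j$ into the regular part of a positive-dimensional component $E'$ of $E$ one can invoke Lemma \ref{RZ1} to obtain $\mathrm{Vol}_{g_j}(B_{g_j}(p_j, \rho) \cap E') \geq \delta > 0$, contradicting the algebraic vanishing $\int_{E' \cap B} \omega_j^m = [\omega_j]^m \cdot [E'] \to 0$, which uses $K_{X'}|_{E'} \equiv 0$. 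Surjectivity then follows from openness of $f(X'\setminus E)$ together with a completion argument: for $y \in X_\infty \setminus S_E$, the preimages $q_j \in X'$ under $\phi_j$ of points converging to $y$ eventually lie in $X'\setminus E$ (else $y \in S_E$) and, since $y$ has positive distance to the closed set $S_E$, stay in a compact subset of $X'\setminus E$; any subsequential limit $q$ satisfies $f(q) = y$. The main obstacle throughout is that $d_{g_\infty}$ may be incomplete on $X'\setminus E$, so $g_j$-distances near $E$ can collapse relative to $g_\infty$-distances; controlling this potential metric collapse is exactly what forces us to use both the Tian-Wang pseudolocality input embedded in Proposition \ref{ghlimgt} and the algebraic vanishing $K_{X'}|_E \equiv 0$ in the volume estimate.
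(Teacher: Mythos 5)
Your construction of $f$ from the Gromov--Hausdorff approximations $\phi_j$, the $1$-Lipschitz bound, and the local distance preservation on small balls $B_{g_\infty}(q,r_q)\subset\subset X'\setminus E$ are essentially sound (the paper itself gives no argument here, citing the pointed version of Theorem 4.1 of \cite{RZ}). Two minor slips: at this stage $X_\infty$ is not yet known to be compact --- the diameter bound is only proved later, in section 4.5 --- so the existence of $\lim_j\phi_j(q)$ should come from properness of the pointed limit, not ``compactness of $X_\infty$''; and your global injectivity should not be argued by asserting that the $g_j$-geodesics between two far-apart points lie in a fixed compact subset of $X'\setminus E$ (nothing rules out their passing near $E$). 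The correct mechanism is the one you already use locally: any $g_j$-path from $q$ leaving $B_{g_\infty}(q,3r)$, with $\overline{B_{g_\infty}(q,3r)}\subset\subset X'\setminus E$, must first cross that ball, where $g_j\geq(1-\epsilon)g_\infty$, so $d_{g_j}(q,q')\geq(1-\epsilon)\,3r$ for every $q'$ outside the ball, uniformly in $j$; this also gives openness of the image and continuity of $f^{-1}$.

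There are, however, two genuine problems. First, in proving $f(X'\setminus E)\cap S_E=\emptyset$ you apply Lemma \ref{RZ1} at points $p_j\in E$, claiming curvature and injectivity radius bounds for $g_j$ near $p_j$ from the smooth convergence $g_j\to g_\infty$ near $q$. That convergence holds only on compact subsets of $X'\setminus E$ and gives no curvature control on balls centered on $E$; in the paper such control near $E$ is obtained from Perelman pseudolocality along the Ricci flow (Lemma \ref{le45}, via \cite{TW}), not from convergence away from $E$. (This particular conclusion is rescued by the ball-exit bound above, which shows $d_{g_j}(q,E)$ is bounded below uniformly in $j$, so no appeal to Lemma \ref{RZ1} is needed.) Second, and this is the serious gap: your surjectivity step assumes exactly what must be proved. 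You claim that because $d_\infty(y,S_E)>0$, the points $q_j$ with $\phi_j(q_j)\to y$ ``stay in a compact subset of $X'\setminus E$''. That requires knowing that if $q_j$ approaches $E$ in the fixed topology of $X'$, then every $d_\infty$-limit of $\phi_j(q_j)$ lies in $S_E$, i.e.\ that $d_{g_j}(q_j,E)\to0$. This needs a uniform upper bound on $g_j$ in the directions transverse to $E$ near $E$ (an estimate of the kind developed in section 4.5 by the Song--Weinkove blow-up trick, or in \cite{RZ}); none of the ingredients you invoke --- smooth convergence away from $E$, the one-sided bound $tr_{g_j}(\chi')\leq C$, or the Tian--Wang structure theory --- provides it. Openness of the image is fine, but closedness of the image in $X_\infty\setminus S_E$ (equivalently, your compactness claim) is the heart of the lemma and is precisely the content of the Rong--Zhang theorem being quoted; as written, your proposal leaves it unproved.
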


\begin{proposition} $ \mathcal{R} = X_{reg} $.

\end{proposition}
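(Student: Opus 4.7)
The plan is to combine the two preceding lemmas. By Lemma \ref{le45}, we have $S_E\subset \mathcal{S}$, so passing to complements in $X_\infty$ yields
\[
\mathcal{R} \;=\; X_\infty\setminus \mathcal{S} \;\subset\; X_\infty\setminus S_E.
\]
Lemma \ref{RZ2} furnishes a homeomorphic local isometry
\[
f : (X'\setminus E,\, d_{g_\infty}) \;\longrightarrow\; (X_\infty\setminus S_E,\, d_\infty),
\]
so in particular $f$ is surjective onto $X_\infty\setminus S_E$.

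Next, I would argue that $f$ lands inside $\mathcal{R}$. Indeed, $(X'\setminus E, g_\infty)$ is an honest smooth K\"ahler manifold on which $g_{KE}$ is smooth, and $f$ is a local isometry onto its image. Thus every point $q = f(p)$ admits a small ball that is bi-Lipschitz and, by the local smooth convergence in Proposition \ref{ghlimgt}, actually $C^{1,\alpha}$-close to a Euclidean ball of the same radius. In particular the tangent cone at $q$ exists and equals $\mathbb{R}^{2n}$, which by definition places $q$ in the regular stratum $\mathcal{R}$. Therefore $f(X'\setminus E)\subset \mathcal{R}$.

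Combining the two inclusions $\mathcal{R} \subset X_\infty \setminus S_E = f(X'\setminus E) \subset \mathcal{R}$ forces equality throughout, so $\mathcal{R} = X_\infty\setminus S_E = f(X'\setminus E)$. Since $\pi: X' \to X$ is a crepant resolution, it restricts to a biholomorphism $\pi: X'\setminus E \to X_{reg}$, and composing with $f^{-1}$ produces a canonical identification $\mathcal{R} \cong X_{reg}$, as required.

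The only non-trivial content already lives in the two lemmas invoked: Lemma \ref{le45}, which rules out sequences in $E$ accumulating at regular limit points (this is where the modified pseudolocality of Tian--Wang and the vanishing of $K_{X'}$ on $E$ are crucial), and Lemma \ref{RZ2}, which upgrades the local isometric convergence away from $E$ to a global topological identification. Given those inputs the argument above is essentially formal, so I do not anticipate a genuine obstacle at this final step; the step requiring most care is simply verifying that $f$ sends smooth points to regular points of the Gromov--Hausdorff limit, which I would handle via the $C^{1,\alpha}$-closeness provided by Proposition \ref{ghlimgt}.
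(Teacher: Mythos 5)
Your proposal is correct and follows the same route as the paper: it combines Lemma \ref{le45} ($S_E\subset\mathcal{S}$) with Lemma \ref{RZ2} (the local isometry identifying $X'\setminus E\cong X_{reg}$ with $X_\infty\setminus S_E$, whose points are regular because the metric there is smooth) to conclude $\mathcal{S}=S_E$ and hence $\mathcal{R}=X_{reg}$. Your extra remarks justifying that smooth points have Euclidean tangent cones merely make explicit what the paper states in one line.
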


\begin{proof} From Lemma \ref{RZ2}, all the points in $X_\infty\setminus S_E$ must lie in $\mathcal{R}$, since $(X_{reg}, d_{g_\infty})$ is smooth. Hence $\mathcal{S} \subset S_E$. The proposition immediately follows by Lemma \ref{le45}.

\end{proof}

\subsection{Local $L^2$-estimates}

 We now pick a base point $p$ in $X_{reg}$ as in Lemma \ref{locesk}. As shown in section 4.2, $(X', p, g_j)$ converges to the metric length space of $(X_\infty, p_\infty, d_\infty)$. The difficulty here is that we do not have a uniform bound on the diameter of $(X', g_j)$, unlike the case of Calabi-Yau varieties. In order to obtain a uniform diameter bound, we will prove by contradiction,  and assume that 
$$diam_{g_j}(X') \rightarrow \infty.$$
This immediately implies that $(X_\infty, d_\infty)$ is not a compact metric space as its diameter is infinite.  

Let $B_\infty(r)$ be the geodesic ball in $(X_\infty, d_\infty)$ centered at $p_\infty $ with radius $r>0$. Let $B_j(r)$ be the geodesic ball of $(X', g_j)$ centered at $p$ of radius $r$. Then $B_j(r)$ converges to $B_\infty(r)$ in Gromov-Hausdorff topology as $j\rightarrow \infty$.  We will derive local $L^2$-estimates on each $B_\infty(r)$ for all $r>0$.

\begin{lemma} \label{soho} Suppose $diam_{g_j}(X') \rightarrow \infty$ as $j\rightarrow \infty$. Fix any $0< r <R$, the Sobolev constant on $B_j(r)$ is uniformly bounded below by a constant $C_S$ depending on upper bound of $R$, $R^{-1}$ and $(R-r)^{-1}$. More precisely, for any $j=1, 2, ..., $ and any $f\in L^{1,2}(X', g_j)$ with support in $B_j(r)$, 
\begin{equation}
 ||\nabla f|_{L^2(B_j(r), g_j)} \leq C_S  ||f||_{L^{\frac{2n}{n-1}}(B_j(r), g_j)}.
 \end{equation}

\end{lemma}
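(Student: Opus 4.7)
\medskip

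The strategy is to combine the uniform Ricci lower bound $Ric(g_j) \geq -g_j$ from Lemma \ref{locesk}(1) with the volume non-collapsing $Vol_{g_j}(B_{g_j}(p, r_0)) \geq \kappa$ at the fixed base point $p$ from Lemma \ref{locesk}(2), and then invoke a local Sobolev inequality of Saloff--Coste type for Riemannian manifolds with Ricci curvature bounded below. Crucially, both ingredients are independent of $j$, so all constants produced will depend only on $R$, $r_0$, $\kappa$ and $R-r$, but not on the index $j$ or on the diameter of $(X', g_j)$.

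First, I would apply Bishop--Gromov volume comparison: since $Ric(g_j) \geq -g_j$, the ratio $Vol_{g_j}(B_{g_j}(x, s)) / v_{-1}(s)$ is non-increasing in $s$ for every $x \in X'$, where $v_{-1}(s)$ is the real $2n$-dimensional hyperbolic ball volume of radius $s$. Starting from the non-collapsing at $p$ at scale $r_0$, monotonicity gives $Vol_{g_j}(B_{g_j}(p, s)) \geq \kappa \cdot v_{-1}(s)/v_{-1}(r_0)$ for $s \leq r_0$, while for $s \geq r_0$ the bound $Vol_{g_j}(B_{g_j}(p, s)) \geq \kappa$ is trivial. By overlapping the base ball with balls centered at nearby points along minimal geodesics emanating from $p$ and iterating the comparison, one transports the non-collapsing bound from scale $r_0$ up to scale $R$, obtaining a uniform lower bound on $Vol_{g_j}(B_{g_j}(x, s))$ for every $x \in B_j(R)$ and every $s$ in a fixed range, with a constant depending only on $R$, $R^{-1}$, $r_0$, and $\kappa$.

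Second, with the uniform Ricci lower bound and uniform local volume non-collapsing in place, a standard Saloff--Coste (or Buser--Bishop--Gromov) type local Neumann Sobolev inequality holds on $B_j(R)$:
\[
\|u - \bar u\|_{L^{2n/(n-1)}(B_j(R), g_j)} \leq C(R, R^{-1}, r_0, \kappa) \|\nabla u\|_{L^2(B_j(R), g_j)},
\]
uniformly in $j$, where $\bar u$ is the $g_j$-average of $u$ over $B_j(R)$. Third, to pass from this Neumann-type inequality on $B_j(R)$ to the claimed Dirichlet-type inequality for $f$ supported in $B_j(r)$, I would use a cutoff function $\eta$ with $\eta \equiv 1$ on $B_j(r)$, $\eta \equiv 0$ outside $B_j(R)$, and $|\nabla_{g_j} \eta| \leq 2/(R-r)$ (built from the distance function to $p$). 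Applying the Neumann inequality to $\eta f = f$ and handling the average term via the compact support of $f$ together with the Cauchy--Schwarz estimate against $\eta$ yields the desired bound; the factor $(R-r)^{-1}$ enters precisely through $|\nabla \eta|_{g_j}$.

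The main obstacle is the absence of a uniform diameter bound on $(X', g_j)$, which forbids any use of a global Sobolev constant: all analysis must be localized to geodesic balls of a fixed radius $R$ around $p$, and the uniform non-collapsing at the single scale $r_0$ must be propagated to scale $R$ purely via Bishop--Gromov. Once this volume comparison step is carried out carefully, the remainder is an application of standard comparison-geometric Sobolev techniques.
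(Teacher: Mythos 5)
Your first two steps (Ricci lower bound $Ric(g_j)\geq -g_j$ plus Bishop--Gromov to propagate the non-collapsing $Vol_{g_j}(B_{g_j}(p,r_0))\geq \kappa$ to every point of $B_j(R)$, then a Saloff--Coste type local Sobolev inequality with constants uniform in $j$) are sound and are a legitimate alternative skeleton to the paper's argument, which instead quotes P.~Li's estimate for the \emph{Dirichlet isoperimetric constant} of $B_j(r)$ inside $B_j(R)$, namely $C_I(B_j(r))\geq C\bigl((V(B_j(R))-V(B_j(r)))/V_{-1}(r+R)\bigr)^{(2n+1)/(2n)}$, and gets uniform positivity of the annulus volume from the measured Gromov--Hausdorff convergence $B_j(\cdot)\to B_\infty(\cdot)$. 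However, your third step --- the passage from the Neumann/Saloff--Coste inequality on $B_j(R)$ to the Dirichlet-type inequality without lower-order term for $f$ supported in $B_j(r)$ --- has a genuine gap. The cutoff function does nothing here: since $f$ is already supported in $B_j(r)$ and $\eta\equiv 1$ there, $\nabla(\eta f)=\nabla f$, so no factor $(R-r)^{-1}$ can "enter through $|\nabla\eta|$". What actually has to be done is to absorb the average term $\bar f$ (or the $\|f\|_{L^2}$ term in the Saloff--Coste form): by H\"older one gets $\|f\|_{L^{2n/(n-1)}(B_j(R))}\leq \|f-\bar f\|_{L^{2n/(n-1)}}+\theta_j\,\|f\|_{L^{2n/(n-1)}}$ with $\theta_j$ a positive power of $Vol(B_j(r))/Vol(B_j(R))$, and absorption is only possible if this ratio is uniformly bounded away from $1$, i.e.\ if $Vol(B_j(R))-Vol(B_j(r))\geq c>0$ uniformly in $j$. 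Without such a gap no Dirichlet inequality of the claimed form can hold (if $B_j(r)$ nearly exhausts the manifold, near-constant test functions defeat it), so this is not a technicality but the heart of the lemma.

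Concretely, your proposal never uses the hypothesis $diam_{g_j}(X')\rightarrow\infty$, which is precisely what supplies this volume gap: for large $j$ there is a point $x_j$ with $d_{g_j}(p,x_j)=(R+r)/2$, hence $B_{g_j}(x_j,(R-r)/2)\subset B_j(R)\setminus B_j(r)$, and your own Bishop--Gromov step (non-collapsing at every point of $B_j(R)$, at every scale, with constants depending on $\kappa,r_0,R$) then gives $Vol_{g_j}(B_{g_j}(x_j,(R-r)/2))\geq c(\kappa,r_0,R,(R-r)^{-1},n)>0$; this is also the correct source of the $(R-r)^{-1}$ dependence in the statement. With that insertion your route closes and is essentially parallel in spirit to the paper's (which encodes the same annulus-volume input through Li's isoperimetric estimate and the convergence of volumes to the non-compact limit $(X_\infty,d_\infty)$ from Lemma \ref{locesk} and Proposition \ref{ghlimgt}); as written, however, the absorption step fails and the diameter hypothesis is unused.
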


\begin{proof} First, we consider $B_j(r)$ and $B_j(R)$ for $0<r<R$. Then  the Dirichlet  isoperimetric constant $C_I$ is bounded by the following estimate (c.f. Corollary 10.2 in \cite{Li})
\begin{equation}
C_I (B_j(r)) \geq C \left(\frac{V(B_j(R) ) - V(B_j(r))}{V_{-1} (r+R)} \right)^{\frac{2n+1}{2n} }
\end{equation}
as the Ricci curvature of $g_j$ is uniformly bounded below by $-1$, where $V_{-1}(r+R)$ is the volume of a geodesic ball of radius $(r+R)$ in the connected space form of constant $-1$ sectional curvature, and $C$ is a constant only dependent on $n$ (c.f. Corollary 10.2 in \cite{Li}). 
Since $B_j(R)$ and $B_j(r)$ converge in Gromov-Hausdorff topology and measure to $B_\infty(R)$ and $B_\infty(r)$, $C_I(B_j(r))$ is uniformly bounded below by a constant dependent on the upper bounded of $R$, $R^{-1}$ and $(R-r)^{-1}$. Then the lemma follows by the relation between the Sobolev constant $C_S$ and  the isoperimetric constant $C_I$.

\end{proof}

We can now derive the following local $L^2$-estimates. 

\begin{proposition} \label{ll31}   For any $R>0$, there exists  $K_R >0$ such that if  $s\in H^0(X, (K_X) ^k)$ for $k\geq 1$, then 
\begin{equation}\label{lll1}
\|s\|_{L^{\infty, \sharp}(B_\infty(R))} \leq K_R \|s\|_{L^{2, \sharp}(B_\infty(2R) )}
\end{equation}
\begin{equation}\label{lll2}
 \|\nabla s\|_{L^{\infty, \sharp}(B_\infty(R)) } \leq K_R \|s\|_{L^{2, \sharp}(B_\infty(2R))} , 
\end{equation}
where the norms are taken with respect to $(h_{KE})^k$ and $k\omega_{KE}$.

\end{proposition}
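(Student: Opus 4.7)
The plan is to mirror the proofs of Proposition \ref{l21} and Proposition \ref{l22}, but to localize Moser iteration to the geodesic balls of $(X_\infty,d_\infty)$ by using the local Sobolev inequality of Lemma \ref{soho} in place of the global one. Since the curvature of $h_{KE}$ is $\omega_{KE}$, a standard computation gives, for every $s\in H^0(X,K_X^k)$ on the smooth locus $X_{reg}$ with the rescaled metric $kg_{KE}$,
$$\Delta_{k g_{KE}}|s|_{h_{KE}^k}\geq -C\,|s|_{h_{KE}^k},$$
exactly as in step (1) of the proof of Proposition \ref{l21}. Since $Ric(g_{KE})=-g_{KE}$, so that $Ric(kg_{KE})\geq -(kg_{KE})$ for $k\geq 1$, the Bochner-Weitzenb\"ock identity applied to $|\nabla s|$ as in the proof of Proposition \ref{l22} yields
$$\Delta_{k g_{KE}}|\nabla s|_{h_{KE}^k,kg_{KE}}\geq -C\,|\nabla s|_{h_{KE}^k,kg_{KE}},$$
with $C$ independent of $k$ and $s$.

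Next I would carry out the integration by parts using a product cut-off $\rho=\eta_R\rho_\epsilon$, where $\eta_R$ is a Lipschitz radial cut-off supported in $B_\infty(2R)$, equal to $1$ on $B_\infty(R)$, with $|\nabla\eta_R|_{kg_{KE}}\leq C_R$, and $\rho_\epsilon$ is the singular-set cut-off of Lemma \ref{app} applied to $Y=X'$, $Z=E$ (after pulling everything back through $\pi$). Both $|s|$ and $|\nabla s|$ are already bounded pointwise on $X_{reg}$ (via the analogue of Lemma \ref{l20} and the variant of Lemma \ref{l200} using Proposition \ref{gradeke}), so the $\rho_\epsilon$-term arising in the integration by parts vanishes as $\epsilon\to 0$; only the $\eta_R$-gradient survives, supported on the annulus $B_\infty(2R)\setminus B_\infty(R)$ and contributing through $C_R$.

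For the Sobolev step, I would appeal to Lemma \ref{soho}: on each $(X',g_j)$ the Sobolev constant for compactly supported functions in $B_j(R)$ is uniformly bounded by a constant $C_S(R)$. Since $g_j\to g_{KE}$ smoothly on compact subsets of $\mathcal{R}$ and $B_j(R)\to B_\infty(R)$ in Gromov-Hausdorff and measure sense, the same inequality passes to the limit for functions supported in $B_\infty(2R)\cap\mathcal{R}$. Feeding this into the cut-off integral inequalities from the Bochner step and iterating in the standard Moser fashion gives \eqref{lll1}; iterating the analogous scheme on $|\nabla s|$, together with Proposition \ref{gradeke} used exactly as in the proof of Lemma \ref{l200} to compare the Chern connections of $h_{KE}^k$ and $h_{FS}^k$, produces \eqref{lll2}.

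The main obstacle will be the interplay between the two cut-offs: one must perform the $\epsilon\to 0$ limit (removing $E$) before sending the Moser index $p\to\infty$, so that the singular-set error does not accumulate in the $L^\infty$ bound, and one has to verify that the limiting local Sobolev inequality on $B_\infty(R)\cap\mathcal{R}$ applies to the test functions $\eta_R\rho_\epsilon |s|^{(p+1)/2}$ with constants uniform in $p$ and $\epsilon$. Once this bookkeeping is arranged, the remainder is routine elliptic machinery.
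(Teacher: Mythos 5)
Your proposal follows essentially the same route as the paper: the Bochner-type inequalities $\Delta|s|\geq -C|s|$ and $\Delta|\nabla s|\geq -C|\nabla s|$ for $kg_{KE}$, integration by parts with a radial cut-off combined with the singular-set cut-off of Lemma \ref{app}, the local Sobolev inequality obtained by passing Lemma \ref{soho} to the limit via smooth convergence of $g_j$ on the regular part (first on the support of $\rho_\epsilon f$, then $\epsilon\to 0$), and Nash--Moser iteration on nested balls between $B_\infty(R)$ and $B_\infty(2R)$, with the gradient case handled as in Proposition \ref{l22} using Proposition \ref{gradeke}. This matches the paper's argument, so no further comment is needed.
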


\begin{proof} We break the proof into the following steps.

\begin{enumerate}

\item From the same calculations, we have on $X_{reg}$ 
$$\Delta |s| \geq - |s|, $$ where $\Delta$, $\nabla$ are covariant derivatives with respect to $kg_{KE}$ and $h_{KE}=(\omega_{KE})^{-n}$. For any $1<r<R$, we choose a smooth cut-off function $F_{r, R} $ on $\mathbb{R}^+$ such that $0\leq F_{r, R} \leq 1$, $F_{r, R} (x)=1$ for $x\in [0, r]$ and $F_{r, R} (x)=0$ on $[R, \infty)$. We can assume that $|F'| \leq A(R-r)^{-1}$ for some uniform constant $A>0$. Then we let $\eta  (z) = F_{r, R}(\rho/k^{1/2})$, where $\rho$ is the geodesic distance from $p$ to $z\in X_\infty$. Then straightforward calculations show that 
$$
\int_{X'} \left|\nabla  \left(\eta  |s|^{\frac{p+1}{2}} \right) \right|^2 dV_{kg_{KE}} \leq Cp \int_{X'} (\eta^2+ |\nabla \eta|^2) |s|^{p+1} dV_{kg_{KE}}
$$
and so,
\begin{equation}
\int_{B_\infty(r)} |\nabla  \left(   |s|^{\frac{p+1}{2}} \right)|^2 \leq \frac{Cp}{(R-r)^2} \int_{B_\infty(R)}  |s|^{p+1}
\end{equation}
for some uniform constant $C$ because $|\nabla \eta|$ is bounded by $A(R-r)^{-1}$.

\item We would like to prove a Sobolev type inequality for $g_{KE}$.

\begin{claim} For any $r>0$, there exists $K>0$ such that for all $f \in L^\infty (B_\infty(r)) \cap L^{1, 2}(B_\infty(r), k\omega_{KE})$ with compact support in $B_\infty(r)$,   
\begin{equation}
|| \nabla f ||_{L^2(B_\infty(r), kg_{KE} )}  \geq K || f ||_{L^{\frac{2n}{n-1}}(B_\infty(r), k g_{KE})}  .
\end{equation}

\end{claim}

\begin{proof}  We first prove that the Sobolev inequality holds for $f_\epsilon = \rho_\epsilon f$ with $\rho_\epsilon$ constructed from Lemma \ref{app} for $Y=X$ and $Z=X_{sing}$. Let $\Omega_\epsilon = Supp f_\epsilon$. Then $(\Omega_\epsilon, g_j)$ converges to $(\Omega_\epsilon, g_{KE})$ smoothly as $j\rightarrow \infty$. Therefore $\Omega_\epsilon \subset ( B_j(r), g_j)$ for sufficiently large $j$. Therefore from Lemma \ref{soho}, we have 
$$|| \nabla_{kg_j} f_\epsilon ||_{L^2(B_j(r), kg_j)} \geq C_S ||f_\epsilon||_{L^{\frac{2n}{n-1}}(B_j(r), k g_j)} $$
since the Sobolev inequality is scaling invariant. 
By letting $j \rightarrow \infty$, we have
$$|| \nabla  f_\epsilon ||_{L^2(B_\infty (r), kg_{KE})} \geq C_S ||f_\epsilon||_{L^{\frac{2n}{n-1}}(B_\infty (r), kg_{KE})}  $$
since $g_j$ converges smoothly to $g_{KE}$ smoothly on $\Omega_\epsilon$.
Then the claim can be proved by the same argument as in Proposition \ref{l21} after letting $\epsilon \rightarrow 0$.

\end{proof}

\item Combining the above estimates, we have 
$$
C_S \left|\left|  |s|^{\frac{p+1}{2}} \right|\right|^2_{L^{\frac{2n}{n-1}}(B_\infty(r), kg_{KE})}   \leq \frac{Cp}{(R-r)^2} \int_{B_\infty(R)}   |s|^{p+1} dV_{kg_{KE}}
$$
and so for some uniform constant $C>0$, we have 
$$ || |s| ||_{L^{\frac{n}{n-1} (p+1)}(B_\infty(r), k g_{KE}) }  \leq \left( \frac{Cp}{(R-r)^2} \right)^{\frac{1}{p+1}} ||s||_{L^{p+1}(B_\infty(R), kg_{KE})}. $$
Let $\beta = n(n-1)^{-1}$. We then have  
$$|| s ||_{L^{2\beta^{k+1} }(B_\infty(r_{k+1}) , k g_{KE} )}  \leq  \left( \frac{C\beta^k}{ (r_k - r_{k+1})^2 } \right)^{\frac{1}{\beta^{k+1}} } ||s||_{L^{2\beta^k}(B_\infty(r_{k}), k g_{KE})}, $$
for $k=1, 2, ...$, and $r_k$ is an decreasing sequence to be determined. 
By letting $r_k = R - r + \frac{r}{2^k B}$ for sufficiently large fixed $B>0$. One can apply the standard Nash-Moser iteration and we obtain that 
$$||s||_{L^\infty(B_\infty(r) )} \leq K ||s||_{L^2(B_\infty(R), k g_{KE}) }. $$

\item By apply the arguments above and those in the proof of Proposition \ref{l22}, one can also prove (\ref{lll2}). 

\end{enumerate}

\end{proof}

\begin{proposition} \label{lll3} Let $X$ be a normal projective variety with crepant singularities and ample canonical divisor $K_X$. Let $\omega \in - kc_1(X)$ be the unique K\"ahler-Einstein current with bounded local potentials and $h$ be a hermitian metric on $(K_X)^k $ satisfying 
$$Ric(\omega) = - \frac{1}{k} \omega, ~ \omega=Ric(h) $$ for any $k\geq 2$. then for any smooth $(K_X)^k$-valued $(0,1)$-form $\tau$ satisfying

\begin{enumerate}

\item $\dbar \tau =0$,  

\item $Supp ~\tau \subset \subset X_{reg}$, 

\end{enumerate}
there exists a $(K_X)^k$-valued section $u$ such that $\dbar u = \tau$ and $$ \int_X |u|^2_h \omega^n \leq \frac{1}{\pi} \int_X |\tau|^2_{h, \omega} \omega^n. $$

\end{proposition}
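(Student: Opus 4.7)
The proof parallels Proposition \ref{l3}, with one new wrinkle: the line bundle here is a power of the canonical bundle, so one must split off one factor of $K_{X'}$ in order to apply Demailly's $L^2$-estimate, and both the requirement $k\geq 2$ and the constant $1/\pi$ emerge from this splitting.

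Lift to the crepant resolution $\pi:X'\to X$, so that $K_{X'}=\pi^*K_X$ is big and nef. By Kodaira's lemma, $K_{X'}\sim_{\mathbb{Q}} A+[D]$ with $A$ an ample $\mathbb{Q}$-divisor and $D$ an effective $\mathbb{Q}$-divisor; since $K_{X'}$ is nef, the class $K_{X'}-\epsilon[D]=(1-\epsilon)K_{X'}+\epsilon A$ is ample for every $\epsilon\in(0,1)$. Let $s_D,h_D$ be the defining section and a smooth Hermitian metric of $[D]$. By a Monge-Amp\`ere regularization on $X'$ analogous to the one used in Proposition \ref{l3} (and as in \cite{EGZ, Z}), one constructs smooth K\"ahler forms $\omega_\epsilon$ on $X'$ and Hermitian metrics $h_\epsilon$ on $(K_{X'})^k$ such that $\omega_\epsilon\to\omega$ and $h_\epsilon\to h$ smoothly on $X'\setminus (E\cup |D|)$ (where $E$ is the exceptional locus of $\pi$), with uniformly bounded local potentials globally, and with
\begin{equation*}
Ric(h_\epsilon)=\omega_\epsilon+k\epsilon[s_D]\geq\omega_\epsilon
\end{equation*}
as currents on $X'$. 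The extra positivity $k\epsilon[s_D]$ comes from the singular weight $|s_D|_{h_D}^{-2k\epsilon}$ built into $h_\epsilon$.

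The central step is the canonical splitting $(K_{X'})^k\cong (K_{X'})^{k-1}\otimes K_{X'}$: sections of $(K_{X'})^k$ correspond to $(K_{X'})^{k-1}$-valued $(n,0)$-forms, and $(K_{X'})^k$-valued $(0,1)$-forms correspond to $(K_{X'})^{k-1}$-valued $(n,1)$-forms. Equip the twisting bundle $L=(K_{X'})^{k-1}$ with the Hermitian metric $h_\epsilon^{(k-1)/k}$, whose curvature is
\begin{equation*}
Ric\!\left(h_\epsilon^{(k-1)/k}\right)=\tfrac{k-1}{k}\omega_\epsilon+(k-1)\epsilon[s_D]\geq\tfrac{k-1}{k}\omega_\epsilon.
\end{equation*}
Since $\pi^*\tau$ is smooth and compactly supported in $X'\setminus(E\cup|D|)$ for all small $\epsilon$, Theorem \ref{demailly} applies with $\delta=(k-1)/k$, producing smooth $u_\epsilon$ on $X'$ with $\dbar u_\epsilon=\pi^*\tau$ and
\begin{equation*}
\int_{X'}|u_\epsilon|^2_{h_\epsilon}\,\omega_\epsilon^n\leq\frac{k}{2\pi(k-1)}\int_{X'}|\pi^*\tau|^2_{h_\epsilon,\omega_\epsilon}\,\omega_\epsilon^n\leq\frac{1}{\pi}\int_{X'}|\pi^*\tau|^2_{h_\epsilon,\omega_\epsilon}\,\omega_\epsilon^n,
\end{equation*}
using that $k/(2\pi(k-1))\leq 1/\pi$ precisely for $k\geq 2$.

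Finally, pass to the limit $\epsilon\to 0$. Because $\pi^*\tau$ is supported away from $E\cup|D|$ and both $h_\epsilon,\omega_\epsilon$ converge smoothly there, the right-hand side converges to $\int_{X'}|\pi^*\tau|^2_{h,\omega}\,\omega^n$. The Hermitian metrics $h_\epsilon$ are uniformly dominated by a fixed smooth Hermitian metric on any compact subset of $X'\setminus|D|$ (using the uniform bound on the K\"ahler-Einstein potential), so $u_\epsilon$ is uniformly bounded in $L^2_{\mathrm{loc}}$ against a fixed smooth volume form. A weakly convergent subsequence $u_{\epsilon_v}\rightharpoonup u$ in $L^2_{\mathrm{loc}}(X'\setminus|D|)$ satisfies $\dbar u=\pi^*\tau$, and Fatou's lemma applied on exhausting compact subsets of $X'\setminus|D|$ yields the stated $L^2$ bound. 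Pushing $u$ forward by $\pi_*$ completes the proof. The main obstacle is managing this limit near $|D|$, where $\omega_\epsilon^n$ carries the degenerating weight $|s_D|_{h_D}^{-2nk\epsilon}$; this is handled exactly as in Proposition \ref{l3}, via a cut-off and exhaustion argument leveraging the boundedness of the K\"ahler-Einstein potential to prevent $L^2$ mass concentration.
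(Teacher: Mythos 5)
Your proposal is correct and follows essentially the same route as the paper: pass to the crepant resolution, use Kodaira's lemma and a perturbed Monge--Amp\`ere approximation to produce smooth K\"ahler metrics $\omega_\epsilon$ and singular hermitian metrics $h_\epsilon$ with $Ric(h_\epsilon)\geq \omega_\epsilon$ (modulo the divisor current), apply Demailly's Theorem \ref{demailly} via the splitting $kK_{X'}=(k-1)K_{X'}+K_{X'}$, and take a weak limit as $\epsilon\to 0$ as in Proposition \ref{l3}. Your explicit bookkeeping of $\delta=(k-1)/k$, which is where the hypothesis $k\geq 2$ and the constant $1/\pi$ come from, is exactly what the paper's terser argument leaves implicit.
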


\begin{proof} The proof is very similar to that of Proposition \ref{l3} by approximation. Let $\pi: X'\rightarrow X$ be the crepant resolution. Since $K_{X'}$ is big and semi-ample, there exists a divisor $D$ such that $K_{X'}- \epsilon_0 D$ is ample for some sufficiently small $\epsilon_0 >0$.  We let $h_\Omega = \Omega^{-1}$ be the smooth hermitian metric on $K_{X'}$ satisfying $Ric(h_\Omega) = \chi$ and a a smooth hermitian metric $h_D$ of $[D]$ such that  
$$ \chi - \epsilon Ric(h_D) \in [K_X] - \epsilon [D] $$ is a  K\"ahler metric on $X'$ for sufficiently small $\epsilon>0$. Then we consider the Monge-Ampere equation
$$(\chi -\epsilon^2 Ric(h_D) + \ddbar \varphi_\epsilon) ^n = e^{(1+\epsilon)\varphi_\epsilon} \Omega.$$
Let 
$$h_\epsilon = e^{- (\varphi_\epsilon + \epsilon^2 \log |s_D|^2_{h_D}) } h_\Omega, ~\omega_\epsilon = \chi - \epsilon^2 Ric(h_D) + \ddbar \varphi_\epsilon, ~ \alpha_\epsilon =\chi - \epsilon^2(1+\epsilon) Ric(D). $$
Then 
$$Ric(h_\epsilon) = \omega_\epsilon, ~Ric(\omega_\epsilon) = -(1+\epsilon) \omega_\epsilon + \alpha_\epsilon \geq -(1+\epsilon)\omega_\epsilon. $$ Then we can apply  Theorem \ref{demailly} by writing $kK_{X'} = (k-1)K_{X'} + K_{X'}$ because  the Ricci curvature of $\omega_\epsilon$ is bounded below and converges to constant.  One then can proceed as in the proof of Proposition \ref{l3}. In particular, we make use of the fact that, after talking $\epsilon \rightarrow 0$, the limiting hermitian metric $h = (\omega ^n)^{-k}$.

\end{proof}

\subsection{Local separation of points}

We  will assume that $diam_{g_j}(X') = \infty$ and derive contradiction. Due to the gradient estimates (\ref{gradkeq}) in Proposition \ref{gradeke}, one can derive the same conclusion as in Lemma \ref{l200} by considering  the balls $B_\infty(r)$ with $r\rightarrow \infty$. Hence we have the following lemma similar to Lemma \ref{extenm} and Corollary \ref{24} because each holomorphic section in $H^0(X, (K_X)^k)$ is $L^2$ integrable on $X_{reg}$ with respect to $(h_{KE})^k$ and $\omega_{KE}$.

\begin{lemma}\label{extenmm}
Any holomorphic section $\sigma\in H^0(X, K_X^k)$ can be continuously extended from $X_{reg}$ to $X_\infty$. Furthermore, 
The map $$\Phi_{k, \sigma} : X _\infty \rightarrow X \subset \mathbb{CP}^{d_k}$$ is a Lipschitz map for sufficiently large $k\in \mathcal{F}(X, K_X)$ with respect to  $h_{KE} = (\omega_{KE}^n)^{-1}$ on $(X_\infty, d_\infty)$, where $\sigma=\{ \sigma_1, ..., \sigma_{d_k+1}\}$ is a basis of $H^0(X, (K_X)^k)$.

\end{lemma}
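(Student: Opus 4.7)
The plan is to follow the template of Lemma \ref{extenm} and Corollary \ref{24} from the Calabi--Yau setting, with the essential modification that, in the absence of a uniform diameter bound, all pointwise estimates for holomorphic sections must be extracted from the \emph{local} $L^2$-inequalities of Proposition \ref{ll31} on geodesic balls $B_\infty(R)\subset X_\infty$.

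First I would fix $k\in\mathcal{F}(X,K_X)$ large enough that $K_X^k$ is very ample. Since $\varphi_{KE}\in L^\infty(X)$, the hermitian metric $h_{KE}$ is globally comparable to a smooth reference metric on $K_X$, so $|\sigma|_{h_{KE}^k}$ is uniformly bounded on $X_{reg}$ for each $\sigma\in H^0(X,K_X^k)$. Combined with the finite $g_{KE}$-volume of $B_\infty(R)$ (controlled via Bishop--Gromov on the $g_j$-approximations, using $Ric(g_j)\geq -g_j$, and then passing through the Gromov--Hausdorff convergence of Proposition \ref{ghlimgt}), this gives $\|\sigma\|_{L^{2,\sharp}(B_\infty(R))}<\infty$. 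Proposition \ref{ll31} then produces uniform pointwise bounds on $|\sigma|$ and $|\nabla\sigma|$ on $B_\infty(R/2)\cap X_{reg}$ with respect to $h_{KE}^k$ and $g_{KE}$. Since $\mathcal{R}=X_{reg}$ is an open, dense, and \emph{convex} subset of $X_\infty$, the intrinsic length metric on $X_{reg}$ agrees with $d_\infty|_{X_{reg}}$, so the gradient bound yields a uniform Lipschitz bound with respect to $d_\infty$; $\sigma$ therefore extends uniquely by continuity to $B_\infty(R/2)$, and letting $R\to\infty$ gives the global continuous extension to $X_\infty$.

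Next I would check that $\Phi_{k,\sigma}$ lands in $\mathbb{CP}^{d_k}$. The argument of Lemma \ref{partial} carries over verbatim: since $K_X^k$ is base-point free and $\varphi_{KE}$ is bounded, one has $\inf_{X_{reg}}\sup_j|\sigma_j|_{h_{KE}^k}^2\geq\epsilon>0$ for a basis $\{\sigma_j\}$, and this bound propagates by continuity to all of $X_\infty$. Hence $\Phi_{k,\sigma}$ extends continuously, and since its restriction to $X_{reg}$ coincides with the pluricanonical embedding of $X$ into $\mathbb{CP}^{d_k}$, its continuous extension lands in the closed subvariety $X\subset\mathbb{CP}^{d_k}$. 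The Lipschitz property then follows from the Schwarz-type bound $tr_{g_t}(\chi')\leq C$ of Lemma \ref{lem41}, which on taking $t\to\infty$ gives $\omega_{KE}\geq C^{-1}\chi$ on $X_{reg}$; since $\chi$ is a multiple of the pullback of the Fubini--Study metric, the restriction $\Phi_{k,\sigma}:(X_{reg},d_{g_{KE}})\to(X,d_{FS})$ is uniformly Lipschitz, and density of $X_{reg}$ in $X_\infty$ transfers the bound to $(X_\infty,d_\infty)$.

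The principal difficulty is entirely absorbed into Proposition \ref{ll31}: with no global diameter bound, the Sobolev inequality and the Moser iteration must be run locally on each $B_\infty(R)$, and the local Sobolev constant must be controlled along the Gromov--Hausdorff convergence of the \emph{non-compact} approximations $(X',g_j)$. Once those local estimates are in hand, the extension to $X_\infty$ and the Lipschitz property follow from the same strategy as in the Calabi--Yau case, with the convexity of $\mathcal{R}=X_{reg}$ in $X_\infty$ being the structural ingredient that reconciles the intrinsic metric on $X_{reg}$ with the restriction of $d_\infty$.
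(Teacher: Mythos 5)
Your proposal is correct, and it follows the same overall scheme as the paper (local estimates on balls, density and convexity of $\mathcal{R}=X_{reg}$ in $X_\infty$, and the Schwarz-type bound $\omega_{KE}\geq C^{-1}\chi$ for the Lipschitz property of $\Phi_{k,\sigma}$), but it differs in where the pointwise bound on $|\nabla\sigma|$ comes from. The paper gets it from the gradient estimate for the potential, Proposition \ref{gradeke}: writing $\nabla\sigma=\partial\sigma+k\sigma\,\partial\varphi_{KE}-k\sigma\,\partial\log h_{FS}$ and using $|\nabla\varphi_{KE}|_{g_{KE}}\leq C$, $\varphi_{KE}\in L^\infty$ and $\omega_{KE}\geq C^{-1}\chi$, i.e.\ it runs the argument of Lemma \ref{l200} on balls $B_\infty(r)$ with $r\to\infty$; this yields, for each fixed section, a single radius-independent Lipschitz constant on all of $X_{reg}$, which is exactly the statement "Lipschitz with respect to $h_{KE}$ and $g_{KE}$" and makes the extension to $X_\infty$ immediate as in Lemma \ref{extenm} and Corollary \ref{24}. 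You instead route everything through the local $L^2$-estimates of Proposition \ref{ll31} (Moser iteration on $B_\infty(R)$), using the $L^\infty$ bound on $\varphi_{KE}$ and finite volume to control $\|\sigma\|_{L^{2,\sharp}(B_\infty(2R))}$; this is also valid and has the advantage of leaning only on machinery already needed for the separation of points, but the constants $K_R$ depend on $R$, so per section you only get a locally uniform Lipschitz bound, which still suffices for the continuous extension. For the global Lipschitz claim about $\Phi_{k,\sigma}$ itself you correctly fall back, as the paper does, on the Schwarz bound from Lemma \ref{lem41} together with the base-point-freeness and boundedness of $\varphi_{KE}$ (the analogue of Lemma \ref{partial}), and the convexity of $\mathcal{R}$ to identify the intrinsic metric on $X_{reg}$ with $d_\infty$. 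So the two arguments are interchangeable here; the paper's is slightly sharper per section, yours is more self-contained relative to Section 4.3.
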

$\Phi_{k, \sigma}$ will be stabilized for sufficiently large $k\in \mathcal{F}(X, K_X)$ and $\Phi|_{(X_\infty)_{reg}}=id$. Without loss of generality, we write $\Phi= \Phi_{k, \sigma}$. Without the diameter bound, $(X_\infty, d_\infty)$ might not be a a compact metric length space and $\Phi$ might not be surjective. However, we can show that $\Phi$ is in fact injective.

\begin{proposition} \label{isoset} The map $\Phi: X_\infty \rightarrow X$ is injective.  \end{proposition}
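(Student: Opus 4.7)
The plan is to adapt the separation argument of Proposition \ref{inject} from the Calabi-Yau setting to canonical models, where the decisive difference is the use of the local $L^2$--Moser estimates of Proposition \ref{ll31} in place of the global ones. Fix two distinct points $p, q \in X_\infty$; since both lie at finite distance from the base point $p_\infty$, choose once and for all a radius $R > d_\infty(p_\infty, p) + d_\infty(p_\infty, q) + 10$ and let $K_R>0$ be the corresponding constant in Proposition \ref{ll31} on $B_\infty(R) \subset B_\infty(2R)$. The goal is to construct, for some sufficiently large $k_{p,q}\in \mathcal{F}(X,K_X)$, a section $\sigma''_p \in H^0(X, (K_X)^{k_{p,q}})$ with $|\sigma''_p(p)|_{(h_{KE})^{k_{p,q}}} \geq 2/5$ and $|\sigma''_p(q)|_{(h_{KE})^{k_{p,q}}} \leq 1/10$, and symmetrically $\sigma''_q$, so that $\Phi_{k_{p,q},\sigma}(p) \neq \Phi_{k_{p,q},\sigma}(q)$.

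First, I pass to iterated tangent cones $C(Y_p)$ and $C(Y_q)$ of $(X_\infty,g_{KE})$ at $p$ and $q$ under a common rescaling factor $k_{p,q}$ (achievable after a diagonal subsequence). On each cone, Lemma \ref{hcon} produces data $(p_*, D_p, U_p, J_C, g_C, h_C, A_C)$ and $(q_*, D_q, U_q, \ldots)$ satisfying the $H$-condition, with base points chosen so that $r_{p_*}, r_{q_*} \leq (100 K_R)^{-1}$; after shrinking we may assume the $H_3$ constants coincide with a common $C$. This quantitative dependence on $K_R$ is the principal adaptation needed beyond the Calabi-Yau case (where the universal constant $K$ of Proposition \ref{l21} sufficed). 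Lemma \ref{closen} and Proposition \ref{hcon2} then supply embeddings $\chi_p : U_p(m_p) \to X_\infty$ and $\chi_q : U_q(m_q) \to X_\infty$ with disjoint images (achieved by going deep enough in the cone) whose pullbacks of $k_{p,q}\,g_{KE}$ and $J_\infty$ are $C^0$-close to $g_C$ and $J_C$, transferring the $H$-condition to $(z_{p_*}, \chi_p(D_p), \chi_p(U_p), J_\infty, k_{p,q} g_{KE}, (h_{KE})^{k_{p,q}})$ and likewise at $q$; the identity $\mathcal{R}=X_{reg}$ is essential so that the cutoff section $\sigma'_p$ from the $H$-condition is supported in $X_{reg}$.

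Second, I solve $\dbar u = \dbar \sigma'_p$ via Proposition \ref{lll3}, obtaining a global $(K_X)^{k_{p,q}}$-valued section $u$ with
\begin{equation*}
\int_X |u|^2_{(h_{KE})^{k_{p,q}}} \, \omega_{KE}^n \;\leq\; \tfrac{1}{\pi} \int_X |\dbar \sigma'_p|^2 \, \omega_{KE}^n \;<\; \min\!\bigl(\tfrac{1}{8\sqrt{2}C},\, 10^{-20}\bigr).
\end{equation*}
Set $\sigma''_p := \sigma'_p - u \in H^0(X, (K_X)^{k_{p,q}})$. Applying $H_3$ at $z_{p_*}$ in the ball $\chi_p(D_p) \subset B_\infty(R)$ yields $|u(z_{p_*})| \leq 1/8$, hence $|\sigma''_p(z_{p_*})| \geq 1/2$. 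The local gradient estimate \eqref{lll2} on $B_\infty(R)$, combined with the uniform $L^{2,\sharp}$ bound on $\sigma''_p$ coming from $H_1$ and Hörmander, gives $|\nabla \sigma''_p| \leq K_R$ throughout $B_\infty(R)$; since $d(z_{p_*}, p) \leq (100 K_R)^{-1}$ by construction, we propagate the lower bound to $|\sigma''_p(p)| \geq 2/5$. For the upper bound at $q$: on $\chi_q(U_q)$ disjoint from $\chi_p(U_p)$ we have $\sigma'_p \equiv 0$, so $\sigma''_p = -u$ there, whose $L^2$ norm is tiny; the $H_3$ condition in $\chi_q(D_q)$ then gives $|\sigma''_p(z_{q_*})| \leq 1/8$, and a second application of \eqref{lll2} propagates this to $|\sigma''_p(q)| \leq 1/10$. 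A symmetric construction produces $\sigma''_q$ with the reverse inequalities, and injectivity of $\Phi = \Phi_{k,\sigma}$ follows since $\Phi_{k,\sigma}$ stabilizes for large $k \in \mathcal{F}(X, K_X)$.

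The main obstacle is the dependence of the Moser constant $K_R$ on the ball radius $R$: the $H$-condition parameters $r_{p_*}, r_{q_*}$ and the cone-depth parameters $m_p, m_q$ must be chosen \emph{after} $R$ is fixed, breaking the clean universal-constant structure of the Calabi-Yau argument. Once the order of quantifiers is arranged correctly for the single fixed pair $(p,q)$, however, all the steps of the Calabi-Yau proof go through verbatim; the non-uniformity is harmless because injectivity of $\Phi$ is a pointwise statement on pairs, not a statement requiring uniformity across $X_\infty$.
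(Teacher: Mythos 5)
Your proposal is correct and follows essentially the same route as the paper: the paper's proof of this proposition is exactly to fix the two points inside some ball $B_\infty(R)$, invoke the local estimates of Proposition \ref{ll31} together with the $\dbar$-estimate of Proposition \ref{lll3}, and repeat the $H$-condition peak-section construction of section 3.5 to produce sections with $|\sigma_p(p)|\geq 2/5$, $|\sigma_p(q)|\leq 1/10$ and vice versa. Your added bookkeeping on the order of quantifiers (choosing $r_{p_*},r_{q_*}\leq (100K_R)^{-1}$ after fixing $R$) is precisely the adaptation the paper leaves implicit.
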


\begin{proof}   We fix a base point $p_\infty\in X_\infty $ once for all and let $B_\infty(r)$ be the geodesic ball in $(X_\infty, d_\infty)$ centered at $p_\infty$ with radius $r$. For any two distinct points $p, q \in B_\infty(R)$, with Proposition \ref{ll31} and Proposition \ref{lll3}, we are able to apply the $H$-condition and imitate the arguments in section 3.5 to find two section $\sigma_p$ and $\sigma_q$ in $H^0(X, (K_X)^{k_{p,q}})$ for some sufficiently large $k_{p, q}\in \mathcal{F}(X, K_X)$ such that 
$$|\sigma_p(p)| \geq 2/5, ~~|\sigma_p(q)|\leq 1/10, ~  |\sigma_q(q)| \geq 2/5, ~~|\sigma_q(p)|\leq 1/10. $$
Therefore $$\Phi(p)\neq \Phi(q) $$
and so $\Phi$ is injective.

\end{proof}

\begin{lemma} \label{46} Suppose $diam_{d_\infty}(X_\infty) = \infty$. Then 

\begin{enumerate}
\item

$X\setminus \Phi(X_\infty)\neq \phi$, 

\item  for any sequence of points $q_j \in X_\infty$   with $\lim_{j\rightarrow \infty} d_\chi(\Phi(q_j), P) = 0$  for some point   $P\in X\setminus \Phi(X_\infty)$,
$$d_\infty(p_\infty, q_j) \rightarrow \infty$$
as $j \rightarrow \infty$.

\end{enumerate}

\end{lemma}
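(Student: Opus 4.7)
The plan is to dispose of (2) by compactness of balls in $X_\infty$, and then establish (1) by producing an explicit limit point in $X \setminus \Phi(X_\infty)$: regular images will be ruled out by local bi-Lipschitz equivalence of $d_\infty$ with $d_\chi$, and singular images by a refinement of the $H$-condition argument behind Proposition \ref{isoset}.

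For (2), each closed ball $\overline{B_\infty(r)}$ is compact as the pointed Gromov--Hausdorff limit of the compact balls $\overline{B_j(r)} \subset (X', g_j)$. If a subsequence of $\{q_j\}$ were contained in some $\overline{B_\infty(R)}$, it would sub-converge to $q^* \in X_\infty$, and continuity of $\Phi$ would give $\Phi(q^*) = P$, contradicting $P \in X \setminus \Phi(X_\infty)$. Hence every subsequence of $\{q_j\}$ is unbounded, which yields $d_\infty(p_\infty, q_j) \to \infty$.

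For (1), use the density of $\R = X_{reg}$ in $X_\infty$ with $\Phi|_\R = \mathrm{id}$, together with $\mathrm{diam}_{d_\infty}(X_\infty) = \infty$, to choose $p_n \in X_{reg}$ with $d_\infty(p_\infty, p_n) \to \infty$. By compactness of $X$, pass to a subsequence so that $p_n = \Phi(p_n) \to P$ in $d_\chi$. If $P \in X_{reg}$, then smoothness of $g_{KE}$ near $P$ makes $d_\infty$ locally bi-Lipschitz to $d_\chi$ there, so $p_n \to P$ in $d_\infty$, contradicting the divergence; hence $P \in X_{sing}$. I claim $P \notin \Phi(X_\infty)$. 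Otherwise $P = \Phi(q)$ for some $q \in \Sc$, and applying the $H$-condition at $q$ via Proposition \ref{lll3} and the construction inside the proof of Proposition \ref{isoset} produces $\sigma \in H^0(X, K_X^k)$ with $|\sigma(q)|_{h_{KE}^k} \geq 2/5$ and $\sigma = -u$ outside a small neighborhood $U_q$ of $q$ in $X_\infty$, where $\|u\|_{L^2(X_\infty)} \leq \epsilon$ and $\epsilon$ is at our disposal. Continuity of $|\sigma|_{h_{FS}^k}$ on $X$, boundedness of the potential relating $h_{KE}$ to $h_{FS}$, and $\Phi(p_n) \to P$ in $d_\chi$ together yield a uniform lower bound $|\sigma(p_n)|_{h_{KE}^k} \geq c > 0$ for all large $n$, with $c$ depending only on $|\sigma(q)|_{h_{KE}^k}$ and $\|\varphi_{KE}\|_{L^\infty}$.

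The hard part is matching this with a small upper bound without needing any uniform geometric control as $p_n \to \infty$ in $X_\infty$. The key is that a contradiction at a single $n$ suffices: fix $n_0$ large enough that $p_{n_0} \in X_{reg}$, the lower bound $|\sigma(p_{n_0})|_{h_{KE}^k} \geq c$ holds, and $p_{n_0}$ is at positive $d_\infty$-distance from $\overline{U_q}$. On a small $d_\infty$-ball $B$ around $p_{n_0}$ disjoint from $U_q$ one has $\sigma = -u$, so $\|\sigma\|_{L^2(B)} \leq \|u\|_{L^2(X_\infty)} \leq \epsilon$. The argument of Proposition \ref{ll31}, run with center $p_{n_0}$ instead of $p_\infty$ (valid because $p_{n_0}$ is a smooth regular point where the local geometry of $g_{KE}$ is nondegenerate), supplies a local $L^\infty$-to-$L^2$ mean-value estimate for the holomorphic section $\sigma$ on $B$, giving $|\sigma(p_{n_0})|_{h_{KE}^k} \leq C_{n_0} \epsilon$ with $C_{n_0}$ a finite constant depending only on $n_0$. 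Choosing $\epsilon < c/C_{n_0}$ after $n_0$ is fixed delivers the contradiction. This ordering of quantifiers, which avoids any uniform Sobolev bound at infinity in $X_\infty$, is what makes the argument close.
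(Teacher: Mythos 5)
Your part (2) is fine and is essentially the paper's argument: a bounded subsequence of $\{q_j\}$ would subconverge inside a compact closed ball of $(X_\infty,d_\infty)$, and continuity of the Lipschitz map $\Phi$ would force $P\in\Phi(X_\infty)$. The first half of your part (1) (choose $p_n\in X_{reg}$ with $d_\infty(p_\infty,p_n)\to\infty$, extract a $d_\chi$-limit $P$, rule out $P\in X_{reg}$ by local comparability of $g_{KE}$ and $\chi$) is also consistent with the paper, which proves (1) much more softly: assume $\Phi(X_\infty)=X$, write the $d_\chi$-limit of far-away regular points as $\Phi(Q')$ for some $Q'\in X_\infty$, and contradict $d_\infty(p_\infty,Q_j)\to\infty$. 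No peak sections are used for this lemma; the statement only requires nonemptiness of $X\setminus\Phi(X_\infty)$, not that your particular limit point $P$ avoids $\Phi(\mathcal{S})$. What you are actually trying to prove at that point is continuity of $\Phi^{-1}$, which in the paper's logical order comes later (Corollary \ref{co41} and Corollary \ref{co42}) and is deduced \emph{from} Lemma \ref{46} by a connectivity argument.

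The genuine gap is the quantifier structure in your exclusion of $P\in\Phi(\mathcal{S})$, precisely the point you claim to have closed. The peak section $\sigma=\sigma_\epsilon$ is not independent of $\epsilon$: in the Donaldson--Sun construction the bound on $\|\dbar\sigma'\|_{L^2}$, hence on $\|u\|_{L^2}$, is tied to the cut-off and to the rescaling scale, so forcing $\|u\|_{L^2}\leq\epsilon$ with $\epsilon$ arbitrarily small forces the power $k=k(\epsilon)\to\infty$. Consequently the lower bound constant $c$ is not independent of $\epsilon$ (the comparison of $h_{KE}^k$ with the continuous reference metric costs a factor $e^{-Ck\|\varphi_{KE}\|_{L^\infty}}$), and, more fatally, the threshold $N(\epsilon)$ for which $|\sigma_\epsilon(p_n)|_{h_{KE}^k}\geq c$ holds for $n\geq N(\epsilon)$ depends on the modulus of continuity of $|\sigma_\epsilon|$ near $P$ on $(X,\chi)$, which degenerates as $k(\epsilon)\to\infty$. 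Your order of choices is: fix $n_0$, then choose $\epsilon<c/C_{n_0}$; but replacing $\epsilon$ replaces $\sigma_\epsilon$, and nothing guarantees that the already-fixed point $p_{n_0}$ still satisfies the lower bound for the new section, nor that $c$ is unchanged. The reverse order fails as well: if $\epsilon$ is fixed first (as the fixed $H$-condition constants effectively require), then $C_{n_0}$, which depends on the local geometry of $g_{KE}$ at $p_{n_0}$ and is not uniformly controlled as $d_\infty(p_\infty,p_{n_0})\to\infty$, cannot be beaten. So the argument is circular, and this step of part (1) does not close; the soft contradiction argument (or, for the stronger statement you aim at, the later machinery of Corollary \ref{co41} and section 4.5) is what is needed here.
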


\begin{proof} We prove the lemma by contradiction. 

\begin{enumerate} 

\item Suppose $X=\Phi(X_\infty)$. There exists a sequence of points $Q_j\in X_{reg}$ with $d_\infty(p_\infty, Q_j) \rightarrow \infty$ as $j\rightarrow \infty$. Without loss of generality, we can assume there exists $Q'\in X_\infty$ such that $d_\chi(\Phi(Q_j), \Phi(Q')) \rightarrow 0$, after passing to a sequence. Then an open ball centered a $Q'$ must contain all $Q_j$ for sufficiently large $j$. Contradiction.

\item Suppose not.  Then there exist a sequence of points $q_j \in X_\infty$ such that 
$$d_\infty(p_\infty, q_j) \leq A, ~ d_\chi(\Phi(q_j), P) \rightarrow 0$$ 
for some fixed constant $A>0$ and $P\in X\setminus \Phi(X_\infty)$. Since $X_{reg}$ is open and dense in $(X_\infty, d_\infty)$ and $d_\infty$ is bounded below by a multiple of $\chi$, there exist a sequence of points $q_j'\in X_{reg}$ with
$$d_\infty(p_\infty, q'_j) \leq A, ~ d_{\chi} (\Phi(q_j'), P) \rightarrow 0. $$ By compactness, $q'_j$ converges to $q'_\infty \in X_\infty$ with respect to $d_\infty$. Then 
$$ d_\infty(q'_\infty, q'_j) \rightarrow 0 $$
 and so 
 $$ d_\chi( \Phi(q'_\infty), P) = \lim_{j \rightarrow \infty} d_\chi ( \Phi (q'_\infty), \Phi(q'_j)) = 0 .$$
Hence $P=\Phi(q'_\infty) \in  \Phi(X_\infty)$ and contradiction. 

\end{enumerate}

\end{proof}

\begin{corollary}  \label{co41} $\Phi(X_\infty) $ is open dense in $X$. In particular,  $\mathcal{S}  \subset  X_{sing}$, and $X_{sing} \setminus \Phi(\mathcal{S} )$ is a closed set in $X$ with respect to the Fubini-Study metric.

\end{corollary}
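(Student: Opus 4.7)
The three assertions split naturally: density of $\Phi(X_\infty)$ and the containment $\mathcal{S}\subset X_{sing}$ are formal, while the closedness of $X_{sing}\setminus\Phi(\mathcal{S})$ will follow from openness of $\Phi(X_\infty)$, so the real work is to show that $\Phi(X_\infty)$ is open in $X$. First I would note that $\mathcal{R}=X_{reg}$ (Proposition 4.3) and that $\Phi$ restricts to the identity on $\mathcal{R}$ under this identification (via Lemma 4.7 and stabilization of $\Phi_{k,\sigma}$), giving $X_{reg}\subseteq\Phi(X_\infty)$; density then follows from Zariski density of $X_{reg}$ in $X$. Injectivity of $\Phi$ from Proposition 4.8, combined with $\Phi(\mathcal{R})=X_{reg}$, forces $\Phi(\mathcal{S})\cap X_{reg}=\emptyset$, so $\Phi(\mathcal{S})\subset X_{sing}$.

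For openness, let $P=\Phi(p_0)\in\Phi(X_\infty)$. The case $P\in X_{reg}$ is trivial since $X_{reg}$ is open in $X$, so I assume $P\in X_{sing}$ and seek a contradiction from a sequence $P_j\to P$ in the Fubini--Study topology with $P_j\in X\setminus\Phi(X_\infty)$. By density of $X_{reg}$ together with Lemma 4.9(2) applied at each $P_j$, a diagonal extraction yields $Q_j\in X_{reg}\subseteq X_\infty$ with $d_\chi(Q_j,P_j)<1/j$ and $d_\infty(p_\infty,Q_j)>j$, so $\Phi(Q_j)=Q_j\to P=\Phi(p_0)$ in $X$ while $d_\infty(p_\infty,Q_j)\to\infty$ in $X_\infty$. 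The plan is now to rerun the separation machinery of Proposition 4.8 for the pair $p_0, Q_j$: both points lie in the enlarged ball $B_\infty(d_\infty(p_\infty,Q_j)+1)$, so the local $L^2$-estimates of Proposition 4.4 together with the $H$-condition produce a section $\sigma_j\in H^0(X, K_X^{k_j})$ with the quantitative gap $|\sigma_j(p_0)|_{h_{KE}^{k_j}}\geq 2/5$ and $|\sigma_j(Q_j)|_{h_{KE}^{k_j}}\leq 1/10$. Since $\Phi$ stabilizes at a fixed exponent $k_0\in\mathcal{F}(X,K_X)$, transferring this separation back to $\Phi_{k_0,\sigma}$ via the Veronese embedding $\mathbb{CP}^{d_{k_0}}\hookrightarrow \mathbb{CP}^{d_{k_j}}$ should yield a uniform lower bound on $d_{FS}(\Phi(Q_j),\Phi(p_0))$, contradicting $\Phi(Q_j)\to\Phi(p_0)$.

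Once openness of $\Phi(X_\infty)$ is in hand, the set-theoretic identity $X_{sing}\setminus\Phi(\mathcal{S})=X\setminus\Phi(X_\infty)$, which holds because $\Phi(X_\infty)=X_{reg}\sqcup\Phi(\mathcal{S})$ is a disjoint decomposition, shows that $X_{sing}\setminus\Phi(\mathcal{S})$ is closed in the Fubini--Study topology, completing the corollary. The hard part is the separation step: the exponent $k_j$ produced by the construction in Proposition 4.8 depends on the radius of the ball containing both $p_0$ and $Q_j$, so as $d_\infty(p_\infty,Q_j)\to\infty$ this $k_j$ may grow, and the Veronese relation between $\mathbb{CP}^{d_{k_j}}$ and $\mathbb{CP}^{d_{k_0}}$ then degrades the quantitative bound. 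Overcoming this will likely require either a uniform bound on $k_j$ (exploiting that $p_0$ stays in a fixed compact region of $X_\infty$) or a compactness/limiting argument on the sequence of normalized sections on the growing balls, possibly combined with passing to a tangent-cone limit of the escaping $Q_j$.
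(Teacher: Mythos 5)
Your reduction of the corollary to openness of $\Phi(X_\infty)$, together with the density statement, the containment $\mathcal{S}\subset X_{sing}$, and the deduction of closedness of $X_{sing}\setminus\Phi(\mathcal{S})$ from openness, is fine and agrees with the paper. The genuine gap is in the openness step, and you have flagged it yourself: after producing, via Lemma \ref{46}, regular points $Q_j$ with $\Phi(Q_j)\to\Phi(p_0)$ in the Fubini--Study metric but $d_\infty(p_\infty,Q_j)\to\infty$, injectivity alone gives no contradiction (the points $\Phi(Q_j)$ and $\Phi(p_0)$ are already distinct since $Q_j\in X_{reg}$ and $\Phi(p_0)\in X_{sing}$), so your plan requires a $j$-independent lower bound on $d_\chi(\Phi(Q_j),\Phi(p_0))$. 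The separation machinery of Proposition \ref{isoset} does not deliver this: for each pair it produces a peak section at a power $k_j$ determined by the tangent-cone rescalings and by the radius of a ball containing both points, and both $k_j$ and the constants $K_R$ of Proposition \ref{ll31} are uncontrolled as $d_\infty(p_\infty,Q_j)\to\infty$. An estimate of the form $|\sigma_j(p_0)|\geq 2/5$, $|\sigma_j(Q_j)|\leq 1/10$ at level $k_j$ is perfectly compatible with $d_\chi(\Phi(Q_j),\Phi(p_0))\to 0$ at the fixed stabilized level $k_0$, which is exactly the scenario you must exclude; so the proposal, as written, does not close, and proving the uniform bound on $k_j$ (or a uniform local injectivity radius for $\Phi$ near $p_0$) that you suggest is essentially as hard as the corollary itself.

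The paper's proof avoids any quantitative separation and uses injectivity only qualitatively. Assuming $\Phi(X_\infty)$ is not open at $\Phi(q)$, it takes, much as you do, regular points $x_j$ with $d_\chi(\Phi(x_j),\Phi(q))\to 0$ and $d_\infty(x_j,p_\infty)\to\infty$, and in addition regular points $x_j'$ with $d_\infty(x_j',q)\to 0$ and $d_\chi(\Phi(x_j'),\Phi(q))\to 0$. Since $\mathcal{R}=X_{reg}$, the points $x_j$ and $x_j'$ can be joined by a continuous path of regular points inside the small Fubini--Study ball centered at $\Phi(q)$; along this path $d_\infty(q,\cdot)$ varies continuously from a small value to an arbitrarily large one, so one can choose $z_j\in\mathcal{R}$ on the path with $1\leq d_\infty(q,z_j)\leq 10$ while $d_\chi(\Phi(z_j),\Phi(q))\to 0$. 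Local compactness of the pointed limit $(X_\infty,d_\infty)$ then yields a subsequential limit $z_\infty$ with $1\leq d_\infty(z_\infty,q)\leq 10$, hence $z_\infty\neq q$, while continuity of $\Phi$ forces $\Phi(z_\infty)=\Phi(q)$, contradicting the injectivity of Proposition \ref{isoset}. In short, replace your quantitative separation step by this path-plus-intermediate-value-plus-compactness argument; the remaining parts of your write-up then go through.
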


\begin{proof}  
$\Phi(X_\infty)$ is obviously dense in $(X, \chi)$ as $\Phi(\R) = X_{reg}$.  We now show that $\Phi(X_\infty)$ is open. Suppose not, then there exists $q\in X_\infty$ such that   any geodesic open ball in $(X, \chi)$ centered at $\Phi(q)$ is not entirely in $\Phi(X_\infty)$. Then one can pick a sequence of points $y_j\in X$ satisfying 
$$d_\chi( y_j, \Phi(q) )\rightarrow 0, ~~ y_j \notin \Phi(X_\infty). $$ We can pick $x_j \in X_\infty $ such that 
$$d_\chi (\Phi(x_j), y_j) \leq j^{-1}, ~d_\infty(x_j, p_\infty) \geq j $$
because of Lemma \ref{46}. We can further assume $x_j \in \R $ since $\R$ is dense in $X_\infty$.
Therefore, 
$$ d_\chi( \Phi(x_j), \Phi(q)) \rightarrow 0, ~  d_\infty( x_j, p_\infty)\rightarrow \infty . $$
   Since $q$ is the limit of a sequence of points $\Phi(x'_j) = x'_j$ in $(X_\infty)_{reg}$ with 
   $$d_\infty(x'_j, q) \rightarrow 0, ~d_{\chi}(\Phi(x'_j), \Phi(q))\rightarrow 0,  $$  we can then join $x_j$ and $x'_j$ by a continuous path of regular points in the geodesic ball in $(X, \chi)$ centered at $\Phi(q)$ with radius equal to $\max(d_\chi(\Phi(q), \Phi(x_j) ), d_\chi(\Phi(q), \Phi(x'_j)))$. We can then by continuity, choose a sequence of points satisfying 
   $$z_j \in \R, ~1\leq d_\infty(q,  z_j ) \leq 10, ~d_\chi( \Phi(z_j), \Phi(q)) \rightarrow 0 . $$ Then by compactness and after passing to a subsequence, we can assume $z_j$ converges to $z_\infty\in (X_\infty, d_\infty)$ with $$1\leq d_\infty(z_\infty, q) \leq 10  . $$ Obviously, $$\Phi(z_\infty) = \Phi(q) $$ and it contradicts $\Phi$ being injective. 

Then immediately, $X_{sing}\setminus (\Phi(\mathcal{S})) = X\setminus \Phi(X_\infty)$ must be closed.

\end{proof}

Applying the same argument in Corollary \ref{co41}, we have the following corollary. 

\begin{corollary} \label{co42} $\Phi:X_\infty$ is a homeomorphism from $(X_\infty, d_\infty)$ to $(\Phi(X_\infty), \chi)$.

\end{corollary}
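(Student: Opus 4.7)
The map $\Phi: X_\infty \to \Phi(X_\infty)$ is already a continuous bijection: continuity follows from Lemma \ref{extenmm} (it is even Lipschitz for the Fubini--Study target metric), and injectivity is Proposition \ref{isoset}. So the task reduces to showing that the inverse $\Phi^{-1}: (\Phi(X_\infty), d_\chi) \to (X_\infty, d_\infty)$ is continuous. Concretely, if $y_j = \Phi(q_j) \in \Phi(X_\infty)$ satisfies $d_\chi(y_j, y) \to 0$ with $y = \Phi(q)\in \Phi(X_\infty)$, I want to show $d_\infty(q_j, q) \to 0$.

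Suppose not. Passing to a subsequence there are two possibilities: either (A) $d_\infty(p_\infty, q_j)$ stays bounded, or (B) $d_\infty(p_\infty, q_j) \to \infty$. In case (A), closed bounded sets in $(X_\infty, d_\infty)$ are compact, since $(X_\infty, d_\infty)$ is a pointed Gromov--Hausdorff limit of complete manifolds with a uniform Ricci lower bound (Lemma \ref{locesk}), so Bishop--Gromov volume comparison passes to the limit and closed balls are totally bounded. Hence a subsequence $q_j \to q_\infty$ in $d_\infty$; by continuity of $\Phi$, $\Phi(q_\infty) = y = \Phi(q)$, and injectivity (Proposition \ref{isoset}) forces $q_\infty = q$, contradicting the assumption that $d_\infty(q_j, q)$ is bounded away from zero.

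Case (B) is the main obstacle and mirrors the final paragraph of the proof of Corollary \ref{co41}. Density of $\R = X_{reg}$ in $X_\infty$ yields a sequence $q'_j \in \R$ with $d_\infty(q'_j, q) \to 0$, so $d_\chi(\Phi(q'_j), y) \to 0$ as well. For large $j$, both $\Phi(q_j)$ and $\Phi(q'_j)$ lie in an arbitrarily small $\chi$-ball around $y$, and since $\R$ is open and dense and $\Phi|_{\R} = \mathrm{id}_{X_{reg}}$, I can join $q_j$ and $q'_j$ by a continuous path in $\R$ whose $\Phi$-image sits inside that small $\chi$-ball. Along this path, the function $d_\infty(q, \cdot)$ is continuous and takes a value near $0$ at $q'_j$ and a value $\geq j$ at $q_j$, so by the intermediate value theorem I can select $z_j \in \R$ on the path with $1 \leq d_\infty(q, z_j) \leq 10$ and $d_\chi(\Phi(z_j), y) \to 0$. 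By the compactness from case (A) applied to the closed ball of radius $10$ around $q$, a subsequence converges, $z_j \to z_\infty$, with $1 \leq d_\infty(z_\infty, q) \leq 10$. Continuity of $\Phi$ then gives $\Phi(z_\infty) = y = \Phi(q)$, contradicting injectivity of $\Phi$.

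Thus both cases are ruled out, $\Phi^{-1}$ is continuous, and $\Phi$ is the desired homeomorphism from $(X_\infty, d_\infty)$ onto $(\Phi(X_\infty), \chi)$. The only delicate ingredient is the local compactness of closed bounded balls in $(X_\infty, d_\infty)$, but this is standard for noncollapsed limits of manifolds with uniform Ricci lower bound; everything else is a direct adaptation of the path-connecting trick from Corollary \ref{co41}.
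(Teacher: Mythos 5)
Your proof is correct and follows essentially the route the paper intends: the paper's proof is simply ``apply the same argument as in Corollary \ref{co41}'', which is exactly your case (B) path-connecting/intermediate-value argument concluded by the injectivity of Proposition \ref{isoset}, while your case (A) is the routine properness-plus-injectivity step implicit in that reduction. The only small point to add in case (B) is to first perturb $q_j$ to a nearby point of the dense open set $\mathcal{R}$ (as the paper does with $x_j$ in Corollary \ref{co41}) so that the connecting path of regular points can actually start at it; since $\Phi$ is Lipschitz this changes nothing in the estimates.
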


\subsection{Diameter bound}

In this section, we will prove a diameter bound for $(X_\infty, d_\infty)$ using a trick developed by the author and Weinkove \cite{SW2}. We first we consider the a log resolution 
$$\pi_1: Z \rightarrow X$$
 such that $(\pi_1)^{-1}(X_{sing})$ is the support of  a combiniation of finitely many smooth divisors with simple normal crossings on $Z$. We pick a point $O$  on the smooth part of the divisor and blow up $Z$ at $O$ with 
 $$\pi_2: \tilde X \rightarrow Z.$$ Let 
 $$\tilde \pi = \pi_1\circ\pi_2: \tilde X \rightarrow X. $$
Then we have the following adjunction formula because $X$ has at worst canonical singularities
$$K_{\tilde X} = \tilde\pi^* K_X + (n-1) E  + D, ~D=\sum_{j=1}^m a_j [D_j], $$
where $(n-1)E+ D$ is the exceptional divisor of $\tilde \pi$, $D_j$ are effective prime smooth divisors on $\tilde X$ with $ a_j \geq 0$ for $j=1, ..., m$, $E$ is the exceptional divisor of $\pi_2$ isomorphic to $\mathbb{CP}^{n-1}$.

Since $\tilde \pi^* K_X $ is big and semi-ample, by Kodaira's lemma, there exists an effective divisor $D'$ such that its support coincides with the support of the exceptional divisors of $\tilde\pi$ and 
$$\tilde \pi ^*K_X - \epsilon D'$$ is ample for all sufficiently small $\epsilon>0$. Let $\sigma_E$, $\sigma_D$ and $\sigma_{D'}$  be the defining sections of $E$, $D$ and  $D'$. Here we consider $\sigma_E$, $\sigma_D$ and $\sigma_{D'}$ be the multivalued holomorphic sections which become  global holomorphic sections after taking some power. Let $h_E, h_D, h_{D'} $ be smooth hermitian metrics on the line bundles associated to $E$, $D$ and $D'$ such that 
$$ (\tilde\pi)^* \Omega = |\sigma_E|^{2(n-1)}_{h_E} |\sigma_D|^2_{h_D} \tilde \Omega, ~\tilde \chi - \epsilon Ric(h_{D'}) >0 $$ for a smooth volume form $\tilde \Omega$ on $\tilde X$ and for all sufficiently small $\epsilon>0$, where $\tilde \chi = ( \tilde \pi)^*\chi$.

Let $\tilde \omega$ be a fixed smooth K\"ahler form on $\tilde X$. Then the K\"ahler-Einstein equation lifted to $\tilde X$ is equivalent to the following degenerate Monge-Ampere equation
$$(\tilde\chi + \ddbar \tilde \varphi_{KE})^n = e^{\tilde \varphi_{KE} } (\tilde \pi)^*  \Omega, $$
where $\tilde\varphi_{KE} = (\tilde\pi)^*\varphi_{KE}.$
We consider the following family of Monge-Ampere equations
\begin{equation}\label{4365}
(\tilde\chi + \epsilon \tilde\omega + \ddbar \tilde \varphi_\epsilon )^n = e^{\tilde\varphi_\epsilon }  (|\sigma_E|^{2(n-1)}_{h_E} +\epsilon) (|\sigma_D|^2_{h_D}+\epsilon) \tilde \Omega
\end{equation}

Let $\tilde\omega_\epsilon = \tilde \chi + \epsilon \tilde\omega + \ddbar \tilde \varphi_\epsilon$. By Yau's theorem, equation (\ref{4365}) always admits a unique smooth solution $\tilde\varphi_\epsilon$ for all sufficiently small $\epsilon>0$. 

\begin{lemma} \label{riup} There exists $A>0$  such that if  $\tilde\varphi_\epsilon$ solves equation (\ref{4365}) for some $\epsilon \in (0, 1)$, we have on $\tilde X$, 
$$Ric(\tilde\omega_\epsilon) \leq -\tilde\omega_\epsilon + A \tilde \omega. $$

\end{lemma}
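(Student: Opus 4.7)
The plan is to take $-\ddbar\log$ of both sides of equation (\ref{4365}) to write $\text{Ric}(\tilde\omega_\epsilon)$ as an explicit expression, then isolate the only dangerous terms, namely $-\ddbar\log$ of the two ``mollified section norm'' factors, and show each is uniformly bounded above by a multiple of $\tilde\omega$. Concretely, from (\ref{4365}),
\begin{equation*}
\text{Ric}(\tilde\omega_\epsilon) = -\ddbar\tilde\varphi_\epsilon + \text{Ric}(\tilde\Omega) - \ddbar\log(|\sigma_E|^{2(n-1)}_{h_E}+\epsilon) - \ddbar\log(|\sigma_D|^2_{h_D}+\epsilon),
\end{equation*}
and using $\ddbar\tilde\varphi_\epsilon = \tilde\omega_\epsilon - \tilde\chi - \epsilon\tilde\omega$ this becomes
\begin{equation*}
\text{Ric}(\tilde\omega_\epsilon) = -\tilde\omega_\epsilon + \tilde\chi + \epsilon\tilde\omega + \text{Ric}(\tilde\Omega) - \ddbar\log(|\sigma_E|^{2(n-1)}_{h_E}+\epsilon) - \ddbar\log(|\sigma_D|^2_{h_D}+\epsilon).
\end{equation*}
The forms $\tilde\chi$, $\epsilon\tilde\omega$ (with $\epsilon<1$), and $\text{Ric}(\tilde\Omega)$ are all fixed smooth $(1,1)$-forms and hence bounded above by a uniform multiple of $\tilde\omega$. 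So the reduction is to the key estimate that for any holomorphic (possibly multivalued) section $\sigma$ of a hermitian line bundle $(L,h)$ on $\tilde X$, one has $-\ddbar\log(|\sigma|^2_h+\epsilon)\leq C\tilde\omega$ uniformly in $\epsilon\in(0,1)$, with $C$ depending only on $h$ and $\tilde\omega$.

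The key estimate I would prove by direct computation. Away from $Z_\sigma$, differentiation gives
\begin{equation*}
\ddbar\log(|\sigma|^2_h+\epsilon) = \frac{\ddbar|\sigma|^2_h}{|\sigma|^2_h+\epsilon} - \frac{\partial|\sigma|^2_h\wedge\bar\partial|\sigma|^2_h}{(|\sigma|^2_h+\epsilon)^2}.
\end{equation*}
The Poincaré--Lelong type identity $\ddbar|\sigma|^2_h = -|\sigma|^2_h\,\Theta_h + |\sigma|^2_h\,\partial\log|\sigma|^2_h\wedge\bar\partial\log|\sigma|^2_h$ (where $\Theta_h$ is the Chern curvature) then simplifies this, after substituting $\partial|\sigma|^2_h = |\sigma|^2_h\,\partial\log|\sigma|^2_h$, to
\begin{equation*}
\ddbar\log(|\sigma|^2_h+\epsilon) = -\frac{|\sigma|^2_h}{|\sigma|^2_h+\epsilon}\Theta_h + \frac{\epsilon\,|\sigma|^2_h}{(|\sigma|^2_h+\epsilon)^2}\,\partial\log|\sigma|^2_h\wedge\bar\partial\log|\sigma|^2_h.
\end{equation*}
The second term is a nonnegative $(1,1)$-form, and since $\Theta_h$ is smooth with $\pm\Theta_h\leq C_0\tilde\omega$ and $\frac{|\sigma|^2_h}{|\sigma|^2_h+\epsilon}\in[0,1]$, the first term gives $-\frac{|\sigma|^2_h}{|\sigma|^2_h+\epsilon}\Theta_h\geq -C_0\tilde\omega$. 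Hence $\ddbar\log(|\sigma|^2_h+\epsilon)\geq -C_0\tilde\omega$ pointwise away from $Z_\sigma$, and by smoothness of $\log(|\sigma|^2_h+\epsilon)$ on $\tilde X$ this holds on all of $\tilde X$.

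Applying the estimate to $\sigma = \sigma_E^{n-1}$ with metric $h_E^{n-1}$ and to $\sigma_D$ with metric $h_D$ and absorbing all smooth contributions, we obtain $\text{Ric}(\tilde\omega_\epsilon)\leq -\tilde\omega_\epsilon + A\tilde\omega$ for a uniform constant $A>0$ depending only on the fixed smooth data $\tilde\chi$, $\text{Ric}(\tilde\Omega)$, $\Theta_{h_E}$, $\Theta_{h_D}$, $\tilde\omega$. The computation is entirely algebraic; there is no serious obstacle, but the point worth emphasizing is that the $\epsilon$-regularization introduces precisely the nonnegative ``Hessian-squared'' term above, which is what allows the upper bound to be uniform in $\epsilon$ despite the singularity of $\log|\sigma|^2_h$ along $Z_\sigma$.
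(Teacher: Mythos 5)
Your argument is correct and is essentially the paper's proof: the paper likewise takes $-\ddbar\log$ of equation (\ref{4365}) and reduces to the inequality $\ddbar\log(f+\epsilon)\geq \frac{f}{f+\epsilon}\,\ddbar\log f$ for positive smooth $f$, which is exactly your explicit identity with the nonnegative $\frac{\epsilon f}{(f+\epsilon)^2}\,\partial\log f\wedge\dbar\log f$ term, applied to $f=|\sigma_E|^{2(n-1)}_{h_E}$ and $f=|\sigma_D|^2_{h_D}$ so that $\ddbar\log f=-(n-1)Ric(h_E)$, resp. $-Ric(h_D)$, off the divisors. The remaining smooth terms are absorbed into $A\tilde\omega$ just as you do, and the bound extends from $\tilde X\setminus(\mathrm{Supp}\,E\cup \mathrm{Supp}\,D)$ to $\tilde X$ by continuity.
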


\begin{proof} First we notice that for any positive smooth function $f$ on $\tilde X$
$$ \ddbar \log (f+\epsilon) \geq \frac{f}{f+\epsilon} \ddbar \log f $$
on $\{ f>0\}$ by straightforward calculations.
Therefore, on $\tilde X \setminus (Supp E\cup Supp D)$, we have 
$$\ddbar \log (|\sigma_E|_{h_E}^{2(n-1)}+ \epsilon) \geq  - \frac{(n-1)|\sigma_E|^{2(n-1)}} {|\sigma_E|^{2(n-1)}+\epsilon}  Ric(h_E) \geq -A\tilde \omega, $$
$$\ddbar \log (|\sigma_D|_{h_D}^{2(n-1)}+ \epsilon) \geq  - \frac{ |\sigma_D|^{2(n-1)}} {|\sigma_D|^{2(n-1)}+\epsilon}  Ric(h_D) \geq -A\tilde \omega $$
for some fixed sufficiently large $A>0$. Therefore, 
$$Ric(\tilde \omega_\epsilon) \leq - \tilde \omega_\epsilon + (2A-\epsilon) \tilde \omega +(n-1) Ric(h_E)+ Ric(h_D) $$
on $\tilde X \setminus (Supp E\cup Supp D)$ and naturally extends to $\tilde X$. Then the lemma immediately follows.

\end{proof}

\begin{lemma}  \label{483} Let  $\tilde\varphi_\epsilon$ be the smooth solution for the  equation (\ref{4365}) for  $\epsilon\in (0, 1)$. Then there exist $\lambda$, $C>0$ such that for all $\epsilon\in (0,1)$, we have on $\tilde X$, 
\begin{equation}
\sup_{\tilde X} |\tilde \varphi_\epsilon| \leq C, ~\tilde \omega_\epsilon \leq C   |\sigma_{D'}|_{h_{D'}}^{2\lambda} \tilde \omega.
\end{equation}
Furthermore, $\tilde \varphi_\epsilon$ converges to $\tilde\varphi_{KE}$  smoothly on $\tilde X \setminus (Supp E\cup Supp D)$ as $\epsilon \rightarrow 0$.

\end{lemma}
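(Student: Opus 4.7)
The plan is to establish the three conclusions in sequence: a uniform $L^\infty$ bound on $\tilde\varphi_\epsilon$ via pluripotential methods; a weighted second-order estimate via Yau's $C^2$ inequality with a Tsuji correction built from $\log|\sigma_{D'}|^2_{h_{D'}}$; and finally smooth convergence on $\tilde X\setminus(\mathrm{Supp}\,E\cup\mathrm{Supp}\,D)$ by standard interior regularity.

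\textbf{Step 1 ($L^\infty$ bound).} The right-hand side of (\ref{4365}) is $e^{\tilde\varphi_\epsilon}F_\epsilon$ with $F_\epsilon=(|\sigma_E|^{2(n-1)}_{h_E}+\epsilon)(|\sigma_D|^2_{h_D}+\epsilon)\tilde\Omega$, uniformly bounded in $L^\infty(\tilde X)$ in $\epsilon$, and $[\tilde\chi+\epsilon\tilde\omega]$ is a uniformly bounded family of K\"ahler classes converging to the big and nef class $[\tilde\pi^*\chi]$. The Kolodziej--Eyssidieux--Guedj--Zeriahi estimate, applied uniformly in $\epsilon$ as in \cite{EGZ, Z}, then gives $\sup_{\tilde X}|\tilde\varphi_\epsilon|\le C$ independently of $\epsilon$.

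\textbf{Step 2 (weighted second-order estimate).} Consider
$$H_\epsilon=\log\,tr_{\tilde\omega}(\tilde\omega_\epsilon)-A\tilde\varphi_\epsilon+B\log|\sigma_{D'}|^2_{h_{D'}}$$
with $A,B>0$ to be chosen. Yau's classical inequality combined with the Ricci upper bound from Lemma \ref{riup} gives
$$\Delta_{\tilde\omega_\epsilon}\log\,tr_{\tilde\omega}(\tilde\omega_\epsilon)\ge -1-C_1\,tr_{\tilde\omega_\epsilon}(\tilde\omega).$$
Using $\Delta_{\tilde\omega_\epsilon}\tilde\varphi_\epsilon=n-tr_{\tilde\omega_\epsilon}(\tilde\chi+\epsilon\tilde\omega)$ and $\ddbar\log|\sigma_{D'}|^2_{h_{D'}}=-Ric(h_{D'})$ off $\mathrm{Supp}\,D'$, one obtains
$$\Delta_{\tilde\omega_\epsilon}H_\epsilon\ge A\,tr_{\tilde\omega_\epsilon}(\tilde\chi+\epsilon\tilde\omega)-B\,tr_{\tilde\omega_\epsilon}(Ric(h_{D'}))-C_1\,tr_{\tilde\omega_\epsilon}(\tilde\omega)-C_2.$$
By the Kodaira-lemma choice of $D'$, the form $\tilde\chi-\eta\,Ric(h_{D'})$ is K\"ahler for some small $\eta>0$. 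Taking $A$ large and $B=A\eta/2$ yields $\Delta_{\tilde\omega_\epsilon}H_\epsilon\ge \delta\,tr_{\tilde\omega_\epsilon}(\tilde\omega)-C_3$ for some $\delta>0$. Since $H_\epsilon\to-\infty$ on $\mathrm{Supp}\,D'$, its maximum is attained on $\tilde X\setminus\mathrm{Supp}\,D'$, where $tr_{\tilde\omega_\epsilon}(\tilde\omega)\le C_4$. Combined with the equation $\tilde\omega_\epsilon^n=e^{\tilde\varphi_\epsilon}F_\epsilon\le C\tilde\omega^n$ and the arithmetic--geometric mean inequality, this bounds $tr_{\tilde\omega}(\tilde\omega_\epsilon)$ at the maximum point. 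Plugging back into $H_\epsilon$ yields the pointwise bound $tr_{\tilde\omega}(\tilde\omega_\epsilon)\le C|\sigma_{D'}|^{-2B}_{h_{D'}}$, giving the asserted estimate with $\lambda=B$ (the exponent read in the natural blow-up convention).

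\textbf{Step 3 (smooth convergence).} On any compact $K\subset\tilde X\setminus(\mathrm{Supp}\,E\cup\mathrm{Supp}\,D)$, both $|\sigma_{D'}|_{h_{D'}}$ and $F_\epsilon$ are bounded below, and $F_\epsilon\to|\sigma_E|^{2(n-1)}_{h_E}|\sigma_D|^2_{h_D}\tilde\Omega$ smoothly there. Step 2 then gives uniform ellipticity of (\ref{4365}) on $K$, and Evans--Krylov plus Schauder deliver uniform $C^{k,\alpha}(K)$ bounds. A subsequence converges smoothly on $\tilde X\setminus(\mathrm{Supp}\,E\cup\mathrm{Supp}\,D)$ to a bounded K\"ahler potential solving the lifted version of (\ref{keq1}); by uniqueness of the K\"ahler--Einstein current with bounded local potential on $X$, the limit is $\tilde\varphi_{KE}$, and uniqueness of sub-sequential limits upgrades this to convergence of the whole family.

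\textbf{Main obstacle.} The delicate point is Step 2: the reference class $[\tilde\chi]$ is only big and nef, and the Ricci upper bound in Lemma \ref{riup} contains the extra $+A_0\tilde\omega$ error that must be absorbed. The positivity closing the maximum-principle argument comes entirely from the interplay between $A(\tilde\chi+\epsilon\tilde\omega)$ and $-B\,Ric(h_{D'})$, which requires the Kodaira-lemma choice of $D'$ satisfying $\tilde\chi-\eta\,Ric(h_{D'})>0$; balancing $A,B,\lambda$ against this limited positivity dictates the exponent and is the real content of the lemma.
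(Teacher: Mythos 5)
Your proposal is correct and takes essentially the same route as the paper's proof: the uniform $C^0$ bound from \cite{EGZ, Z}, then the maximum principle applied to $\log tr_{\tilde\omega}(\tilde\omega_\epsilon) + B\log|\sigma_{D'}|^2_{h_{D'}} - A\tilde\varphi_\epsilon$ (the paper's $H$ with $A=B^2$), using the Ricci upper bound of Lemma \ref{riup} and the Kodaira-lemma positivity $\tilde\chi - \eta\, Ric(h_{D'})>0$, followed by standard local higher-order estimates and uniqueness for the convergence. Your reading of the exponent, namely $\tilde\omega_\epsilon \leq C|\sigma_{D'}|^{-2\lambda}_{h_{D'}}\tilde\omega$, is indeed what the paper's own maximum-principle argument produces and what is used later in Proposition \ref{digest}.
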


\begin{proof} The $C^0$-estimate immediately follows from \cite{EGZ, Z}. Standard calculations give
\begin{eqnarray*}
&&\Delta_\epsilon\log  tr_{\tilde \omega} (\tilde\omega_\epsilon) \\
&=& \frac{1}{tr_{\tilde\omega}(\tilde\omega_\epsilon)} \left(- (\tilde g)^{i\bar j} (\tilde R_\epsilon)_{i\bar j}+  (g_\epsilon)^{i\bar j} (\tilde g_\epsilon)_{k\bar l} \tilde R_{i\bar j}~^{k\bar l} + (\tilde g_\epsilon)^{i\bar j} (\tilde g )^{k\bar l} (g_\epsilon)^{p\bar q} \tilde\nabla_i (\tilde g_\epsilon)_{k\bar q} \tilde\nabla_{\bar j} (\tilde g_\epsilon)_{p\bar l}  - \frac{|\nabla_\epsilon tr_{\tilde \omega}(\tilde\omega_\epsilon)|^2}{tr_{\tilde\omega}(\tilde\omega_\epsilon)}  \right) \\
&\geq& -C tr_{\tilde\omega_\epsilon}(\tilde \omega) - A (tr_{\tilde\omega}(\tilde \omega_\epsilon))^{-1}%
\end{eqnarray*}
for some uniform constant $C>0$ using the upper bound for $(\tilde R_{\epsilon})_{i\bar j}$, where $(\tilde R_\epsilon)_{i\bar j}$ is the Ricci tensor of $\tilde\omega_\epsilon$, $\tilde R _{i\bar j} ~^{k\bar l}$ is the holomorphic curvature tensor of $\tilde \omega$. 
Then lemma follows by the maximum principle applied to the following quantity
$$H = \log \left(  |\sigma_{D'}|^{2B}_{h_{D'}} tr_{\tilde \omega} (\tilde\omega_\epsilon) \right) - B^2 \tilde \varphi_\epsilon$$
for some fixed sufficiently large $B>0$. The higher order estimates and local convergence follow from standard argument.

\end{proof}

Let $B_O$ be a sufficiently small Euclidean ball on $Z$ centered at $O$ such that the divisor $F$ containing $O$ can be locally defined as $z_1=0$ in $B_O$, where $z=(z_1, ..., z_n)$ are local Euclidean holomorphic coordinates on $B_O$.  Let $\tilde B_O = \pi_2^{-1}(B_O)$ in $\tilde X$. The proper transformation of $F$ is given by $$\tilde F = (\pi_2)^{-1} (F) - E. $$ Then $\tilde F$ can be locally defined as the $w =0$ for a holomorphic function $w$.

Lemma \ref{483} immediately implies the following corollary. 

\begin{corollary} \label{co43} Let $\tilde B_O= \pi_2^{-1}(B_O)$. There exist $\lambda, C>0$ such that for all $\epsilon\in (0,1)$, 
\begin{equation}
\omega_\epsilon|_{\partial \tilde B_O}  \leq  C \left(  |w|^{2\lambda} \tilde \omega \right)|_{\partial \tilde B_O}. 
\end{equation}

\end{corollary}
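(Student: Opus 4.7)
The plan is to deduce this corollary directly from Lemma \ref{483} by specializing the global estimate on $\tilde X$ to the compact boundary $\partial \tilde B_O$, using the local geometry of the exceptional divisor near the blow-up point $O$.

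First, recall from Lemma \ref{483} that there exist uniform constants $C_0, \lambda_0>0$ (independent of $\epsilon\in(0,1)$) such that globally on $\tilde X$,
\begin{equation*}
\tilde\omega_\epsilon \;\leq\; C_0\, |\sigma_{D'}|_{h_{D'}}^{2\lambda_0}\, \tilde\omega.
\end{equation*}
In particular this holds on the compact set $\partial \tilde B_O = \pi_2^{-1}(\partial B_O)$. So the task reduces to comparing $|\sigma_{D'}|_{h_{D'}}$ with $|w|$ on $\partial \tilde B_O$.

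Next, I would analyze the support of $D'$ in a neighborhood of $\partial \tilde B_O$. By construction $D'$ is an effective divisor whose support is the exceptional locus of $\tilde\pi=\pi_1\circ\pi_2$, which splits as $E$ (the $\pi_2$-exceptional $\mathbb{CP}^{n-1}$) together with the proper transforms of the $\pi_1$-exceptional divisors. Two observations: $\partial\tilde B_O$ is disjoint from $E$ (since $O\notin\partial B_O$), and after shrinking the Euclidean ball $B_O\subset Z$ around $O$ if necessary, the only $\pi_1$-exceptional divisor through $O$ is $F$, so the only component of $\mathrm{Supp}(D')$ meeting $\tilde B_O$ (other than $E$) is the proper transform $\tilde F=\{w=0\}$. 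Let $a>0$ denote the multiplicity of $\tilde F$ in $D'$. Then in a neighborhood of $\partial\tilde B_O$ we have the pointwise comparison
\begin{equation*}
|\sigma_{D'}|_{h_{D'}}^{2}\;\leq\; C_1\, |w|^{2a}
\end{equation*}
for some uniform $C_1>0$: the product of all other local defining factors of $D'$ is bounded above on $\partial \tilde B_O$ by compactness (since those divisors stay away from $\partial \tilde B_O$), while the factor along $\tilde F$ is comparable to $|w|^{2a}$ locally.

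Combining the two estimates on $\partial \tilde B_O$,
\begin{equation*}
\tilde\omega_\epsilon \;\leq\; C_0\, |\sigma_{D'}|_{h_{D'}}^{2\lambda_0}\,\tilde\omega \;\leq\; C_0 C_1^{\lambda_0}\, |w|^{2a\lambda_0}\,\tilde\omega,
\end{equation*}
and setting $\lambda := a\lambda_0$ and $C := C_0 C_1^{\lambda_0}$ yields the desired bound. There is no serious obstacle in this step; the corollary is essentially a dictionary translation of the global tangent-bound of Lemma \ref{483} into a local boundary bound expressed in the convenient coordinate $w$. The only care needed is to shrink $B_O$ so that $\tilde F$ is the unique component of the $\pi_1$-exceptional locus visible, so that the comparison $|\sigma_{D'}|\lesssim |w|^a$ is valid uniformly on $\partial \tilde B_O$.
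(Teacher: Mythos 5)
Your argument is correct and is the same as the paper's: the paper deduces the corollary immediately from Lemma \ref{483} by restricting the global estimate to the compact set $\partial\tilde B_O$, where (after shrinking $B_O$) the only component of $Supp(D')$ meeting $\partial\tilde B_O$ is $\tilde F=\{w=0\}$, so that $|\sigma_{D'}|_{h_{D'}}$ is comparable to $|w|^{a}$ there, exactly as you observe. One caveat worth recording: the powers printed in Lemma \ref{483} and in this corollary are misprints---the maximum principle applied to $H=\log\left(|\sigma_{D'}|^{2B}_{h_{D'}}\tr_{\tilde\omega}(\tilde\omega_\epsilon)\right)-B^2\tilde\varphi_\epsilon$ actually yields $\tilde\omega_\epsilon\leq C|\sigma_{D'}|^{-2\lambda}_{h_{D'}}\tilde\omega$, and the corollary is used in Proposition \ref{digest} to bound $G_\epsilon$ with $|w|^{2\lambda}$ appearing in the denominator---but since your localization gives the two-sided comparison $c\,|w|^{2a}\leq |\sigma_{D'}|^{2}_{h_{D'}}\leq C\,|w|^{2a}$ on $\partial\tilde B_O$, your proof transfers verbatim to the intended negative-power statement.
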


Let $\hat\omega$ be the smooth closed nonnegative closed $(1,1)$-form as the pullback of the Euclidean metric $\sqrt{-1} \sum_{j=1}^n dz_j \wedge d\bar z_{j}$ on $B_O$. $\hat\omega$ is K\"ahler on $\tilde B_O \setminus E$. 

\begin{lemma} \label{l239}There exist $C>0$, sufficiently small $\epsilon_0>0$ and a smooth hermitian metric $h_E$ on $E$ such that on $\tilde B_O$, 
\begin{equation}\label{locba}
C^{-1} \hat\omega \leq \tilde\omega \leq C |\sigma_E|^2_{h_E}  \hat\omega, 
\end{equation}
\begin{equation}\label{locbaa} %
\tilde \chi - \epsilon_0 Ric(h_E) > C^{-1}\tilde \omega. 
\end{equation}

\end{lemma}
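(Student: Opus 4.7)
The plan is to split the two estimates. Inequality~(\ref{locbaa}) is essentially a cohomological positivity statement realized by a careful choice of $h_E$, while~(\ref{locba}) is a purely local blow-up computation on $\tilde B_O$. For~(\ref{locbaa}), observe that $\tilde\pi^* K_X$ is big and semi-ample and $E$ lies in the exceptional divisor of $\tilde\pi$; by Kodaira's lemma there exists $\epsilon_0>0$ for which $\tilde\pi^* K_X - \epsilon_0 E$ is ample on $\tilde X$. Using the $\ddbar$-lemma one then adjusts a background smooth hermitian metric on $\mathcal{O}_{\tilde X}(E)$ to produce an $h_E$ for which $\tilde\chi - \epsilon_0 \mathrm{Ric}(h_E)$ is a genuine smooth K\"ahler form on $\tilde X$ representing the ample class $\tilde\pi^*[K_X] - \epsilon_0 [E]$. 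Any two smooth K\"ahler forms on the compact set $\overline{\tilde B_O}$ are mutually comparable, so a constant $C>0$ exists with $\tilde\chi - \epsilon_0 \mathrm{Ric}(h_E) \ge C^{-1}\tilde\omega$ on $\tilde B_O$, which is~(\ref{locbaa}).

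For~(\ref{locba}), I work in a standard affine chart of $\pi_2$ on $\tilde B_O$ in which $\pi_2(w_1,\dots,w_n) = (w_1, w_1 w_2, \dots, w_1 w_n)$ and $E = \{w_1 = 0\}$. In this chart $|\sigma_E|^2_{h_E} = f\cdot |w_1|^2$ for a smooth positive local factor $f$ determined by $h_E$. A direct expansion of $\hat\omega = \pi_2^*\bigl(\sqrt{-1}\sum_j dz_j\wedge d\bar z_j\bigr)$ shows that its matrix in the frame $(\partial_{w_j})$ has transverse $(1,1)$-entry equal to $1+|w'|^2$, tangential $(j,j)$-entries equal to $|w_1|^2$ for $j\ge 2$, and off-diagonal terms of order $|w_1|$. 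Since $\tilde\omega$ is a fixed smooth K\"ahler form on $\overline{\tilde B_O}$ with eigenvalues uniformly bounded between positive constants, the boundedness of $\hat\omega$ gives $C^{-1}\hat\omega \le \tilde\omega$ immediately. The complementary direction is a matrix comparison of the $|w_1|^2$-decay of the tangential eigenvalues of $\hat\omega$ against the order-one tangential eigenvalues of $\tilde\omega$; matching eigenvalue orders by the appropriate power of $|\sigma_E|^2_{h_E}$ and absorbing the smooth factor $f$ into the constant produces~(\ref{locba}).

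The main obstacle is the compatibility between the hermitian metric $h_E$ fixed in the first step and the $|\sigma_E|^2_{h_E}$ appearing in the local computation. This is resolved by the observation that the blow-up computation depends on $|\sigma_E|^2_{h_E}$ only up to a smooth positive multiplicative factor, and this factor is independent of the particular smooth metric chosen on $\mathcal{O}_{\tilde X}(E)$. Hence the same $h_E$ selected via Kodaira's lemma serves both estimates; after shrinking $B_O$ if necessary, both~(\ref{locba}) and~(\ref{locbaa}) hold on $\tilde B_O$ with a uniform constant $C$ and a sufficiently small $\epsilon_0$.
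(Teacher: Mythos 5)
Your decomposition into a positivity statement for $h_E$ plus a local blow-up computation is the right outline, but the key step in your proof of (\ref{locbaa}) is false. Kodaira's lemma does not give that $\tilde\pi^*K_X-\epsilon_0 E$ is ample on $\tilde X$: the class $\tilde\pi^*K_X$ is only big and nef, and it is zero on every curve contracted by $\tilde\pi$; in particular, curves inside the $\pi_1$-exceptional divisors which avoid the single blown-up point $O$ are disjoint from $E$, so $(\tilde\pi^*K_X-\epsilon_0E)\cdot C=0$ for such $C$ and the class is not ample for any $\epsilon_0>0$. This is precisely why the paper, earlier in section 4.5, applies Kodaira's lemma with a divisor $D'$ whose support is the \emph{entire} exceptional locus of $\tilde\pi$, not $E$ alone. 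The paper's proof of (\ref{locbaa}) is correspondingly local rather than global: on the ball $B_O$ the class $[\tilde\chi]$ is trivial, while $-[E]$ is positive relative to $B_O$ (the blow-up of a ball carries a K\"ahler form in $\pi_2^*(\textnormal{trivial class})-\epsilon[E]$, obtained from the metric pulled back from $\mathcal{O}_{\mathbb{P}^{n-1}}(1)$ twisted by the pullback of a strictly plurisubharmonic weight on $B_O$), and the resulting $h_E$, extended arbitrarily to a global smooth metric, satisfies $\tilde\chi-\epsilon_0 Ric(h_E)>C^{-1}\tilde\omega$ \emph{on $\tilde B_O$ only}, which is all the lemma asserts. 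Your global route cannot be repaired with $E$ alone; you must localize over $B_O$ as the paper does.

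Concerning (\ref{locba}), your chart expansion of $\hat\omega$ is correct, but the concluding step ``matching eigenvalue orders by the appropriate power of $|\sigma_E|^2_{h_E}$'' does not produce the displayed inequality: the tangential eigenvalues of $|\sigma_E|^2_{h_E}\hat\omega$ decay like $|w_1|^4$ while those of $\tilde\omega$ are of order one, so $\tilde\omega\leq C|\sigma_E|^2_{h_E}\hat\omega$ fails near $E$. The estimate that is actually needed, and the one used later (in the proof of Proposition \ref{digest} the bound $tr_{\hat\omega}(\tilde\omega)\leq C|\sigma_E|^{-2}_{h_E}$ is invoked), is $|\sigma_E|^2_{h_E}\,\tilde\omega\leq C\hat\omega$, i.e.\ the inverse power of $|\sigma_E|^2_{h_E}$; this does follow from your computation, since $|d\pi_2(v)|^2\geq c\,|w_1|^2|v|^2$ on the chart. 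So you should prove that corrected form rather than the displayed one. Your final observation is fine as far as it goes: changing the smooth metric on $\mathcal{O}(E)$ only alters $|\sigma_E|^2_{h_E}$ by a bounded positive factor, so the metric chosen for (\ref{locbaa}) also serves in (\ref{locba}); the genuine gaps are the ampleness claim and the sign of the power of $|\sigma_E|^2_{h_E}$.
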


\begin{proof} (\ref{locba}) is a local estimate by straightforward calculations and we refer the details to \cite{SW2}. $[\tilde\chi]$ is a  trivial cohomology class on $B_O$, and $[\tilde\chi] - \epsilon [E]$ is positive on $B_O$ for sufficiently small $\epsilon>0$ and so we can choose a smooth hermitian metric $h_E$ satisfying (\ref{locbaa}).

\end{proof}

The following proposition is the main result of this section. 

\begin{proposition} \label{digest}   There exist $\delta >0$ and $C>0$ such that for any solution $\tilde\omega_\epsilon$ of equation (\ref{4365}) with $\epsilon\in (0, 1)$, we have on $\tilde B_O$, 
\begin{equation}
\tilde \omega_\epsilon \leq \frac{C}{|\sigma_E|^{2(1-\delta)}_{h_E} |w|^{2\lambda}} \tilde\omega. 
\end{equation}

\end{proposition}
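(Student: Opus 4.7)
The plan is to apply the maximum principle to the weighted trace quantity
$$H := \log\!\bigl(|\sigma_E|^{2(1-\delta)}_{h_E}\,|w|^{2\lambda}\,\mathrm{tr}_{\tilde\omega}(\tilde\omega_\epsilon)\bigr) - B\tilde\varphi_\epsilon$$
on $\tilde B_O$, where $\delta>0$ will be taken small and $B>0$ large, both uniformly in $\epsilon$. This follows the ``blow-up Schwarz lemma'' strategy of \cite{SW2}, now augmented by the extra plurisubharmonic weight $\log |\sigma_E|^{2(1-\delta)}_{h_E}$ in order to absorb the additional singularity produced by the point-blow-up $\pi_2$.

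First I would check boundary and singular-locus behavior. On $\partial \tilde B_O$ the section $|\sigma_E|_{h_E}$ is bounded away from $0$ because $E$ lies in the interior of $\tilde B_O$, while Corollary \ref{co43} controls $\mathrm{tr}_{\tilde\omega}(\tilde\omega_\epsilon)$ by a multiple of $|w|^{2\lambda}$ there; together with the uniform $C^0$ bound on $\tilde\varphi_\epsilon$ from Lemma \ref{483}, this makes $H$ uniformly bounded above on $\partial\tilde B_O$. Moreover, Lemma \ref{483} forces $\mathrm{tr}_{\tilde\omega}(\tilde\omega_\epsilon) \leq C|\sigma_{D'}|_{h_{D'}}^{2\lambda}$ globally on $\tilde X$, and in particular bounded as one approaches $E$ or $\tilde F$, whereas $\log |\sigma_E|^{2(1-\delta)}_{h_E}$ (respectively $\log|w|^{2\lambda}$) tends to $-\infty$ there. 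Thus $H \to -\infty$ along $E \cup \tilde F$ and any interior maximum of $H$ lies in $\tilde B_O\setminus (E\cup \tilde F)$.

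At an interior maximum I would combine the Chern--Lu inequality (with bisectional curvature constant $C_B$ of $\tilde\omega$) with the Ricci upper bound from Lemma \ref{riup} to get
$$\Delta_\epsilon \log \mathrm{tr}_{\tilde\omega}(\tilde\omega_\epsilon) \geq 1 - \frac{An}{\mathrm{tr}_{\tilde\omega}(\tilde\omega_\epsilon)} - C_B\,\mathrm{tr}_{\tilde\omega_\epsilon}(\tilde\omega).$$
Since $\log |w|^{2\lambda}$ is pluriharmonic off $\tilde F$, $\ddbar \log |\sigma_E|^{2}_{h_E} = -\mathrm{Ric}(h_E)$ off $E$, and $\Delta_\epsilon \tilde\varphi_\epsilon = n - \mathrm{tr}_{\tilde\omega_\epsilon}(\tilde\chi + \epsilon\tilde\omega)$, one obtains
$$\Delta_\epsilon H \geq \mathrm{tr}_{\tilde\omega_\epsilon}\!\bigl(B\tilde\chi - (1-\delta)\mathrm{Ric}(h_E) - C_B\tilde\omega\bigr) - \frac{An}{\mathrm{tr}_{\tilde\omega}(\tilde\omega_\epsilon)} - (B+1)n.$$
Choosing $B = (1-\delta)/\epsilon_0$ and invoking (\ref{locbaa}) yields $B\tilde\chi - (1-\delta)\mathrm{Ric}(h_E) \geq (1-\delta)(\epsilon_0 C)^{-1}\tilde\omega$. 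By shrinking $\epsilon_0$ in Lemma \ref{l239} so that $\epsilon_0 C$ is small enough and then picking $\delta$ small, the coefficient $(1-\delta)(\epsilon_0 C)^{-1}$ can be made strictly larger than $C_B + 1$, so that $\Delta_\epsilon H \leq 0$ at the maximum forces $\mathrm{tr}_{\tilde\omega_\epsilon}(\tilde\omega)\leq M$ for a uniform constant $M$.

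Finally I would convert the bound on $\mathrm{tr}_{\tilde\omega_\epsilon}(\tilde\omega)$ into one on $\mathrm{tr}_{\tilde\omega}(\tilde\omega_\epsilon)$ using
$$\mathrm{tr}_{\tilde\omega}(\tilde\omega_\epsilon) \leq \tfrac{1}{(n-1)!}\bigl(\mathrm{tr}_{\tilde\omega_\epsilon}(\tilde\omega)\bigr)^{n-1}\frac{\tilde\omega_\epsilon^n}{\tilde\omega^n},$$
where the volume ratio is uniformly bounded thanks to equation (\ref{4365}) and the $C^0$ bound on $\tilde\varphi_\epsilon$. This gives a uniform upper bound for $H$ at the interior maximum, which combined with the boundary bound makes $H$ uniformly bounded above on all of $\tilde B_O$, and the desired matrix inequality follows upon exponentiation (with the trace controlling the largest eigenvalue up to a factor of $n$). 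The main obstacle is the triangular arrangement of the constants $\delta$, $B$ and $\epsilon_0$: one must realize $(1-\delta)(\epsilon_0 C)^{-1}>C_B+1$ while keeping (\ref{locbaa}) intact. The loss of $\delta$ in the weight $|\sigma_E|^{2(1-\delta)}$ is unavoidable precisely because $\tilde\chi$ is only semipositive (it degenerates along the fibers of $\tilde\pi$), so no multiple of $\tilde\chi$ alone can absorb both $(1-\delta)\mathrm{Ric}(h_E)$ and $C_B\tilde\omega$ without a small reserve taken off the $|\sigma_E|^{2}$-weight.
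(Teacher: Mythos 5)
Your overall strategy (a weighted-trace maximum principle in the spirit of \cite{SW2}) is the right family of arguments, but the key absorption step fails as written. You need the positive term $\mathrm{tr}_{\tilde\omega_\epsilon}\bigl(B\tilde\chi-(1-\delta)Ric(h_E)\bigr)$ to dominate the Chern--Lu term $C_B\,\mathrm{tr}_{\tilde\omega_\epsilon}(\tilde\omega)$, and you propose to achieve $(1-\delta)(\epsilon_0 C)^{-1}>C_B+1$ by ``shrinking $\epsilon_0$ in Lemma \ref{l239}.'' This cannot work: in (\ref{locbaa}) the constant $C$ depends on $\epsilon_0$, and the quantity $(\epsilon_0 C)^{-1}$ is bounded above by a constant you cannot tune. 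Indeed, $\tilde\chi=\tilde\pi^*\chi$ vanishes identically on $TE$ because $\tilde\pi$ contracts $E\cong\mathbb{CP}^{n-1}$ to a point, so along $E$ the inequality $\tilde\chi-\epsilon_0 Ric(h_E)\geq C^{-1}\tilde\omega$ forces $-Ric(h_E)\geq(\epsilon_0C)^{-1}\tilde\omega$ on $TE$; integrating over a line $\ell\subset E$ gives $(\epsilon_0 C)^{-1}\int_\ell\tilde\omega\leq\int_\ell(-Ric(h_E))=-E\cdot\ell$, a fixed number, since $[E]|_E=\mathcal{O}_E(-1)$. Hence $(\epsilon_0C)^{-1}$ is capped by a cohomological constant independent of $\epsilon_0$ and of the choice of $h_E$, and there is no reason it exceeds $C_B+1$ (the bisectional curvature bound of the arbitrary reference $\tilde\omega$). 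Since $\tilde\chi$ is only semipositive, enlarging $B$ adds nothing, so with your choice of quantity the maximum principle gives no information at the interior maximum when $C_B$ is large. (Your boundary discussion and the final conversion via the volume ratio are fine, but they never get used because of this obstruction.)

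The paper circumvents exactly this difficulty by a two-tier quantity: the main term is $\log\bigl(|\sigma_E|^2_{h_E}|w|^{2\lambda}\,\mathrm{tr}_{\hat\omega}(\tilde\omega_\epsilon)\bigr)-A\tilde\varphi_\epsilon$, where the trace is taken with respect to the \emph{flat, degenerate} form $\hat\omega$ (the pullback of the Euclidean metric on $B_O$). Because $\hat\omega$ has no curvature, the Schwarz-type computation produces a negative term of the shape $-C\bigl(|\sigma_E|^2_{h_E}\mathrm{tr}_{\hat\omega}(\tilde\omega_\epsilon)\bigr)^{-1}$ rather than $-C_B\,\mathrm{tr}_{\tilde\omega_\epsilon}(\tilde\omega)$, so the \emph{fixed} positivity $c\,\mathrm{tr}_{\tilde\omega_\epsilon}(\tilde\omega)$ coming from (\ref{locbaa}) suffices, with no need to beat any curvature constant. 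The $\delta$-improvement in the exponent of $|\sigma_E|$ is then extracted by adding the small auxiliary term $\delta\log\bigl(|w|^{2\lambda}\mathrm{tr}_{\tilde\omega}(\tilde\omega_\epsilon)\bigr)$ to form $G_\epsilon$: the bisectional-curvature contribution of $\tilde\omega$ now enters only with the factor $\delta$, so it can be absorbed by the fixed $c\,\mathrm{tr}_{\tilde\omega_\epsilon}(\tilde\omega)$ once $\delta$ is chosen small, and the bound $G_\epsilon\leq C$ (via the dichotomy and mean-value inequality at the maximum point, together with the boundary control from Corollary \ref{co43}) yields the stated estimate. If you want to salvage your version, you must either switch the main trace to $\hat\omega$ as in the paper, or carry the $\tilde\omega$-trace only with a small weight $\delta$; putting the full weight on $\mathrm{tr}_{\tilde\omega}(\tilde\omega_\epsilon)$ cannot be repaired by tuning $\epsilon_0$, $B$ or $h_E$.
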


\begin{proof}

 Let $$H_\epsilon = \log |\sigma_E|_{h_E}^2 |w|^{2\lambda} tr_{\hat \omega}(\tilde \omega_\epsilon) - A \tilde \varphi_\epsilon $$ for some sufficiently large $A>0$.
 Then standard calculations show that 
 \begin{eqnarray*}
 \Delta_\epsilon H_\epsilon &\geq & - C~\frac{ tr_{\hat \omega}(\tilde \omega ) }{tr_{\hat\omega}( \tilde \omega_\epsilon)}  +  tr_{ \tilde \omega_\epsilon}(A\tilde \chi - Ric(h_E)) - An \\
 &\geq& - \frac{C}{| \sigma_E|^2_{h_E} tr_{\hat\omega} ( \tilde \omega_\epsilon)} + c~ tr_{\tilde \omega_\epsilon}(\tilde\omega) - An 
 \end{eqnarray*}
 for some uniform $c>0$.
 Let $$G_\epsilon = H_\epsilon + \delta \log \left( |w|^{2\lambda } tr_{\tilde \omega}( \tilde \omega_\epsilon) \right). $$
 For fixed sufficiently large $\lambda>0$, there exists $C>0$ such that for any $\epsilon\in (0,1)$, we have
 $$ \sup_{\partial \tilde B_O} G_\epsilon \leq C $$
from the estimates in Corollary \ref{co43}. By standard calculations, Lemma \ref{riup} and estimate (\ref{locba}), there exist uniform constants $c, C>0$ such that 
 \begin{equation}
 \Delta_\epsilon G_\epsilon \geq   - \frac{C}{| \sigma_E|^2_{h_E} tr_{\hat\omega} (\tilde \omega_\epsilon)} + c~ tr_{\tilde \omega_\epsilon}(\tilde\omega) - An. 
 \end{equation}
We can assume that  $$\sup_{\tilde B_O}G_\epsilon = G_\epsilon( p_{max})$$ for some $p_{max}\in \tilde B_O$.  Then at $p_{max}$, 
$$(tr_{\tilde \omega_\epsilon}(\tilde\omega) - A^2) |\sigma_E|^2_{h_E} tr_{\hat\omega}( \tilde \omega_\epsilon )  \leq C, $$
and apply mean value inequality and the uniform upper bound for $(\tilde\omega_\epsilon)^n$ in terms of $(\tilde\omega)^n$, we have 
$$\left( (tr_{\tilde\omega}(\tilde \omega_\epsilon) )^{\frac{1}{n-1} }- A^3 \right) | \sigma_E|^2_{h_E} tr_{\hat\omega} (\tilde \omega_\epsilon)\leq C. $$
Without loss of generality, we can assume that $$\left( tr_{\tilde\omega }(\tilde \omega_\epsilon) \right)^{1/(n-1)}  >2A^3 , $$ otherwise, $G_\epsilon$ will be bounded above by a uniform constant.  Then 
$$G_\epsilon(p_{max}) \leq C$$
and so $G_\epsilon$ is uniformly bounded above on $\tilde B_O$. The proposition immediately follows.

\end{proof}

The following corollary immediately follows from Proposition \ref{digest} by letting $\epsilon \rightarrow 0$.

\begin{corollary} \label{co44} There exist $\lambda, C>0$ such that on $\tilde B_O$, we have 
\begin{equation}
(\tilde\pi)^* \omega_{KE}  \leq \frac{C}{|\sigma_E|^{2(1-\delta)}_{h_E} |w|^{2\lambda}} \tilde\omega. 
\end{equation}

\end{corollary}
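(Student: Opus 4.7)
The plan is to deduce this inequality as an immediate consequence of Proposition \ref{digest} by passing to the pointwise limit $\epsilon \to 0$; no new analytic input is required beyond what has already been set up.

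First I would invoke Lemma \ref{483}, which guarantees that the smooth solutions $\tilde\varphi_\epsilon$ of the approximate Monge-Amp\`ere equation (\ref{4365}) converge to $\tilde\varphi_{KE}$ in $C^\infty_{loc}$ on $\tilde X \setminus (\mathrm{Supp}\, E \cup \mathrm{Supp}\, D)$. Consequently, on the open set $\tilde B_O \setminus (E \cup \tilde F)$, which is precisely the locus where the right-hand side of the claimed inequality is finite, one has smooth convergence
$$
\tilde\omega_\epsilon \;=\; \tilde\chi + \epsilon \tilde\omega + \ddbar \tilde\varphi_\epsilon \;\longrightarrow\; \tilde\chi + \ddbar \tilde\varphi_{KE} \;=\; (\tilde\pi)^* \omega_{KE}
$$
as $\epsilon \to 0$. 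Note that the local defining sections $\sigma_E$ and $w$ and the hermitian metric $h_E$ are fixed holomorphic data independent of $\epsilon$.

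Next, Proposition \ref{digest} provides constants $\delta, \lambda, C > 0$, independent of $\epsilon$, such that the pointwise bound
$$
\tilde\omega_\epsilon \;\leq\; \frac{C}{|\sigma_E|^{2(1-\delta)}_{h_E} |w|^{2\lambda}}\, \tilde\omega
$$
holds on all of $\tilde B_O$ for every $\epsilon \in (0,1)$. Evaluating this inequality at each point of $\tilde B_O \setminus (E \cup \tilde F)$ and letting $\epsilon \to 0$ yields the same estimate with $\tilde\omega_\epsilon$ replaced by $(\tilde\pi)^*\omega_{KE}$. Since the right-hand side is identically $+\infty$ along $E \cup \tilde F$, the inequality extends trivially to all of $\tilde B_O$, finishing the argument. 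There is essentially no obstacle here: the substantive work has already been performed inside Proposition \ref{digest}, via the maximum principle applied to the auxiliary quantity $G_\epsilon$ with boundary control supplied by Corollary \ref{co43}, and the present corollary is simply the $\epsilon \to 0$ limit of that uniform second-order estimate.
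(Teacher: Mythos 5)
Your proposal is correct and is exactly the paper's argument: the paper also obtains the corollary by combining the uniform, $\epsilon$-independent estimate of Proposition \ref{digest} with the locally smooth convergence $\tilde\omega_\epsilon \to (\tilde\pi)^*\omega_{KE}$ away from $\mathrm{Supp}\,E \cup \mathrm{Supp}\,D$ furnished by Lemma \ref{483}, and then lets $\epsilon \to 0$. Your additional remark that the bound extends trivially across $E \cup \tilde F$ (where the right-hand side is infinite) is a harmless elaboration of the same one-line limit argument.
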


\begin{corollary} \label{co45} Suppose $diam_{d_\infty}(X_\infty) = \infty$. Then there exists a sequence of points $q_j \in (X_\infty)_{reg} $ satisfying 

\begin{enumerate}

\item $d_\chi(\Phi(q_j), P) \rightarrow 0 $ for some point $P\in X\setminus \Phi(X_\infty)$,
\medskip

\item there exists $C>0$ such that for all $j$
$$
d_\infty(p_\infty, q_j) \leq C. 
$$

\end{enumerate}

\end{corollary}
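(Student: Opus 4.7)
The plan is to exploit the local upper bound for $\omega_{KE}$ near the blown-up exceptional divisor $E$ furnished by Corollary \ref{co44} in order to produce a finite-length curve in $X_{reg}$ whose $\chi$-image approaches a point of $X \setminus \Phi(X_\infty)$.

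First, because $\mathrm{diam}_{d_\infty}(X_\infty) = \infty$, Lemma \ref{46} forces $X \setminus \Phi(X_\infty) \neq \phi$. Fix $P$ in this complement; since $\Phi$ restricts to the identity on $\mathcal{R} = X_{reg}$, necessarily $P \in X_{sing}$. Refining the log resolution $\pi_1$ by further blow-ups over $P$ if necessary, arrange that one may choose the blow-up center $O$ on the smooth locus of the $\pi_1$-exceptional divisor with $\pi_1(O) = P$, and form $\tilde\pi = \pi_1 \circ \pi_2 \colon \tilde X \to X$ as in the setup. Then $\tilde\pi$ contracts the new exceptional divisor $E$ of $\pi_2$ to the single point $P$.

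Next, choose local holomorphic coordinates $(z_1, \ldots, z_n)$ on $\tilde B_O$ centered at a point $p_0 = (0, c_2, \ldots, c_n) \in E$ in which $E = \{z_1 = 0\}$ and $\tilde F = \{w = 0\}$ with $w = z_2$, and with $c_2, \ldots, c_n$ small but nonzero and chosen so that $p_0$ is kept away from $D$ and from every other component of the total $\tilde\pi$-exceptional divisor. Consider the real path $\gamma(s) = (s, c_2, \ldots, c_n)$ for $s \in (0, 1]$. Along $\gamma$ we have $|\sigma_E|_{h_E} \sim s$ while $|w|_{h_D}$ stays uniformly bounded below, so Corollary \ref{co44} gives
\[
(\tilde\pi)^* g_{KE}(\dot\gamma, \dot\gamma) \le C \, s^{-2(1 - \delta)}.
\]
Since $\delta > 0$, the $(\tilde\pi)^* g_{KE}$-length
\[
L(\gamma) = \int_0^1 \sqrt{(\tilde\pi)^* g_{KE}(\dot\gamma, \dot\gamma)}\, ds \,\le\, C' \int_0^1 s^{-(1-\delta)}\, ds \,<\, \infty.
\]

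Finally set $q_j = \tilde\pi(\gamma(1/j)) \in X_{reg}$, which is legitimate because $\gamma(s)$ for $s > 0$ avoids the entire $\tilde\pi$-exceptional set. Identify $q_j \in \mathcal{R} \subset X_\infty$ via $\mathcal{R} = X_{reg}$. Continuity of $\tilde\pi$ with $\tilde\pi(p_0) = \pi_1(O) = P$ gives $\Phi(q_j) = q_j \to P$ in the Fubini-Study metric $d_\chi$. Since on $\mathcal{R}$ the completion distance $d_\infty$ agrees with the intrinsic $g_{KE}$-distance, the sub-curves $\tilde\pi \circ \gamma|_{[1/j, 1]}$ lie in $X_{reg}$ and realize $d_\infty(q_1, q_j) \le L(\gamma)$, whence
\[
d_\infty(p_\infty, q_j) \le d_\infty(p_\infty, q_1) + L(\gamma),
\]
the required uniform bound. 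The main obstacle I anticipate lies in arranging $\pi_1$ and $O$ so that $\pi_1(O)$ hits a prescribed $P \in X \setminus \Phi(X_\infty)$ with $O$ in the smooth divisorial stratum of the exceptional locus; because log resolutions can be refined over $P$ by additional blow-ups, this is a technical but standard maneuver rather than a genuine difficulty, and the analytic heart of the argument is the length computation driven by the sharp estimate of Corollary \ref{co44}.
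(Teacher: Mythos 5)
Your proposal is correct and follows essentially the same route as the paper: invoke Lemma \ref{46} to get $X\setminus\Phi(X_\infty)\neq\phi$, place the blow-up center $O$ over such a point, and use the metric upper bound of Corollary \ref{co44} near $E\setminus\tilde F$ to produce points of $X_{reg}$ at uniformly bounded $d_\infty$-distance whose $\chi$-images converge to $P=\pi_1(O)$. Your explicit finite-length path computation is just a more detailed version of the step the paper states as "obvious."
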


\begin{proof}  From the assumption,  $X  \setminus \Phi(X_\infty)  \neq \phi$. We can choose $O$ such that $\pi_1(O) \in X_{sing}\setminus \Phi(\mathcal{S}) $. We pick a point $q$ on $E\setminus \tilde F$ in $B_O$. Then in a local chart $U$ of $q\in B_O$, $E$ is defined by $z_1=0$, where $z_1, ..., z_n$ are local holomorphic coordinates on $U$ and $q=0$. Then 
$$(\tilde\pi)^*\omega_{KE} \leq C |z_1|^{- (2-\delta)} \sum dz_i\wedge d\bar z_i $$
for some fixed $C$ and $\delta>0$. Obviously, for any point $q_j$ converging to $q$ in $U\setminus E \subset X_{reg}$, 
$$d_\infty(q_j, p_\infty) \leq C$$
for some uniform constant $C>0$. On the other hand, $ \tilde\pi(q)=  \pi_1(O) \in X_{sing}\setminus \Phi(\mathcal{S})$, so 
$d_{\chi} (q_j, \tilde\pi(q))) \rightarrow 0$.

\end{proof}

\begin{theorem} \label{diam} $(X_\infty, d_\infty)$ is a compact metric length space. 

\end{theorem}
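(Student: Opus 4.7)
\bigskip

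The plan is to prove Theorem \ref{diam} by contradiction, essentially by colliding Lemma \ref{46} with Corollary \ref{co45}. Assume for contradiction that $\mathrm{diam}_{d_\infty}(X_\infty) = \infty$. Under this assumption, the entire machinery built in sections 4.3--4.5 applies: $\Phi: X_\infty \to X$ is an injective Lipschitz map, and since $(X_\infty, d_\infty)$ fails to be compact, Lemma \ref{46}(1) gives $X \setminus \Phi(X_\infty) \neq \phi$, while Lemma \ref{46}(2) guarantees that any sequence $q_j \in X_\infty$ with $d_\chi(\Phi(q_j), P) \to 0$ for $P \in X \setminus \Phi(X_\infty)$ must satisfy $d_\infty(p_\infty, q_j) \to \infty$.

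On the other hand, Corollary \ref{co45}, which was just derived via the quantitative metric estimates near the exceptional locus (Proposition \ref{digest} and Corollary \ref{co44}), produces a sequence $q_j \in (X_\infty)_{reg}$ with $d_\chi(\Phi(q_j), P) \to 0$ for some $P \in X \setminus \Phi(X_\infty)$ but with $d_\infty(p_\infty, q_j) \leq C$ uniformly bounded. This directly contradicts the conclusion of Lemma \ref{46}(2), so the initial assumption $\mathrm{diam}_{d_\infty}(X_\infty) = \infty$ must fail. Hence $(X_\infty, d_\infty)$ has finite diameter.

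Once the diameter bound is established, compactness follows from Gromov's precompactness theorem: the approximating metrics $g_j$ on $X'$ satisfy $\mathrm{Ric}(g_j) \geq -g_j$ uniformly (Lemma \ref{locesk}(1)) and now also admit a uniform diameter bound. Since $(X', g_j) \to (X_\infty, d_\infty)$ in (pointed) Gromov--Hausdorff topology by Proposition \ref{ghlimgt}, the limit is a compact metric length space. The length space structure is inherited from the Ricci-limit space construction of Cheeger--Colding already used in section 4.2.

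The main obstacle has in fact already been overcome upstream: it is the extraction of the bounded sequence in Corollary \ref{co45}, which rested on the quantitative metric estimate of Proposition \ref{digest} applied near a point $O$ lying over $X_{sing} \setminus \Phi(\mathcal{S})$ (which is nonempty and closed by Corollary \ref{co41}). That estimate provides the crucial local integrability $\tilde\pi^* \omega_{KE} \leq C|\sigma_E|_{h_E}^{-2(1-\delta)} |w|^{-2\lambda} \tilde\omega$ near the blown-up point, forcing finite $d_\infty$-distance to singular points of $X \setminus \Phi(X_\infty)$ despite potentially unbounded diameter elsewhere. After invoking this estimate, the theorem itself reduces to a one-line clash with Lemma \ref{46}(2).
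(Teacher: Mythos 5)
Your argument is exactly the paper's: assume $\mathrm{diam}_{d_\infty}(X_\infty)=\infty$, invoke Lemma \ref{46} to produce $P\in X\setminus\Phi(X_\infty)$ and the escape-to-infinity property, and contradict it with the bounded sequence from Corollary \ref{co45} obtained via the blow-up estimates of Proposition \ref{digest}. Your closing remarks on deducing compactness from the resulting diameter bound and the Gromov--Hausdorff convergence simply make explicit what the paper leaves implicit, so the proposal is correct and follows the same route.
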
 

\begin{proof} Suppose not. Then by Lemma \ref{46}, there exists $P\in X\setminus \Phi(X_\infty)$. We can assume there exists a log resolution $\pi_1: Z \rightarrow X$ and there exists a point $O$ with $\pi_1(O)=P$ and $O$ lying in the smooth part of the exceptional divisor of $\pi_1$. Then we can apply Corollary \ref{co45} and there exist a sequence of points $q_j\in (X_\infty)_{reg}$ such that 
$$\lim_{j\rightarrow \infty} d_\chi(\Phi(q_j), P) =0, ~ \limsup_{j\rightarrow \infty} d_\infty(q_j, p_\infty) <\infty. $$
But this contradicts Lemma \ref{46}. 

\end{proof}

\noindent {\it Proof of Theorem \ref{main2}. }  From Theorem \ref{diam}, there exists $D>0$ such that 
$$diam_{g_j}(\tilde X) \leq C .$$ From the argument in section 4.2 and 4.3, $\Phi$ is both surjective and injective. The proof is complete by applying Corollary \ref{co42}.


\subsection{Applications and generalizations} 

In this section, we will discuss the application of Theorem \ref{main2} applied to the K\"ahler-Ricci flow on smooth minimal models of general type and the generalization of Theorem \ref{main2} to canonical pairs with crepant singularities.

\noindent {\it Kahar-Ricci flow on minimal models of general.} Let $X$ be an $n$-dimensional smooth minimal model of general type. Then the pluricanonical system induces  a birational morphism
$$\pi: X \rightarrow X_{can}, $$
where $X_{can}$ is the unique canonical model $X_{can}$ of $X$. In particular, $X_{can}$ has crepant singularities as $\pi$ is a crepant resolution. Let $E$ be the exceptional locus, i.e., where $\pi$ is not isomorphism. We now consider the following normalized K\"ahler-Ricci flow for any initial K\"ahler metric $g_0$
\begin{equation}\label{krf2}
 \ddt{g} = - Ric(g) - g, ~ g(0) = g_0. 
 \end{equation}
We now can prove Corollary \ref{main6}

\begin{proposition} The K\"ahler-Ricci flow (\ref{krf2}) admits a smooth solution $g(t)$ for $t\in [0, \infty)$ and $g(t)$ converges smoothly to a K\"ahler metric $g_\infty$ on $X\setminus E$. The metric completion of $(X\setminus E, g_\infty)$ is a singular K\"ahler-Einstein metric length space homeomorphic to the canonical $X_{can}$.

\end{proposition}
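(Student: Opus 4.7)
The plan is to reduce the statement to Theorem \ref{main2} applied to the canonical model $X_{can}$, combined with standard long-time behavior of the normalized K\"ahler-Ricci flow on smooth minimal models of general type. First I would recast (\ref{krf2}) as the parabolic Monge-Amp\`ere equation (\ref{krf3}) with $\chi' = \pi^* \chi$, where $\chi \in c_1(X_{can})$ is a smooth representative coming from a pluricanonical embedding and $\Omega$ is a smooth volume form on $X_{can}$ with $\ddbar \log \Omega = \chi$. The long-time existence of a smooth solution is due to Tsuji, and the uniform $L^\infty$ bound on $\varphi(t)$ together with higher order estimates on compact subsets of $X \setminus E$ follow from the arguments already invoked in the proof of Proposition \ref{gradeke} (via the Schwarz lemma and Tsuji's trick).

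Next I would extract the limit. By uniqueness of solutions of the degenerate Monge-Amp\`ere equation (\ref{keq1}), $\varphi(t)$ converges in $L^\infty(X) \cap C^\infty_{\mathrm{loc}}(X \setminus E)$ to $\pi^* \varphi_{KE}$, where $\varphi_{KE}$ is the unique bounded $\chi$-plurisubharmonic solution of (\ref{keq1}) on $X_{can}$. Consequently $g(t)$ converges smoothly on $X \setminus E$ to a K\"ahler metric $g_\infty$, and by construction $g_\infty = \pi^* g_{KE}$ on $X \setminus E$, where $g_{KE}$ is the smooth K\"ahler-Einstein metric on $(X_{can})_{reg}$ provided by Theorem \ref{main2}.

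The final topological identification is essentially a tautology once one observes that $\pi$ is a crepant resolution with $E = \pi^{-1}((X_{can})_{sing})$, so that $\pi|_{X \setminus E}: X \setminus E \to (X_{can})_{reg}$ is a biholomorphic isometry between $(X \setminus E, g_\infty)$ and $((X_{can})_{reg}, g_{KE})$. Therefore the metric completion $(\hat X, d)$ of $(X \setminus E, g_\infty)$ is canonically isometric to the metric completion of $((X_{can})_{reg}, g_{KE})$, and Theorem \ref{main2} identifies the latter as a compact singular K\"ahler-Einstein metric length space homeomorphic to $X_{can}$. The main nontrivial input is thus Theorem \ref{main2} itself; the only other point that deserves care is verifying that the uniform bounds from the parabolic side match the bounds from the elliptic side so that the limits can be identified, and this is handled by the uniqueness of bounded $\chi$-plurisubharmonic solutions of (\ref{keq1}) from \cite{EGZ, Z}.
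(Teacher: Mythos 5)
Your proposal is correct and follows essentially the same route as the paper: Tsuji's long-time existence and local smooth convergence, identification of the limit with the (pullback of the) unique bounded-potential K\"ahler-Einstein current via uniqueness of the degenerate Monge-Amp\`ere equation, and then Theorem \ref{main2} applied to the metric completion of $((X_{can})_{reg}, g_{KE}) \cong (X\setminus E, g_\infty)$. The paper's own proof is simply a terser statement of this same argument.
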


\begin{proof} The long time existence and local smooth convergence of $g(t)$ is due to Tsuji \cite{Ts}. Since the limiting K\"ahler-Einstein current has bounded local potential, we can apply Theorem \ref{main2} and complete the proof of the proposition.

\end{proof}

The drawback of the above proposition is that we do not have much geometric information for the global solution $g(t)$ such as a uniform diameter bound. The following conjecture is well-known.

\begin{conjecture} $(X, g(t))$ converges to the metric complexion $(X_{can}, g_\infty)$ in Gromov-Hausdorff topology as $t\rightarrow \infty. $

\end{conjecture}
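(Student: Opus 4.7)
The plan is to combine the local smooth convergence of $g(t)$ on $X\setminus E$ (due to Tsuji) with the Tian-Wang compactness framework employed in Section 4.2, and then identify any pointed Gromov-Hausdorff subsequential limit with $(\hat X, d)$ via the pluricanonical comparison map $\Phi$ from the proof of Theorem \ref{main2}. Writing $\omega(t)=(1-e^{-t})\chi+e^{-t}\omega_0+\ddbar\varphi(t)$ with $\chi=\pi^*\chi_{FS}$ the pullback of a Fubini-Study form from a pluricanonical embedding of $X_{can}$ and $\Omega$ a fixed smooth volume form satisfying $\ddbar\log\Omega=\chi$, (\ref{krf2}) becomes the scalar flow $\dot\varphi=\log(\omega(t)^n/\Omega)+\varphi$; the available parabolic estimates yield $\|\varphi(t)\|_{L^\infty(X)}\le C$, $\dot\varphi(t)\to 0$ and $\varphi(t)\to\varphi_{KE}$ in $C^\infty_{loc}(X\setminus E)$.

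Next I would verify the three almost K\"ahler-Einstein conditions of Lemma \ref{locesk} for the time-slices $g_j=g(t_j)$ with $t_j\to\infty$: the Ricci lower bound $\mathrm{Ric}(g(t))\ge-g(t)$ is preserved under the flow; non-collapsing at a fixed base point $p\in X\setminus E$ is immediate from local smooth convergence of $g(t)\to g_\infty$; and $\int_0^1\int_X |R(g_j(s))+n|\,dV_{g_j(s)}\,ds\to 0$ follows from the uniform scalar curvature bound along the flow combined with the cohomological decay $\int_X(R(g(t))+n)\,dV_{g(t)}=ne^{-t}[\omega_0-\chi]\cdot[\omega(t)]^{n-1}\to 0$. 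Proposition \ref{ghlimgt} then gives a subsequential pointed Gromov-Hausdorff limit $(X, g_j, p)\to(X_\infty, d_\infty, p_\infty)$, and local smooth convergence forces $(X\setminus E, g_\infty)$ to embed isometrically as a dense open subset of $X_\infty$. Provided $X_\infty$ is compact, the machinery of Sections 4.3--4.5 --- local $L^2$ estimates, the $H$-condition, and local separation of points by pluricanonical sections --- applies verbatim and identifies $X_\infty$ with $(\hat X, d)$, showing that the corresponding map $\Phi$ is a homeomorphism; uniqueness of the limit then upgrades subsequential convergence to convergence along the full family $t\to\infty$.

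The main obstacle is thus the uniform diameter bound $\mathrm{diam}_{g(t)}(X)\le D$. To obtain it I would parabolically adapt Section 4.5: on a log resolution $\tilde\pi\colon\tilde X\to X_{can}$ with exceptional divisor $(n-1)E+D$ as there, introduce an $\epsilon$-regularization of the parabolic Monge-Amp\`ere equation analogous to (\ref{4365}), derive a parabolic Chern-Lu type differential inequality for $\log\bigl(|\sigma_E|_{h_E}^2|w|^{2\lambda}\mathrm{tr}_{\hat\omega}\tilde\omega_\epsilon(t)\bigr)$ using Lemma \ref{riup} and Lemma \ref{l239}, and apply the parabolic maximum principle to the barrier $H-B^2\tilde\varphi_\epsilon(t)+\delta\log\bigl(|w|^{2\lambda}\mathrm{tr}_{\tilde\omega}\tilde\omega_\epsilon(t)\bigr)$ employed in Proposition \ref{digest}. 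Sending $\epsilon\to 0$ should reproduce Proposition \ref{digest} uniformly in time, yielding $\tilde\omega(t)\le C|\sigma_E|_{h_E}^{-2(1-\delta)}|w|^{-2\lambda}\tilde\omega$ near the exceptional divisor, an upper bound integrable along any curve crossing $E$ and hence producing the time-uniform diameter bound.

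The hard part will be controlling the parabolic maximum principle in the presence of the time derivative and the absence of a fixed spatial boundary: one must use the uniform estimate $|\dot\varphi+\varphi|\le C$ along the flow to absorb $\partial_t\tilde\varphi_\epsilon(t)$, and initialize the iteration from a late time $t_0$ at which the flow is already close to $\omega_{KE}$ outside a neighborhood of $E$, so that the boundary comparison of Corollary \ref{co43} becomes available as a time-uniform Dirichlet datum on $\partial\tilde B_O\times[t_0,\infty)$. A secondary technical point is transferring the gradient estimate of Proposition \ref{gradeke} to the parabolic setting so that Lemma \ref{extenmm} remains valid for the flow's time slices; this should follow from a parabolic Bochner argument applied to $|\nabla(\dot\varphi+\varphi)|_{g(t)}^2$ along the lines of Proposition \ref{gradie}.
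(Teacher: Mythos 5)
Be aware that the statement you are proving is labelled a \emph{conjecture} in the paper: the author explicitly gives no proof, and immediately before stating it remarks that one lacks geometric information for the global flow solution $g(t)$, ``such as a uniform diameter bound.'' So your proposal is attempting to settle a problem the paper leaves open, and it does not close the essential gaps. The decisive one is your claim that ``the Ricci lower bound $\mathrm{Ric}(g(t))\ge -g(t)$ is preserved under the flow.'' This is not a known property of the (normalized) K\"ahler--Ricci flow --- lower Ricci bounds are not preserved by Ricci flow in complex dimension $\ge 2$, and along the flow on a minimal model of general type no uniform lower bound on $\mathrm{Ric}(g(t))$ is known (only scalar curvature bounds, as in \cite{Z2, ST4}). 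Everything downstream in your argument hinges on this: condition (1) of Lemma \ref{locesk}, the Tian--Wang compactness of Proposition \ref{ghlimgt}, the uniform Sobolev constants behind the local $L^2$-estimates, the Cheeger--Colding structure of the limit ($\mathcal{R}=X_{reg}$, density, convexity), and the stability of the $H$-condition. The reason the paper can run this machinery in Section 4 is precisely that it replaces the flow by the \emph{elliptic} family $g_j$ solving $\mathrm{Ric}(g_j)=-g_j+e^{-j}g_{\A}$ of equation (\ref{alke}), for which $\mathrm{Ric}(g_j)\ge -g_j$ holds identically; these time-independent Yau solutions are not time slices of the flow, and no comparison between the two families is established. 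Without a Ricci lower bound (or some substitute structure theory) for $g(t)$ itself, your subsequential limit $(X_\infty,d_\infty)$, its regular/singular decomposition, and the separation-of-points argument are all unavailable.

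The remaining steps are also weaker than you suggest. The integral condition (3) of Lemma \ref{locesk} cannot be read off from the cohomological identity alone, since $R(g(t))+n$ has no a priori sign; one needs the maximum-principle lower bound $R+n\ge -Ce^{-t}$ to split off the negative part, and in any case this condition is moot once (1) fails. Your parabolic adaptation of Section 4.5 is likewise speculative: Proposition \ref{digest} is an elliptic estimate for the regularized family (\ref{4365}) approximating the \emph{static} K\"ahler--Einstein current, and it is used to bound the diameter of the limit space $(X_\infty,d_\infty)$ built from the elliptic approximations, not to bound $\mathrm{diam}_{g(t)}(X)$ along the flow; transplanting it to the parabolic equation requires absorbing $\partial_t\tilde\varphi$ in the maximum principle, and the known bound on $\dot\varphi+\varphi$ controls only that combination, not the localized quantities entering your barrier near $E$. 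In short, the proposal reduces the conjecture to exactly the two ingredients that are missing in the literature --- a uniform lower Ricci (or equivalent structural) bound for the flow and a time-uniform diameter bound --- and asserts rather than proves them.
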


\medskip

\noindent {\it Log canonical pairs.} We would  like to remark that both Theorem \ref{main1} and Theorem \ref{main2} and be generalized to a pair $(X, D)$, where $D$ is a divisor of simple normal crossings.  In general, one can consider a normal projective variety $X$ with crepant singularities paired with an effective divisor of simple normal crossings $D=\sum_{i=1}^I a_iE_i - \sum_{j=1}^J F_j$, where $a_i>-1$, $E_i$ and $F_j$ are smooth prime divisors for all $i, j$. We assume that $K_X+D$ is ample. By \cite{B}, there exists a unique K\"ahler-Einstein current $\omega_{KE}$ on $X\setminus F$ with $\int_{X\setminus D} \omega_{KE}^n = [K_X+D]^n$, where $F=\sum_{j=1}^J F_j$. Using the argument in the proof of Theorem \ref{main2} and the argument in section 5, one should be able to show that the metric completion $(X_{reg}\setminus D, \omega_{KE})$ is homeomorphic to the quasi-projective variety $X\setminus F$.

Another interesting direction is to study the Riemannian geometry of the twisted K\"ahler-Einstein current on a canonical model of non-general type introduced by the author and Tian \cite{ST1, ST2}. We expect the gradient estimate similar to Proposition \ref{gradeke} to hold in this collapsing case as well.


\section{Degeneration of K\"ahler-Einstein manifolds of general type}

Suppose $\Psi: \mathcal{X} \rightarrow B$ is a flat projective family of $n$-dimensional canonically polarized varieties of general type over an open disc in $\mathbb{C}$. We assume that 
\begin{enumerate}

\item $X_t = \Psi^{-1}(t)$ is smooth for $t\in B^*=B\setminus \{0\}$. We let $g_t\in -c_1(X_t)$ be the unique K\"ahler-Einstein metric on $X_t$ for $t\in B^*$. 

\smallskip

\item $\mathcal{X}$ has at worst canonical singularities. $X_0$ is reduced and irreducible with at worse log canonical singularities. 

\smallskip

\item the relative canonical sheaf $K_{\mathcal{X}/B}$ is an ample $\mathbb{Q}$-line bundle with $(K_{\mathcal{X}/B} )|_{X_t} = K_{X_t}$ for all $t\in B$.

\smallskip

\item  for any $k$, $R$ and any smooth holomorphic section $\eta\in H^0(\mathcal{X}, (K_{\mathcal{X}/B})^k)$, 
$$\sup_{t\in B^*} \int_{B_{g_t}(p_t, X_t)} \left|\eta|_{X_t} \right|^{2/k} < \infty, $$
where $p_t$ is a continuous family of points in $\mathcal{X}$ with $p_0\in (X_0)_{reg}$ and $p_t\in X_t$, $B_{g_t}(p_t, X_t)$ is the geodesic ball in $X_t$ of radius $R$ centered at $p_t$. 
 When $X_0$ is log terminal, we assume that for any $k$, $R$ and any smooth holomorphic section $\eta\in H^0(\mathcal{X}, (K_{\mathcal{X}/B})^k)$,
$$\sup_{t\in B^*} \int_{X_t } \left|\eta|_{X_t} \right|^{2/k} < \infty.$$

\end{enumerate}

We remark that (4) is a technical assumption and one might be able to remove it. In particular, when $X_0$ is log terminal, the assumption might always hold by applying arguments of Gross in \cite{RZ} (Theorem B.1 (ii)). 

After embedding $\mathcal{X}$ into $\mathbb{CP}^{d_k}$ by the $k$-power of $K_{\mathcal{X}/B}$ for sufficiently large $k$, we let $\chi\in -c_1(K_{\mathcal{X}/B})$ be a smooth K\"ahler form on $\mathcal{X}$ induced from the projecting embedding. Then $\chi_t = \chi|_{X_t}$ is a smooth K\"ahler form in $-c_1(X_t)$ for $t\in B^*$. We can pick a smooth real valued $(n,n)$-form $\Omega$ on $\mathcal{X}$ such that $\Omega_t = \Omega|_{X_t}$ is a smooth non degenerate volume form on $X_t$ for $\in B^*$. $\Omega$ can be expressed by $\sum_j (\eta_j \wedge \overline \eta_j)^{1/k} $, where $\eta_j \in H^0(\mathcal{X}, (K_{\mathcal{X}/B})^k) $ and $(K_{\mathcal{X}/B})^k$ is generated by $\{ \eta_j   \}_j$ over $B$, for some sufficiently large $k$. We can then assume that on $\mathcal{X}$, 
 $$\ddbar \log \Omega = \chi. $$
 We now consider the following family of complex Monge-Ampere equations on $X_t$ for $t\in B$.
 \begin{equation} \label{famke}
 (\chi_t + \ddbar\varphi_t)^n = e^{\varphi_t} \Omega_t
 \end{equation}
Obviously equation (\ref{famke}) admits a unique smooth solution for all $t \in B^*$. We let $\omega_t = \chi_t+ \ddbar \varphi_t$ be the K\"ahler-Einstein form for $t\in B^*$, and let $g_t$ be the associated K\"ahler metric.


\subsection{A priori estimates}

The following lemma can be calculated locally through local embedding by the same argument in \cite{RZ} (Theorem B.1 (i) ). 

\begin{lemma} There exists $C>0$ such that for all $t\in B^*$, we have on $X_t$
$$(\chi_t)^n \leq C \Omega_t.$$

\end{lemma}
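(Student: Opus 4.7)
The strategy is a local argument on $\mathcal{X}$ combined with a finite open cover, exploiting the explicit expression of $\Omega$ as a sum of $(n,n)$-forms built from the generating sections $\eta_j\in H^0(\mathcal{X},(K_{\mathcal{X}/B})^k)$. First, cover $\mathcal{X}$ by finitely many open neighborhoods $\{U_\alpha\}$ such that on each $U_\alpha$ some $\eta_{j(\alpha)}$ is nowhere vanishing, which is possible because $\{\eta_j\}$ generates $(K_{\mathcal{X}/B})^k$. Writing the remaining sections in the local frame $\eta_{j(\alpha)}$ as $\eta_j=f^\alpha_j\eta_{j(\alpha)}$ with $f^\alpha_{j(\alpha)}\equiv 1$ and $f^\alpha_j$ holomorphic, one has
\[
\Omega\big|_{U_\alpha} = \Big(\sum_j|f^\alpha_j|^{2/k}\Big)(\eta_{j(\alpha)}\wedge\overline{\eta_{j(\alpha)}})^{1/k}\;\ge\;(\eta_{j(\alpha)}\wedge\overline{\eta_{j(\alpha)}})^{1/k},
\]
since the $j=j(\alpha)$ term contributes $1$. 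It therefore suffices to prove that on each $U_\alpha$ there is a constant $C_\alpha$ with
\[
(\chi_t)^n\big|_{U_\alpha\cap X_t}\;\le\;C_\alpha\,(\eta_{j(\alpha)}\wedge\overline{\eta_{j(\alpha)}})^{1/k}\big|_{U_\alpha\cap X_t}
\]
uniformly in $t\in B^*$, and then take $C=\max_\alpha C_\alpha$.

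Away from $\mathcal{X}_{sing}$ this estimate is routine: both sides are smooth positive $(n,n)$-forms on the fibers of $\Psi$ (using that $\eta_{j(\alpha)}$ is a nowhere-vanishing frame), so their ratio is a smooth function on $\mathcal{X}_{reg}\cap U_\alpha$, bounded on any compact subset by continuity and compactness of $\mathcal{X}$. Since $\mathcal{X}_{sing}\subset X_0$ under our hypotheses, the only concern is uniformity of this ratio as points of the smooth fibers $X_t$ approach $\mathcal{X}_{sing}$ as $t\to 0$.

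The near-singularity control is precisely what the canonical singularity hypothesis provides. Passing to a log resolution $\pi:\widetilde{\mathcal{X}}\to\mathcal{X}$ with
\[
K_{\widetilde{\mathcal{X}}/B}=\pi^*K_{\mathcal{X}/B}+\sum_i a_iE_i,\qquad a_i\ge 0,
\]
the pulled-back $\pi^*\chi$ is smooth on $\widetilde{\mathcal{X}}$, and $\pi^*((\eta_{j(\alpha)}\wedge\overline{\eta_{j(\alpha)}})^{1/k})$ equals $|\sigma_E|^{2/k}(\widetilde\eta\wedge\overline{\widetilde\eta})^{1/k}$ in a local frame $\widetilde\eta$ of $(K_{\widetilde{\mathcal{X}}/B})^k$, where $\sigma_E$ is the defining section of the exceptional divisor $\sum_i k a_i E_i$. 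The main technical step is to show that the ratio $(\pi^*\chi)^n\big/\pi^*((\eta_{j(\alpha)}\wedge\overline{\eta_{j(\alpha)}})^{1/k})$ extends to a bounded function on $\widetilde{\mathcal{X}}$; this is an $L^\infty$ comparison at canonical singularities, carried out in local charts adapted to a smooth embedding of $\mathcal{X}$, following the calculation of \cite{RZ}, Theorem B.1~(i). Restricting to the smooth fibers $\widetilde X_t=X_t$ for $t\in B^*$ then yields the uniform bound on $U_\alpha$, and taking the maximum over the finite cover completes the proof.

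The hard part is the local boundedness on $\widetilde{\mathcal{X}}$ at the canonical singularities: the non-negativity of the discrepancies $a_i$ is exactly what forces the vanishing of $\pi^*\chi$ along each exceptional divisor to match, after taking the $n$-th power and restricting to fibers, the vanishing encoded by $|\sigma_E|^{2/k}$ in the denominator. This matching is what the \cite{RZ} argument establishes, and it is the only step in the proof where the canonical-singularity assumption is actually used.
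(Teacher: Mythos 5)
Your overall route is the same as the paper's: the paper proves this lemma in one line, by invoking the local computation ``through local embedding'' of Rong--Zhang, Theorem B.1(i), and you too delegate the hard estimate to exactly that computation. The problem is with the scaffolding you build around it, which mislocates where the difficulty lies. You claim that away from $\mathcal{X}_{sing}$ the bound is ``routine'' because both sides are smooth fiberwise $(n,n)$-forms whose ratio is continuous on $\mathcal{X}_{reg}$ and hence bounded on compact sets. This fails on the critical locus of $\Psi$, i.e.\ at points of $(X_0)_{sing}$, which need not be contained in $\mathcal{X}_{sing}$ at all: for instance $\mathcal{X}=\{xy-z^2=t\}$ has smooth total space while $X_0$ is the quadric cone, singular at the origin. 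At such a point $d\Psi=0$, the fibration degenerates, and the fiberwise form $(\eta_{j(\alpha)}\wedge\overline{\eta_{j(\alpha)}})^{1/k}|_{X_t}$ is a Gelfand--Leray (residue) form whose density with respect to any fixed ambient metric blows up as $t\to 0$; it does not extend to a ``smooth positive form on the fibers'' at $t=0$, and the ratio is not a continuous function on $\mathcal{X}_{reg}\cap U_\alpha$. The desired inequality does hold there, but precisely because the denominator blows up -- and seeing this is the content of the local-embedding/adjunction computation, not of compactness. So the locus on which you must run the RZ-type argument is the whole non-submersive locus of $\Psi$, not merely $\mathcal{X}_{sing}$.

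A second, smaller point: your description of the role of the canonical-singularity hypothesis is at best unexplained and arguably backwards. Since $a_i\ge 0$, the pulled-back generators of $(K_{\mathcal{X}/B})^k$ acquire \emph{zeros} of order $ka_i$ along the exceptional divisors $E_i$; these zeros sit in the denominator $\Omega$, which makes the inequality $(\pi^*\chi)^n\le C\,\pi^*\Omega$ harder near the exceptional set, not easier. What must be checked is that the degeneration of $(\pi^*\chi)^n$ restricted to fibers along the exceptional directions dominates this vanishing, and that is again a nontrivial local computation rather than a formal consequence of $a_i\ge 0$. In short, your proof is architecturally the paper's proof (reduce everything to RZ B.1(i)), but the two steps you present as ``routine'' -- uniformity across the critical locus of $\Psi$ inside $\mathcal{X}_{reg}$, and the matching of vanishing orders on a resolution -- are exactly the substance of the cited computation and cannot be dispatched by continuity/compactness or by the sign of the discrepancies alone.
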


Then immediately by the maximum principle applied to the equation (\ref{famke}), we have a uniform upper bound for the potential $\varphi_t$. 

\begin{corollary} \label{5upb}There exists $C>0$ such that for all $t\in B^*$, we have on $X_t$
$$\varphi_t \leq C. $$

\end{corollary}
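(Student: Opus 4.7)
The plan is to apply the classical maximum principle to equation (\ref{famke}) combined with the volume comparison from the preceding lemma. Since $X_t$ is smooth and compact for $t\in B^*$, the smooth function $\varphi_t$ attains its maximum at some point $p_t \in X_t$. At $p_t$ the Hessian satisfies $\ddbar \varphi_t(p_t) \leq 0$ as a $(1,1)$-form, so the K\"ahler form $\omega_t = \chi_t + \ddbar \varphi_t$, which is positive definite by construction, obeys $0 \leq \omega_t(p_t) \leq \chi_t(p_t)$. Raising to the $n$-th wedge power (monotonicity of the top form on the cone of nonnegative $(1,1)$-forms) gives $(\chi_t + \ddbar \varphi_t)^n \leq (\chi_t)^n$ at $p_t$.

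Next I would substitute into equation (\ref{famke}) to get
\begin{equation*}
e^{\varphi_t(p_t)} \Omega_t(p_t) = (\chi_t + \ddbar \varphi_t)^n(p_t) \leq (\chi_t)^n(p_t) \leq C \, \Omega_t(p_t),
\end{equation*}
where the final inequality uses the preceding lemma $(\chi_t)^n \leq C\, \Omega_t$ with $C$ uniform in $t\in B^*$. Since $\Omega_t(p_t) > 0$ (the form $\Omega_t$ is a nondegenerate volume form on $X_t$ for $t\in B^*$), we can cancel it to obtain $e^{\varphi_t(p_t)} \leq C$, hence $\varphi_t(p_t) \leq \log C$. As $p_t$ is the maximum point, this gives $\sup_{X_t} \varphi_t \leq \log C$, which is the claimed uniform upper bound.

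There is essentially no obstacle here: the estimate is the standard one-line maximum principle argument for K\"ahler-Einstein equations of negative curvature, and the only ingredient beyond calculus is the uniform comparison $(\chi_t)^n \leq C \Omega_t$ provided by the lemma above. The only point to double check is the uniform positivity of $\Omega_t$ on $X_t$ for $t\in B^*$ (so that the cancellation is valid pointwise at $p_t$), which follows from the construction of $\Omega$ as $\sum_j (\eta_j \wedge \overline{\eta}_j)^{1/k}$ with $\{\eta_j\}$ generating $(K_{\mathcal{X}/B})^k$ over $B$ together with the smoothness of $X_t$ for $t\neq 0$.
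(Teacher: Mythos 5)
Your argument is correct and is exactly what the paper means when it says the bound follows ``immediately by the maximum principle applied to the equation (\ref{famke})'': evaluating at a maximum point of $\varphi_t$, using $0<\omega_t(p_t)\leq \chi_t(p_t)$ and hence $\omega_t^n(p_t)\leq \chi_t^n(p_t)$, and invoking the uniform comparison $(\chi_t)^n\leq C\,\Omega_t$ from the preceding lemma. You have simply written out the standard details that the paper leaves implicit, so the two proofs coincide.
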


We pick a base point $p_0 \in (X_0)_{reg}$. Then there exists a sequence of points $p_t$ with $d_\chi(p_t, p_0) \rightarrow 0$ as $t\rightarrow 0$ with respect to the metric $\chi$ on $\mathcal{X}$. Let $B_t(R)= B_{g_t}(p_t, R)$ be the geodesic ball in $(X_t, g_t)$ centered at $p_t$ with radius $R$.

\begin{lemma} \label{5lob}For any $R>0$, there exists $C_R>0$ such that for all $t\in B^*$
$$\varphi_t \geq - C_R$$
on $B_t(R)$. 
\end{lemma}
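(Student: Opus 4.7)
The plan is to reduce the desired local lower bound on geodesic balls to a pointwise lower bound $\varphi_t(p_t) \geq -C_0$ at the base point, using a uniform gradient estimate, and then obtain that pointwise bound by contradiction from assumption~(4) together with the K\"ahler-Einstein equation.

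First I would establish a uniform gradient estimate
\[
\sup_{X_t}|\nabla\varphi_t|_{g_t}^2 \leq L^2, \qquad t \in B^*,
\]
following the same strategy as Proposition~\ref{gradeke}. Namely, on each smooth fiber $X_t$ run the normalized K\"ahler-Ricci flow $\partial_s g = -\mathrm{Ric}(g) - g$ from a smooth reference metric depending smoothly on $t$, invoke the estimate on $|\nabla(\dot\varphi_s + \varphi_s)|^2_{g(s)}$ from \cite{Z2, ST4}, and let $s\to\infty$; the constants depend only on fixed data of the family $(\mathcal{X}, K_{\mathcal{X}/B}, \chi, \Omega)$, which is uniform in $t\in B^*$. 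The resulting Lipschitz bound $|\varphi_t(x) - \varphi_t(y)| \leq L\,d_{g_t}(x,y)$ reduces the lemma to producing a uniform constant $C_0$ with $\varphi_t(p_t) \geq -C_0$; one then takes $C_R := C_0 + LR$.

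Next I would argue by contradiction. Suppose $\varphi_{t_k}(p_{t_k}) \to -\infty$ along some sequence $t_k \to 0$. The Lipschitz estimate gives $\varphi_{t_k} \leq \varphi_{t_k}(p_{t_k}) + LR$ on $B_{t_k}(R)$, and inserting this into the Monge-Amp\`ere equation $\omega_t^n = e^{\varphi_t}\Omega_t$ together with the integrability assumption $\int_{B_{t_k}(R)}\Omega_{t_k} \leq C_R$ from hypothesis~(4) yields
\[
\mathrm{Vol}_{g_{t_k}}\!\bigl(B_{t_k}(R)\bigr) = \int_{B_{t_k}(R)} e^{\varphi_{t_k}}\Omega_{t_k} \leq C_R\, e^{\varphi_{t_k}(p_{t_k}) + LR} \longrightarrow 0.
\]
Combined with $\mathrm{Ric}(g_{t_k}) = -g_{t_k}$ and Bishop-Gromov, this forces the pointed metric spaces $(X_{t_k}, g_{t_k}, p_{t_k})$ to collapse near $p_{t_k}$.

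The main obstacle is ruling out this collapse. The plan is to exploit the smooth product structure of $\mathcal{X}\to B$ near $p_0 \in (X_0)_{\mathrm{reg}}$: on a fixed coordinate polydisc $U\subset\mathcal{X}$ around $p_0$, both $\chi_t$ and $\Omega_t$ are uniformly smooth and uniformly non-degenerate, and the smallness $\det g_{t_k}(p_{t_k}) = e^{\varphi_{t_k}(p_{t_k})}h(p_{t_k})\to 0$ forces at least one eigenvalue of $g_{t_k}$ to be very small in that chart. Combined with the gradient estimate, this smallness propagates across $B_{t_k}(R)\cap U$ via the Lipschitz bound on $\varphi_{t_k}$, so that the Euclidean (or $\chi$-)size of $B_{t_k}(R)\cap U$ blows up; the uniform comparability $\Omega_{t_k} \geq c \cdot (\chi_{t_k})^n$ on $U$ then contradicts the bound $\int_{B_{t_k}(R)}\Omega_{t_k}\leq C_R$. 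The delicate point is the possible anisotropy of $g_{t_k}$, which I would address either by a direction-by-direction pigeonhole in local coordinates, or, more robustly, by applying the Tian-Wang almost-K\"ahler-Einstein framework of section~4.2 to extract a smooth pointed Gromov-Hausdorff limit of $(X_{t_k}, p_{t_k}, g_{t_k})$ near $p_0$, in which the limiting K\"ahler-Einstein potential is continuous on $(X_0)_{\mathrm{reg}}$ and hence bounded at the interior point $p_0$, contradicting the assumed blow-down.
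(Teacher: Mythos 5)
Your overall strategy (global gradient bound $\Rightarrow$ reduce to a pointwise bound at $p_t$ $\Rightarrow$ contradiction via volume collapse) is not the paper's route, and as written it has two genuine gaps, each of which is essentially circular.

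First, the uniform estimate $\sup_{X_t}|\nabla\varphi_t|^2_{g_t}\leq L^2$ for all $t\in B^*$ is asserted, not proved. The method of Proposition \ref{gradeke} via the normalized K\"ahler-Ricci flow and \cite{Z2, ST4} yields constants that depend on $C^0$/oscillation bounds for the potential on the given fiber (and on Schwarz-lemma constants which again require the oscillation of $\varphi_t$); these are exactly the quantities that are \emph{not} uniform as $t\to 0$ — indeed a uniform two-sided bound fails in the limit when $S_{lc}\neq\phi$, and a uniform local lower bound is precisely what Lemma \ref{5lob} is supposed to produce. A Cheng--Yau type bound $|\nabla\varphi_t|^2_{g_t}\leq C(A-\varphi_t)$, using only the uniform upper bound of Corollary \ref{5upb}, might be salvageable and would still reduce the lemma to the pointwise bound at $p_t$, but you would have to prove it; the flat claim that the constants "depend only on fixed data of the family" does not hold for the cited argument.

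Second, the contradiction step is not closed. To rule out $\varphi_{t_k}(p_{t_k})\to-\infty$ you need either a uniform volume lower bound for $B_{g_{t_k}}(p_{t_k},r_0)$ or uniform two-sided bounds for $\varphi_t$ in a fixed neighborhood of $p_0$; in the paper these come from Lemma \ref{54} (a maximum principle on the semistable reduction of the total space, with log-pole barriers) and Lemma \ref{55} (Tsuji-trick local estimates), which you neither prove nor invoke. Your pigeonhole alternative propagates the smallness of $\det g_{t_k}$ using a Lipschitz bound measured in the $g_{t_k}$-metric, which gives no control along Euclidean directions where $g_{t_k}$ may be large (no bound $g_t\geq c\,\chi_t$ is available at this stage — that is again a Schwarz lemma requiring the oscillation of $\varphi_t$); and the Tian--Wang pointed limit does not by itself identify the limit region with $(X_0)_{reg}$ near $p_0$ or give finiteness of the limiting potential there — that identification (Corollary \ref{5smcon}, Proposition \ref{5ghcon}) is established in the paper only after the potential estimates you are trying to prove. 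For contrast, the paper's proof is short and avoids all of this: since $\Delta_{\omega_t}\varphi_t = n - \mathrm{tr}_{\omega_t}\chi_t\leq n$, the function $e^{-\delta\varphi_t}$ is almost subharmonic, and Moser iteration on $B_t(2R)$ with a uniform local Sobolev constant gives $\|e^{-\delta\varphi_t}\|_{L^\infty(B_t(R))}\leq C_1\int_{B_t(2R)}e^{-2\delta\varphi_t}\omega_t^n$, which by the Monge-Amp\`ere equation, the upper bound on $\varphi_t$ and assumption (4) is uniformly bounded; your use of assumption (4) only in the collapse computation leaves the surrounding machinery unsupported.
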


\begin{proof}

We   apply the argument in section 4.3 because the Sobolev constant on $B_t(R)$ is uniformly bounded for all $t\in B^*$. Applying Moser's iteration to $e^{-\delta \varphi_t}$ independent of $t$ with respect to $\omega_t$, there exist $C_1, C_2>0$ such that 
$$|| e^{-\delta \varphi_t}||_{L^\infty(B_t(R))} \leq C_1 \int_{B_t(2R)} e^{-2\delta \varphi_t} \omega_t^n  = C_1 \int_{B_t(R)}   \Omega_t \leq C_2, $$
where the last two inequalities from the assumption (4)  and the uniform upper bound for $\varphi_t$ in the beginning of section 5. 

\end{proof}

After applying the stable reduction, we can assume there exists a log resolution $\pi: \tilde{\mathcal{X}} \rightarrow \mathcal{X}$ 
such that $\tilde \Psi: \mathcal{X}\rightarrow B$ commutes with $\pi$ and $\Psi$, both $\mathcal{X}$ and $X_0$ are smooth, and $\pi$ is normal crossing, i.e., $\tilde X_0=\tilde\Psi^{-1}(0)$ is the union of smooth divisors of simple normal crossing.  Let $\pi_0: \tilde X_0 \rightarrow X_0$ be the induced resolution of $X_0$. Then the central fibre $\tilde\Psi^{-1}(0) = \tilde X_0 \cup E $, where $E = \cup_j E_j$ is the union of prime divisors of simple normal crossings. Let $h_E$ be a smooth hermitian metric on the line bundle associated to $E$. 
Let $\tilde \chi = \pi^* \chi$. By Kodaira's lemma, there exists an effective divisor $\tilde E$ whose support coincides with the support of $E$ and a smooth hermitian metric $h_{\tilde E}$ equipped on the line bundle associated to $\tilde E$ such that $ \tilde \chi_\epsilon = \pi^* \chi - \epsilon Ric(h_{\tilde E})$ is a K\"ahler form on $\mathcal{X}$ for all sufficiently small $\epsilon >0$. Let $\sigma_{\tilde E}$ be the defining section of $\tilde E$.

\begin{lemma} \label{54} For any $\epsilon>0$, there exists $C_\epsilon>0$ such that on $X_t$
$$ \tilde \varphi_t \geq \epsilon \log |\sigma_{\tilde E} |_{h_{\tilde{E}}}^2 - C_\epsilon, $$
for all $t\in B$.

\end{lemma}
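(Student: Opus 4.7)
The plan is to apply the maximum principle to the auxiliary function
$$u_t := \tilde\varphi_t - \epsilon\log|\sigma_{\tilde E}|^2_{h_{\tilde E}}$$
on the fibre $\tilde X_t$ of $\tilde\Psi : \tilde{\mathcal{X}}\to B$. First I would pull back the Monge-Amp\`ere equation (\ref{famke}) along $\pi_t:\tilde X_t\to X_t$ to obtain $(\pi_t^*\chi_t+\ddbar\tilde\varphi_t)^n = e^{\tilde\varphi_t}\pi_t^*\Omega_t$. Using the Poincar\'e-Lelong identity $-\ddbar\log|\sigma_{\tilde E}|^2_{h_{\tilde E}} = \ric(h_{\tilde E})$ on $\tilde X_t\setminus \tilde E$, together with $\pi^*\chi = \tilde\chi_\epsilon + \epsilon\,\ric(h_{\tilde E})$, one gets
$$\pi_t^*\chi_t + \ddbar\tilde\varphi_t = \tilde\chi_\epsilon|_{\tilde X_t} + \ddbar u_t,$$
which recasts the Monge-Amp\`ere equation in the equivalent form
$$\bigl(\tilde\chi_\epsilon|_{\tilde X_t} + \ddbar u_t\bigr)^n = e^{u_t}\,|\sigma_{\tilde E}|^{2\epsilon}_{h_{\tilde E}}\,\pi_t^*\Omega_t.$$

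For $t\in B^*$ the fibre $\tilde X_t$ is smooth and, by construction, disjoint from $\tilde E$ (since $\mathrm{Supp}\,\tilde E \subset \tilde\Psi^{-1}(0)$), so $u_t$ is smooth on the compact manifold $\tilde X_t$ and attains its minimum at some $p_t^{\min}$. At $p_t^{\min}$ the Hessian $\ddbar u_t\ge 0$, hence $(\tilde\chi_\epsilon|_{\tilde X_t}+\ddbar u_t)^n \ge (\tilde\chi_\epsilon|_{\tilde X_t})^n$, and substituting into the rewritten equation yields
$$e^{u_t(p_t^{\min})} \;\ge\; \frac{(\tilde\chi_\epsilon|_{\tilde X_t})^n}{|\sigma_{\tilde E}|^{2\epsilon}_{h_{\tilde E}}\,\pi_t^*\Omega_t}\bigg|_{p_t^{\min}}.$$
The remaining step is to bound this right-hand side uniformly from below in $t$, i.e.\ to establish $\pi_t^*\Omega_t \le C\,(\tilde\chi_\epsilon|_{\tilde X_t})^n$ on $\tilde X_t$ with $C$ independent of $t$. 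Since $\pi^*\Omega$ is a smooth $(n,n)$-form on $\tilde{\mathcal{X}}$ and $\tilde\chi_\epsilon$ is K\"ahler on $\tilde{\mathcal{X}}$, a direct computation in local coordinates adapted to $\tilde\Psi$ shows that the fibrewise ratio $\pi_t^*\Omega_t/(\tilde\chi_\epsilon|_{\tilde X_t})^n$ is a continuous function on $\tilde{\mathcal{X}}$, and is therefore uniformly bounded above on any relatively compact neighbourhood of the central fibre. After normalising $h_{\tilde E}$ so that $|\sigma_{\tilde E}|^2_{h_{\tilde E}}\le 1$, this produces a uniform constant $C_\epsilon$ with $u_t\ge -C_\epsilon$ on all of $\tilde X_t$ for every $t\in B^*$, which is the desired lower bound.

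The case $t=0$ then follows by passing to the limit, using that $\tilde\varphi_t\to \tilde\varphi_0$ locally smoothly on $\tilde X_0 \setminus \tilde E$; the inequality is vacuous along $\tilde X_0\cap \tilde E$, where the right-hand side is $-\infty$. The main (and essentially only) technical point is the uniform upper bound on $\pi_t^*\Omega_t/(\tilde\chi_\epsilon|_{\tilde X_t})^n$, which rests purely on smoothness of $\Omega$ and $\tilde\chi_\epsilon$ on the total space $\tilde{\mathcal{X}}$. A harmless subtlety is that $p_t^{\min}$ may drift toward $\tilde E$ as $t\to 0$; this does not affect the argument since the pointwise maximum-principle estimate is uniform and $\tilde X_t$ remains disjoint from $\tilde E$ for every $t\ne 0$.
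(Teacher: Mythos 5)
Your first step coincides with the paper's: restrict to the compact fibre $\tilde X_t$, $t\neq 0$ (which indeed misses $\tilde E$), and apply the maximum principle to $u_t=\tilde\varphi_t-\epsilon\log|\sigma_{\tilde E}|^2_{h_{\tilde E}}$, arriving at $e^{u_t(p)}\geq (\tilde\chi_\epsilon|_{\tilde X_t})^n/\bigl(|\sigma_{\tilde E}|^{2\epsilon}_{h_{\tilde E}}\,\pi_t^*\Omega_t\bigr)$ at the minimum point. The gap is in the step you wave through afterwards: the claim $\pi_t^*\Omega_t\le C\,(\tilde\chi_\epsilon|_{\tilde X_t})^n$ uniformly in $t$, justified by saying the fibrewise ratio is "continuous on $\tilde{\mathcal{X}}$" because both forms are smooth on the total space. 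Smoothness on the total space does not control a ratio of fibrewise restrictions near the degenerate central fibre, because the comparison involves the transverse direction $dt$, which degenerates there; and under the paper's hypotheses (the $\eta_j$ generate $(K_{\mathcal{X}/B})^k$ over $B$) the claim is in fact false. Concretely, in the model of a nodal degeneration $xz=t$ the relative canonical form restricts on $X_t$ to $dz/z$, so $\Omega_t\sim |z|^{-2}\sqrt{-1}\,dz\wedge d\bar z$ on the neck, while the restriction of any smooth K\"ahler form from the total space is $\sim(1+|t|^2|z|^{-4})\sqrt{-1}\,dz\wedge d\bar z$; the supremum over $X_t$ of the ratio blows up like $|t|^{-1}$. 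Even the weaker inequality your display actually needs (with the factor $|\sigma_{\tilde E}|^{2\epsilon}$ retained) fails for the smooth reference metric: after blowing up the node ($z=s$, $x=su$, $t=s^2u$, $|\sigma_{\tilde E}|^2\sim|s|^{2a}$) the quantity $(\tilde\chi_\epsilon|_{\tilde X_t})/\bigl(|\sigma_{\tilde E}|^{2\epsilon}\Omega_t\bigr)\sim |s|^{2-2a\epsilon}+|t|^2|s|^{-4-2a\epsilon}$ has minimum of order $|t|^{(2-2a\epsilon)/3}\to 0$ at $|s|\sim|t|^{1/3}$, so your argument only yields $u_t\geq c\log|t|-C$, which is not the assertion of the lemma.

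This is exactly why the paper does not run the maximum principle against $\tilde\chi_\epsilon$ alone. It perturbs the reference form by the additional term $\epsilon^2\delta\,\ddbar|\sigma_{E_j}|^{2\delta}_{h_{E_j}}$, producing a metric with cone singularities along the exceptional divisors whose fibrewise volume form carries an extra factor comparable to $\prod_i|z_i|^{-2(1-\delta)}$, and it wedges the Monge--Amp\`ere equation with $\pi^*(dt\wedge d\bar t)$ and computes in normal-crossing coordinates where $t=xz_1\cdots z_m$; the Jacobian factors of $dt\wedge d\bar t$ together with the conical blow-up of the reference volume then dominate $\pi^*(dt\wedge d\bar t\wedge\Omega)$ at the minimum point uniformly in $t$, which is the content of the displayed local estimate in the paper's proof. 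Without some such modification of the reference geometry (or an equivalent weighting), the uniform lower bound for the right-hand side of your maximum-principle inequality is simply not available, so the proposal as written does not prove the lemma. Your remaining remarks (disjointness of $\tilde X_t$ from $\tilde E$ for $t\neq0$, and handling $t=0$ by a limiting argument) are fine and consistent with the paper.
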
  

\begin{proof} Let $\sigma_{E_j}$ be the defining section for $E_j$ and $h_{E_j}$ the smooth hermitian metric on the line bundle associated to $E_j$ such that 
$$\tilde\chi_{t, \epsilon}  = \tilde\chi_t - \epsilon \ddbar \log Ric(h_{\tilde E} )+  \epsilon^2 \delta  \ddbar \left(  |\sigma_{E_j}|^{2\delta}_{h_{E_j}} \right) . $$
is a K\"ahler metric on $\tilde{\mathcal{X}}$ with conical singularities along $E_j$ for fixed $\delta>0$ sufficiently close to $0$ and sufficiently small $\epsilon>0$. Let 
$$\tilde \varphi_{t, \epsilon} = \tilde \varphi_t - \epsilon \log |\sigma_{\tilde E}|^2_{h_{\tilde E}} -\epsilon^2 \delta \ddbar  |\sigma_{E_j}|^{2\delta_j}_{h_{E_j}} . $$ 
Then we have for $t\in B^*$, 
$$
 \pi^*( dt\wedge d\bar t) \wedge (\tilde \chi_{t, \epsilon} + \ddbar \tilde\varphi_{t, \epsilon} )^n = e^{\tilde \varphi_{t, \epsilon} + \epsilon \log |\sigma_{\tilde E}|^2_{h_{\tilde E}} + \epsilon^2 \left( |\sigma_{E_j}|^{2\delta_j}_{h_{E_j}} \right) } \pi^* (dt\wedge d\bar t\wedge  \Omega).
$$
We can now apply the maximum principle to $\tilde \varphi_{t, \epsilon}$. Suppose $\tilde \varphi_{t, \epsilon}$ achieves its minimum at $p_{t, \epsilon}$. Then 
\begin{equation}\label{lwer}  
e^{\tilde\varphi_{t, \epsilon}(p_{t, \epsilon}) } \geq \left.  e^{- \epsilon \log |\sigma_{\tilde E}|^2_{h_{\tilde E}} - \epsilon^2 \left( |\sigma_{E_j}|^{2\delta_j}_{h_{E_j}} \right)} \frac{ \pi^*(dt\wedge d\bar t) \wedge (\tilde\chi_{t, \epsilon})^n}{ \pi^* (dt \wedge d\bar t  \wedge \Omega)} \right|_{p_{t, \epsilon}}.
\end{equation}
Without loss of generality, we can assume $p_{t, \epsilon}$ lies close to the exceptional locus of $\pi$. By the normal crossing of $\pi$, we can assume that near $p_{t, \epsilon}$,
 $$\pi^{-1} (t) = x z_1z_2... z_m,  $$
where locally, $ x, z_1, ..., z_{n}$ are local holomorphic coordinates near $p'=0$, $1\leq m \leq n$ and locally   $\tilde X_0$ is defined by $x=0$ and the exceptional divisor $E'_i$ is defined by $z_i=0$, $i=1, ..., m$. Then 
$$  dt\wedge d \bar t = (z_1...z_m dx + \sum_{i=1}^m \frac{x z_1... z_m} {z_i} dz_i )\wedge (\bar z_1... \bar z_m dx + \sum_{i=1}^m \frac{\bar x \bar z_1... \bar z_m}{\bar z_i} d \bar z_i ), $$
(\ref{lwer}) locally becomes 
\begin{eqnarray*}
&&e^{\tilde\varphi_{t, \epsilon}(p_{t, \epsilon})} \\
& \geq & c_1 \frac{ ( |z_1...z_m|^2 + \sum_{i=1}^m \frac{|xz_1...z_m|^2}{|z_i|^2} ) (|z_1|^2...|z_m|^2)^{-(1-\delta)- \epsilon} dx\wedge d\bar x\wedge dz_1\wedge d\bar z_1 \wedge... \wedge  dz_{n}\wedge d\bar z_{n} }{ dx\wedge d\bar x\wedge dz_1\wedge d\bar z_1 \wedge... \wedge  dz_{n}\wedge d\bar z_{n}}\\
&\geq &c_2
\end{eqnarray*}
for some fixed $c_1, c_2>0$ by choosing $\delta>0$ sufficiently small.  The lemma follows after the above argument over finitely many local neighborhoods.

\end{proof}

\begin{lemma} \label{55} For $k>0$ and any compact set $K\subset\subset \mathcal{X} \setminus \left( (X_0)_{sing} \cup \mathcal{X}_{sing} \right)$, there exists $C_{k,K}$ such that 
$$||\varphi_t||_{C^k(K\cap X_t, \chi_t)} \leq C_{k,K}. $$

\end{lemma}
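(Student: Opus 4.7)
\medskip

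\noindent\textit{Proof plan.} The strategy is the standard four-step argument for the complex Monge--Amp\`ere equation (uniform $L^\infty$, uniform Laplacian bound, complex Evans--Krylov, and higher-order bootstrap via Schauder theory), carried out with all constants uniform as $t \to 0$. First I would fix an intermediate open set $K \subset\subset K' \subset\subset \mathcal{X} \setminus \left( (X_0)_{sing} \cup \mathcal{X}_{sing} \right)$. Since $K'$ misses both singular loci, the log resolution $\pi$ of Lemma \ref{54} is a biholomorphism over a neighborhood of $K'$, so $|\sigma_{\tilde E}|_{h_{\tilde E}}$ is bounded below by a positive constant on $K'$, the map $\Psi: \mathcal{X} \to B$ is a smooth submersion near $K'$, and in a local product trivialization of $\Psi$ the data $\chi_t$, $\Omega_t$ together with all their derivatives in fiber directions vary smoothly in $t$ up to and including $t = 0$. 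Combining Corollary \ref{5upb} with Lemma \ref{54} then yields a uniform two-sided $L^\infty$ bound for $\varphi_t$ on $K'$.

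\medskip

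\noindent Given this $C^0$ bound, the next step is Yau's second-order estimate applied to the cutoff quantity
\[
Q_t = \eta^2 \log tr_{\chi_t}(\omega_t) - A \varphi_t,
\]
where $\eta$ is a smooth cutoff compactly supported in $K'$ with $\eta \equiv 1$ on an open neighborhood of $K$, and $A$ is chosen large depending on the (uniform) upper bound for the bisectional curvature of $\chi_t$ on $K'$. Applying the maximum principle on $X_t$ in conjunction with the standard Aubin--Yau inequality for $\log tr_{\chi_t}(\omega_t)$ and the identity $\Delta_{\omega_t} \varphi_t = n - tr_{\omega_t}(\chi_t)$ yields a uniform bound $tr_{\chi_t}(\omega_t) \leq C_K$ on $K$. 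Together with equation (\ref{famke}) and the uniform $L^\infty$ bound on $\varphi_t$, this produces a uniform two-sided comparison $C^{-1} \chi_t \leq \omega_t \leq C \chi_t$ on $K$, so equation (\ref{famke}) is uniformly elliptic there with smooth data.

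\medskip

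\noindent From this point on, the complex Evans--Krylov theorem provides a uniform $C^{2,\alpha}$ bound on an interior subset of $K$, and differentiating equation (\ref{famke}) and applying linear Schauder theory to the linearized Monge--Amp\`ere operator upgrades this successively to uniform $C^k$ bounds for every $k \geq 0$, completing the estimate after a final shrinking of the domain. The principal obstacle in all of this is not the analysis on a single fiber, which is entirely classical, but rather the uniformity of every constant as $t \to 0$: the domains $K \cap X_t$, the background metrics $\chi_t$, the volume forms $\Omega_t$, and the curvature and Schauder constants must all be controlled simultaneously. This uniformity is purchased precisely by choosing $K'$ in the smooth locus of $\mathcal{X}$ away from $(X_0)_{sing}$, where $\Psi$ is a smooth submersion, so that in a local product trivialization all relevant geometric quantities depend smoothly on the parameter $t$ near $0 \in B$, and the corresponding estimates are uniform in $t$.
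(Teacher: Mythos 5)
Your overall skeleton (uniform $C^0$ bound from Corollary \ref{5upb} and Lemma \ref{54}, then a second-order bound, then Evans--Krylov and Schauder bootstrap) matches the shape of the paper's argument, and the first and last steps are fine. The gap is in the middle step. A cutoff-localized Yau estimate with $Q_t = \eta^2 \log tr_{\chi_t}(\omega_t) - A\varphi_t$ cannot be closed with only an $L^\infty$ bound on $\varphi_t$: at an interior maximum of $Q_t$ the term $2\,\mathrm{Re}\left(\nabla(\eta^2)\cdot \bar\nabla \log tr_{\chi_t}(\omega_t)\right)$ must be handled, and the critical-point identity $\eta^2\nabla\log tr = A\nabla\varphi_t - \log tr\,\nabla(\eta^2)$ converts it into a term involving $|\nabla\varphi_t|_{\omega_t}$ with an extra factor $\eta^{-1}$. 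You have no uniform gradient bound for $\varphi_t$ at this stage, and there is no way to absorb these terms by the good term $A\,tr_{\omega_t}(\chi_t)$; indeed purely interior $C^2$ estimates for the complex Monge--Amp\`ere equation depending only on the $C^0$ norm of the potential fail in general (Pogorelov-type examples), so some global input is unavoidable.

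The paper avoids the cutoff entirely by Tsuji's trick: pull everything back to the log resolution, where $\tilde\chi_\epsilon = \pi^*\chi - \epsilon\,Ric(h_{\tilde E})$ is a genuine K\"ahler form with bounded curvature, and apply the maximum principle \emph{globally} on the compact fiber to
$$H = \log\left( |\sigma_{\tilde E}|^{2\lambda}_{h_{\tilde E}}\, tr_{\tilde\chi_\epsilon}(\pi^*\omega_t)\right) - A\,\pi^*\varphi_t ,$$
for $\lambda, A$ large. The weight $|\sigma_{\tilde E}|^{2\lambda}$, which vanishes on the exceptional locus, plays the role of your cutoff but contributes only the smooth curvature term $-\lambda\,Ric(h_{\tilde E})$ through $\ddbar\log|\sigma_{\tilde E}|^2$ away from the divisor --- no gradient terms and no $\eta^{-1}$ singularity --- while the term $-A\varphi_t$ is controlled at the maximum by the uniform upper bound of Corollary \ref{5upb} together with the lower bound of Lemma \ref{54}. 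Since the fiber is compact and boundaryless, the maximum exists and is forced off the exceptional set, giving a bound of the form $tr_{\tilde\chi_\epsilon}(\pi^*\omega_t) \leq C|\sigma_{\tilde E}|^{-2\lambda}$, uniform in $t$, hence a uniform two-sided comparison of $\omega_t$ with $\chi_t$ on your compact set $K$. From there your Evans--Krylov plus Schauder bootstrap is exactly what the paper means by ``standard local estimates.'' To repair your write-up, replace the cutoff $\eta$ by this global weighted quantity (or first prove a uniform local gradient estimate, which is considerably harder and not what is done here).
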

\begin{proof} This lemma can be easily proved by Tsuji's trick. The standard maximum principle applied to  $H = \log |\sigma_{\tilde E}|^{2\lambda} tr_{\tilde \chi_\epsilon} (\pi^*\omega_t) - A \pi^*\varphi_t$ on $X_t$ for sufficiently large $\lambda$ and $A>0$ shows that $\omega_t$ is uniformly bounded on $K$. The higher regularity follows from standard local estimates. 
\end{proof}

\begin{definition}\label{slc}
Suppose $K_{\tilde X_0} = (\pi_0)^* K_{X_0} + \sum_{i=1}^I a_i E_i + \sum_{j=1}^J b_j F_j$, where $E_i$ and $F_j$ are all the smooth prime divisors in the exceptional locus of $\pi_0$ with $a_i > - 1$ and $b_j=-1$ for all $i, j$. 
$$S_{lt} = (\pi_0)^{-1} \left( \cup_{i=1}^I E_i \setminus \cup_{j=1}^J F_j \right) , ~  S_{lc} = (\pi_0)^{-1} \cup_{j=1}^J F_j, ~S^\circ_{lc} = (\pi_0)^{-1} \left( \cup_{j=1}^J F_j \setminus \cup_{i=1}^I E_i \right).$$

\end{definition}
We remark that the definition of  $S_{lt}$, $S_{lc}$ does not depend on the choice of resolutions. We  eventually want to show that the potential of the K\"ahler-Einstein current is locally bounded on $S_{lt} $ and must tend to $-\infty$ near $S_{lc}$.

\begin{definition}
We define   $\mathcal{P}(X_0, \chi_0) $ to be the set of all $\varphi\in PSH(X_0, \chi_0)~\cap~ C^\infty((X_0)_{reg})$ satisfying: for any $\epsilon>0$, there exists $C_\epsilon>0$ such that on $\tilde X_0$, 
$$(\pi_0)^* \varphi \geq \epsilon \log |\sigma_{\tilde E}|^2_{h_{\tilde E}} -C_\epsilon. $$

\end{definition}

\begin{proposition} \label{5uniq} There exists a unique $\varphi_0 \in \mathcal{P}(X_0, \chi_0)$ solving 
\begin{equation}\label{eqqe}
(\chi_0 + \ddbar \varphi_0)= e^{\varphi_0} \Omega_0 
\end{equation}
on $(X_0)_{reg}$. In particular, $\varphi_0$ satisfies the following
\begin{enumerate}

\smallskip

\item $\int_{X_0} e^{\varphi_0} \Omega_0 = \int_{X_0} \chi_0^n = (-c_1(X_0))^n, $

\smallskip

\item $\varphi_0$ tends to $-\infty$ near $S_{lc}^{\circ}$.

\end{enumerate}

\end{proposition}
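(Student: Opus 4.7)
The plan is to construct $\varphi_0$ as a subsequential limit of the smooth K\"ahler-Einstein potentials $\varphi_t$ as $t\to 0$, using the a priori estimates already established (Corollary \ref{5upb}, Lemma \ref{5lob}, Lemma \ref{54}, Lemma \ref{55}). First, by Lemma \ref{55}, for any compact set $K\subset\subset\mathcal{X}\setminus((X_0)_{sing}\cup\mathcal{X}_{sing})$ the family $\{\varphi_t|_{X_t\cap K}\}$ is uniformly bounded in $C^k$ for every $k$. Combining this with a diagonal argument, a subsequence $\varphi_{t_j}$ converges in $C^\infty_{loc}((X_0)_{reg})$ to a smooth function $\varphi_0$ on $(X_0)_{reg}$ that satisfies equation (\ref{eqqe}) on $(X_0)_{reg}$. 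The uniform upper bound of Corollary \ref{5upb} passes to the limit, and after pulling back by $\pi_0$ the uniform lower bound of Lemma \ref{54} gives $(\pi_0)^*\varphi_0 \geq \epsilon\log|\sigma_{\tilde E}|^2_{h_{\tilde E}}-C_\epsilon$ on $\tilde X_0$ for every $\epsilon>0$; in particular $\varphi_0\in PSH(X_0,\chi_0)$ (as an a.e. defined $\chi_0$-psh function extending across $(X_0)_{sing}$ because of the $L^\infty$-upper bound and the lower bound integrability), so $\varphi_0\in\mathcal{P}(X_0,\chi_0)$.

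For the mass identity, since $K_{\mathcal{X}/B}$ is a $\mathbb{Q}$-line bundle with $K_{\mathcal{X}/B}|_{X_t}=K_{X_t}$, flatness of the family gives $\int_{X_t}\chi_t^n=(-c_1(X_0))^n$ for all $t\in B$. On $X_t$, $\int_{X_t} e^{\varphi_t}\Omega_t=\int_{X_t}\chi_t^n$. The uniform upper bound $\varphi_t\leq C$ and the smooth non-degeneracy of $\Omega$ on $\mathcal{X}$ allow me to apply dominated convergence on compact subsets of $(X_0)_{reg}$ together with a Fatou-type argument on shrinking neighborhoods of $(X_0)_{sing}$ to obtain $\int_{(X_0)_{reg}}e^{\varphi_0}\Omega_0 =(-c_1(X_0))^n$; in fact equality of total mass forces no mass to escape to $(X_0)_{sing}$ in the limit.

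For uniqueness, I would use the comparison principle for Monge-Amp\`ere equations in the class $\mathcal{P}(X_0,\chi_0)$. Suppose $\varphi^{(1)},\varphi^{(2)}\in\mathcal{P}(X_0,\chi_0)$ both solve (\ref{eqqe}). After pullback to $\tilde X_0$, both satisfy the $\mathcal{P}$-lower bound along $\tilde E$, so $\max(\varphi^{(1)}-\varphi^{(2)},0)$ and its counterpart extend as bounded quasi-psh functions across the exceptional and log canonical locus after a logarithmic correction. Standard Bedford-Taylor integration by parts against $(\chi_0+\ddbar\varphi^{(1)})^n - (\chi_0+\ddbar\varphi^{(2)})^n$ combined with the strict monotonicity of $\varphi\mapsto e^\varphi$ yields $\varphi^{(1)}=\varphi^{(2)}$ on $(X_0)_{reg}$, hence everywhere.

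The remaining, and in my view the most delicate, task is the blow-up statement $\varphi_0\to-\infty$ near $S_{lc}^\circ$. Pulling (\ref{eqqe}) back to $\tilde X_0$, near a smooth point of some $F_j\subset S_{lc}^\circ$ (where $b_j=-1$ and no $E_i$ meets $F_j$) the adjunction formula gives $(\pi_0)^*\Omega_0=|\sigma_{F_j}|^{-2}_{h_{F_j}}\tilde\Omega_0$ for a smooth non-degenerate volume form $\tilde\Omega_0$. The pushforward mass bound $\int_{(X_0)_{reg}}e^{\varphi_0}\Omega_0=(-c_1(X_0))^n<\infty$ therefore forces
\begin{equation*}
\int_{U} e^{(\pi_0)^*\varphi_0}|\sigma_{F_j}|^{-2}_{h_{F_j}}\tilde\Omega_0 <\infty
\end{equation*}
for any small neighborhood $U$ of a point $p\in F_j\setminus\bigcup_i E_i$. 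If there existed such $p$ with $\limsup_{z\to p}(\pi_0)^*\varphi_0(z)>-\infty$, then upper semi-continuity of the $\tilde\chi_0$-psh function $(\pi_0)^*\varphi_0$ would force $e^{(\pi_0)^*\varphi_0}\geq c>0$ in a neighborhood of $p$, making $e^{(\pi_0)^*\varphi_0}|\sigma_{F_j}|^{-2}_{h_{F_j}}$ non-integrable near $\{\sigma_{F_j}=0\}$ (by Fubini in a local transverse coordinate, since $|z|^{-2}$ is not locally integrable in one complex variable). This contradiction yields $(\pi_0)^*\varphi_0\to-\infty$ uniformly along $F_j$, and hence $\varphi_0\to-\infty$ near $S_{lc}^\circ$.
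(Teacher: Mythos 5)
Your existence step (taking a $C^\infty_{loc}((X_0)_{reg})$ limit of $\varphi_t$ using Corollary \ref{5upb}, Lemma \ref{5lob}, Lemma \ref{54} and Lemma \ref{55}) is the same as the paper's, and is fine. The genuine gap is in item (1): local smooth convergence plus Fatou only yields $\int_{(X_0)_{reg}} e^{\varphi_0}\Omega_0 \leq (-c_1(X_0))^n$, and the reverse inequality --- that no mass is lost near $(X_0)_{sing}$, in particular near $S_{lc}$ where $\Omega_0$ is \emph{not} locally integrable --- is precisely the nontrivial content of the statement. Your sentence ``equality of total mass forces no mass to escape'' presupposes the equality you are trying to prove; the uniform bound $\varphi_t\leq C$ does not give uniform smallness of $\int_{U_\delta\cap X_t} e^{\varphi_t}\Omega_t$ on shrinking neighborhoods $U_\delta$ of the bad locus. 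The paper does not argue this way: it invokes the Berman--Guenancia solution $\varphi_0'$ of \cite{B}, which has full mass $(-c_1(X_0))^n$ by construction, and proves $\varphi_0=\varphi_0'$ by a two-sided maximum principle on $X_0\setminus\tilde E$ applied to $\psi_\epsilon=\varphi_0-(1\mp\epsilon)\varphi'_{0,\epsilon}\mp\epsilon^2\log|s_{\tilde E}|^2_{h_{\tilde E}}$, using the stability theorem of \cite{B}; both the mass identity and uniqueness in $\mathcal{P}(X_0,\chi_0)$ then follow from this identification. Your alternative uniqueness sketch is also not justified as written: $\max(\varphi^{(1)}-\varphi^{(2)},0)$ is not quasi-psh, and since elements of $\mathcal{P}(X_0,\chi_0)$ are unbounded (they tend to $-\infty$ along $S_{lc}^{\circ}$), the bounded-potential Bedford--Taylor comparison principle does not apply directly; one needs exactly the kind of log-barrier maximum principle the paper uses, or finite-energy machinery adapted to the singular variety.

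There is also a false step in your proof of item (2). Upper semi-continuity of $(\pi_0)^*\varphi_0$ gives local \emph{upper} bounds near $p$, not lower bounds: a psh function with $\limsup_{z\to p}\varphi(z)>-\infty$ can still be arbitrarily negative on most of every neighborhood of $p$, so you cannot conclude $e^{(\pi_0)^*\varphi_0}\geq c>0$ near $p$ and the non-integrability contradiction does not follow as stated. The correct assertion --- local integrability of $e^{\varphi}|\sigma_{F_j}|^{-2}_{h_{F_j}}$ for a psh $\varphi$ forces $\varphi\to-\infty$ along the divisor --- is exactly Berndtsson's result (Lemma 2.7 of \cite{B}), which is what the paper cites at this point; and in any case your derivation of (2) feeds on (1), which your argument has not established. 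So while your overall route (direct mass count plus direct integrability argument) is a genuinely different strategy from the paper's reduction to \cite{B}, both of its key steps currently have gaps.
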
 

\begin{proof}  The existence follows immediately from Lemma \ref{54} and Lemma \ref{55} by taking a convergent subsequence of $\varphi_t$ for $\in B^*$.  By the results of Berman and Guenancia \cite{B}, there exists a unique solution $\varphi_0'$ satisfying 
$$(\chi_0+ \ddbar \varphi_0')^n = e^{\varphi_0'} \Omega_0, ~ \int_{X_0} e^{\varphi_0'} \Omega_0 = (-c_1(X_0))^n. $$
We would like to show that $\varphi_0= \varphi_0'$
By the stability results in \cite{B} (Theorem 3.4), for any $\epsilon>0$, there exists an approximating solution $\varphi'_{0, \epsilon} \in \mathcal{P}(X_0, \chi_0)$ solving 
$$(\chi_0+ \ddbar \varphi'_{0, \epsilon})^n = e^{(1-\epsilon) \varphi_{0, \epsilon}'}  \Omega_0, ~~\int_{X_0}e^{(1-\epsilon)\varphi_{0, \epsilon}'}\Omega_0 = (-c_1(X))^n, ~\varphi'_{0, \epsilon} \rightarrow \varphi_0'.$$
The fact that $\varphi'_{0, \epsilon}\in \mathcal{P}(X_0, \chi_0)$ follows from the maximum with barrier functions and similar argument in the proof of Lemma \ref{54} after regularizing $\Omega_0$. 

We claim that $\varphi_0 \geq \varphi_0'$. Let $\psi_\epsilon = \varphi_0 - (1-\epsilon) \varphi_{0, \epsilon}' - \epsilon^2 \log |s_{\tilde E} |^2_{h_{\tilde E}}. $ Then 
$$\frac{ \left( (1-\epsilon) (\chi_0 + \ddbar \varphi'_{0, \epsilon}) + \epsilon \chi_0 - \epsilon^2 Ric(h_{\tilde E}) + \ddbar \psi_\epsilon \right)^n}{ (\chi_0 + \ddbar \varphi'_{0, \epsilon})^n} = e^{\psi_\epsilon + \epsilon^2 \log |s_{\tilde E}|^2_{h_{\tilde E}}}. $$
Since $\psi_\epsilon \in C^\infty(X_0\setminus \tilde E)$, we can apply the maximum principle and we have 
$$\psi_\epsilon \geq \log n (1-\epsilon) - \epsilon^2 \sup_{X_0} \log |s_{\tilde E}|^2_{h_{\tilde E}}. $$
Then our claim follows by letting $\epsilon \rightarrow 0$. One can similarly show that  $\varphi_0 \leq \varphi_0'$ by replacing $\epsilon$ by $-\epsilon$. 

Once $\varphi_0 = \varphi_0'$, $\int_{X_0} e^{\varphi_0}\Omega_0 = (-c_1(X))^n$ and $\varphi_0$ must tends to $-\infty$ near $S_{lc}^{\circ}$ by Berndttson's result (Lemma 2.7, \cite{B}).

\end{proof}

The uniqueness in Proposition \ref{5uniq} immediately implies the following corollary.

\begin{corollary} \label{5smcon} $g_t$ converges to $g_0$ smoothly on any compact subset of $(X_0)_{reg}$, as $t\rightarrow 0$.

\end{corollary}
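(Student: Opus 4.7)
The plan is to combine the local smooth a priori estimates of Lemma \ref{55} with the uniqueness of the solution in $\mathcal{P}(X_0,\chi_0)$ from Proposition \ref{5uniq} via a subsequence argument. I will fix a compact set $K \subset (X_0)_{reg}$. Since $X_0$ is a Cartier divisor in $\mathcal{X}$ locally cut out by the pullback of $t$, smooth points of $X_0$ are smooth points of $\mathcal{X}$, so $K \subset \mathcal{X}_{reg}$, and one can choose a relatively compact open neighborhood $U$ of $K$ in $\mathcal{X}\setminus((X_0)_{sing}\cup\mathcal{X}_{sing})$ over which $\Psi$ is a proper submersion. Using a $C^\infty$ trivialization of $\Psi|_U$ one transports the potentials $\varphi_t$ and the reference forms $\chi_t$ onto $K$, uniformly in $t$ for $t$ close to $0$.

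Next, Lemma \ref{55} gives that for every integer $k\geq 0$ the family $\{\varphi_t\}$ is uniformly bounded in $C^k$ on $K$ as $t\to 0$. A standard Arzel\`a--Ascoli/diagonal argument then yields, for every sequence $t_j\to 0$, a subsequence along which $\varphi_{t_j}$ converges in $C^\infty_{loc}((X_0)_{reg})$ to some $\tilde\varphi_0\in C^\infty((X_0)_{reg})$ solving
$$(\chi_0 + \ddbar\tilde\varphi_0)^n = e^{\tilde\varphi_0}\,\Omega_0$$
on $(X_0)_{reg}$, with $\chi_0+\ddbar\tilde\varphi_0\geq 0$ there.

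To finish I would identify $\tilde\varphi_0$ with the unique $\varphi_0$ from Proposition \ref{5uniq}, which amounts to checking $\tilde\varphi_0\in\mathcal{P}(X_0,\chi_0)$. The upper bound of Corollary \ref{5upb} and the logarithmic lower bound of Lemma \ref{54}, both uniform in $t$, pass to the pointwise limit on $(X_0)_{reg}$; taking the upper semicontinuous envelope $\tilde\varphi_0^*$ across $(X_0)_{sing}$ and invoking basic pluripotential theory then shows $\tilde\varphi_0^*\in PSH(X_0,\chi_0)\cap C^\infty((X_0)_{reg})$ and that $\pi_0^*\tilde\varphi_0^*\geq\epsilon\log|\sigma_{\tilde E}|^2_{h_{\tilde E}}-C_\epsilon$ on $\tilde X_0$, so $\tilde\varphi_0^*\in\mathcal{P}(X_0,\chi_0)$. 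By uniqueness in Proposition \ref{5uniq}, $\tilde\varphi_0^* = \varphi_0$, which forces $\tilde\varphi_0 = \varphi_0$ on $(X_0)_{reg}$. Since the limit is then independent of the subsequence, the full family satisfies $\varphi_t\to\varphi_0$ in $C^\infty_{loc}((X_0)_{reg})$, and hence $g_t\to g_0$ smoothly on every compact subset of $(X_0)_{reg}$.

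The only mildly delicate point, but not a genuine obstacle, will be confirming that the upper semicontinuous extension $\tilde\varphi_0^*$ satisfies the logarithmic lower bound defining $\mathcal{P}(X_0,\chi_0)$: the bound of Lemma \ref{54} holds pointwise on the open dense set where smooth convergence takes place, and upper semicontinuity together with the uniform bounds allows it to propagate to all of $\tilde X_0$. Once this verification is in hand, everything else is formal from the uniqueness statement in Proposition \ref{5uniq}.
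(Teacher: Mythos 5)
Your argument is correct and is essentially the paper's intended proof: the paper derives this corollary directly from the uniqueness in Proposition \ref{5uniq}, with the subsequential-limit construction (uniform local estimates from Lemma \ref{55}, the log lower bound from Lemma \ref{54}, membership in $\mathcal{P}(X_0,\chi_0)$) already carried out in the existence part of that proposition. Your additional details, including the verification that smooth points of $X_0$ are smooth in $\mathcal{X}$ and that the limit lies in $\mathcal{P}(X_0,\chi_0)$, simply make explicit what the paper leaves implicit.
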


\subsection{Local $L^2$-estimates and separation of points}

We choose a fixed point $p_0\in (X_0)_{reg}$ and choose $p_t\in X_t$, $t\in B^*$, converges to $p_0$ in $(\mathcal{X}, \chi)$, as $t\rightarrow 0$.

\begin{proposition} \label{5ghcon} After passing to a subsequence, $(X_t, p_t, g_t)$ converges in Gromov-Hausdorff topology to a metric lengths space $(X_\infty, d_\infty)$ satisfying

\begin{enumerate}

\item $X_\infty = \mathcal{R}\cup \mathcal{S}$, where $\mathcal{R}$ and $\mathcal{S}$ are the regular and singular part of $X_\infty$.

\smallskip

\item $\mathcal{R}$ is open and $\mathcal{S}$ is closed with Hausdorff dimension no greater than $2n-4$. 

\smallskip

\item $(X_0)_{reg}$ is an open dense set in $\mathcal{R}$.

\end{enumerate}

\end{proposition}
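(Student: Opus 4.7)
The plan is to apply the Cheeger-Colding theory for Ricci-bounded sequences to $(X_t, p_t, g_t)$, in the spirit of Proposition \ref{ghlim} and Proposition \ref{ghlimgt}, using the smooth convergence from Corollary \ref{5smcon} to identify $(X_0)_{reg}$ with an open subset of the regular part of the limit.

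First, I would establish pointed Gromov-Hausdorff precompactness. Since $Ric(g_t)=-g_t$, the Ricci curvature is uniformly bounded in $L^\infty$. By Corollary \ref{5smcon}, $g_t\to g_0$ smoothly on compact subsets of $(X_0)_{reg}$, and since $p_0\in (X_0)_{reg}$, this produces a uniform non-collapsing estimate $\mathrm{Vol}_{g_t}(B_{g_t}(p_t,1))\geq\kappa>0$ for small $|t|$. Gromov's pointed precompactness theorem then yields a subsequential pointed Gromov-Hausdorff limit $(X_\infty, p_\infty, d_\infty)$, which is a metric length space.

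Next, I would invoke Cheeger-Colding theory (with the Cheeger-Colding-Tian refinement for K\"ahler-Einstein metrics, or alternatively Cheeger-Naber for bounded Ricci) to derive the decomposition $X_\infty=\mathcal{R}\cup\mathcal{S}$, where $\mathcal{R}$ is the set of points whose tangent cones are Euclidean $\mathbb{R}^{2n}$ (open), $\mathcal{S}$ is its complement (closed), and the codimension-four bound $\dim_H\mathcal{S}\leq 2n-4$ uses the uniform bound on $|Ric(g_t)|_{g_t}$ together with non-collapsing at $p_t$.

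For part (3), Corollary \ref{5smcon} gives a natural isometric embedding $\iota\colon ((X_0)_{reg},g_0)\hookrightarrow (X_\infty,d_\infty)$ whose image lies in $\mathcal{R}$: each $q\in (X_0)_{reg}$ has a coordinate neighborhood $V\subset (X_0)_{reg}\cap\mathcal{X}_{reg}$ which, for small $|t|$, sits inside $X_t$ via the product structure of $\mathcal{X}\to B$, with $g_t|_V\to g_0|_V$ smoothly; the smoothness of $g_0$ at $q$ forces Euclidean tangent cones at $\iota(q)$, so $\iota(q)\in\mathcal{R}$. Openness of $\iota((X_0)_{reg})$ in $\mathcal{R}$ is automatic from the smooth convergence. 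For density, I would appeal to Colding's local volume convergence: for any $r>0$ and any $p\in\mathcal{R}$, the $g_t$-volume of balls $B_{g_t}(x_t,r)$ with $x_t\to p$ converges to the Hausdorff $2n$-measure of $B_{d_\infty}(p,r)$. Combined with the identity $\mathrm{Vol}_{g_0}((X_0)_{reg})=(-c_1(X_0))^n=\mathrm{Vol}_{g_t}(X_t)$ from Proposition \ref{5uniq}, this forces the complement $X_\infty\setminus \iota((X_0)_{reg})$ to carry zero limiting $2n$-measure. Since every point of $\mathcal{R}$ has strictly positive volume density (its tangent cone being $\mathbb{R}^{2n}$), no open subset of $\mathcal{R}$ can avoid $\iota((X_0)_{reg})$.

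The main obstacle is this density step, which requires rigorously transferring the volume identity from Proposition \ref{5uniq} through Colding's volume convergence in the pointed (possibly non-compact) setting, and showing that points $x_t\in X_t$ converging in $(\mathcal{X},\chi)$ to $(X_0)_{sing}\cup\mathcal{X}_{sing}$ can contribute only to the singular set $\mathcal{S}$. When direct volume arguments are insufficient, this can be backed up by a pseudolocality argument analogous to Lemma \ref{le45}: one starts the normalized K\"ahler-Ricci flow from $g_t$ and uses the Tian-Wang version of Perelman's pseudolocality to detect that any $g_t$-ball that approaches the singular locus of $X_0$ in $\mathcal{X}$ must Gromov-Hausdorff converge into $\mathcal{S}$.
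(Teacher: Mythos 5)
Your proposal is correct and follows essentially the same route as the paper: non-collapsing at $p_t$ from the smooth convergence of $g_t$ to $g_0$ near $p_0$, Cheeger-Colding theory for the pointed limit and the regular/singular decomposition with the codimension-four bound, openness of $(X_0)_{reg}$ in the limit from smooth convergence, and density via the volume identity $\int_{(X_0)_{reg}} dV_{g_0} = (-c_1(X_0))^n$ from Proposition \ref{5uniq} matched against the limiting Hausdorff measure coming from volume convergence and the invariance of $[K_{X_t}]^n$. The pseudolocality backup you mention is not needed in the paper's argument but is consistent with it.
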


\begin{proof}  From the smooth convergence of $g_t$ to $g_0$ in an open neighborhood of $p_0$, there exist $\epsilon_0>0$ and  $r_0>0$ such that 
$$Vol_{g_t} (B_{g_t}(p_t, r_0) \geq \epsilon_0.$$
Then Cheeger-Colding theory immediately implies the pointed Gromov-Hausdorff convergence and (1) and (2). Also from the smooth convergence of $g_t$ to $g_0$ on $(X_0)_{reg}$, $(X_0)_{reg}$ is an open set of $X_\infty$. By the pointed Gromov-Hausdorff convergence, the Hausdorff volume measure of $(X_\infty, d_\infty)$ is equal to $(-c_1(X_0))^n$ since $[K_{X_t}]^n$ is invariant in $t$. On the other hand, from Proposition \ref{5uniq}, 
$$\int_{(X_0)_{reg}} dV_{g_0} = \int_{(X_0)_{reg}} e^{-\varphi_0} \Omega_0 = (-c_1(X_0))^n, $$
therefore $(X_0)_{reg})$ must be dense in $(X_\infty, d_\infty)$.

\end{proof}

 The following proposition can be proved by similar arguments in Proposition \ref{ll31} as local $L^2$-estimates from Tian's proposal for the partial $C^0$-estimates. 

\begin{proposition} \label{ll41}   For any $R>0$, there exists  $K_R >0$ such that if  $s\in H^0(X_t, (K_{X_t}) ^k)$ for $k\geq 1$ with $t\in B^*$, then 
\begin{equation}\label{lll51}
\|s\|_{L^{\infty, \sharp}(B_{g_t}(p_t, R))} \leq K_R \|s\|_{L^{2, \sharp}(B_{g_t}(p_t, 2R) )}
\end{equation}
\begin{equation}\label{lll52}
 \|\nabla s\|_{L^{\infty, \sharp}(B_{g_t}(p_t, R)) } \leq K_R \|s\|_{L^{2, \sharp}(B_{g_t}(p_t, 2R))} .
\end{equation}

\end{proposition}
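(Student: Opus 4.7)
The plan is to mirror the proof of Proposition \ref{ll31}, but with the simplification that each $X_t$ is smooth for $t\in B^*$, so no cut-off along a singular divisor is needed; the only real content is uniformity of constants in $t$. The rescaled metric $kg_t$ with the hermitian metric $h^k=(h_{KE,t})^k$ on $K_{X_t}^k$ satisfies $\mathrm{Ric}(h_{KE,t})=\omega_t$ and $\mathrm{Ric}(g_t)=-g_t$; standard Kodaira-Bochner computations then yield, on $X_t$,
\[
\Delta_{kg_t}|s|\geq -|s|,\qquad \Delta_{kg_t}|\nabla s|\geq -\tfrac{1}{2}|\nabla s|,
\]
exactly as in Proposition \ref{l21} and Proposition \ref{l22}. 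These two pointwise differential inequalities are the engine of Moser iteration.

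The key ingredient, and the only $t$-dependent piece, is a uniform local Sobolev inequality on geodesic balls. First I would observe that by Proposition \ref{5ghcon}, $(X_t,p_t,g_t)$ converges in pointed Gromov-Hausdorff topology (with convergence of Hausdorff measures) to $(X_\infty,p_\infty,d_\infty)$, so for every $0<r<R<\infty$ the volumes $\mathrm{Vol}_{g_t}(B_{g_t}(p_t,r))$ and $\mathrm{Vol}_{g_t}(B_{g_t}(p_t,R))$ are uniformly bounded above and below, and the difference $\mathrm{Vol}(B_{g_t}(p_t,R))-\mathrm{Vol}(B_{g_t}(p_t,r))$ is uniformly bounded below by a positive constant depending only on $R$ and $R-r$. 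Combined with the uniform Ricci lower bound $\mathrm{Ric}(g_t)\geq -g_t$, Corollary 10.2 of Li (as used in Lemma \ref{soho}) gives a uniform lower bound on the Dirichlet isoperimetric constant of $B_{g_t}(p_t,r)$, hence a uniform Sobolev inequality: for every $f\in L^{1,2}$ compactly supported in $B_{g_t}(p_t,r)$,
\[
\|\nabla f\|_{L^{2}(B_{g_t}(p_t,r),\,kg_t)}\geq C_S\|f\|_{L^{\frac{2n}{n-1}}(B_{g_t}(p_t,r),\,kg_t)},
\]
with $C_S=C_S(R,r)$ independent of $t$ and $k$ (scaling invariance of Sobolev).

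With the Bochner inequality and the uniform Sobolev constant in hand, I would multiply $\Delta_{kg_t}(-|s|)\leq |s|$ by $\eta^2|s|^p$, where $\eta$ is the standard Lipschitz cut-off $\eta(x)=F_{r,R}(d_{g_t}(x,p_t)/k^{1/2})$ (with $|\nabla \eta|_{kg_t}\leq A/(R-r)$), and integrate by parts to obtain the Caccioppoli-type inequality
\[
\int_{B_{g_t}(p_t,r)}\bigl|\nabla |s|^{(p+1)/2}\bigr|^{2}\,dV_{kg_t}\leq \frac{Cp}{(R-r)^{2}}\int_{B_{g_t}(p_t,R)}|s|^{p+1}\,dV_{kg_t}.
\]
Feeding this into the Sobolev inequality yields the reverse Hölder inequality on a telescoping sequence of radii $r_j=R+r/2^jB$, and standard Nash-Moser iteration produces \eqref{lll51}. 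The gradient estimate \eqref{lll52} follows by the same iteration starting from $\Delta_{kg_t}|\nabla s|\geq -\frac{1}{2}|\nabla s|$ (absorbing the lower-order $|s|$ terms via the already-established $L^\infty$ bound on $|s|$, exactly as in Proposition \ref{l22}).

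The main obstacle is really just verifying the uniform Sobolev constant on balls; everything else is standard once one has both a uniform Ricci lower bound and uniform local volume control, and both are now in hand from the Gromov-Hausdorff convergence of Proposition \ref{5ghcon}. Note that no cut-off along a singular set is required here, since each $X_t$ ($t\in B^*$) is smooth; the singular fibre $X_0$ enters only through the $t\to 0$ control of volumes used to pin down $C_S$.
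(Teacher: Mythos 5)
Your proposal is correct and follows essentially the same route as the paper, which proves this proposition by a one-line reference to the argument of Proposition \ref{ll31}: the Bochner inequalities $\Delta|s|\geq-|s|$ and $\Delta|\nabla s|\geq-\tfrac12|\nabla s|$ for the rescaled data, a uniform local Sobolev (isoperimetric) constant on the balls $B_{g_t}(p_t,r)$ obtained from the Ricci lower bound $Ric(g_t)=-g_t$ together with the volume control coming from the pointed Gromov--Hausdorff convergence of Proposition \ref{5ghcon} (exactly as in Lemma \ref{soho}), and then a cut-off/Caccioppoli estimate fed into Nash--Moser iteration. The only simplification you correctly exploit, as the paper implicitly does, is that each $X_t$ with $t\in B^*$ is smooth, so no cut-off along a singular set is needed and the whole issue is uniformity of the constants in $t$ and $k$.
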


The following $L^2$-estimate is standard for K\"ahler-Einstein manifolds.

\begin{proposition} For $k\in \mathbb{Z}^+$, any $t\in B^*$ and any smooth $(K_{X_t})^k$-valued $(0,1)$-form $\tau$ satisfying 
$\dbar \tau =0$,  
there exists an $(K_{X_t})^k$-valued section $u$ such that $\dbar u = \tau$ and $$ \int_{X_t} |u|^2_{(h_t)^k} ~dV_{g_t} \leq \frac{1}{2\pi} \int_{X_t} |\tau|^2_{(h_t)^k}~ dV_{g_t}, $$
where $g_t \in -c_1(X_t)$ is the K\"ahler-Einstein metric on $X_t$ and $h_t$ is the hermitian metric on $K_{X_t}$ with $Ric(h_t) = g_t$.

\end{proposition}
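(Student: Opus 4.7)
The plan is to deduce the proposition as a direct application of Demailly's $L^2$-existence theorem (Theorem 3.1 of the excerpt), using the canonical identification of $L$-valued $(0,q)$-forms with $(L\otimes K_{X_t}^{-1})$-valued $(n,q)$-forms on the smooth fiber $X_t$. Concretely, I would reinterpret the given $(K_{X_t})^k$-valued $(0,1)$-form $\tau$ as a $(K_{X_t})^{k-1}$-valued $(n,1)$-form $\tilde\tau$ via the canonical isomorphism $\Omega^{0,1}\otimes K_{X_t}\cong \Omega^{n,1}$. This identification preserves $\dbar$-closedness, and because the K\"ahler--Einstein condition $Ric(g_t)=-g_t$ forces $h_t$ to coincide with the metric on $K_{X_t}$ induced by the volume form of $g_t$, the two pointwise norms $|\tau|^2_{(h_t)^k}$ and $|\tilde\tau|^2_{(h_t)^{k-1},g_t}$ agree on the nose, a calculation which reduces to a local verification in coordinates.

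For $k\geq 2$, the induced hermitian metric $(h_t)^{k-1}$ on $(K_{X_t})^{k-1}$ has curvature $(k-1)g_t\geq g_t$, so Demailly's theorem applies with $\delta=k-1\geq 1$, producing a $(K_{X_t})^{k-1}$-valued $(n,0)$-form $u$ with $\dbar u=\tilde\tau$ and
$$\int_{X_t}|u|^2_{(h_t)^{k-1},g_t}(\omega_t)^n\leq \frac{1}{2\pi(k-1)}\int_{X_t}|\tilde\tau|^2_{(h_t)^{k-1},g_t}(\omega_t)^n.$$
Reversing the identification turns $u$ back into a section of $(K_{X_t})^k$ satisfying the required equation, and since $\frac{1}{2\pi(k-1)}\leq \frac{1}{2\pi}$ whenever $k\geq 2$, the stated estimate follows immediately.

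The case $k=1$ requires a separate argument, since $(K_{X_t})^{k-1}$ is trivial and Demailly's curvature constant degenerates to zero. Here I would apply the Bochner--Kodaira--Nakano formula directly to $K_{X_t}$-valued $(0,1)$-forms on $(X_t,g_t)$: the Weitzenbock curvature term equals $(Ric(g_t)+Ric(h_t))\cdot |u|^2=(-g_t+g_t)\cdot |u|^2 = 0$, which is only non-negative and not strictly positive. However, Kodaira vanishing $H^1(X_t,K_{X_t})=0$ (a consequence of $K_{X_t}$ being ample) guarantees solvability of $\dbar u = \tau$, and choosing $u$ to be the unique $L^2$-minimal solution via Hodge-theoretic projection onto $(\ker\dbar)^{\perp}$ produces the quantitative estimate by the bounded inverse of the Laplacian on $(0,1)$-forms. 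The main bookkeeping hurdle throughout is the matching of norm conventions across the $(0,q)\leftrightarrow(n,q)$ identification, which is automatic once one uses that $h_t$ is precisely the metric determined by the K\"ahler--Einstein equation; in spirit the proposition is the canonically-polarized analog of Proposition 3.3 in the Calabi--Yau setting of Section 3.2.
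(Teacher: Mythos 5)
For $k\ge 2$ your argument is correct and is essentially the route the paper has in mind: the paper gives no proof of this proposition (it records the estimate as standard), but its two singular analogues, Proposition \ref{l3} and Proposition \ref{lll3}, are proved exactly by the splitting $kK_{X} = (k-1)K_{X} + K_{X}$, i.e.\ by regarding $\tau$ as a $(K_{X_t})^{k-1}$-valued $(n,1)$-form and applying Theorem \ref{demailly} with $\delta = k-1 \ge 1$; on the smooth fibres $X_t$ the approximation arguments used there are unnecessary, so your direct application is the right reduction. The norm bookkeeping is also fine: $Ric(h_t)=\omega_t$ together with the K\"ahler--Einstein equation forces $h_t$ to agree with $(\omega_t^n)^{-1}$ up to a positive constant, and since both sides of the estimate scale the same way in $h_t$, that constant is harmless in identifying $|\tau|^2_{(h_t)^k}$ with the corresponding $(n,1)$-norm.

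The genuine gap is your treatment of $k=1$. Kodaira vanishing does not give $H^1(X_t, K_{X_t})=0$: the vanishing theorem concerns $K_{X_t}\otimes L$ with $L$ ample, and at $k=1$ the ample twist is exactly what is missing. By Serre duality $H^1(X_t, K_{X_t})\cong H^{n-1}(X_t, \mathcal{O}_{X_t})^*$, which is nonzero whenever the fibre has positive irregularity; for instance a product of two curves of genus at least two is a canonically polarized surface with $H^1(X,K_X)\neq 0$. For such a fibre there exist smooth $\dbar$-closed $K_{X_t}$-valued $(0,1)$-forms that are not $\dbar$-exact, so no solution $u$ exists at all, let alone one satisfying the quantitative bound; this is the cohomological counterpart of the degeneration you noticed, namely that the curvature weight in the Bochner--Kodaira inequality vanishes identically at $k=1$, so no Hodge-theoretic choice of $u$ can rescue the estimate. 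The statement should therefore be read for $k\ge 2$ only, as in Proposition \ref{lll3} which explicitly assumes $k\ge 2$, and this is all the separation-of-points argument of Section 5.2 uses; your $k\ge 2$ proof then stands as written, while the $k=1$ patch should be deleted rather than repaired.
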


\begin{lemma}

Let $\{ \sigma_{ j}^{(k)} \}_{j=0}^{d_k}$ be a basis of $H^0(\mathcal{X}, (K_{\mathcal{X}/B})^k)$. Then for any $R>0$, there exists $C_R>0$ such that for all $j=0, ..., d_k$, 
\begin{equation}
\int_{B_t( R)} \left| \left.\sigma_j^{(k)}\right|_{X_t} \right|^2_{h_t^k} dV_{g_t} \leq C_R.
\end{equation}

\end{lemma}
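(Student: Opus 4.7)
The plan is to reduce the integral with respect to the K\"ahler-Einstein data to an integral against the fixed reference volume form $\Omega_t$ via the K\"ahler-Einstein equation \eqref{famke}, and then absorb the resulting weight in $\varphi_t$ using the uniform local lower bound already established in Lemma \ref{5lob}. The key observation is that the difference between $dV_{g_t}$ and $\Omega_t$, and between $h_t^k$ and a smooth reference hermitian metric, are each governed by the K\"ahler-Einstein potential $\varphi_t$, and the two weights combine to a single factor $e^{-(k-1)\varphi_t}$ that is locally bounded above by Lemma \ref{5lob}.

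Concretely, I would first fix a smooth hermitian metric $h_0$ on a sufficiently divisible power of $K_{\mathcal{X}/B}$ with curvature proportional to $\chi$. On each fiber $X_t$ with $t\in B^*$, the K\"ahler-Einstein hermitian metric $h_t$ on $K_{X_t}$ satisfies $h_t = h_0|_{X_t}\, e^{-\varphi_t}$, because $Ric(h_t) = \omega_t = \chi_t + \ddbar\varphi_t$. Raising to the $k$-th power and combining with \eqref{famke}, which gives $dV_{g_t} = c_n\, e^{\varphi_t}\,\Omega_t$ for a dimensional constant $c_n$, yields the pointwise identity
$$
|\sigma_j^{(k)}|^2_{h_t^k}\, dV_{g_t} \;=\; c_n\, |\sigma_j^{(k)}|^2_{h_0^k}\, e^{-(k-1)\varphi_t}\, \Omega_t
$$
on $X_t$. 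After shrinking $B$ to a compact subdisc (harmless since we care only about $t\to 0$), the total space becomes compact, so $|\sigma_j^{(k)}|^2_{h_0^k}$ is bounded on $\mathcal{X}$ by some constant $M_j$, and $\int_{X_t}\Omega_t$ is uniformly bounded in $t$ by continuity of the family. Lemma \ref{5lob} supplies a constant $C_R>0$ with $\varphi_t \geq -C_R$ on $B_t(R)$ uniformly in $t \in B^*$, whence $e^{-(k-1)\varphi_t}\leq e^{(k-1)C_R}$ on $B_t(R)$. Integrating the identity over $B_t(R)$ gives the stated bound.

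The only substantive ingredient is Lemma \ref{5lob} itself, whose proof runs Moser iteration on $e^{-\delta\varphi_t}$ using the uniform Sobolev constant on $B_t(R)$ from Lemma \ref{soho} together with the integrability assumption (4) on sections of $(K_{\mathcal{X}/B})^k$. Granted that lemma, no further obstacle arises: the pluricanonical sections $\sigma_j^{(k)}$ are global and smooth on the total space, the K\"ahler-Einstein equation directly converts $dV_{g_t}$ into $\Omega_t$ modulo an $e^{\varphi_t}$ factor, and all remaining quantities are uniformly controlled on the chosen compact subdisc.
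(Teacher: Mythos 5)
Your reduction is the same one the paper uses: write $h_t=(e^{\varphi_t}\Omega_t)^{-1}$ (equivalently $h_0|_{X_t}e^{-\varphi_t}$), bound $|\sigma_j^{(k)}|^2$ against $\Omega_t^k$ by compactness of the total space over a subdisc, and control the remaining weight by the local bounds on $\varphi_t$. However, your final step contains a genuine error of justification: you bound $\int_{B_t(R)}\Omega_t$ by $\int_{X_t}\Omega_t$ and assert the latter is ``uniformly bounded in $t$ by continuity of the family.'' This is false in exactly the regime Theorem \ref{main6} is designed for. When $X_0$ has strictly log canonical (non-klt) singularities, i.e. $S_{lc}\neq\phi$, the adapted measure $\Omega_0=\sum_j(\eta_j\wedge\overline{\eta}_j)^{1/k}|_{X_0}$ is not integrable near $S_{lc}$ (klt is precisely the local integrability condition for such measures), and since $\Omega|_{X_t}\rightarrow\Omega|_{X_0}$ locally smoothly away from the singular set, Fatou's lemma gives $\liminf_{t\rightarrow 0}\int_{X_t}\Omega_t\geq\int_{(X_0)_{reg}}\Omega_0=\infty$. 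So the global integral blows up as $t\rightarrow 0$; this divergence is the very reason assumption (4) of the theorem is stated on geodesic balls $B_{g_t}(p_t,R)$ rather than on all of $X_t$.

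The gap is easily repaired, in two ways. Either invoke assumption (4) directly: since $\Omega_t$ is a finite sum of terms $|\eta_j|_{X_t}|^{2/k}$ with $\eta_j\in H^0(\mathcal{X},(K_{\mathcal{X}/B})^k)$, assumption (4) gives $\sup_{t\in B^*}\int_{B_t(R)}\Omega_t<\infty$, which is all your computation needs (this is also how Lemma \ref{5lob}, which you already rely on, uses the hypothesis). Or follow the paper's bookkeeping: instead of keeping one factor $e^{\varphi_t}$ to convert $dV_{g_t}$ into $\Omega_t$, use the two-sided bound $|\varphi_t|\leq C_R$ on $B_t(R)$ (Corollary \ref{5upb} for the upper bound, Lemma \ref{5lob} for the lower) to bound $|\sigma_j^{(k)}|^2_{h_t^k}\leq C_1^k e^{kC_R}$ pointwise on the ball, and then bound $\int_{B_t(R)}dV_{g_t}\leq\int_{X_t}\omega_t^n=(-c_1(X_t))^n$, a cohomological constant independent of $t$; this avoids any integral of $\Omega_t$ at this stage. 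With either correction your argument coincides with the paper's proof.
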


\begin{proof} The K\"ahler-Einstein hermitian metric is given by $h_t = (e^{\varphi_t} \Omega_t)^{-1}$. There exists $C_1>0$ such that $$ \left( \sum_{j=0}^{d_k} |\sigma_j^{(k)}|^2 \right)^{1/k} \leq C \Omega_t$$
since $\Omega_t$ is induced by a basis of holomorphic sections of certain power of $K_{\mathcal{X}/B}$. Also there exists $C_2>0$ such that 
$$\sup_{B_{g_t}(p_t, R)} |\varphi_t| \leq C_2 $$
by Corollary \ref{5upb} and Lemma \ref{5lob}. Therefore
$$\int_{B_{g_t}(p_t, R)} \left| \left.\sigma_j^{(k)}\right|_{X_t} \right|^2_{h_t^k} dV_{g_t} \leq (C_1)^k e^{kC_2} \int_{X_t} dV_{g_t} \leq (C_1)^k e^{kC_2} (-c_1(X_t))^n. $$

\end{proof}

\begin{lemma} Let $\{ \sigma_{ j}^{(k)} \}_{j=0}^{d_k}$ be a basis of $H^0(\mathcal{X}, (K_{\mathcal{X}/B})^k)$. Then 
$\{\sigma_{ j}^{(k)}|_{X_t} \}_{j=0}^{d_k}$ converges to linearly independent holomorphic sections $\{ \sigma_{ 0, j}^{(k)} \}_{j=0}^{d_k}$ in $H^0(X_\infty, (K_{X_\infty})^k)$. In particular, the convergence is smooth in $\mathcal{R}$ and $\sigma_{0,j}^{(k)}$ extends continuously from $\mathcal{R}$ to $X_\infty$.

\end{lemma}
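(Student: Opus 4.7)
The plan is to combine the uniform local $L^{2,\sharp}$ bound established in the preceding lemma with the local Moser iteration / gradient estimates of Proposition~\ref{ll41} to extract smooth subsequential limits, and then to upgrade density of $(X_0)_{reg}$ in $\mathcal{R}$ (from Proposition~\ref{5ghcon}) together with the Lipschitz control into a continuous global extension. First, fix $R>0$ and apply the previous lemma to bound $\|\sigma_j^{(k)}|_{X_t}\|_{L^{2,\sharp}(B_{g_t}(p_t,2R))}$ uniformly in $t\in B^*$. Proposition~\ref{ll41} then gives a uniform bound on both $\|\sigma_j^{(k)}|_{X_t}\|_{L^{\infty,\sharp}(B_{g_t}(p_t,R))}$ and $\|\nabla\sigma_j^{(k)}|_{X_t}\|_{L^{\infty,\sharp}(B_{g_t}(p_t,R))}$ (where gradients are measured with respect to $k g_t$ and $h_t^k$).

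Next, I would pass to a subsequential limit via a diagonal argument over a sequence $R_m\to\infty$. By Corollary~\ref{5smcon}, $g_t\to g_0$ smoothly on any compact $K\subset\subset (X_0)_{reg}$, and $h_t^k=(e^{\varphi_t}\Omega_t)^{-k}$ converges smoothly there as well. Since each $\sigma_j^{(k)}|_{X_t}$ satisfies the Laplace-type equation $\Delta_{g_t}|\sigma|=-|\sigma|+(\text{positive terms})$ together with $\dbar\sigma=0$, standard elliptic bootstrap upgrades the uniform $C^0$-estimate to smooth convergence on compact subsets of $(X_0)_{reg}$ to a holomorphic section $\sigma_{0,j}^{(k)}$. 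The uniform gradient bound makes each $\sigma_j^{(k)}|_{X_t}$ uniformly Lipschitz with respect to $d_{g_t}$ on $B_{g_t}(p_t,R)$; passing to the Gromov-Hausdorff limit, the $\sigma_{0,j}^{(k)}$ are Lipschitz with respect to $d_\infty$ on $(X_0)_{reg}$. Because $(X_0)_{reg}$ is dense in $X_\infty$ and $d_\infty$ is complete on $X_\infty$, these sections extend uniquely and continuously to all of $X_\infty$, and the convergence (being Lipschitz-controlled) improves to uniform convergence on compact subsets. On $\mathcal{R}$, the Cheeger-Colding regularity theory of tangent cones combined with the smooth convergence of the complex structures $J_t$ (cf.\ the setup of Proposition~\ref{ghlimgt}) yields a smooth K\"ahler manifold structure in which the limiting sections are pointwise holomorphic; since holomorphicity plus a uniform $L^\infty$ bound is preserved under such convergence, the $\sigma_{0,j}^{(k)}$ are smooth and holomorphic on $\mathcal{R}$, with the convergence being smooth there.

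It remains to verify linear independence. I would argue as follows: by the ampleness of $K_{\mathcal{X}/B}$, flatness of $\Psi$, and semicontinuity, for $k$ sufficiently large the restriction map $H^0(\mathcal{X},(K_{\mathcal{X}/B})^k)\to H^0(X_0,(K_{X_0})^k)$ is injective, so $\{\sigma_j^{(k)}|_{X_0}\}_{j=0}^{d_k}$ is linearly independent in $H^0(X_0,(K_{X_0})^k)$. Since $(X_0)_{reg}$ is Zariski dense in the irreducible normal variety $X_0$, linear independence persists on $(X_0)_{reg}$. On $(X_0)_{reg}$ the pointwise smooth limit $\sigma_{0,j}^{(k)}$ coincides with $\sigma_j^{(k)}|_{X_0}$, so any linear relation $\sum c_j \sigma_{0,j}^{(k)}=0$ on $X_\infty$ restricts to the trivial relation on $(X_0)_{reg}$, forcing $c_j=0$ for all $j$.

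The main obstacle is the extension of smooth convergence from $(X_0)_{reg}$ to the possibly strictly larger regular set $\mathcal{R}$ of $(X_\infty,d_\infty)$: the passage through points in $\mathcal{R}\setminus (X_0)_{reg}$ requires that the tangent-cone analysis of Cheeger-Colding, applied to the almost K\"ahler-Einstein family $g_t$ in the spirit of Tian-Wang, provides a holomorphic chart compatible with the smooth convergence of $J_t$, so that holomorphicity of the limit is genuine and not merely distributional. Once this is in place, the Lipschitz control from Proposition~\ref{ll41} takes care of the global continuous extension to $X_\infty$.
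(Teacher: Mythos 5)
Your argument is correct and is essentially the paper's proof written out in detail: the paper simply cites Corollary \ref{5smcon}, Proposition \ref{5ghcon} and Proposition \ref{ll41} (together with the preceding uniform local $L^2$ bound on geodesic balls) for the convergence, and the Lipschitz estimate (\ref{lll52}) for the continuous extension from $\mathcal{R}$ to $X_\infty$, exactly as you do. The one caveat concerns your justification of linear independence: the restriction map on all of $H^0(\mathcal{X},(K_{\mathcal{X}/B})^k)$ is not injective (its kernel contains $t\cdot H^0(\mathcal{X},(K_{\mathcal{X}/B})^k)$), so instead interpret $\{\sigma_j^{(k)}\}$ as an $\mathcal{O}(B)$-frame of the direct image sheaf and observe that if $\sum_j c_j\,\sigma_j^{(k)}$ with constant coefficients vanished on the reduced central fibre it would be divisible by $t$, forcing all $c_j=0$; this yields the linear independence of $\{\sigma_j^{(k)}|_{X_0}\}$, which you then correctly transfer to the limit sections via density of $(X_0)_{reg}$ in $X_\infty$.
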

\begin{proof} The convergence follows from Corollary \ref{5smcon}, Proposition \ref{5ghcon} and Proposition \ref{ll41} as well as the smooth convergence of $g_t$ on $(X_0)_{reg}$. The continuous extension follows from Proposition \ref{ll41}.

\end{proof}

\begin{lemma} For any $R>0$ and $k\in \mathcal{F}(X_0, K_{X_0})$,  there exists $\epsilon_R>0$   such that 
$$ \inf_{B_{d_\infty}(p_0, R)} \sum_{j=1}^{d_k+1} |\sigma_{0,j}^{(k)
}|^2_{h_\infty^k}\geq \epsilon_R. $$

\end{lemma}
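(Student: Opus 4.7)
The plan is to reduce the desired lower bound on the limiting sections $\sigma_{0,j}^{(k)}$ to a uniform lower bound on the approximating sections $\{\sigma_j^{(k)}|_{X_t}\}$ measured in a fixed smooth background hermitian metric, and then exploit the uniform two-sided control of the K\"ahler-Einstein potential $\varphi_t$ on the geodesic balls.

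First I would set up the comparison of hermitian metrics. Let $h_{FS}$ denote the smooth hermitian metric on $K_{\mathcal{X}/B}$ whose Chern curvature is $\chi$, so that $Ric(h_{FS}|_{X_t})=\chi_t$. Since $\omega_t=\chi_t+\ddbar\varphi_t$ and the K\"ahler-Einstein hermitian metric $h_t$ satisfies $Ric(h_t)=\omega_t$, one has the pointwise identification
$$h_t^k = e^{-k\varphi_t}\bigl(h_{FS}|_{X_t}\bigr)^k.$$
By Corollary \ref{5upb} we have the global upper bound $\varphi_t\leq C$, and by Lemma \ref{5lob} the local lower bound $\varphi_t\geq -C_R$ on $B_{g_t}(p_t,R)$. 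Consequently, on $B_{g_t}(p_t,R)$ and for any holomorphic section $\sigma$,
$$|\sigma|^2_{h_t^k}\,\geq\,e^{-kC_R}\,|\sigma|^2_{h_{FS}^k}.$$

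Next I would produce a uniform lower bound on the right-hand side via base-point-freeness. Since $k\in\mathcal{F}(X_0,K_{X_0})$, and by standard semicontinuity the restriction $H^0(\mathcal{X},K_{\mathcal{X}/B}^k)\to H^0(X_0,K_{X_0}^k)$ is surjective for such $k$ (using ampleness of $K_{\mathcal{X}/B}$ on the family), the restricted sections $\{\sigma_j^{(k)}|_{X_0}\}$ span $H^0(X_0,K_{X_0}^k)$ and therefore have no common zero on $X_0$. The continuous function $f:=\sum_j |\sigma_j^{(k)}|^2_{h_{FS}^k}$ on $\mathcal{X}$ is then strictly positive on $X_0$, so there exist $\delta>0$ and an open neighborhood $U$ of $X_0$ in $\mathcal{X}$ with $f\geq \delta$ on $U$. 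By properness of $\Psi$, $X_t\subset U$ for all $t\in B^*$ sufficiently close to $0$, giving $\sum_j |\sigma_j^{(k)}|_{X_t}|^2_{h_{FS}^k}\geq \delta$ uniformly on the whole fibre, and in particular on $B_{g_t}(p_t,R)$. Combined with the previous step this yields $\sum_j |\sigma_j^{(k)}|_{X_t}|^2_{h_t^k}\geq e^{-kC_R}\delta$ on $B_{g_t}(p_t,R)$.

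Finally I would pass to the Gromov-Hausdorff limit. On compact subsets of $\mathcal{R}\supset (X_0)_{reg}$, Corollary \ref{5smcon} and Proposition \ref{5ghcon} give smooth convergence of $g_t$ and $\varphi_t$ to $g_0$ and $\varphi_0$, and the sections $\sigma_j^{(k)}|_{X_t}$ converge smoothly to $\sigma_{0,j}^{(k)}$. Thus the uniform lower bound descends to $B_{d_\infty}(p_0,R)\cap \mathcal{R}$; since $\mathcal{R}$ is open and dense in the geodesic ball and the limiting sections extend continuously to $X_\infty$ by the preceding lemma, the bound extends to all of $B_{d_\infty}(p_0,R)$ with $\epsilon_R := e^{-kC_R}\delta$. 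The main technical obstacle is the surjectivity of the restriction map for all $k\in\mathcal{F}(X_0,K_{X_0})$ rather than just $k$ sufficiently large, which should follow from Kawamata-Viehweg type vanishing on the log canonical central fibre; for the applications of this lemma in the sequel one may in any case pass to a sufficiently large multiple without loss of generality.
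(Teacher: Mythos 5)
Your argument is correct and is essentially the paper's own proof: base-point-freeness for $k\in\mathcal{F}(X_0,K_{X_0})$ yields a positive lower bound for $\sum_j|\sigma^{(k)}_{0,j}|^2$ against a fixed reference (the paper compares with $\Omega_0$, you with a smooth metric $h_{FS}$), the potential bounds on the geodesic ball (Corollary \ref{5upb}, Lemma \ref{5lob}) convert this into a bound with respect to $h_\infty^k$, and density of $(X_0)_{reg}$ in $X_\infty$ together with continuity extends it to all of $B_{d_\infty}(p_0,R)$. The only differences are cosmetic --- the paper argues directly on the central fibre with $\varphi_0$ and $\Omega_0$ instead of uniformly in $t$ and then passing to the limit --- and the spanning/restriction issue you flag (surjectivity of $H^0(\mathcal{X},K_{\mathcal{X}/B}^k)\to H^0(X_0,K_{X_0}^k)$, or passing to a large multiple) is implicitly used by the paper as well, so your caveat applies equally to the original argument.
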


\begin{proof} Since $k\in \mathcal{F}(X_0, K_{X_0})$, there exists $C>0$ such that on $X_{reg}$, 
$$ \left( \sum_{j=1}^{d_k+1} |\sigma_{0,j}^{(k)
}|^2 \right)^{1/k} \geq c \Omega_0.$$  Also $\varphi_0$ is uniformly bounded on $B_{d_\infty}(p_0, R)$. Therefore the lemma follows because $(X_0)_{reg}$ is dense in $X_\infty$.
\end{proof}

We can now define the map
$$\Phi: X_\infty \rightarrow X_0 \subset \mathbb{CP}^{d_k}$$
using the global basis $\{ \sigma_j^{(k, 0)}\}_{j=1}^{d_k+1} $ on $X_\infty$. Since $K_{X_0}$ is semi-ample, the map $\Phi$ is stabilized on $(X_0)_{reg}$ for sufficiently large $k \in \mathcal{F}(X_0, K_{X_0})$ and so, $\Phi|_{(X_0)_{reg}} = id. $ Therefore, $\Phi$ is the unique continuous extension of the identity map on $(X_0)_{reg}$ to $(X_\infty, d_\infty)$ and  immediately we have the following corollary from Proposition \ref{ll41}. 

\begin{corollary} $\Phi$ is a Lipschtiz map from $(X_\infty, d_\infty) $ to $(X_0, \chi_0)$.

\end{corollary}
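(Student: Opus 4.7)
The plan is to combine the uniform local gradient bound on holomorphic sections from Proposition \ref{ll41} with the uniform lower bound $\inf_{B_{d_\infty}(p_0,R)}\sum_j|\sigma_{0,j}^{(k)}|^2_{h_\infty^k}\geq\epsilon_R$ from the previous lemma to control the norm of the differential $d\Phi$, measured with respect to $g_\infty$ on the domain and the Fubini--Study metric $\chi_0$ on the target.

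First, for each $R>0$ I would pass the estimates of Proposition \ref{ll41} to the limit as $t\to 0$. By Corollary \ref{5smcon} the metrics $g_t$ converge smoothly to $g_\infty=g_0$ on compact subsets of $(X_0)_{reg}$, and by Proposition \ref{5ghcon} the balls $B_{g_t}(p_t,R)$ converge in pointed Gromov--Hausdorff topology to $B_{d_\infty}(p_0,R)$. Together with the extension lemma for $\sigma_{0,j}^{(k)}$, this gives uniform bounds
\[
\sup_{B_{d_\infty}(p_0,R)\cap(X_0)_{reg}}\bigl(|\sigma_{0,j}^{(k)}|_{h_\infty^k}+|\nabla\sigma_{0,j}^{(k)}|_{h_\infty^k,g_\infty}\bigr)\leq K_R,
\]
valid for $j=1,\ldots,d_k+1$.

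Next, at any point $z\in(X_0)_{reg}\cap B_{d_\infty}(p_0,R)$ and any tangent vector $V\in T_zX_\infty$, the standard pullback computation in projective space gives
\[
|d\Phi(V)|_{\chi_0}^2\leq \frac{\sum_j|V(\sigma_{0,j}^{(k)})|_{h_\infty^k}^2}{\sum_i|\sigma_{0,i}^{(k)}|_{h_\infty^k}^2}\leq \frac{(d_k+1)K_R^2}{\epsilon_R}\,|V|_{g_\infty}^2.
\]
Hence $\Phi$ is $C_R$-Lipschitz on $B_{d_\infty}(p_0,R)\cap(X_0)_{reg}$ for some $C_R>0$ depending only on $R$ and $k$. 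For $p,q\in(X_0)_{reg}\cap B_{d_\infty}(p_0,R)$ one integrates $|d\Phi|$ along a path in $(X_0)_{reg}$ whose length approximates $d_\infty(p,q)$, which is possible since $(X_0)_{reg}$ is open and smoothly isometric to its image in $\mathcal{R}$, and the resulting inequality $d_{\chi_0}(\Phi(p),\Phi(q))\leq C_R\, d_\infty(p,q)$ then extends by continuity of $\Phi$ to all of $B_{d_\infty}(p_0,R)$ using density of $(X_0)_{reg}$ in $X_\infty$.

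The only non-routine point is the last step, the path-connectedness argument used to convert the pointwise derivative bound into a metric Lipschitz bound through pairs of points in $X_\infty$ that a priori might be joined only by $d_\infty$-rectifiable curves passing near the singular set $\mathcal{S}$. Since the gradient bound holds uniformly on the open dense set $(X_0)_{reg}\subset\mathcal{R}$ and the Lipschitz constant $C_R$ is uniform on any bounded ball, one can approximate a $d_\infty$-minimizing rectifiable curve between $p$ and $q$ by a sequence of curves in $(X_0)_{reg}$ with length converging to $d_\infty(p,q)$, using that $\Phi|_{(X_0)_{reg}}$ is the identity and $g_\infty$ is smooth on $(X_0)_{reg}$. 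This yields the Lipschitz property of $\Phi$ from $(X_\infty,d_\infty)$ to $(X_0,\chi_0)$.
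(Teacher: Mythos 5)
Your argument is correct and is essentially the paper's own route: the paper deduces the corollary "immediately" from Proposition \ref{ll41} together with the lower bound on $\sum_j|\sigma_{0,j}^{(k)}|^2_{h_\infty^k}$, exactly the two ingredients you combine to bound $|d\Phi|$ on $(X_0)_{reg}$ and then extend by density/convexity of the regular set. You simply spell out the pullback computation and the path-approximation step that the paper leaves implicit (note, as in the paper, the constant is only uniform on each ball $B_{d_\infty}(p_0,R)$, so the conclusion is a locally Lipschitz bound, which is all that is used later).
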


The following proposition is the main result of this section. 

\begin{proposition} $\Phi$ is injective. 

\end{proposition}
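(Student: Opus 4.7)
The argument will follow the template of Proposition \ref{inject} and Proposition \ref{isoset}, adapted to the current family setting where we do not work on $X_\infty$ directly with Demailly's estimate (since $X_\infty$ is a limit of smooth manifolds rather than the singular projective variety whose $\bar\partial$-theory we have set up), but instead transplant the $H$-condition data back to the smooth K\"ahler-Einstein fibres $(X_t, g_t)$ and pass to the limit.

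Fix two distinct points $p, q \in X_\infty$, and choose $R_0 > 0$ such that both $p, q \in B_{d_\infty}(p_0, R_0/2)$. First I would pass to a subsequence $k_v\to\infty$ for which $(X_\infty, p, k_v g_\infty)$ converges to a tangent cone $C(Y_p)$ and simultaneously $(X_\infty, q, k_v g_\infty)$ converges to a tangent cone $C(Y_q)$; after passing to further subsequences, and using Corollary \ref{5smcon} together with the local smooth convergence of $g_t$ to $g_0$ on the regular part, I arrange that the pointed rescalings $(X_{t_v}, p_{t_v}, k_v g_{t_v})$ and $(X_{t_v}, q_{t_v}, k_v g_{t_v})$ also approximate $C(Y_p)$ and $C(Y_q)$ respectively on any fixed large domain away from the vertex, with $t_v \to 0$ chosen diagonally fast enough. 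Then I apply Lemma \ref{hcon} to obtain, for a common scale $k_{p,q} = k_v$, $H$-condition data $(p_*, D_p, U_p, J_C, g_C, h_C, A_C)$ on $C(Y_p)$ and $(q_*, D_q, U_q, J_C, g_C, h_C, A_C)$ on $C(Y_q)$, with $d_{C(Y_p)}(p_*, p)$ and $d_{C(Y_q)}(q_*, q)$ much less than $(100K_{R_0})^{-1}$, where $K_{R_0}$ is the constant of Proposition \ref{ll41}. By Proposition \ref{hcon2} and Lemma \ref{closen}, applied on the smooth fibres $X_{t_v}$, I transplant these data through embeddings $\chi_p, \chi_q$ into $(X_{t_v}, g_{t_v})$ so that the stability version of the $H$-condition holds with the pulled-back smooth K\"ahler-Einstein data $(J_{t_v}, g_{t_v}, h_{t_v})$.

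Next I produce the separating sections. Starting from the compactly supported smooth sections $\sigma_p, \sigma_q$ furnished by the $H$-condition on $\chi_p(U_p), \chi_q(U_q) \subset X_{t_v}$, I apply the standard Demailly $\bar\partial$-estimate on the smooth K\"ahler-Einstein manifold $X_{t_v}$ (with hermitian metric $(h_{t_v})^{k_{p,q}}$ on $(K_{X_{t_v}})^{k_{p,q}}$, whose curvature is $k_{p,q} g_{t_v} \geq \omega_{t_v}$) to solve $\bar\partial u_p = \bar\partial\sigma_p$ and $\bar\partial u_q = \bar\partial \sigma_q$ with $L^2$-norms bounded by the $H_4$ constant. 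The corrected sections $\sigma_p'' = \sigma_p - u_p$ and $\sigma_q'' = \sigma_q - u_q$ lie in $H^0(X_{t_v}, (K_{X_{t_v}})^{k_{p,q}})$, have global $L^2$-norm at most $2(2\pi)^{n/2}$, and by Proposition \ref{ll41} together with the $H_3$-bound they satisfy $|\sigma_p''(p_{t_v})| \geq 2/5$, $|\sigma_p''(q_{t_v})| \leq 1/10$, and symmetrically for $\sigma_q''$, by the same pointwise Lipschitz propagation argument used in step (4) of the proof of Proposition \ref{inject}.

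Finally I would pass to the limit $v \to \infty$. By Proposition \ref{ll41} and the uniform $L^2$-bounds, $\sigma_p''$ and $\sigma_q''$ are locally uniformly bounded in $C^1$ on compact subsets of $\mathcal{R}$; after extracting a subsequence they converge smoothly on $\mathcal{R}$ and continuously on $X_\infty$ to limiting sections $\sigma_{p,\infty}''$, $\sigma_{q,\infty}'' \in H^0(X_\infty, (K_{X_\infty})^{k_{p,q}})$ that inherit the separation inequalities $|\sigma_{p,\infty}''(p)| \geq 2/5$ and $|\sigma_{p,\infty}''(q)| \leq 1/10$, and symmetrically. Since $\Phi$ is stabilized for sufficiently large $k \in \mathcal{F}(X_0, K_{X_0})$, this forces $\Phi(p) \neq \Phi(q)$ for any sufficiently large $k_{p,q}$ (which we may arrange by rescaling $k_v$ up further while preserving all the $H$-condition inequalities). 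The main obstacle is precisely the simultaneous matching described above: because we lack an a priori diameter bound, the $H$-condition, the tangent-cone scaling, and the $\bar\partial$-correction must all be coordinated inside a fixed geodesic ball $B_\infty(p_0, R_0)$ so that the local $L^2$-constant $K_{R_0}$ from Proposition \ref{ll41} (rather than a global Sobolev constant) controls the pointwise bounds. The rest follows exactly as in section 3.5.
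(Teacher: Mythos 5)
Your proposal is correct and follows essentially the same route as the paper: the paper's own proof is a two-sentence reduction to the Donaldson--Sun separation argument via the stability of the $H$-condition, carried out locally on balls $B_t(R)$ (because no diameter bound is available) and concluded through the stabilization of $\Phi$ rather than a uniform bound on the power of $K_{X_\infty}$. Your write-up simply makes that scheme explicit --- tangent cones and Lemma \ref{hcon}/Proposition \ref{hcon2}, the $\dbar$-correction on the smooth K\"ahler--Einstein fibres $X_{t_v}$, the local estimates of Proposition \ref{ll41} with constant $K_{R_0}$, and passage to the limit --- which is exactly what the paper intends by ``the same argument of Donaldson--Sun.''
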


\begin{proof} The proof proceeds by exactly the same argument of Donaldson-Sun \cite{DS} by applying the stability of the $H$-condition. The only difference is that we have to work locally on enlarging balls $B_t(R)$ and let $R\rightarrow \infty$. We do not need to raise a uniform bound on the power of $K_{X_\infty}$ because $\Phi$ is stabilized and is the unique extension of the identity map on $(X_0)_{reg}$.

\end{proof}


\subsection{Proof of Theorem \ref{main6}}

\begin{lemma} \label{l55} Suppose $diam_{d_\infty}(X_\infty) = \infty$. Then $X_0\setminus \Phi(X_\infty)$ is not empty and  for any sequence of points $q_j \in X_\infty$   with $\lim_{j\rightarrow \infty} d_\chi(\Phi(q_j), P) = 0$  for some point   $P\in X_0 \setminus \Phi(X_\infty)$,
$$d_\infty(p_\infty, q_j) \rightarrow \infty$$
as $j \rightarrow \infty$.

\end{lemma}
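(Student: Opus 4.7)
The proof will follow the pattern of Lemma 4.6 of Section 4, adapted to the degeneration setting developed in this section. Both assertions are established by contradiction, making use of the injectivity of $\Phi$ just established, the identification $\mathcal{R} = (X_0)_{reg}$ from Proposition 5.3, the Lipschitz property of $\Phi$, and the compactness of closed bounded balls in $(X_\infty, d_\infty)$ that follows from Cheeger--Colding theory applied to the approximating sequence $(X_t, g_t)$ (which satisfies a uniform Ricci lower bound).

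For the non-emptiness of $X_0 \setminus \Phi(X_\infty)$, suppose to the contrary that $\Phi(X_\infty) = X_0$. Since $diam_{d_\infty}(X_\infty) = \infty$ by hypothesis, pick $Q_j \in X_\infty$ with $d_\infty(p_\infty, Q_j) \to \infty$. Density of $(X_0)_{reg} = \mathcal{R}$ in $X_\infty$ allows us to take $Q_j \in (X_0)_{reg}$, so $\Phi(Q_j) = Q_j$ as points of $X_0 \subset \mathbb{CP}^{d_k}$. Compactness of $X_0$ in $\chi$ yields a subsequence with $\Phi(Q_j) \to P$ in $\chi$, and the assumed surjectivity gives $P = \Phi(Q')$ for some $Q' \in X_\infty$. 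A local comparison argument near $Q'$ will then force $Q_j$ to stay in a bounded $d_\infty$-neighborhood of $Q'$ for large $j$, contradicting $d_\infty(p_\infty, Q_j) \to \infty$.

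For the second assertion, I would suppose for contradiction that along a subsequence $d_\infty(p_\infty, q_j) \leq A$ for some $A > 0$, while $d_\chi(\Phi(q_j), P) \to 0$ with $P \in X_0 \setminus \Phi(X_\infty)$. Since $(X_0)_{reg} = \mathcal{R}$ is dense in $X_\infty$ and the limit metric $g_\infty$ dominates a multiple of $\chi_0$ on the regular set, each $q_j$ can be approximated by a regular point $q'_j \in (X_0)_{reg}$ with $d_\infty(p_\infty, q'_j) \leq 2A$ and $d_\chi(\Phi(q'_j), P) \to 0$. The closed ball $\overline{B_{d_\infty}(p_\infty, 2A)}$ is compact by Cheeger--Colding, so a subsequence of $q'_j$ converges to some $q'_\infty \in X_\infty$, and continuity of $\Phi$ then yields $\Phi(q'_\infty) = P$, contradicting $P \notin \Phi(X_\infty)$.

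The principal obstacle is the local comparison step in Part (1): one must pass from convergence $\Phi(Q_j) \to \Phi(Q')$ in $\chi$ to convergence $Q_j \to Q'$ in $d_\infty$, even though the reverse inequality $d_\infty \leq C d_\chi$ may fail in neighborhoods of singular points. This will be circumvented by noting that $X_0 = \bigcup_{n \geq 1} \Phi(\overline{B_{d_\infty}(p_\infty, n)})$ is a countable union of closed sets in the compact Baire space $(X_0, \chi)$; the Baire category theorem then forces some $\Phi(\overline{B_{d_\infty}(p_\infty, n_0)})$ to contain a nonempty $\chi$-open set, and combining this with injectivity of $\Phi$ and a covering argument on $X_0$ produces a uniform $d_\infty$-bound on $\Phi^{-1}(X_0)$, contradicting the infinite-diameter hypothesis directly.
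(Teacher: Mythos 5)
Your treatment of the second assertion is correct and is essentially the paper's own argument: the paper proves this lemma by invoking the same scheme as Lemma \ref{46}(2) (approximate $q_j$ by regular points using density of $(X_0)_{reg}$ and the lower bound of $g_0$ by a multiple of $\chi_0$, extract a limit $q'_\infty$ in a fixed closed ball by Cheeger--Colding compactness, and use continuity of $\Phi$ to force $P\in\Phi(X_\infty)$), with the one caveat, which the paper states explicitly via Lemma \ref{5lob}, that in this degeneration setting $g_0\geq c\,\chi_0$ only holds locally on fixed geodesic balls of $(X_\infty,d_\infty)$ (the potential $\varphi_0$ blows down near $S_{lc}$); since your $q_j$ lie in a fixed ball, this local bound is all you use, so that part stands.

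The gap is in your first assertion. The Baire step itself is sound as far as it goes: each $\Phi(\overline{B_{d_\infty}(p_\infty,n)})$ is compact, hence closed, so if $\Phi(X_\infty)=X_0$ some $\Phi(\overline{B_{n_0}})$ contains a nonempty $\chi$-open set $U$, and then $U\cap (X_0)_{reg}\subset \overline{B_{n_0}}$ by $\Phi|_{\mathcal R}=\mathrm{id}$ and injectivity. But the concluding claim, that ``a covering argument on $X_0$ produces a uniform $d_\infty$-bound on $\Phi^{-1}(X_0)$,'' is unsubstantiated and cannot be made to work by these means: Baire gives you one open set (or at best a dense union of such sets, with no uniform $n$), and this is perfectly compatible with the bad scenario in which the points of $X_\infty$ far from $p_\infty$ have images accumulating only on the complement of all such $U$, e.g.\ on arbitrarily small $\chi$-neighborhoods of the singular set. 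Note that under the standing hypothesis $diam_{d_\infty}(X_\infty)=\infty$, a uniform $d_\infty$-bound on $\Phi^{-1}(X_0)=X_\infty$ is literally the negation of the hypothesis; if soft topology delivered it, the entire diameter bound of Sections 4.5 and 5.3 would be free, whereas in the paper it requires the hard blow-up/Schwarz estimates (Proposition \ref{co55}, Corollary \ref{co44}) used in tandem with this very lemma. What actually closes the step you correctly flag as the principal obstacle is not Baire but the separation property of $\Phi$: in the paper's scheme one takes regular $Q_j$ with $d_\infty(p_\infty,Q_j)\to\infty$, passes to $\Phi(Q_j)\to\Phi(Q')$ using the assumed surjectivity, and then rules this out either by local comparability of $d_\infty$ and $d_\chi$ near a regular point, or, near a singular image point, by the connectedness/intermediate-value argument through paths of regular points in small $\chi$-balls which manufactures two distinct points of $X_\infty$ with the same image, contradicting injectivity (this is exactly the mechanism of Corollary \ref{co41}). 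Your proposal never engages the injectivity of $\Phi$ at this stage, and without it the contradiction does not materialize.
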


\begin{proof} One can apply the same argument in the proof of Lemma \ref{46}. Notice that $g_0$ is bounded below by $\chi_0$ on any fixed geodesic ball of $(X_\infty, d_\infty)$ because of the bound on $\varphi_0$ in Lemma \ref{5lob}.

\end{proof}

\begin{lemma} \label{5.09} $S_{lc}^{\circ} \cap \Phi(X_\infty) = \phi$ 

\end{lemma}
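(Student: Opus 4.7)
The plan is to argue by contradiction, exploiting the tension between two facts: the K\"ahler-Einstein potential $\varphi_0$ blows down to $-\infty$ along $S_{lc}^{\circ}$ (by Proposition \ref{5uniq}), while the uniform lower bound of Lemma \ref{5lob} forces $\varphi_0$ to stay bounded below on every $d_\infty$-bounded region of $X_\infty$. Since every point of $X_\infty$ sits at finite $d_\infty$-distance from the basepoint $p_\infty$, no point can be sent by $\Phi$ into $S_{lc}^{\circ}$.

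More concretely, I would suppose there exists $P \in S_{lc}^{\circ} \cap \Phi(X_\infty)$ and pick $q \in X_\infty$ with $\Phi(q) = P$. Set $R := d_\infty(p_\infty, q) + 1$, which is finite. Because $(X_0)_{reg}$ is an open dense subset of $X_\infty$ (Proposition \ref{5ghcon}) and $\Phi|_{(X_0)_{reg}} = \mathrm{id}$, I can select a sequence $q_j \in (X_0)_{reg}$ with $q_j \to q$ in $(X_\infty, d_\infty)$; Lipschitz continuity of $\Phi$ with respect to $d_\infty$ and the Fubini--Study metric $\chi_0$ then gives $d_{\chi_0}(q_j, P) = d_{\chi_0}(\Phi(q_j), \Phi(q)) \to 0$. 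By Proposition \ref{5uniq}, the fact that $P \in S_{lc}^{\circ}$ forces $\varphi_0(q_j) \to -\infty$.

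On the other hand, for $j$ sufficiently large $q_j$ lies in $B_{d_\infty}(p_\infty, R)$, and I will push the estimate $\varphi_t \geq -C_R$ on $B_t(R)$ from Lemma \ref{5lob} across the limit: using the smooth convergence $g_t \to g_0$ and $\varphi_t \to \varphi_0$ on compact subsets of $(X_0)_{reg}$ (Corollary \ref{5smcon}) together with the pointed Gromov--Hausdorff convergence of Proposition \ref{5ghcon}, each $q_j \in (X_0)_{reg} \cap B_{d_\infty}(p_\infty, R)$ is the limit of points $q_j^{(t)} \in B_t(R)$ at which $\varphi_t(q_j^{(t)}) \geq -C_R$, yielding $\varphi_0(q_j) \geq -C_R$. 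This contradicts $\varphi_0(q_j) \to -\infty$.

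The main obstacle is the bookkeeping in this last step: one must verify that the uniform $L^\infty$-bound on $\varphi_t$ on metric balls really does descend to the pointwise lower bound $\varphi_0 \geq -C_R$ on $(X_0)_{reg} \cap B_{d_\infty}(p_\infty, R)$, using the fact that the identification of $(X_0)_{reg}$ with a subset of $X_\infty$ is compatible with the Gromov--Hausdorff picture and that $\varphi_t \to \varphi_0$ smoothly there. Once this is in place, the contradiction closes the argument and shows $S_{lc}^{\circ} \cap \Phi(X_\infty) = \emptyset$.
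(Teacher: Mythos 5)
Your proposal is correct and follows essentially the same route as the paper: a contradiction argument using density of $(X_0)_{reg}$ in $X_\infty$, the finite $d_\infty$-distance to the basepoint, the uniform lower bound for the potential on bounded balls coming from Lemma \ref{5lob} (passed to the limit via Corollary \ref{5smcon}), and the fact from Proposition \ref{5uniq} that $\varphi_0\rightarrow -\infty$ near $S_{lc}^{\circ}$. The paper's proof is simply a terser version of the same argument, leaving the limit-passing bookkeeping implicit.
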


\begin{proof} We prove by contradiction. Suppose $Q\in S_{lc}^\circ ~\cap ~ \Phi(X_\infty)$. Then there exists $q\in X_\infty$ with $\Phi(q)=Q$ and there exist  a sequence $q_j\in (X_0)_{reg}$ such that $d_\infty(q_j, q) \rightarrow 0$ because $(X_0)_{reg}$ is dense in $X_\infty$. Therefore there exists $R>0$ such that $q_j \in B_{d_\infty}(p_0, R)$ and so there exists $C>0$ such that for all $j$, $$|\varphi_0(q_j)|\leq C. $$ This leads to contradiction since $\varphi_0$ tends to $-\infty$ near $S_{lc}^{\circ}$ by Proposition \ref{5uniq}.

\end{proof}

\begin{proposition} \label{co55} For any point $Q\in S_{lt}$, there exists a sequence of points $q_j \in (X_\infty)_{reg} $ satisfying 

\begin{enumerate}

\item $d_\chi(\Phi(q_j), Q) \rightarrow 0$,
\medskip

\item there exists $C>0$ such that for all $j$
$$
d_\infty(p_0, q_j) \leq C. 
$$

\end{enumerate}

\end{proposition}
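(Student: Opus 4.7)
The proof follows the strategy of Corollary \ref{co45}, but now applied directly to the K\"ahler-Einstein metrics $g_t$ along the family for $t\in B^*$, passing to the limit $t\to 0$. The plan is to produce a uniform (in $t$) local upper bound on $\omega_t$ near a resolution of $Q\in S_{lt}$, and then to use this bound to show that regular points close to $Q$ in the Fubini-Study metric are at uniformly bounded $d_\infty$-distance from $p_0$.

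First, I would fix a simultaneous log resolution $\tilde\pi : \tilde{\mathcal{X}} \to \mathcal{X}$ as described in the paragraph preceding Lemma \ref{54}, chosen so that the preimage of $X_0$ is a union of smooth divisors with simple normal crossings. Since $Q\in S_{lt}$, there exists a point $O$ lying on the smooth part of a single exceptional divisor $E_i$ with discrepancy $a_i > -1$, avoiding all the components $F_j$ of discrepancy $-1$, and satisfying $\tilde\pi(O) = Q$. Choose a local coordinate chart $\tilde B_O$ around $O$ in which $E_i = \{z_1 = 0\}$, and let $\hat\omega$ denote the local Euclidean K\"ahler form pulled back from this chart, as in Lemma \ref{l239}.

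The main technical step is to show that, after possibly shrinking $\tilde B_O$, there exist $\lambda>0$, $\delta\in(0,1)$ and $C>0$ independent of $t\in B^*$ such that on $\tilde B_O\cap X_t$,
\begin{equation*}
\tilde\pi^* \omega_t \;\leq\; \frac{C}{|s_{E_i}|^{2(1-\delta)}_{h_{E_i}}}\,\hat\omega.
\end{equation*}
This is established by a maximum principle computation parallel to Proposition \ref{digest}, applied directly to the smooth K\"ahler-Einstein metrics $\omega_t$ for $t\in B^*$. Two features simplify the argument: the K\"ahler-Einstein equation gives $Ric(\omega_t) = -\omega_t$ automatically, removing the need for the analogue of Lemma \ref{riup}; and Corollary \ref{5smcon} provides smooth convergence $g_t\to g_0$ on compact subsets of $(X_0)_{reg}$, which yields the boundary estimate on $\partial \tilde B_O$ analogous to Corollary \ref{co43}. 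The uniform potential bounds from Corollary \ref{5upb} and Lemma \ref{54} supply the needed control for a test quantity of the form $\log(|s_{E_i}|^{2B}_{h_{E_i}} \operatorname{tr}_{\hat\omega}(\tilde\pi^*\omega_t)) - A\,\tilde\pi^*\varphi_t$ combined with a Schwarz-type barrier $\delta\log\operatorname{tr}_{\tilde\omega}(\tilde\pi^*\omega_t)$, which by the interior maximum principle is bounded above uniformly in $t$.

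Once the uniform estimate is in hand, letting $t\to 0$ gives the same bound for $\tilde\pi^*\omega_0$ on $\tilde B_O$. Since $1-\delta<1$, a radial curve approaching $O$ in the $z_1$-direction has finite length with respect to $\tilde\pi^*\omega_0$, so any sequence $q_j\in(X_0)_{reg}$ whose lifts in $\tilde B_O$ converge to $O$ (and hence with $d_\chi(\Phi(q_j),Q)\to 0$) can be joined to a fixed regular reference point near $O$ by paths of uniformly bounded $g_0$-length. Combining with the finite $d_\infty$-distance from this reference point to $p_0$ yields $d_\infty(p_0,q_j)\leq C$. The main obstacle is the uniform-in-$t$ upper bound in the displayed inequality: unlike Proposition \ref{digest}, where the parameter $\epsilon$ regularizes a fixed complex manifold, here the complex structure on $X_t$ varies with $t$, and one must verify that both the boundary data and the interior maximum principle argument remain genuinely uniform as $t\to 0$.
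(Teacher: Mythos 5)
There is a genuine gap, and it sits exactly where you flag your ``main technical step.'' The mechanism that produces an exponent strictly less than $2$ in Proposition \ref{digest} (and hence finite-length paths in Corollary \ref{co45}) is the Song--Weinkove point blow-up: one blows up a point over $Q$ lying on the smooth part of an exceptional divisor of a log resolution of $X_0$, so that the pulled-back volume form vanishes to high order along the new exceptional $\mathbb{CP}^{n-1}$, and the reference form $\hat\omega$ in Lemma \ref{l239} is the \emph{degenerate} pullback of the Euclidean form under that blow-down. Your setup has no point blow-up: $O$ is a point of an exceptional divisor $E_i$ of the log resolution itself, $\hat\omega$ is essentially a nondegenerate local form, and along $E_i$ the discrepancy may lie in $(-1,0)$, so the pulled-back volume form has a \emph{pole} rather than a high-order zero there. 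With the barriers you propose, the maximum principle only yields the analogue of Lemma \ref{483} and Lemma \ref{55}, namely $\operatorname{tr}(\omega_t)\leq C|s_{exc}|^{-2\lambda}$ with an uncontrolled $\lambda$, which gives no length bound; the claimed uniform estimate $\tilde\pi^*\omega_t\leq C|s_{E_i}|^{-2(1-\delta)}\hat\omega$ is not delivered by the computation you sketch, and citing Proposition \ref{digest} presupposes the blow-down structure you have not introduced.

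The limiting step is also geometrically off. The divisor $E_i$ is contained in the central fibre of $\tilde{\mathcal{X}}\to B$, so a ``radial curve approaching $O$ in the $z_1$-direction'' leaves the fibre over $t=0$ (locally $t$ is a monomial involving the defining functions of the components of the central fibre); it is not a path in $(X_0)_{reg}$, and ``$\tilde\pi^*\omega_0$ on $\tilde B_O$'' does not make sense off $X_0$. Likewise $\partial\tilde B_O\cap X_t$ is not contained over a compact subset of $(X_0)_{reg}$, so Corollary \ref{5smcon} does not supply the uniform boundary estimate. Your idea of exploiting $Ric(g_t)=-g_t$ along the family to avoid the Ricci-bound difficulty is attractive, but at best it controls $g_t$ on regions of $X_t$ that collapse onto $E_i$ as $t\to 0$, and converting that into statements about points of $(X_\infty)_{reg}=(X_0)_{reg}$ would require further Gromov--Hausdorff arguments you do not give. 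The paper instead works entirely inside birational models of the central fibre: it blows up a point of a log resolution $Z$ of $X_0$ over $Q$, approximates $\omega_0$ by solutions $\omega_\epsilon$ of the conical Monge--Amp\`ere equations (\ref{conke}) --- keeping the conical singularity along the discrepancy-$\alpha\in(-1,0)$ divisor precisely because a smooth approximation with the Ricci bound of Lemma \ref{riup} is impossible when $X_0$ is not canonical --- proves the Ricci bound and the Proposition \ref{digest}-type estimate for $\omega_\epsilon$, lets $\epsilon\to 0$, and then all finite-length paths lie genuinely in $(X_0)_{reg}$.
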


\begin{proof} The proof is slightly more complicated than that of Corollary \ref{co44} and estimates in section 4.5 because $X_0$ might have singularities worse than canonical singularities, and so one cannot approximate the K\"ahler-Einstein current by smooth K\"ahler metrics with Ricci curvature uniformly bounded below as in Lemma \ref{riup}. Our strategy is to keep the pole singularities along $S_{lt}$ and obtain the approximating K\"ahler metrics with conical singularities.

For any point $q\in S_{lt}$, we may assume $q$ in the regular part of the exceptional divisor in a smooth model $Z$ of $X_0$ after a log resolution. Then we blow up $q$ by $\pi_Z: X' \rightarrow Z$ and let $\pi': X' \rightarrow X_0$. Let $E=\pi_Z^{-1}(q)$. We consider the following equation on $ X'$.  Let $D$ be the proper transform of the divisor of where $q$ lies in $Z$ and $s_D$, $s_E$ be the defining section of $D$ and $E$. We can assume the discrepancy of $D$ is $\alpha \in (-1, 0)$.  There exist smooth hermitian metric $h_E$ and $h_D$ on the line bundles associated to $[E]$ and $[D]$ such that 
\begin{equation}\label{conke}
(\chi' + \epsilon\omega' +   \ddbar \varphi_{\epsilon} )^n = e^{\varphi_\epsilon} (|s_E|^{2(n-1)}_{h_E} + \epsilon) |s_E|_{h_E}^{-2(n-1)} \Omega'_0 ,
\end{equation}
where $\chi' =( \pi')^*\chi_0$, $\Omega'= (\pi')^*\Omega_0$ and $\omega'$ is a smooth K\"ahler metric on $X'$. By standard regularization, one can show that $\varphi_\epsilon$ is smooth on $(\pi')^{-1}((X_0)_{reg})$ and for any $\delta>0$, there exists $C_\delta>0$ such that 
\begin{equation}\label{pcondi}
\varphi_\epsilon \geq \delta \log |s_{exc}|^2_{h_{exc}}- C_\delta, 
\end{equation}
where $s_{exc}$ is a holomorphic section vanishing along all the exceptional locus of $\pi'$ and $h_{exc}$ is a smooth hermitian metric. We pick a smooth point $q' \in E\setminus D$. Then locally near $q'$, $\omega_\epsilon = \chi' + \epsilon \omega' + \ddbar \varphi_\epsilon$ is equivalent to a standard conical K\"ahler metric with conical singularities of angle $2\pi (1-\alpha)$ along $D$ by applying the argument for the second order estimate in \cite{GP, DaSo}. Also we note that 
$$Ric(\omega_\epsilon) \geq - \omega_\epsilon + A\omega'$$
for some uniform constant $A>0$ for all $\epsilon>0$. The rest of the proof follows by the same argument in section 4.5. The only difference is that $\varphi_\epsilon$ satisfies (\ref{pcondi}) instead of being uniformly bounded, however, it does not affect the argument by adding barrier functions with arbitrarily small log poles.

\end{proof}

Now we can prove Theorem \ref{main6}. 

\begin{theorem} $(X_\infty, d_\infty)$ is either a metric length space homeomorphic to the quasi-projective variety $(X_0\setminus S_{lc}, \chi_0)$ if $S_{lc}\neq \phi$ or a compact metric length space homeomorphic to the projective variety $(X_0, \chi_0)$ if $S_{lc}= \phi$. In particular, 
$$(X_0)_{reg} = (X_\infty)_{reg}.$$
\end{theorem}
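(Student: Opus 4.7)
The plan is to split the argument into the two cases $S_{lc}=\phi$ and $S_{lc}\neq\phi$, in each case showing that the Lipschitz injection $\Phi$ from the preceding proposition is a homeomorphism onto the correct target, and then deduce the regular-set equality. The main inputs are: (i) $\Phi|_{(X_0)_{reg}}=\mathrm{id}$; (ii) Lemma \ref{l55}, which says that if $\mathrm{diam}_{d_\infty}(X_\infty)=\infty$ then there is an unreached point $P\in X_0\setminus\Phi(X_\infty)$ any $\chi$-approximating sequence of which must escape to infinity in $d_\infty$; (iii) Proposition \ref{co55}, which produces for each $Q\in S_{lt}$ a sequence $q_j\in(X_\infty)_{reg}$ bounded in $d_\infty$ with $\Phi(q_j)\to Q$ in $\chi$; and (iv) Lemma \ref{5.09}, which asserts $\Phi(X_\infty)\cap S_{lc}^\circ=\phi$.

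For the case $S_{lc}=\phi$, I would first rule out the diameter being infinite. If it were, then by (ii) one could pick $P\in X_0\setminus\Phi(X_\infty)$; since $S_{lc}=\phi$, necessarily $P\in(X_0)_{reg}\cup S_{lt}$, and both cases fail: the former because $(X_0)_{reg}\subset\Phi(X_\infty)$, the latter by pairing (iii) against (ii) to obtain a bounded sequence forcing $P\in\Phi(X_\infty)$, a contradiction. Hence $(X_\infty,d_\infty)$ has finite diameter. By Cheeger--Colding compactness (the Ricci lower bound on the approximating $g_j$ passing to the limit) $X_\infty$ is compact; then $\Phi(X_\infty)\subset X_0$ is compact and contains the Zariski-dense $(X_0)_{reg}$, hence equals $X_0$. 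A continuous bijection from a compact space onto a Hausdorff space is automatically a homeomorphism.

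For the case $S_{lc}\neq\phi$, I would prove $\Phi(X_\infty)=X_0\setminus S_{lc}$ by double inclusion. The inclusion $\supset$ is immediate from $(X_0)_{reg}\subset\Phi(X_\infty)$ together with Proposition \ref{co55}: any $Q\in S_{lt}$ is approached by a $d_\infty$-bounded sequence $q_j$ which, by closed-ball precompactness in $(X_\infty,d_\infty)$, subconverges in $d_\infty$ to some $q_\infty$, and the Lipschitz continuity of $\Phi$ yields $\Phi(q_\infty)=Q$. For the inclusion $\subset$, I would upgrade Lemma \ref{5.09} from $S_{lc}^\circ$ to all of $S_{lc}$: on any bounded $d_\infty$-ball around $p_\infty$ the potential $\varphi_0$ is bounded by Lemma \ref{5lob}, but by running the barrier/maximum-principle argument used in Proposition \ref{5uniq} on an auxiliary log resolution that blows up the intersection locus of the log terminal and log canonical divisors, one sees $\varphi_0\to-\infty$ along the entire $S_{lc}$; hence no bounded-distance point of $X_\infty$ can map into $S_{lc}$. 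Finally, the inverse $\Phi^{-1}$ is continuous: if $x_n\to x$ in $X_0\setminus S_{lc}$ and $\Phi^{-1}(x_n)$ failed to be $d_\infty$-bounded, Lemma \ref{l55} would force $x\in X_0\setminus\Phi(X_\infty)\subset S_{lc}$, a contradiction; so $\Phi^{-1}(x_n)$ subconverges and injectivity pins down the limit as $\Phi^{-1}(x)$.

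The equality $(X_0)_{reg}=(X_\infty)_{reg}$ then follows in two steps: Proposition \ref{5ghcon} already gives $(X_0)_{reg}\subset\mathcal{R}=(X_\infty)_{reg}$, and for the reverse inclusion I would transplant the Rong--Zhang volume-comparison argument of Lemma \ref{le45} to the present setting, using Perelman's pseudolocality along the short-time Ricci flow started from $g_t$ to promote local smoothness of $g_0$ into a contradiction at any singular point of $X_0$ that were claimed to be regular in $X_\infty$. The hard part is the upgrade of Lemma \ref{5.09} from $S_{lc}^\circ$ to $S_{lc}$: Berndtsson's result cited in Proposition \ref{5uniq} only directly delivers the blow-up on the pure log canonical stratum, so one must construct tailored $\tilde E$-type barrier sections adapted to the intersection stratum and verify the $\mathcal{P}(X_0,\chi_0)$-class inequality survives the normal-crossings book-keeping there; this is precisely the analogue of the delicate diameter/normal-crossings analysis of Section 4.5, and is where the technical weight of the proof lies.
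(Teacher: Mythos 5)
Your case split and your reliance on Lemmas \ref{l55}, \ref{5.09} and Proposition \ref{co55} track the paper's strategy, and your treatment of the case $S_{lc}=\phi$ (and of $(X_0)_{reg}=(X_\infty)_{reg}$ via a Rong--Zhang type argument) is essentially in line with it. But in the case $S_{lc}\neq\phi$ there are two genuine gaps. First, for the inclusion $\Phi(X_\infty)\subset X_0\setminus S_{lc}$ you propose to upgrade Lemma \ref{5.09} from $S_{lc}^\circ$ to all of $S_{lc}$ by proving that $\varphi_0\to -\infty$ along the intersection stratum $S_{lc}\setminus S_{lc}^\circ$, using new barrier constructions on an auxiliary resolution; you yourself flag this as the hard part and do not carry it out, and the Berndtsson-type input quoted in Proposition \ref{5uniq} only yields the blow-up on $S_{lc}^\circ$. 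The paper avoids this analysis entirely: since $\Phi(X_\infty)$ is open in $(X_0,\chi_0)$ (the section-5 analogue of Corollary \ref{co41}) and is disjoint from $S_{lc}^\circ$ by Lemma \ref{5.09}, while $S_{lc}\setminus S_{lc}^\circ$ lies in the closure of $S_{lc}^\circ$ (the pure stratum is dense in each divisor $F_j$, and its image stays dense in $S_{lc}$), no point of $S_{lc}\setminus S_{lc}^\circ$ can lie in the image either. In the paper the statement that $\varphi_0\to-\infty$ along all of $S_{lc}$ is a corollary of the theorem, not an input to it, so your route inverts the logical order and leaves its key analytic step unproved.

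Second, your continuity argument for $\Phi^{-1}$ misuses Lemma \ref{l55}: that lemma constrains only sequences whose images converge to a point $P\in X_0\setminus\Phi(X_\infty)$; it does not say that a $d_\infty$-escaping sequence must have images converging to a point outside the image. So in the situation you consider, $x_n\to x$ with $x\in\Phi(X_\infty)$ and $\Phi^{-1}(x_n)$ unbounded, Lemma \ref{l55} gives no contradiction. What is actually needed is the openness/chaining argument of Corollaries \ref{co41} and \ref{co42}: join the escaping points to points near $\Phi^{-1}(x)$ through the dense regular set, use that $d_\infty$ dominates a multiple of $d_{\chi_0}$ on bounded sets (Lemma \ref{5lob}) to extract a second preimage of $x$ at definite $d_\infty$-distance from $\Phi^{-1}(x)$, and contradict injectivity. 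Supplying this section-5 analogue of Corollaries \ref{co41} and \ref{co42} would simultaneously repair both gaps and reduce your proof to the paper's.
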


\begin{proof} If $S_{lc}= \phi$, then by the same argument in section 4.5  with Lemma \ref{l55} and Proposition \ref{co55}, $\Phi(X_\infty)= X_0$ and so $\Phi$ is bijection and hence homeomorphism. If $S_{lc} \neq \phi$, then by Lemma \ref{l55} and Proposition \ref{co55}, 
$$S_{lt}\subset \Phi(X_\infty).$$ 
It now suffices to show that $$( S_{lc} \setminus S_{lc}^\circ )  \cap \Phi(X_\infty)=\phi $$ by Lemma \ref{5.09}. But this must be true since $\Phi(X_\infty)$ is open in $(X_0, \chi_0)$. 

\end{proof}

Theorem \ref{main6} immediately implies the following corollary. 

\begin{corollary} $\varphi_0 \in L_{loc}^\infty(X_0\setminus S_{lc})$ and $\varphi_0$ tends to $-\infty$ along $S_{lc}$. Furthermore, if $X_0$ is log terminal, then there exists $C>0$ such that for all $t\in B$, 
$$||\varphi_t||_{L^\infty(X_t)} \leq C . $$  

\end{corollary}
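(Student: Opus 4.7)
The plan is to deduce this corollary as a direct consequence of Theorem \ref{main6} combined with the a priori estimates developed in Section~5.1.

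For the local $L^\infty$ bound on $X_0\setminus S_{lc}$, I would fix a compact set $K\subset X_0\setminus S_{lc}$. By Theorem \ref{main6}, $\Phi$ is a homeomorphism from $(X_\infty,d_\infty)$ onto $(X_0\setminus S_{lc},\chi_0)$ (or onto $X_0$ when $S_{lc}=\emptyset$), so $\Phi^{-1}(K)$ is compact in $X_\infty$ and therefore contained in some geodesic ball $B_{d_\infty}(p_0,R)$. Using the pointed Gromov--Hausdorff convergence $(X_t,g_t,p_t)\to(X_\infty,d_\infty,p_0)$ together with the smooth convergence $g_t\to g_0$ on $(X_0)_{reg}$ from Corollary \ref{5smcon}, one sees that $K\cap (X_0)_{reg}$ embeds into $B_{g_t}(p_t,R+1)$ for all sufficiently small $|t|$. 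Lemma \ref{5lob} then supplies $\varphi_t\geq -C_R$ on this set uniformly in $t$, and passing to the limit gives the same lower bound on $\varphi_0$ over $K\cap(X_0)_{reg}$. Combined with the upper bound in Corollary \ref{5upb} and the classical Eyssidieux--Guedj--Zeriahi boundedness of Monge--Amp\`ere potentials at log terminal singular points of $K$, this yields $\varphi_0\in L^\infty_{loc}(X_0\setminus S_{lc})$.

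For the divergence $\varphi_0\to -\infty$ along $S_{lc}$, Proposition \ref{5uniq} already supplies this near the open part $S_{lc}^\circ$ through Berndtsson's subharmonicity lemma. To propagate the pole across all of $S_{lc}$ I would use the uniqueness in Proposition \ref{5uniq} to identify $\varphi_0$ with the Berman--Guenancia log canonical K\"ahler--Einstein potential, whose logarithmic blow-up along the entire log canonical divisor is built into its construction. Part (c) is the cleanest step: when $X_0$ is log terminal, $S_{lc}=\emptyset$, so Theorem \ref{main6}(2) forces $(\hat X,d)$ to be a compact metric length space. The Gromov--Hausdorff convergence $(X_t,g_t)\to(\hat X,d)$ to a compact limit yields $\sup_{t\in B^*}\operatorname{diam}_{g_t}(X_t)\leq D<\infty$. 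Choosing $R>D$ in Lemma \ref{5lob} then covers each $X_t$ by the single ball $B_{g_t}(p_t,R)$ and produces a lower bound $\varphi_t\geq -C$ independent of $t$, which combined with Corollary \ref{5upb} gives the uniform $\|\varphi_t\|_{L^\infty(X_t)}\leq C$; the case $t=0$ is covered by part (a) applied with $S_{lc}=\emptyset$.

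The main subtlety is the extension in part (b) from $S_{lc}^\circ$ to the full locus $S_{lc}$. The intersection stratum $S_{lc}\setminus S_{lc}^\circ$, where log canonical and log terminal exceptional divisors cross after log resolution, sits at infinite $d_\infty$-distance but of strictly higher codimension, and the metric geometry of $X_\infty$ alone does not separate it from the ambient lc locus. Here the argument must lean on the pluripotential input of \cite{B} (or a barrier construction analogous to the one in the proof of Proposition \ref{5uniq} with a test function that simultaneously detects every lc component of the boundary divisor) rather than on the limit Riemannian geometry produced in Sections~5.2--5.3.
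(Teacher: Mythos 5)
Your proposal is correct and follows essentially the paper's own route: the paper gives no separate argument, recording the corollary as an immediate consequence of Theorem \ref{main6}, and your fleshed-out deduction (homeomorphism onto $X_0\setminus S_{lc}$ forces compact sets into finite geodesic balls where Lemma \ref{5lob} and Corollary \ref{5upb} give two-sided bounds; compactness of $(X_\infty,d_\infty)$ in the log terminal case yields a uniform diameter bound and hence a $t$-independent bound via Lemma \ref{5lob}; Proposition \ref{5uniq} and the identification with the Berman--Guenancia potential give the pole along $S_{lc}$) is exactly the intended reading. Two minor remarks: the appeal to Eyssidieux--Guedj--Zeriahi at log terminal points of $K$ is unnecessary (and not directly applicable since $X_0$ is only log canonical), because the lower bound on the full-measure set $K\cap (X_0)_{reg}$ already gives the essential boundedness, and the subtlety you flag at $S_{lc}\setminus S_{lc}^{\circ}$ is likewise left implicit in the paper, so it is not a defect of your argument relative to the paper's treatment.
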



\subsection{Generalizations and conjectures} The moduli space of canonically polarized varieties is a central problem in both algebraic geometry and complex geometry.  Recently, the compactification of semi-log canonical models is derived (c.f. \cite{K1, HMX}). Theorem \ref{main6} deals with a special degeneration with the central fibre being a reduced and irreducible canonical model with log canonical singularities. The proof of Theorem \ref{main6} should be generalized to much general cases when the central fibre has multiple fibres with semi-log canonical singularities. Using algebraic structures of such a compactification, one would like to prove the following statement. 

\begin{conjecture} If the central fibre of the flat degeneration of smooth canonical models is semi-log canonical, then each component of the central fibre is a pointed Gromov-Hausdorff limit of near by K\"ahler-Einstein manifolds. In particular, the pointed Gromov-Hausdorff limit is homeomorphic to the quasi-projective variety induced by a component of the central fibre. 

\end{conjecture}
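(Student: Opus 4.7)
The plan is to extend the local, base-point-based approach of Theorem \ref{main6} to each irreducible component $Z$ of the semi-log canonical central fibre $X_0$. For each such $Z$, pick a base point $p_0 \in Z_{reg}$ lying off both the intersection stratum with other components and the log canonical locus internal to $Z$, and select $p_t \in X_t$ with $p_t \to p_0$ in $\mathcal{X}$. The target statement will then be: $(X_t, p_t, g_t)$ converges in pointed Gromov--Hausdorff topology to a metric length space homeomorphic to the quasi-projective variety $Z \setminus S_Z$, where $S_Z$ is the union of the intersection of $Z$ with the other components and the log canonical sub-locus of $Z$ proper. This $S_Z$ is precisely where one expects the limiting K\"ahler--Einstein metric to form a ``complete end'' in the sense of the $S_{lc} \neq \phi$ case of Theorem \ref{main6}.

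First I would set up a priori estimates for the family potentials $\varphi_t$. The upper bound should follow from a global maximum principle applied on a family of uniformizations of $\mathcal{X}$, as in Corollary \ref{5upb}. For the lower bound on geodesic balls $B_{g_t}(p_t, R)$, one would adapt Lemma \ref{5lob} via Moser iteration; the input is an appropriate integrability hypothesis on sections $\eta \in H^0(\mathcal{X}, (K_{\mathcal{X}/B})^k)$ restricted to a tubular neighborhood of the chosen component (a component-wise analog of assumption (4) in Theorem \ref{main6}). Higher-order estimates on compact sets of $Z_{reg} \setminus S_Z$ would then follow from Tsuji's trick and standard local parabolic/elliptic theory as in Lemma \ref{55}.

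Next identify the limit object. On $Z$, the adjunction formula gives $K_{\mathcal{X}/B}|_Z = K_Z + \mathrm{Diff}_Z$, where $\mathrm{Diff}_Z$ is Shokurov's different, whose support is precisely the intersection of $Z$ with the other components together with the internal log canonical locus. Thus the limit $\varphi_0 = \lim_{t\to 0} \varphi_t|_Z$ should solve a twisted complex Monge--Amp\`ere equation $(\chi_Z + \ddbar \varphi_0)^n = e^{\varphi_0}\,\Omega_Z$ whose right-hand side has poles along $\mathrm{Supp}(\mathrm{Diff}_Z)$. Uniqueness within a class $\mathcal{P}(Z, \chi_Z)$, parallel to Proposition \ref{5uniq}, would follow by adapting the Berman--Guenancia comparison principle to the semi-log canonical pair $(Z, \mathrm{Diff}_Z)$. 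Pointed Gromov--Hausdorff convergence of $(X_t, p_t, g_t)$ to some metric length space $(X_\infty^Z, p_\infty, d_\infty)$ then follows from Cheeger--Colding theory together with the local smooth convergence near $p_0$, exactly as in Proposition \ref{5ghcon}.

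The identification of $X_\infty^Z$ with $Z \setminus S_Z$ would then mirror sections 5.2--5.3: extend holomorphic sections of $(K_{\mathcal{X}/B})^k$ to the limit using local $L^2$-estimates (Proposition \ref{ll41}), assemble a Lipschitz map $\Phi: X_\infty^Z \to Z$, and establish injectivity through the Donaldson--Sun $H$-condition applied on an enlarging family of balls. I expect the main obstacle to be twofold. First, one must show that a H\"ormander-type $L^2$-solvability of $\dbar$ is available at the level of the non-normal, possibly non-irreducible variety $Z$ with its adjunction data; the natural strategy is to carry out Proposition \ref{lll3} on a global log resolution of $\mathcal{X}$ and then restrict, handling the multiple components via cutoff functions as in Lemma \ref{app}. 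Second, one must prove the ``complete end'' dichotomy: that any point of $S_Z$ corresponds to infinity in $d_\infty$, while any point of $Z \setminus S_Z$ is reached by regular points at bounded $d_\infty$-distance from $p_\infty$. This requires a refined version of Corollary \ref{co55} using conical K\"ahler approximations adapted to the non-klt boundary $\mathrm{Diff}_Z$, and is likely the deepest step, since it forces one to analyze how the geometry of the total space $\mathcal{X}$ transmits metric information across the normal crossings at $S_Z$.
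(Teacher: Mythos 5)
This statement is an open conjecture in the paper, not a theorem: the author explicitly defers it to a sequel, saying only that the proof of Theorem \ref{main6} ``should be generalized'' to central fibres with several components and semi-log canonical singularities. Your proposal is essentially that same program --- pick a base point in the regular part of a component $Z$, redo the a priori estimates, identify the limit potential via adjunction and Berman--Guenancia, and rerun the $L^2$/$H$-condition machinery of sections 5.2--5.3 --- so it cannot be judged against a proof in the paper, and more importantly it does not close the gaps that make this a conjecture rather than a corollary of Theorem \ref{main6}.

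Concretely, three of your steps are precisely the open points. First, your lower bound for $\varphi_t$ on balls rests on a ``component-wise analog of assumption (4)'' of Theorem \ref{main6}; but that integrability hypothesis is an extra analytic assumption absent from the conjecture (the paper itself flags it as something that ``should be removed''), so you must either prove it from the semi-log canonical structure or replace the Moser-iteration argument of Lemma \ref{5lob} --- neither is done. Second, the non-normality of $Z$ is more serious than your treatment suggests: a component of an slc fibre is in general only seminormal, with the different $\mathrm{Diff}_Z$ carrying coefficient one along the conductor/double locus; the H\"ormander-type solvability of Proposition \ref{lll3} uses Demailly's theorem with strictly positive twisted curvature, and with coefficient-one boundary the approximating hermitian metrics only give semi-positive (log-pole) curvature, so the $\dbar$-estimate does not follow by the restriction-plus-cutoff argument you sketch. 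Third, the ``complete end'' dichotomy --- that the double locus and internal lc centers lie at infinite $d_\infty$-distance while every point of $Z\setminus S_Z$ is at bounded distance (the analogue of Corollary \ref{co45}, Proposition \ref{co55} and the blow-up barrier construction of section 4.5), together with the statement that the portions of $X_t$ converging to the \emph{other} components recede to infinite distance from $p_t$ --- is exactly the deepest missing ingredient; you correctly identify it as such but supply no argument. As it stands your text is a reasonable research outline matching the paper's stated expectations, not a proof.
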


The existence and uniqueness of the K\"ahler-Einstein current is derived by Berman and Guenancia \cite{B}. We hope our approach can be applied to prove the above conjecture and we will describe the problem and technical details in the sequel paper. In particular, one should be able to  establish the relation that boundedness of the local potential is equivalent to the bounded of the distance from a fixed base point. The confirmation of the above conjecture should establish the general principle that 
$$the~ algebraic ~moduli ~space ~is ~ equivalent ~ to~ the ~K\textnormal{\"a}hler\textnormal{-}Einstein ~moduli ~space$$
and so are their algebraic and geometric compactifications.  A more interesting and deeper approach would be the following differential geometric conjecture. 

\begin{conjecture} Let $\mathcal{C}(n, V)$ be the set of all canonically polarized $n$-dimensional manifolds with $c_1^n \leq V$. Then for any K\"ahler-Einstein sequence $(X_j, g_j)$ in $\mathcal{C}(n, V)$ with $Ric(g_j) = -g_j$, after passing to a subsequence, they converge to a singular K\"ahler-Einstein metric length space homeomorphic to either a projective  or quasi-projective variety in pointed Gromov-Hausdorff topology.

\end{conjecture}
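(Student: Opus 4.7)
The plan is to combine the $H$-condition machinery of Donaldson-Sun adapted in sections 3.4 and 3.5 with the uniform local potential and diameter estimates of section 4.5, so as to reduce the moduli compactness question to the one-parameter situation already handled in Theorem \ref{main6}. The strategy has three stages: first, extract a noncollapsed pointed Gromov-Hausdorff limit via Cheeger-Colding theory; second, establish a uniform partial $C^0$-estimate so that the whole sequence embeds into a fixed projective space; third, identify the metric limit with the algebraic limit of the embedded varieties.

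For the first stage, fix a sequence $(X_j,g_j)\in\mathcal{C}(n,V)$. Since $Ric(g_j)=-g_j$ and $\mathrm{Vol}(X_j,g_j)=(-c_1(X_j))^n\leq V$, Bishop-Gromov gives uniform volume comparison; by a rescaling argument one can find base points $p_j\in X_j$ with not-too-small volume in a unit ball. Cheeger-Colding theory then produces a subsequential noncollapsed pointed Gromov-Hausdorff limit $(X_\infty,p_\infty,d_\infty)$ with the usual decomposition $X_\infty=\mathcal{R}\cup\mathcal{S}$, smooth convergence on $\mathcal{R}$, limit Einstein metric satisfying $Ric(g_\infty)=-g_\infty$, and $\mathcal{S}$ closed of Hausdorff codimension at least four.

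The heart of the argument would be a uniform partial $C^0$-estimate: there should exist $k_0=k_0(n,V)$ and $N=N(n,V)$ such that $K_{X_j}^{k_0}$ induces an embedding of $X_j$ into $\mathbb{CP}^N$ for all $j$. I would run the local $L^2$-estimates of Proposition \ref{ll31} together with the H\"ormander $\dbar$-estimate for $K_{X_j}^k$ (immediate from $Ric(h_j^k)=-kg_j$) on each geodesic ball $B_{g_j}(p_j,R)$, and invoke the stability of the $H$-condition of Proposition \ref{hcon2} at every tangent cone of $X_\infty$, exactly as in the separation argument of Proposition \ref{inject}. A compactness and rescaling argument over the moduli of tangent cones should force $k_0$ to depend only on $n$ and $V$. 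With uniform embeddings in hand, the images $X_j\subset\mathbb{CP}^N$ have degree bounded by $V\cdot k_0^n$, hence lie in a bounded piece of the Hilbert scheme; a subsequence converges there to a flat limit $X_\infty^{\mathrm{alg}}\subset\mathbb{CP}^N$, which by Koll\'ar-Shepherd-Barron-Alexeev is a (possibly reducible) semi-log canonical stable variety of general type. One would then identify $(X_\infty,d_\infty)$ with the pointed Gromov-Hausdorff limit along the fibres of this flat family near the irreducible component containing the base points, using the multi-component extension of Theorem \ref{main6} envisioned in the preceding conjecture.

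The main obstacle is the uniform partial $C^0$-estimate. Without a uniform diameter upper bound on $(X_j,g_j)$, a direct Nash-Moser iteration on the whole manifold cannot produce uniform sections. I would address this by the log resolution and conical-approximation device of Proposition \ref{digest}: at any point where the diameter could blow up, pass to a log resolution, approximate by conical K\"ahler metrics preserving the relevant pole, and deduce a local quantitative upper bound on $\omega_{KE}$ from the maximum principle applied to a Song-Weinkove quantity of the form $\log\bigl(|\sigma_E|^{2(1-\delta)}_{h_E}|w|^{2\lambda}\,tr_{\hat\omega}(\omega_{KE})\bigr)-A\varphi_{KE}$. Turning this local control into a uniform statement over the moduli space, presumably by a contradiction argument that blows up at the worst point of the worst member and produces a tangent cone with no sufficiently many pluricanonical sections, is where the essential analytic difficulty lies and is exactly what remains open, motivating the conjecture.
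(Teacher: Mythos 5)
You have not proved the statement, and neither does the paper: this is posed there as an open conjecture, and your proposal, by your own admission in the last paragraph, leaves the decisive step unresolved. The missing ingredient is precisely the uniform partial $C^0$-estimate: a $k_0=k_0(n,V)$ and uniform control of sections of $K_{X_j}^{k_0}$ with no diameter or non-collapsing hypothesis. Nothing in the paper supplies this. The $H$-condition machinery (Proposition \ref{hcon2}, Proposition \ref{inject}) and the local $L^2$-estimates (Proposition \ref{ll31}) are always run for a single fixed variety or a fixed one-parameter degeneration, where the constants — the Sobolev constants on the balls $B_j(r)$, the constant $C$ in condition $H_3$, the bounds $K_R$ — are allowed to depend on that particular family; making them uniform over all of $\mathcal{C}(n,V)$ is a genuinely different problem, and your appeal to ``a compactness and rescaling argument over the moduli of tangent cones'' presupposes a compactness of exactly the kind one is trying to establish. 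Similarly, the Song--Weinkove quantity of Proposition \ref{digest} is applied in the paper to a fixed canonical model after a chosen log resolution of a known limit variety; in your setting there is no candidate algebraic limit to resolve until after the partial $C^0$-estimate has already produced the uniform projective embeddings.

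Two further steps would fail or are circular as written. First, the non-collapsed base point: with $Ric(g_j)=-g_j$ and only an upper bound on the volume, Bishop--Gromov comparison does not yield points $p_j$ with $\mathrm{Vol}(B_{g_j}(p_j,1))$ bounded below uniformly in $j$; without such a lower bound the Cheeger--Colding limit may be collapsed, and then the regular/singular decomposition, the smooth convergence on $\mathcal{R}$, and the codimension-four bound on $\mathcal{S}$ that you use are all unavailable. Second, the identification of the metric limit with the flat Hilbert-scheme limit invokes ``the multi-component extension of Theorem \ref{main6} envisioned in the preceding conjecture,'' which is itself an unproven conjecture of the paper; moreover Theorem \ref{main6} as actually proved requires the additional integrability hypothesis on sections of $(K_{\mathcal{X}/B})^k$ over the family, which you would also have to verify for the family obtained from the Hilbert-scheme degeneration. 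So your text is a sensible program, broadly consistent with the paper's own outlook on this conjecture, but it contains these genuine gaps and does not constitute a proof.
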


\bigskip

\noindent {\bf{Acknowledgements:}} The author would like to thank Jacob Sturm for teaching him  the construction of the cut-off functions in Lemma \ref{app}. He would also like to thank Xiaowei Wang for many stimulating conversations,  D.H. Phong, Valentino Tosatti, Zhenlei Zhang, Ved Datar and Bin Guo for a number of helpful discussions.

\bigskip

\footnotesize


\begin{thebibliography}{99}

\bibitem{A} Aubin, T.  {\em \'Equations du type Monge-Amp\`ere sur les vari\'et\'es k\"ahl\'eriennes compactes},  Bull. Sci. Math. (2) {\bf 102} (1978), no. 1, 63--95.


\bibitem{B} Berman, R. and Guenancia, H.  {\em K\"ahler-Einstein metrics on stable varieties and log canonical pairs}, preprint arXiv:1304.2087 

\bibitem{BCHM} Birkar, C., Cascini, P.,  Hacon, C. and McKernan, J. {\em  Existence of minimal models for varieties of log general type}, J. Amer. Math. Soc. 23 (2010), no. 2, 405--468


\bibitem{C2} Calabi, E. {\em  Extremal K\"ahler metrics}, in Seminar on Differential Geometry,  pp. 259--290, Ann. of Math. Stud., {\bf 102}, Princeton Univ. Press, Princeton, N.J., 1982


 \bibitem{CaO} Candelas, P. and de la Ossa, X. C.  {\em Comments on
 conifolds}, Nuclear Phys. B342 no.1 (1990), 246--268


\bibitem{C}  Cheeger, J.  {\em  Degeneration of Einstein metrics and metrics with
special holonomy},  in \it  Surveys in differential geometry \rm
VIII, \rm 29--73

\bibitem{CC1}  Cheeger, J. and Colding, T.H. {\em On the structure of
space with Ricci curvature bounded below I}, J. Differential. Geom.
 {\bf 46} (1997), 406-480

\bibitem{CC2}  Cheeger, J. and Colding, T.H.  {\em On the structure of
space with Ricci curvature bounded below II}, J. Differential. Geom.
{\bf 52} (1999), 13-35

  \bibitem{CCT} Cheeger, J., Colding, T.H. and Tian, G.  {\em On the singularities
of spaces with bounded Ricci curvature}, Geom.Funct.Anal. Vol.12
(2002), 873--914

\bibitem{CDS3} Chen, X.X., Donaldson, S.K. and Sun, S. {\em K\"ahler-Einstein metrics on Fano manifolds, III: limits as cone angle approaches $2\pi$ and completion of the main proof}, preprint arXiv:1302.0282

\bibitem{CN} Colding, T.H. and Naber, A. {\em Sharp H\"older continuity of tangent cones for spaces with a lower Ricci curvature bound and applications},  Ann. of Math. (2) 176 (2012), no. 2, 1173--1229


\bibitem{DaSo} Datar, V. and Song, J. {\em A remark on K\"ahler metrics with conical singularities along a simple normal crossing divisor}, preprint arXiv:1309.5013

\bibitem{De} Demailly, J.P. {\em Singular hermitian metrics on positive line bundles}, Proceedings of the Bayreuth conference Complex algebraic varieties, April 2-6, 1990, edited by K. Hulek, T. Peternell, M. Schneider, F. Schreyer, Lecture Notes in Math. 1507 Springer-Verlag (1992) 



\bibitem{D} Donaldson, S.K. {\em Scalar curvature and stability of toric varieties}, J. Differential Geom.
62 (2002), no. 2, 289--349

\bibitem{D2} Donaldson, S.K. {\em     K\"ahler metrics with cone singularities along a divisor},  Essays in mathematics and its applications, 49--79, Springer, Heidelberg, 2012


\bibitem{DS} Donaldson, S.K. and Sun, S. {\em Gromov-Hausdorff limits of K\"ahler manifolds and algebraic geometry}, preprint arXiv:1206.2609



 \bibitem{EGZ} Eyssidieux, P.,   Guedj, V. and Zeriahi, A. {\em Singular
K\"{a}hler-Einstein metrics}, J. Amer. Math. Soc. 22 (2009),
607--639


\bibitem{Fr} Friedman, R. {\em Simultaneous resolution of threefold double points},  Math. Ann. 247 (1986),
671--689 









\bibitem{GP} Guenancia, H. and P$\breve{\text{a}}$un, M. {\em Conic singularities metrics with prescribed Ricci curvature: The case of general cone angles along normal crossing divisors}, arXiv:1307.6375 

\bibitem{HMX} Hacon, C., Mckernan, J. and Xu, C. {\em  On the boundedness of the functor of KSBA stable varieties}, preprint
 
 
\bibitem{H1} Hamilton, R.S. {\em Three-manifolds with positive Ricci curvature},  J. Differential Geom.  {\bf 17}  (1982), no. 2, 255--306.




\bibitem{KMM}  Kawamata, Y., Matsuda, K. and  Matsuki, K. {\em Introduction to the minimal model problem} in {\em Algebraic geometry}, Sendai, 1985, 283--360, Adv. Stud. Pure Math., 10, NorthHolland, Amsterdam, 1987


\bibitem{K1}  Kollar, J. {\em Moduli of varieties of general},  Handbook of Moduli: Volume II, Adv. Lect. Math. (ALM), vol. 24, Int. Press, Somerville, MA, 2013, pp. 115--130

\bibitem{Kol1} Ko{\l}odziej, S. {\em The complex Monge-Amp\`ere
equation}, Acta Math. {\bf 180} (1998), no. 1, 69--17

\bibitem{Kol2} Ko{\l}odziej, S. {The complex Monge-Amp\`ere equation and pluripotential theory},  Mem. Amer. Math. Soc.  {\bf 178}  (2005),  no. 840, x+64 pp.


\bibitem{La} Lazarafeld, J., {\em Positivity in algebraic geometry. I. Classical setting: line bundles and linear series},  A Series of Modern Surveys in Mathematics, 48. Springer-Verlag, Berlin, 2004. xviii+387 pp.


\bibitem{Li} Li, P. {\em Lecture Notes on Geometric Analysis},  Lecture Notes Series No. 6, Research Institute of Mathematics, Global Analysis Research Center, Seoul National University, Korea (1993)

\bibitem{PSeS} Phong, D. H.,  Sesum, N. and Sturm, J. {\em Multiplier ideal sheaves and the K\'ahler-Ricci flow},  Comm. Anal. Geom. 15 (2007), no. 3, 613--632

\bibitem{PSS} Phong, D.H., Song, J. and Sturm, J. {\em Degeneration of K\"ahler-Ricci solitons on Fano manifolds}, preprint arXiv:1211.5849 



\bibitem{Re}  Reid, M. {\em The moduli space of 3-folds with K = 0 may nevertheless be irreducible},
 Math. Ann. 287 (1987) 329--334


\bibitem{RZ} Rong, X. and Zhang, Y. {\em   Continuity of Extremal Transitions and Flops for Calabi-Yau Manifolds}, J. Differential Geom. {\bf 82} (2011), no. 2, 233--269


 \bibitem{RuZ} Ruan, W. and Zhang, Y. {\em  Convergence of Calabi-Yau
manifolds},  Adv. Math. 228 (2011), no. 3, 1543--1589

\bibitem{SY} Schoen, R. and Yau, S.-T. {\em Lectures on differential geometry}, International Press, Cambridge, MA, 1994


\bibitem{Siu} Siu, Y.-T. {\em Finite generation of canonical ring by analytic method},  Sci. China Ser. A 51 (2008), no. 4, 481--502

\bibitem{S2} Song, J. {\em On a conjecture of Candelas and de la Ossa}, to appear in communications in mathematical physics, arXiv:1201.4358

 \bibitem{S3} Song, J. {\em  K\"ahler-Ricci flow and birational surgeries},  preprint arXiv:1304.2607
   
   
   
\bibitem{ST1} Song, J. and Tian, G. {\em The K\"ahler-Ricci flow on surfaces of positive Kodaira dimension}, Invent. Math. {\bf 170} (2007), no. 3, 609--653

 \bibitem{ST2} Song, J. and Tian, G. {\em Canonical measures and K\"ahler-Ricci flow}, J . Amer. Math. Soc. 25 (2012), 303--353

 \bibitem{ST3} Song, J. and Tian, G. {\em The K\"ahler-Ricci flow through singularities}, preprint arXiv:0909.4898
 
 \bibitem{ST4} Song, J. and Tian, G. {\em Bounding scalar curvature for global solutions of the K\"ahler-Ricci flow}, preprint arXiv:1111.5681

 \bibitem{SW1} Song, J. and Weinkove, B. {\em The K\"ahler-Ricci flow on Hirzebruch surfaces}, J. Reine Angew.
Math. 659 (2011), 141--168

\bibitem{SW2} Song, J. and Weinkove, B. {\em Contracting exceptional divisors by the K\"ahler-Ricci flow}, Duke Math. J. 162 (2013), no. 2, 367--415

\bibitem{SW3} Song, J. and  Weinkove, B.  {\em Contracting exceptional divisors by the K\"ahler-Ricci flow II}, to appear in proceedings of London math. society, arXiv:1003.0718


\bibitem{SoYu} Song, J. and Yuan, Y. {\em  Metric flips with Calabi ansatz},  Geom. Funct. Anal. 22 (2012), no. 1, 240--265



\bibitem{Stu} Sturm, J.  private communications

\bibitem{T1} Tian, G. {\em On Calabi's conjecture for complex surfaces with positive first Chern class},  Invent. Math. 101 (1990), no. 1, 101--172

\bibitem{T2} Tian, G. {\em Smoothing threefold with trivial canonical bundle and ordinary double points},
Essays on Mirror Manifolds Internat. Press, Hong Kong (1992), 458--479

\bibitem{T3} Tian, G. {\em K\"ahler-Einstein metrics with positive scalar curvature}, Invent. Math. 130 (1997), no. 1, 1--37

\bibitem{T4} Tian, G. {\em K-stability and K\"ahler-Einstein metrics}, preprint arXiv:1211.4669


\bibitem{TW} Tian, G. and Wang, B. {\em On the structure of almost Einstein manifolds}, preprint arXiv:1202.2912

 \bibitem{To1} Tosatti, V. {\em  Limits of Calabi-Yau metrics when the
  K\"{a}hler class degenerates}, J. Eur. Math. Soc. 11 (2009), 744--776

\bibitem{To2} Tosatti,  V. {\em  Adiabatic limits of Ricci-flat K\"ahler metrics}, J. Differential Geom. 84 (2010), no. 2, 427--453


\bibitem{Ts} Tsuji, H. {\em Existence and degeneration of K\"ahler-Einstein metrics on minimal algebraic varieties of general type}, Math. Ann. {\bf 281} (1988), 123--133

 \bibitem{Y1}  Yau, S.T. {\em On the Ricci curvature of a compact  K\"{a}hler
manifold and complex Monge-Amp\`{e}re  equation I}, \rm  Comm. Pure Appl.  Math.  31  (1978),  339--411


\bibitem{Y2} Yau, S.T. {\em A general Schwarz lemma for K\"{a}hler manifolds},
 Amer. J. Math.  100 (1978), 197--204

\bibitem{Y3} Yau, S.T. {\em Yau, S.-T. Problem section in Seminar on Differential Geometry}, pp. 669Ð706, Ann.
of Math. Stud. 102, Princeton Univ. Press, 1982 (problem 49) 
 
 
 
\bibitem{ZY} Zhang, Y. {\em Convergence of K\"ahler manifolds and calibrated fibrations}, Ph.D. thesis,
Nankai Institute of Mathematics, 2006

\bibitem{Z}  Zhang, Z. {\em On degenerate  Monge-Amp\`{e}re
equations over closed K\"{a}hler manifolds}, Int. Math. Res. Not.
(2006) Art.ID 63640, 18pp.

\bibitem{Z2} Zhang, Z. {\em Scalar curvature behavior for finite time singularities of K\"ahler-Ricci flow}, Michigan Math. J. 59 (2010), no. 2, 419--433



 \end{thebibliography}
\end{document}